\definecolor{orange2}{HTML}{FF7F2A}
\tikzset{cdlabel/.style={above,sloped,%
    execute at begin node=$\scriptstyle,execute at end node=$}}
\tikzset{algarrow/.style={->, thick}}   
\tikzset{alb/.style={->, bend right=25, thick}}
\tikzset{arb/.style={->, bend left=25, thick}}
\tikzset{al/.style={->, bend right=20, thick}}
\tikzset{ar/.style={->, bend left=20, thick}}
\tikzset{als/.style={->, bend right=15, thick}}
\tikzset{ars/.style={->, bend left=15, thick}}
\tikzset{blgarrow/.style={->, thick}}
\tikzset{clgarrow/.style={->, thick}}
\tikzset{tensoralgarrow/.style={double, double equal sign distance, -implies}}
\tikzset{tensorblgarrow/.style={double, double equal sign distance, -implies}}
\tikzset{tensorclgarrow/.style={double, double equal sign distance, -implies}}
\tikzset{tensorelgarrow/.style={double, double equal sign distance, -implies}}
\tikzset{modarrow/.style={->, dashed}}
\tikzset{Amodar/.style={->, dashed}}
\tikzset{Dmodar/.style={->, dashed}}
\tikzset{DAmodar/.style={->, dashed}}
\theoremstyle{definition}
\newtheorem{defn}[equation]{Definition}
\theoremstyle{plain}
\newtheorem{cor}[equation]{Corollary}
\newtheorem{lem}[equation]{Lemma}
\newtheorem{prop}[equation]{Proposition}
\newtheorem{thm}[equation]{Theorem}
\newtheorem{remark}[equation]{Remark}
\numberwithin{equation}{section}
\numberwithin{figure}{section}
\numberwithin{table}{section}
\newcommand{\vphi}{\varphi}
\renewcommand{\phi}{\vphi}
\DeclareMathOperator{\Id}{Id}
\DeclareMathOperator{\Cone}{Cone}
\newcommand{\genG}{\mathcal{G}}
\newcommand{\opminus}[1]{#1^-}
\newcommand{\ophat}[1]{\widehat{#1}}
\newcommand{\optilde}[1]{\widetilde{#1}}
\newcommand{\opunorr}[1]{#1 {}^u}
\DeclareMathOperator{\CFK}{CFK}
\DeclareMathOperator{\HFK}{HFK}
\newcommand{\CFKm}{\opminus{\CFK}}
\newcommand{\HFKm}{\opminus{\HFK}}
\newcommand{\HFKh}{\ophat{\HFK}}
\newcommand{\HFKt}{\optilde{\HFK}}
\newcommand{\CTu}{\opunorr{\CTu}}
\newcommand{\curves}[1]{\overrightarrow{#1}}
\newcommand{\alphas}[1][]{%
  \ifthenelse{\equal{#1}{}}{\curves{\alpha}}{\curves{\alpha^{#1}}}
}
\newcommand{\betas}[1][]{%
  \ifthenelse{\equal{#1}{}}{\curves{\beta}}{\curves{\beta_{#1}}}
}
\newcommand{\Xs}[1][]{%
  \ifthenelse{\equal{#1}{}}{\curves{\mathbb{X}}}{\curves{\mathbb{X}{#1}}}
}
\newcommand{\Os}[1][]{%
  \ifthenelse{\equal{#1}{}}{\curves{\mathbb{O}}}{\curves{\mathbb{O}{#1}}}
}
\newcommand{\markers}[1]{\mathbb{#1}}
\newcommand{\OO}{\markers{O}}
\newcommand{\XX}{\markers{X}}
\newcommand{\gen}[1]{\mathbf{#1}}
\newcommand{\x}{\gen{x}}
\newcommand{\y}{\gen{y}}
\newcommand{\z}{\gen{z}}
\DeclareMathOperator{\Rect}{Rect}
\DeclareMathOperator{\Pent}{Pent}
\DeclareMathOperator{\Hex}{Hex}
\newcommand{\emptypoly}[2][]{%
  #2^{\ifthenelse{\equal{#1}{}}{\circ}{\circ, #1}}
}
\newcommand{\eRect}{\emptypoly{\Rect}}
\newcommand{\ePent}{\emptypoly{\Pent}}
\newcommand{\lpq}{L(p,q)}
\DeclareMathOperator{\XSW}{X:SW}
\DeclareMathOperator{\XSE}{X:SE}
\DeclareMathOperator{\XNE}{X:NE}
\DeclareMathOperator{\XNW}{X:NW}
\DeclareMathOperator{\OSE}{O:SE}
\DeclareMathOperator{\OSW}{O:SW}
\DeclareMathOperator{\ONE}{O:NE}
\DeclareMathOperator{\ONW}{O:NW}
\newcommand{\bI}{\mathbf{I}}
\newcommand{\tbI}{\tilde{\bI}}
\newcommand{\tbN}{\tilde{\mathbf{N}}}
\newcommand{\saS}{\mathcal{S}}
\newcommand{\I}{\mathcal{I}}
\newcommand{\dxminus}{\partial_{\XX}^-}
\newcommand{\dalpha}{\partial_{\alpha}}
\newcommand{\dbeta}{\partial_{\beta}}
\newcommand{\dxminusprime}{\partial_{\XX}^{-'}}
\newcommand{\dii}{\partial_{\tbI}^{\tbI}}
\newcommand{\din}{\partial_{\tbI}^{\tbN}}
\newcommand{\dnn}{\partial_{\tbN}^{\tbN}}
\newcommand{\phixnisigned}{\Phi_{X_n,\saS}^{\tbI}}
\newcommand{\phionsigned}{\Phi_{O_n,\saS}}
\newcommand{\phionxnsigned}{\Phi_{O_n,X_n,\saS}}
\newcommand{\numset}[1]{\mathbb{#1}}
\newcommand{\Z}{\numset{Z}}
\newcommand{\F}{\numset{F}}
\newcommand{\Ztwo}{\Z/2\Z}
\newcommand{\Zp}{\Z/p\Z}
\newcommand{\R}{\numset{R}}
\newcommand{\poly}[1]{\mathscr{#1}}
\newcommand{\HH}{\poly{H}}
\DeclareMathOperator{\PG}{Par}
\newcommand{\ePG}{\PG^{\circ}}
\DeclareMathOperator{\Poly}{Poly}
\newcommand{\ePoly}{\Poly^{\circ}}
\newcommand{\pbg}{\Pent_{\beta\gamma}}
\newcommand{\epbg}{\Pent^{\circ}_{\beta\gamma}}
\DeclareMathOperator{\Int}{Int}
\DeclareMathOperator{\spinc}{Spin^c}
\newcommand{\hexbgb}{\Hex_{\beta\gamma\beta}}
\newcommand{\Hbgb}{\HH_{\beta\gamma\beta}}
\newcommand{\ehexbgb}{\Hex^{\circ}_{\beta\gamma\beta}}
\newcommand{\epi}{\pi^{\circ}}
\newcommand{\spincS}{\mathbf{S}}
\newcommand{\maslov}{\mathbf{M}}
\newcommand{\alex}{\mathbf{A}}
\newcommand{\tx}{\tilde{\x}}
\newcommand{\tOO}{\tilde{\OO}}
\newcommand{\ty}{\tilde{\y}}
\newcommand{\tXX}{\tilde{\XX}}
\newcommand{\tO}{\tilde{O}}
\newcommand{\tc}{\tilde{c}}
\newcommand{\tz}{\tilde{\z}}
\newcommand{\gop}{\llbracket}
\newcommand{\gcl}{\rrbracket}
\newcommand{\tgen}{\mathbf{t}}
\newcommand{\w}{\mathbf{w}}
  \newcommand{\figheight}{2.5cm}
  \newcommand{\gkprime}{\mathbb{G}'}
  \newcommand{\gridg}{\mathbb{G}}
  \newcommand{\gridgl}{\mathbb{G}_L}
\begin{document}
\title[On grid homology for lens space links: combinatorial invariance]{On grid homology for lens space links:\\ combinatorial invariance and integral coefficients}
\author{Samuel Tripp}
\address {Department of Mathematics, Dartmouth College, Hanover, NH 03755, 
  USA}
\email{\href{mailto:samuel.w.tripp.gr@dartmouth.edu}{samuel.w.tripp.gr@dartmouth.edu}}
\urladdr{\url{http://www.math.dartmouth.edu/~stripp/}}

%
\begin{abstract}
Following the approach to grid homology of links in $S^3$, we prove combinatorially that the grid homology of links in lens spaces defined by Baker, Grigsby, and Hedden is a link invariant. Further, using the sign assignment defined by Celoria, we prove that the generalization of grid homology to integral coefficients is a link invariant. 
\end{abstract}
\maketitle

\section{Introduction}
Heegaard Floer homology is a package of invariants for closed, oriented 3-manifolds first introduced by Ozsváth and Szabó \cite{Ozsvath00holomorphicdisks}. Knot Floer homology is an extension of this invariant to null-homologous knots in closed, oriented 3-manifolds, introduced independently by Ozsváth and Szabó \cite{Ozsvath2002HolomorphicDA} and Rasmussen \cite{Rasmussen2003FloerHA}. In \cite{Ozsvath_2008}, Ozsváth and Szabó extended knot Floer homology to nullhomologous links in closed, oriented 3-manifolds. The theory was later generalized to non-nullhomologous links in rational homology spheres by Ni \cite{Ni_2009} and Ozsváth and Szabó \cite{Ozsv_th_2010}. 

The above invariants are computed from a Heegaard diagram for a link in the 3-manifold, along with further analytic choices. To this data, one associates a chain complex whose differential counts holomorphic disks in the $g$-fold symmetric product of the Heegaard surface, where $g$ is the genus of the surface. The conditions these disks must satisfy depend on which version of the above invariant one is computing. The simplest form yields a bigraded chain complex over $\Ztwo$. The quasi-isomorphism type of the chain complex does not depend on the choice of diagram for the link or on the choice of analytic data, and the bigraded Euler characteristic of this chain complex is the Alexander polynomial of the link.

In \cite{Sarkar2006AnAF}, Sarkar and Wang showed that the simplest flavors of Heegaard Floer and knot Floer homology admit a combinatorial description. This description relies on a Heegaard diagram for the link in the manifold which is \emph{nice}. They showed that any nullhomologous link in a closed, oriented 3-manifold has a nice Heegaard diagram. On such diagrams, the differential counts embedded rectangles and bigons, and does not require any analytic choices. Soon after, Manolescu, Ozsváth, Szabó, and Thurston used \emph{grid diagrams} for links in $S^3$ to give a combinatorial description of all flavors of knot Floer homology \cite{Manolescu_2007}. In \cite{Manolescu_2007}, the authors also provide a standalone combinatorial proof of invariance without appealing to the equivalence with the holomorphic theory. Additionally, they give a way to assign signs to the rectangles counted by the differential, allowing the combinatorial theory to be generalized to integral coefficients. 

In \cite{Baker_2008}, Baker, Grigsby, and Hedden undertook a similar combinatorial program for links in lens spaces. They introduced \emph{twisted toroidal grid diagrams}, the combinatorics of which capture the topology of links in lens spaces. To a twisted toroidal grid diagram $\gridg$ for a link $L$, they associate a chain complex $(C^-(\gridg),\dxminus)$ over $\Ztwo[V_0,\ldots,V_{n-1}]$, where $n$ is the size of the grid. Each variable $V_i$ is associated to a marking on the grid, and the actions of $V_i$ and $V_j$ are homotopic if the markings they are associated to correspond to the same component of the link. As a result, $(C^-(\gridg),\dxminus)$ can be viewed as a chain complex over $\Ztwo[U_1,\ldots,U_{\ell}]$, where $\ell$ is the number of components of $L$. Let each $U_i$ act by multiplication by $V_{i_j}$, where $V_{i_j}$ corresponds to any marking on the $i^{\text{th}}$ component of $L$. 

In \cite{Baker_2008}, it is shown that this chain complex is quasi-isomorphic to one that computes the knot Floer homology of the link. By \cite{Ozsvath_2008}, they conclude the quasi-isomorphism type of $C^-(\gridg)$ is an invariant of the link. A combinatorial proof that the quasi-isomorphism type of $C^-(\gridg)$ is a link invariant is the first main result of this paper. 

\begin{thm}\label{thm:maintheoremz2}
Suppose $\gridg$ and $\gkprime$ are two twisted toroidal grid diagrams associated to the same $\ell$-component link $L$ in $\lpq$. Then  $(C^-(\gridg),\dxminus)$ and $(C^-(\gkprime),\dxminusprime)$ are quasi-isomorphic as chain complexes over $\Ztwo[U_1,\ldots,U_{\ell}]$ via a combinatorially defined quasi-isomorphism. 
\end{thm}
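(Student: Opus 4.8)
**

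The plan is to mirror the strategy of Manolescu--Ozsv\'ath--Szab\'o--Thurston \cite{Manolescu_2007} for grid diagrams in $S^3$, adapting each step to the twisted toroidal setting of Baker--Grigsby--Hedden. The key structural fact is that any two twisted toroidal grid diagrams for the same link $L \subset \lpq$ are related by a finite sequence of elementary moves: cyclic permutations (rotating rows or columns of the toroidal grid), commutations (interchanging two adjacent rows or columns whose $\XX$- and $\OO$-markings are suitably ``non-interleaved''), and stabilizations/destabilizations (inserting or deleting a row and column, which changes the grid number $n$ by one). So the first step is to state precisely this Reidemeister-type theorem for twisted toroidal grids --- either citing it from \cite{Baker_2008} or proving it by the standard argument that passes through a common refinement of the two associated Heegaard diagrams --- and then to reduce \thmref{thm:maintheoremz2} to showing that each elementary move induces a quasi-isomorphism of chain complexes over $\Ztwo[U_1,\ldots,U_\ell]$.

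Next I would handle the three move types in increasing order of difficulty. Cyclic permutation is essentially trivial: it does not change the combinatorial data at all, only relabels which fundamental domain of the torus one draws, so the complexes are literally isomorphic once one matches up generators and markings. Commutation invariance is the heart of the combinatorial argument: following \cite{Manolescu_2007}, one builds an explicit chain map between $C^-(\gridg)$ and $C^-(\gridg')$ by counting \emph{pentagons} in a combined diagram (here, a combined twisted toroidal diagram with an extra $\beta$- or $\gamma$-type curve), and a homotopy inverse by counting pentagons in the other direction; the composition is shown to be chain-homotopic to the identity by counting \emph{hexagons}, with the relevant identities following from degeneration/boundary-counting of $1$-dimensional moduli spaces of such polygons. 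All the polygon-counting machinery is combinatorial, so one must verify that the count is finite, that the signs/variables (here just the $V_i$, hence the $U_j$) are carried correctly, and that the boundary identities hold --- the bigraded structure and the behavior of the Alexander and Maslov gradings must also be tracked, though for the quasi-isomorphism statement one mainly needs that the maps respect the $\Ztwo[U_1,\ldots,U_\ell]$-module structure. Stabilization invariance is handled by the usual algebraic trick: a stabilized complex $C^-(\gridg')$ decomposes, after a change of basis, as a mapping cone in which one summand is acyclic, so it is quasi-isomorphic to $C^-(\gridg)$; one identifies the destabilization map explicitly (counting rectangles that pass through the new markings in a controlled way) and checks it is a quasi-isomorphism using a filtration argument on the number of times a rectangle crosses the distinguished new column.

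The main obstacle, and the place where genuinely new work is required beyond transcribing \cite{Manolescu_2007}, is the toroidal topology: on a torus, ``rows'' and ``columns'' are circles rather than intervals, and the twisting by $q$ means that a column wraps around $p$ times before closing up, so the polygons counted by the commutation and stabilization maps can wrap around the torus and one must rule out (or correctly account for) infinitely many such polygons and degenerate configurations that simply do not arise in the planar $S^3$ case. Concretely, I expect the hard lemmas to be: (i) the finiteness and well-definedness of the pentagon and hexagon counts on combined twisted toroidal diagrams, using that the relevant domains have non-negative multiplicities and are cut out by the $\XX$- and $\OO$-markings; (ii) the boundary-degeneration identities that force the pentagon map to be a chain map and the hexagon count to witness the chain homotopy --- these require a careful enumeration of the codimension-one boundary strata of moduli spaces of polygons on the torus, where extra strata coming from the wrapping must be shown to cancel in pairs; and (iii) checking that under commutation and stabilization the bijection between markings is compatible with the identification of the $V_i$-actions descending to the $U_j$-actions, so that everything is genuinely a statement over $\Ztwo[U_1,\ldots,U_\ell]$. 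Once these are in place, composing the quasi-isomorphisms along a sequence of moves connecting $\gridg$ to $\gkprime$ yields the desired combinatorially defined quasi-isomorphism and completes the proof.
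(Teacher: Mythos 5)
Your proposal follows essentially the same route as the paper: invoke the Cromwell-type theorem for twisted toroidal grids (Theorem~\ref{thm:cromwell}) to reduce to commutations and (de)stabilizations, establish commutation invariance via pentagon chain maps and a hexagon chain homotopy, establish stabilization invariance via a mapping-cone decomposition, and track the $V_i$-to-$U_j$ descent throughout. One small correction: cyclic permutation is not one of the moves here --- because a twisted toroidal grid diagram lives on the torus itself rather than on a chosen fundamental domain, cyclic permutation is a change of fundamental domain and does not alter the diagram or the complex at all (what does need checking is that the gradings are independent of this choice, which the paper handles separately), and the paper also passes through an intermediate ``switch'' move to handle certain stabilization types, but these are technicalities that do not change the overall strategy.
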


Following the program for links in $S^3$, Celoria gives a way to assign signs to the embedded parallelograms in twisted toroidal grid diagrams \cite{celoria2015note}. Celoria also proves that for a fixed twisted toroidal grid diagram $\gridg$, the \emph{sign assignment} is unique up to quasi-isomorphism. Utilizing this sign assignment, one can generalize the above complex to integral coefficients. We prove that this generalization is a link invariant; below, let $(C^-(\gridg),\dxminus;\Z)$ be the chain complex incorporating the sign assignment of \cite{celoria2015note}. 

\begin{thm}\label{thm:mainresultz}
Suppose $\gridg$ and $\gkprime$ are two twisted toroidal grid diagrams associated to the same $\ell$-component link $L$ in $ \lpq$. Then $(C^-(\gridg),\dxminus;\Z)$ and $(C^-(\gkprime),\dxminusprime;\Z)$ are quasi-isomorphic as chain complexes over $\Z[U_1,\ldots,U_{\ell}]$. Moreover, there is a combinatorially defined quasi-isomorphism between them.  
\end{thm}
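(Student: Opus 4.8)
The plan is to follow the same strategy used to prove Theorem \ref{thm:maintheoremz2}, upgrading each step to keep track of the sign assignment of \cite{celoria2015note}. Recall that any two twisted toroidal grid diagrams for the same link $L$ in $\lpq$ are connected by a finite sequence of elementary grid moves: (de)stabilizations, commutations, and (when the relevant conventions are in play) the torus-twisting/cyclic-permutation moves. So it suffices to produce, for each elementary move $\gridg \rightsquigarrow \gridg'$, a combinatorially defined quasi-isomorphism $(C^-(\gridg),\dxminus;\Z) \to (C^-(\gridg'),\dxminusprime;\Z)$ of chain complexes over $\Z[U_1,\ldots,U_\ell]$, and then compose. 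First I would fix, once and for all, the combinatorial chain homotopy equivalences constructed over $\Ztwo$ in the proof of Theorem \ref{thm:maintheoremz2}: the triangle-counting maps for commutations and the one-sided-map-plus-homotopy for (de)stabilizations. The key point is that each such map is defined by counting certain empty polygons (triangles, pentagons, hexagons), and the uniqueness statement of \cite{celoria2015note} — that the sign assignment on a fixed diagram is unique up to quasi-isomorphism — together with the existence of a sign assignment on the enlarged diagrams built out of $\gridg$ and $\gridg'$ (e.g.\ the $(\beta\gamma)$- or stabilized diagrams appearing in the $S^3$ proof) lets one assign signs consistently to all the polygons that appear.

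Concretely, for the commutation move I would mimic the $S^3$ argument: embed $\gridg$ and $\gridg'$ as the two $(\alpha\beta)$- and $(\alpha\gamma)$-subdiagrams of a single diagram with three collections of curves, and define the maps by counting empty triangles in the $(\alpha\beta\gamma)$-diagram. Because \cite{celoria2015note} provides a sign assignment on this ambient diagram, the triangle counts acquire signs; one then checks that the resulting maps are chain maps over $\Z[U_1,\ldots,U_\ell]$ and that the two composites (around the two orders of commutation, or commutation followed by its inverse) are chain-homotopic to the identity via pentagon- and hexagon-counting homotopies, exactly as in \cite{Manolescu_2007} but now with signs. For (de)stabilization I would split $C^-(\gridg')$ as a mapping cone using the two idempotent pieces coming from the new marking, produce the one-sided homotopy equivalence onto $C^-(\gridg)$ by counting empty rectangles and pentagons, and again read off the signs from a sign assignment on the stabilized diagram. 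The only genuinely new bookkeeping relative to \cite{Manolescu_2007} is that the toroidal geometry allows polygons wrapping around the torus, but this is already handled in the $\Ztwo$ proof and in \cite{celoria2015note}; the sign assignment is defined uniformly on all embedded parallelograms, so it extends to the polygons counted by the homotopies.

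The main obstacle I expect is verifying the sign-refined identities — that the candidate maps really are chain maps and that the candidate homotopies really witness $\Phi'\circ\Phi \simeq \Id$ with the correct signs. Over $\Ztwo$ these come down to classifying the boundary degenerations of $1$-parameter families of polygons (a domain decomposing as two polygons in exactly two ways, or as a polygon and a rectangle); with signs one must check that in each such pair the two terms carry opposite signs, which is precisely the defining property of a sign assignment (the ``square'' relations) — so the content is to match the local pictures arising here against the relations Celoria imposes. I would organize this by listing the domain types that occur for each move, invoking the square/commutation relations of the sign assignment for each, and deferring the torus-wrapping cases to the observation that they are local near the polygon and hence governed by the same relations. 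Once invariance under each elementary move is established with signs, composing the moves connecting $\gridg$ to $\gkprime$ yields the desired combinatorially defined quasi-isomorphism over $\Z[U_1,\ldots,U_\ell]$, and since a chain homotopy equivalence of complexes of free modules is in particular a quasi-isomorphism, the theorem follows.
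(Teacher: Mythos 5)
Your overall strategy --- reduce to the elementary moves of Theorem~\ref{thm:cromwell} and sign-refine the $\Ztwo$ homotopy equivalences of Section~\ref{sec:invariance} move by move --- is exactly what the paper does, but your description of the sign bookkeeping glosses over the step where the genuinely new work lies. A small factual slip first: the commutation chain map $\Phi_{\beta\gamma}$ counts empty \emph{pentagons} (not triangles) in the combined diagram, and the homotopy $\Hbgb$ counts hexagons. The more serious issue is that Celoria's sign assignment lives on parallelograms of a single two-curve grid; it does not simply ``extend'' or ``restrict'' to give signs on the pentagons and hexagons appearing in a combined $(\alphas,\betas\cup\betas')$-diagram. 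The paper's construction glues a triangle to each pentagon $s$ to obtain a parallelogram $R(s)$, and then --- crucially --- multiplies by a Maslov-grading-dependent factor, setting $\saS(s)=(-1)^{\maslov(\x)+\varepsilon}\saS(R(s))$ with $\varepsilon\in\{0,1\}$ determined by whether $s$ is a left or right pentagon. This Maslov correction is \emph{not} a consequence of the properties (S1)--(S3) that you propose to invoke; without it $\Phi_{\beta\gamma}$ fails to be a chain map. In the proof of Lemma~\ref{lem:phibetagammasignedchainmap}, after converting each decomposition of a domain into a juxtaposition of two parallelograms, (S1) supplies one minus sign, but the initial generators of the two pentagons involved have Maslov gradings differing by one, and only the extra $(-1)^{\maslov(\x)}$ factor reconciles the two sides of the chain-map identity.

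The stabilization argument carries analogous, unanticipated signs: the mapping cone must be taken with the differential on $\tbI$ replaced by $-\dii$; the isomorphism $e_{\Z}$ onto $C[V_n]\gop 1,1,0\gcl$ carries a factor $(-1)^{\maslov(\x)}$; and the homotopy inverse to $\phixnisigned$ is $-\phionsigned$ rather than $\phionsigned$. You correctly flag ``verifying the sign-refined identities'' as the main obstacle, but your proposed mechanism --- matching local pictures against the square relations --- is not sufficient on its own. Discovering and inserting these Maslov-graded sign corrections, then re-checking each case of the $\Ztwo$ domain analysis with them in hand, is precisely the content of Section~\ref{sec:integralinvariance} that your plan would still need to supply.
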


Let $\gridg$ be a twisted toroidal grid diagram associated to an $\ell$-component link $L$ in $\lpq$. Let $\CFKm(L,\lpq;\Z)$ be the quasi-isomorphism type of $(C^-(\gridg),\dxminus;\Z)$ viewed as a chain complex over $\Z[U_1,\ldots,U_\ell]$. Define the \emph{grid homology $\HFKm(L,\lpq;\Z)$ of $L$} to be the homology of $(C^-(\gridg),\dxminus;\Z)$. 

\begin{cor}\label{cor:invariant}
The quasi-isomorphism type $\CFKm(L,\lpq;\Z)$ and the $\Z[U_1,\ldots,U_\ell]$ module $\HFKm(L,\lpq;\Z)$ are invariants of the link $L$. 
\end{cor}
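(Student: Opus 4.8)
The plan is to deduce the corollary formally from Theorem~\ref{thm:mainresultz} together with the existence of twisted toroidal grid diagrams established in \cite{Baker_2008}. First I would recall from \cite{Baker_2008} that every $\ell$-component link $L$ in $\lpq$ is represented by at least one twisted toroidal grid diagram $\gridg$, so that the chain complex $(C^-(\gridg),\dxminus;\Z)$ over $\Z[U_1,\ldots,U_\ell]$ and its homology are genuinely defined; this makes $\CFKm(L,\lpq;\Z)$ and $\HFKm(L,\lpq;\Z)$ meaningful objects to begin with.

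The heart of the argument is already carried out in Theorem~\ref{thm:mainresultz}: any two twisted toroidal grid diagrams $\gridg$ and $\gkprime$ for the same link $L$ yield complexes that are quasi-isomorphic over $\Z[U_1,\ldots,U_\ell]$, and in fact connected by an explicit, combinatorially defined quasi-isomorphism. I would then simply observe that being quasi-isomorphic is an equivalence relation on chain complexes of $\Z[U_1,\ldots,U_\ell]$-modules (reflexivity via the identity, with Theorem~\ref{thm:mainresultz} supplying the comparisons directly), so all twisted toroidal grid diagrams representing $L$ fall into a single quasi-isomorphism class. Hence $\CFKm(L,\lpq;\Z)$ depends only on $L$ and not on the chosen diagram, which is the first assertion.

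For the module statement, I would note that a quasi-isomorphism of chain complexes of $\Z[U_1,\ldots,U_\ell]$-modules induces an isomorphism on homology, and that this isomorphism is automatically $\Z[U_1,\ldots,U_\ell]$-linear since the chain map is a map of modules and the homology modules inherit their structure from the complexes. Applying this to the quasi-isomorphism furnished by Theorem~\ref{thm:mainresultz} gives an isomorphism between the homologies computed from $\gridg$ and from $\gkprime$ as $\Z[U_1,\ldots,U_\ell]$-modules, so $\HFKm(L,\lpq;\Z)$ is a well-defined link invariant.

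There is essentially no obstacle here beyond bookkeeping: the entire content lives in Theorem~\ref{thm:mainresultz}, and the corollary is a formal consequence of it. The only point I would take care to spell out is that the Maslov and Alexander gradings on $C^-(\gridg)$ are preserved, up to the standard normalizations, by the quasi-isomorphisms constructed in the proof of Theorem~\ref{thm:mainresultz}, so that the conclusion upgrades to an invariance statement for the bigraded quasi-isomorphism type and the bigraded $\Z[U_1,\ldots,U_\ell]$-module.
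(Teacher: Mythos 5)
Your proposal is correct and follows essentially the same route as the paper: both deduce the corollary as a formal consequence of Theorem~\ref{thm:mainresultz}, using that any two grid diagrams for $L$ give quasi-isomorphic complexes and that a quasi-isomorphism of $\Z[U_1,\ldots,U_\ell]$-module complexes induces a $\Z[U_1,\ldots,U_\ell]$-linear isomorphism on homology. Your added remarks on existence of grid diagrams and on grading preservation are extra care the paper leaves implicit, but do not change the argument.
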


We note that Theorem~\ref{thm:mainresultz} did not follow from an identification with the holomorphic theory. While there is an extension of the holomorphic theory for links to integral coefficients, it is not known if the two definitions agree. For example, it is not understood how the local orientation systems necessary to define the holomorphic theory with integral coefficients change under combinatorial moves.

We also mention that from these invariants, one could extract the simpler $\HFKt$ and $\HFKh$ invariants, by collapsing some of the variables.

There remains an interesting open question of whether there can be torsion found in these integral coefficient grid homology groups. No examples have been produced yet of links in $S^3$ or lens spaces which have torsion in these groups, but that is not expected to hold in general \cite{gridhomology}. 
For knots in $S^3$, another complex homotopy equivalent to grid homology, but more computationally feasible, was described in \cite{beliakova2010simplification} and implemented in \cite{droz2008effective}. An adaptation of this approach to links in lens spaces would allow for greater computation, and is one possible direction of future inquiry, as more computational evidence would help investigate the potential presence of torsion.

\subsection*{Organization} In Section~\ref{sec:griddiagrams} we recall the definition and some properties of twisted toroidal grid diagrams. In Section~\ref{sec:gridhomology}, we recall the definition of grid homology introduced in \cite{Baker_2008} for links in lens spaces. We then take a detour in Section~\ref{sec:gradings} to introduce the gradings on grid homology, and provide some combinatorial proofs of their properties. 

In Section~\ref{sec:invariance} we prove directly that the grid homology of a lens space link is well-defined, using some results about the combinatorics of twisted toroidal grid diagrams. Section~\ref{sec:signassignments} introduces the definition of sign assignments of \cite{celoria2015note}, which is used in Section~\ref{sec:integralinvariance} to define the grid homology of links in lens spaces with integral coefficients, and prove that the quasi-isomorphism type of this is well-defined. 

\subsection*{Acknowledgements}
The author thanks Ina Petkova for suggesting the project, and both Ina Petkova and C.-M. Michael Wong for comments on the draft of this paper.


\section{Twisted toroidal grid diagrams}\label{sec:griddiagrams}
In this section we recall the definition of twisted toroidal grid diagrams introduced in \cite{Baker_2008}. The combinatorics of these diagrams capture the topology of links in lens spaces. We will assume throughout that $p$ and $q$ are relatively prime integers with $p$ positive and $-p<q<p$.

\begin{defn}[{\cite[Definition 2.1]{Baker_2008}}]
A \emph{(twisted toroidal) grid diagram} $\gridg$ with grid number $n$ for $L(p,q)$ consists of a five-tuple $\gridg=(T^2,\alphas,\betas,\XX,\OO)$, where:

\begin{itemize}
    \item $T^2$ is the standard oriented torus $\R^2/\Z^2$, identified with the quotient of $\R^2$ (with its standard orientation) by the $\Z^2$ lattice generated by the vectors $(1,0)$ and $(0,1)$. 
    \item $\alphas=\{\alpha_0,\ldots,\alpha_{n-1}\}$ are the $n$ images $\alpha_i$ in $T^2$ of the lines $y=\frac{i}{n}$ for $i\in\{0,\ldots,n-1\}$. Their complement $T^2-\alpha_0-\ldots-\alpha_{n-1}$ has $n$ connected annular components, which we call the \emph{rows} of the diagram. 
    \item $\betas=\{\beta_0,\ldots,\beta_{n-1}\}$ are the $n$ images $\beta_i$ in $T^2$ of the lines $y=-\frac{p}{q}(x-\frac{i}{pn})$ for $i\in\{0,\ldots,n-1\}$. Their complement $T^2-\beta_0-\ldots-\beta_{n-1}$ has $n$ connected annular components, which we call the \emph{columns} of the diagram. 
    \item $\OO=\{O_0,\ldots,O_{n-1}\}$ are $n$ points in $T^2-\alphas-\betas$ with the property that no two $O$'s lie in the same row or column. 
    \item $\XX=\{X_0,\ldots,X_{n-1}\}$ are $n$ points in $T^2-\alphas-\betas$ with the property that no two $X$'s lie in the same row or column. 
\end{itemize}
\end{defn}

Associated to such a twisted toroidal grid diagram, there is a unique isotopy class of link $L$ in $L(p,q)$. First, note that $(T^2,\alphas,\betas,\XX)$ is a multi-pointed Heegaard diagram for $\lpq$. The lens space $\lpq$ is comprised of two solid tori $V_{\alpha}$ and $V_{\beta}$, glued along their common boundary $T^2$. We view $V_\alpha$ as laying below $T^2$, and $V_\beta$ as laying above it. The curves $\alphas$ are meridians of $V_{\alpha}$, and $\betas$ are meridians of $V_{\beta}$. A link $L$ in $\lpq$ can then be gotten from a grid diagram $G$ by connecting each marking $X_i$ to the unique marking $O_j$ in the same row as it via an oriented embedded arc in $T^2$ disjoint from $\alphas$. Next, each marking $O_j$ is connected to the unique marking $X_k$ in the same column as it via an oriented embedded arc in $T^2$ disjoint from $\betas$. Pushing the interiors of the arcs disjoint from $\alphas$ down into $V_{\alpha}$ and the interiors of the arcs disjoint from $\betas$ up into $V_{\beta}$, we get an oriented link $L$. As a result, we occasionally denote a grid diagram $\gridg$ by $\gridgl$, to remind ourselves of the link associated to the grid diagram. 

It is also true that for any link $L$ in a lens space, there is a grid diagram which represents it (see~{\cite[Proposition 4.3]{Baker_2008}}). 

Furthermore, \cite{baker2008grid} makes explicit the sense in which the combinatorics of twisted toroidal grid diagrams capture the topology of links in lens spaces: 

\begin{thm}[{\cite[Theorem 5.1]{baker2008grid}}]\label{thm:cromwell}
Let $\gridg$ and $\gkprime$ be grid diagrams representing smooth links $L$ and $L'$ in $L(p,q)$. Then $L$ and $L'$ are smoothly isotopic if and only if $\gridg$ and $\gkprime$ are related by a sequence of grid moves: stabilizations, destabilizations, and commutations. 
\end{thm}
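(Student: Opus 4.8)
The plan is to prove the two implications separately. The ``if'' direction, that a grid move does not change the smooth isotopy class of the associated link, is a routine topological verification; the ``only if'' direction, that isotopic links have grid diagrams related by grid moves, carries the real content and is proved by a general-position argument in the spirit of Cromwell's theorem on arc presentations.

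For the ``if'' direction I would treat the three moves one at a time. A commutation interchanges two adjacent rows or columns; geometrically it is realized either by an ambient isotopy of $T^2$ carrying the arcs that define $L$ onto those defining $L'$, or by sliding one strand of $L$ past another inside the complementary solid torus, so in both cases $L$ is unchanged up to isotopy in $\lpq$. A stabilization replaces a single marking by a $2\times 2$ block carrying three markings and inserts one new $\alpha$-curve and one new $\beta$-curve; unwinding the construction of $L$ from the markings, it merely exchanges a short embedded sub-arc of $L$ for another embedded arc with the same endpoints lying in the same solid torus, and is therefore an isotopy. Destabilization is the inverse move.

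For the ``only if'' direction I would argue as follows. First, by a general-position argument (this recovers \cite[Proposition 4.3]{Baker_2008}) any link in $\lpq$ can be isotoped into \emph{grid position} with respect to the genus-one Heegaard splitting $\lpq=V_\alpha\cup_{T^2}V_\beta$: after making $L$ transverse to $T^2$ and to the two systems of meridian disks and straightening, $L$ meets $T^2$ in $2n$ points, consists of $n$ arcs in $V_\alpha$ projecting to sub-arcs of $\alpha$-curves and $n$ arcs in $V_\beta$ projecting to sub-arcs of $\beta$-curves, and has endpoints at the $\XX$ and $\OO$ markings; one checks that the grid diagram is determined by such a position up to commutations and (de)stabilizations. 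Then, fixing a generic isotopy $L_t$ from $L$ to $L'$ and keeping $L_t$ in grid position for all but finitely many $t$, at the exceptional times the combinatorial type changes by one of a short list of elementary moves: (a) an arc sweeping across a meridian disk, giving no change or a commutation; (b) two arcs exchanging cyclic order within a row or column, a commutation; (c) a small arc being born or dying against $T^2$, a stabilization or destabilization after refining the grid by extra stabilizations; and (d) a strand of $L$ sweeping through one of the core circles of $V_\alpha$ or $V_\beta$. One then checks case by case that each elementary move, together with whatever grid refinements it requires, is a composition of stabilizations, destabilizations, and commutations, and finally inserts enough stabilizations on both sides so that the two diagrams literally coincide.

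The main obstacle is case (d): the moves in which $L$ crosses a core circle of one of the solid tori have no analogue for links in $S^3$, encode precisely the lens-space topology, and must be analyzed directly and shown to be expressible through grid moves. A secondary difficulty is the bookkeeping needed to keep the grid numbers compatible throughout the isotopy, together with the general-position step itself, which must be carried out with respect to the $\alpha$- and $\beta$-foliations of $T^2$ at the same time even though a generic isotopy of $L$ respects neither foliation.
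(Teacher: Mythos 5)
This theorem is not proved in the paper at hand; it is stated as a citation of Baker and Grigsby's result (their Theorem~5.1), so there is no internal argument to compare your proposal against. Taken on its own terms, your sketch is a faithful outline of the Cromwell-style general-position argument that establishes the result: the easy direction is the routine check that each grid move is realized by an ambient isotopy in $\lpq$, and the hard direction proceeds by putting a generic isotopy $L_t$ into grid position relative to the Heegaard torus for all but finitely many $t$ and then factoring the finitely many degenerate moments through grid moves. You also correctly single out the genuinely new lens-space difficulty, namely strands of the isotopy passing through the core circles of $V_\alpha$ or $V_\beta$, which has no analogue in $S^3$.

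However, the proposal stops short of being a proof. Everything after the phrase ``one checks'' is an assertion rather than an argument: you do not establish that a generic isotopy can be kept simultaneously compatible with both the $\alpha$- and $\beta$-foliations of $T^2$ (you flag this yourself as a ``secondary difficulty'' but do not address it); you do not verify that each of the elementary moves in your cases (a)--(c) in fact factors through stabilizations, destabilizations, and commutations once the requisite grid refinements are inserted; and you explicitly leave case (d) -- the one you correctly identify as the crux -- unresolved. Nor do you show that a link in grid position determines its grid diagram up to grid moves, which is needed at the start of the ``only if'' direction. So this is a correct roadmap consistent with the cited reference, but the substantive case analysis that would turn it into a proof remains to be supplied.
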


We will describe these moves in Section~\ref{sec:invariance}. 

We can choose any number of fundamental domains for $T^2$. 
In general, we will look at one specific class of fundamental domains. Throughout the rest of this paper, whenever we fix a fundamental domain, it is assumed to be one of this form. 
Each such is a choice of a subset of $\R^2$ bounded vertically between $y=\frac{k}{n}$ and $y=\frac{k}{n}+1$, with those two edges identified, for some $k\in\Z$, and horizontally between $y=-\frac{p}{q}(x-\frac{l}{pn})$ and $y=-\frac{p}{q}(x-\frac{l}{pn}-1)$, with those two edges identified, for some $l\in\Z$. 
Choosing a fundamental domain for $T^2$ allows us to order the $\alphas$ bottom-to-top and the $\betas$ left-to-right. In a fundamental domain, each $\alpha_i$ and $\beta_j$ intersect $p$ times. Other than in the definition of a twisted toroidal grid diagram, when we refer to an indexed curve $\alpha_i\in\alphas$ or $\beta_j\in\betas$, this index is with respect to the ordering induced by choice of fundamental domain. 

In depicting these fundamental domains, we will often use a different model, which we will call the \emph{straight model}. This model is bounded by $y=0$, $x=0$, $y=1$, and $x=1$. The left and right boundary are identified directly, and the top and the bottom are identified with a twist: $(x,0)\sim(x+p/q\pmod{1}, 1)$. Morally, this model is just straightening out the previously described fundamental domain, while remembering the underlying twisting. 

Figure~\ref{fig:fundomain} depicts an example of a choice of fundamental domain, as well as the straight model for it. 

\begin{figure}[h]
    \centering
    \begin{tikzpicture}
    \node at (0,0) {\includegraphics[height=6cm]{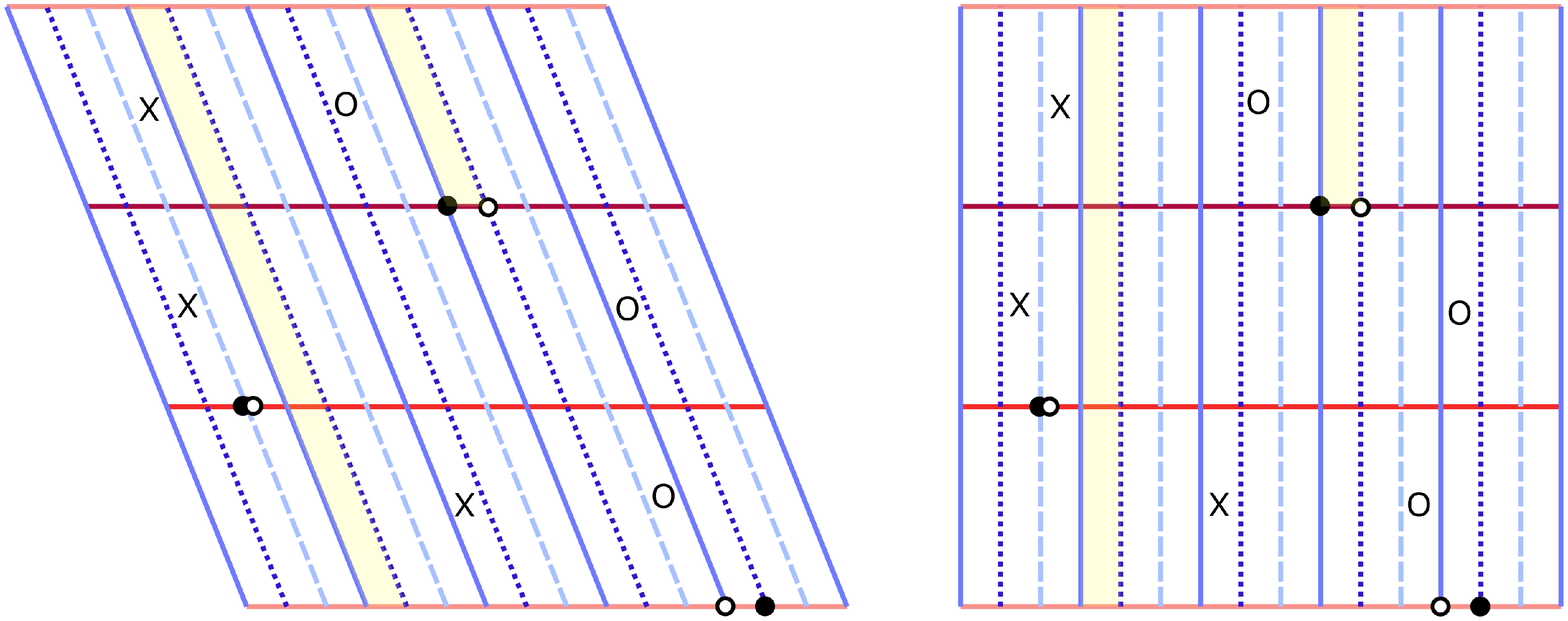}};
    \node at (-6.3,-1) {$\alpha_1$};
    \node at (-7.1,1) {$\alpha_2$};
    \node at (-5.55,-2.85) {$\alpha_0$};
    \node at (-5.2,-3.12) { $\beta_0$};
    \node at (-4.8,-3.12) { $\beta_1$};
    \node at (-4.4,-3.12) { $\beta_2$};
    \node at (1.8,-3.12) { $\beta_0$};
    \node at (2.2,-3.12) { $\beta_1$};
    \node at (2.6,-3.12) { $\beta_2$};
    \node at (1.45,-1) {$\alpha_1$};
    \node at (1.45,1) {$\alpha_2$};
    \node at (1.45,-2.85) {$\alpha_0$};
    \end{tikzpicture}
    \caption{A fundamental domain for $\text{L}(5,2)$ with grid number 3, and the corresponding straight model.}
    \label{fig:fundomain}
\end{figure}

We also note that without loss of generality, we assume the \emph{markings}, the points $X_i\in\XX$ and $O_i\in\OO$, lie in the center of the region of $T^2-\alphas-\betas$ which they occupy.

\section{Grid homology}\label{sec:gridhomology}
In \cite{Baker_2008}, to a grid diagram $\gridg$ the authors associate a module $C^-(\gridg)$ and an endomorphism $\dxminus$ on $C^-(\gridg)$. They show that $(C^-(\gridg),\dxminus)$ is quasi-isomorphic to the knot Floer complex of \cite{Ozsvath_2008}. This equivalence is powerful: it proves that the endomorphism is a differential, and that the homology with respect to the differential is an invariant of the link. In \cite{celoria2015note}, the author provides a combinatorial proof that the endomorphism is a differential. 

After recalling the definition of the chain complex associated to a grid diagram, we will recreate the combinatorial proof from \cite{celoria2015note} that it is indeed a chain complex, as the case analysis will prove fruitful in later proofs. A combinatorial proof of the fact that the homology is a link invariant is the content of Section~\ref{sec:invariance}. 

\subsection{Preliminaries}

Let $L$ be a link with $\ell$ components in $L(p,q)$, and let $\gridg$ be a grid diagram of size $n$ for $L$. Define $\genG$ to be the set of unordered $n$-tuples of intersection points in $\alphas\cap\betas$ with exactly one point on each curve in $\alphas$ and exactly one point on each curve in $\betas$. Let $\x\in\genG$. Given a fundamental domain for $T^2$, and thus an ordering of $\alphas$ and $\betas$, $\x$ corresponds to a bijection between $\alphas$ and $\betas$, which can be thought of as an element $\sigma_{\x}\in S_n$; namely, each component of $\x$ is in $\alpha_i\cap \beta_{\sigma_{\x}(i)}$ for some $0\le i<n$. Denote by $x_i$ the component of $\x$ in the intersection of $\alpha_i$ and $\beta_{\sigma_{\x}(i)}$. 

The choice of a fundamental domain also gives an ordering of the $p$ points of $\alpha_i\cap\beta_{\sigma_{\x}(i)}$ from left-to-right for each $0\le i <n$. To each component $x_i$ of $\x$ we can assign an element of $\Zp$ recording on which intersection point of $\alpha_i$ and $\beta_{\sigma_{\x}(i)}$ the component lies. 

Thus, a choice of a fundamental domain for $T^2$ gives us a bijection between $\genG$ and $S_n\times(\Zp)^n$. For a generator $\x\in\genG$ and a choice of a fundamental domain, we can then write $\x$ as a pair $(\sigma_{\x},(a^{\x}_0,\ldots,a^{\x}_{n-1}))$. We call the first component the \emph{permutation associated to $\x$}, and the second component the \emph{$p$-coordinates of $\x$}. As an example, the generator given by darkened circles in Figure~\ref{fig:fundomain} has permutation $\sigma_{\x}=  \bigl(\begin{smallmatrix}
    0 & 1 & 2 \\
    1 & 2 & 0
  \end{smallmatrix}\bigr)$, and $p$-coordinates $(4,0,3)$. 

\begin{defn}
Fix $\x,\y\in\genG$. A \emph{parallelogram $r$ from $\x$ to $\y$} is a disk embedded in $T^2$ along with the data of the generators $\x$ and $\y$ such that \begin{itemize}
    \item the boundary of $r$ (which is oriented) lies on $\alphas\cup\betas$,  
    \item $\mid\x\setminus\y\mid=2=\mid\y\setminus\x\mid$, and 
    \item $\partial(\partial_{\alpha} (r))=\y-\x$ and $\partial(\partial_{\beta}(r))=\x-\y$, where $\partial_{\alpha}(r)=\partial(r)\cap\alphas$ and $\partial_{\beta}(r)=\partial(r)\cap\betas$. 
\end{itemize}
\end{defn}

There is also the more general idea of a domain, a generalization of the above concept. 

\begin{defn}[{\cite[Definition 4.6.4]{gridhomology}}]\label{def:domain1}
Fix $\x,\y\in\genG$. A \emph{domain $\phi$ from $\x$ to $\y$} is a formal linear combination of the closure of regions in $\gridg\setminus(\alphas\cup\betas)$ such that $\partial(\dalpha\phi)=\y-\x$ and $\partial(\dbeta\phi)=\x-\y$, along with the data of the generators $\x$ and $\y$. We will call $\x$ the \emph{initial generator} for the domain, and $\y$ the \emph{terminal generator}. The \emph{support} of a domain $\phi$ from $\x$ to $\y$ is the underlying linear combination. Thus, two domains can have the same support without being equal. 
\end{defn}

Given a domain $\phi_1$ from $\x$ to $\y$ and a domain $\phi_2$ from $\y$ to $\z$, we can form the \emph{juxtaposition} $\phi_1*\phi_2$, which is a domain as follows. The underlying formal linear combination of $\phi_1*\phi_2$ is the sum of those underlying $\phi_1$ and $\phi_2$, and it has as initial generator $\x$ and terminal generator $\z$. In general, the domains we consider will be juxtapositions of two specified types of domains, such as parallelograms or other polygons we will define later. 

Let $\PG(\x,\y)$ be the space of parallelograms from $\x$ to $\y$. Note that $\PG(\x,\y)$ is the empty set unless $\x$ and $\y$ agree at $n-2$ points. Call a parallelogram \emph{empty} if $\Int(r)\cap \x=\emptyset=\Int(r)\cap\y$. Let $\ePG(\x,\y)$ be the space of empty parallelograms from $\x$ to $\y$. Further, let $\Int(\phi)$ denote the interior of the support of $\phi$. Finally, let $\PG(\gridg):=\bigcup_{\x,\y\in\genG}\PG(\x,\y)$, and $\ePG(\gridg):=\bigcup_{\x,\y\in\genG}\ePG(\x,\y)$. 

\begin{remark} \label{rem:parallelogramtransposition}
Note that if an embedded parallelogram $r$ connects $\x$ to $\y$, then $\sigma_{\y}=\tau_{i,j}\sigma_{\x}$, where $\tau_{i,j}$ is the transposition $(i\; j)\in S_n$. Furthermore, we have that $a^{\x}_s=a^{\y}_s$ for all $s\in\{0,\ldots,n-1\}\setminus\{i, j\}$, and $a^{\x}_i-a^{\y}_i=a^{\y}_j-a^{\x}_j=k$ for some $|k|<n$. 
\end{remark}

For an embedded parallelogram $r\in\ePG(\x,\y)$, denote the above transposition by $\tau_r$.

An example of a parallelogram $r$ can be seen in Figure~\ref{fig:fundomain}, connecting the generator given by darkened circles to the generator given by empty circles. In this case, we see $\tau_r=(0\;1)$. 

\subsection{Aside on twisting}\label{sec:twisting}
Before we provide the definition of grid homology, we take a minute to discuss the impact of the twisting of the grid diagrams on the domains we will be discussing. In the case of grid diagrams for $S^3$, it is not possible for an empty rectangle to intersect a curve $\alpha\in\alphas$ multiple times; the rectangle either intersects $\alpha$ in a single segment or not at all. As a result, we can always choose a fundamental domain such that any rectangle embedded in $T^2$ can be embedded in the fundamental domain considered as a subset of $\R^2$. 

This is no longer true for twisted toroidal grid diagrams for links in lens spaces: there are parallelograms which intersect a given curve $\alpha\in\alphas$ in multiple segments. There is no choice of fundamental domain in which they will be an embedded disk, considered as a subset of $\R^2$. We refer to such parallelograms as \emph{wrapping} around the domain, in reference to the fact that their support on any fundamental domain, considered as a subset of $\R^2$, is disconnected.

Figure~\ref{fig:wrappingexplained} provides some examples of parallelograms on the twisted toroidal grid diagrams. We see that there are many options for the parallelogram extending vertically: it can wrap any number of times around the diagram (less than $p$). Later on, when analyzing the combinatorics of decompositions of domains, we will often not display this potential complexity. We will draw domains as embedded, wrapped, or with dashed lines to acknowledge potential wrapping. Regardless of how the domain is drawn, it will stand in for all domains with the same combinatorics governing its decompositions, which will be independent of the number of times the domain wraps around the diagram. 

\begin{figure}[ht]
    \centering
    \includegraphics[height=4cm]{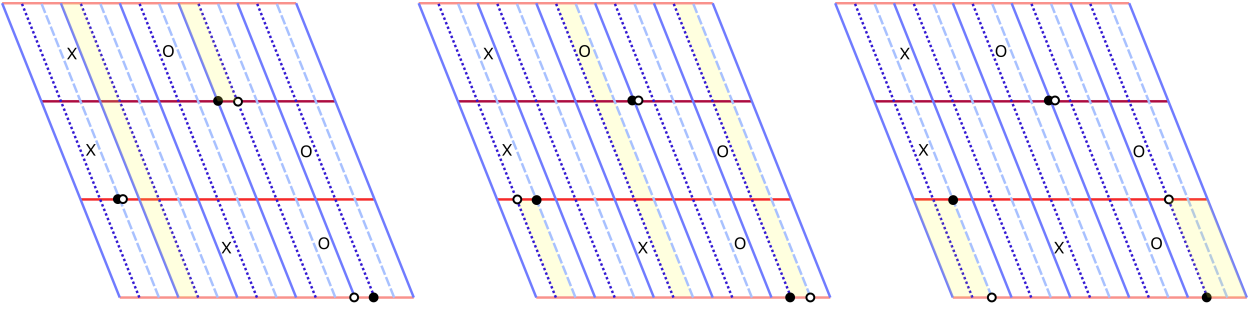}
    \caption{Examples of parallelograms on fundamental domains. The first and second examples wrap around the diagram one and two times respectively. The third example does not wrap around the diagram, as there are many examples of fundamental domains on which the parallelogram would be embedded (as a subset of $\R^2$).  }
    \label{fig:wrappingexplained}
\end{figure}

\subsection{Definition of grid homology}

Let $C^-(\gridg)$ be the free $\F[V_0,\ldots,V_{n-1}]$ module generated by $\genG$. Define an endomorphism of $C^-(\gridg)$ on generators $\x\in\genG$ by:$$
\dxminus(\x)=\sum_{y\in\genG}\sum_{\substack{r\in\ePG(\x,\y)\\r\cap\XX=\emptyset}}(\prod_{i=0}^{n-1}V_i^{n_{O_i}(r)})\y,
$$
where $n_{O_i}(r)=\#(O_i\cap\Int(r))$. Extend this map to $C^-(\gridg)$ linearly; note then that $\dxminus(V_i\x)=V_i\dxminus(\x)$. 

\begin{prop}[{\cite[Proposition 2.2]{Baker_2008}}]\label{prop:cminusiscfk}
$(C^-(\gridg),\dxminus)$ is isomorphic to a chain complex which computes the knot Floer homology $(CF^-(\lpq,K),\dxminus)$. 
\end{prop}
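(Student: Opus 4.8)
The plan is to recognize a twisted toroidal grid diagram as a particular multi-pointed Heegaard diagram for the pair $(\lpq, K)$ (here $K$ is the link $L$), and then to invoke the Sarkar--Wang theory of nice diagrams \cite{Sarkar2006AnAF} to reduce the holomorphic differential on $CF^-$ to a count of embedded parallelograms, recovering the formula for $\dxminus$ verbatim.

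First I would record, as already observed in Section~\ref{sec:griddiagrams}, that $(T^2, \alphas, \betas, \XX, \OO)$ is a multi-pointed Heegaard diagram for $(\lpq, K)$: the $\alphas$ are meridians of $V_\alpha$, the $\betas$ are meridians of $V_\beta$, and the $\XX$ and $\OO$ markings encode the oriented link. Since $\lpq$ is a rational homology sphere, the knot/link Floer complex $CF^-(\lpq, K)$ is defined from this data once the diagram is admissible; admissibility is standard here because the diagram carries a full complement of $\OO$- and $\XX$-basepoints, so every nontrivial periodic domain has both positive and negative local multiplicities.

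Next I would verify that the diagram is \emph{nice}: every connected component of $T^2 \setminus (\alphas \cup \betas)$ disjoint from $\XX \cup \OO$ is a topological disk with exactly four corners. This is immediate from the fact that $\alphas$ is a family of $n$ disjoint closed curves of one slope and $\betas$ is a family of $n$ disjoint closed curves of a different slope, so each complementary region is bounded by two $\alpha$-arcs and two $\beta$-arcs and there are no bigons. By the nice-diagram theorem, the differential on $CF^-$ therefore counts (mod $2$, each with multiplicity $1$) the empty embedded rectangles avoiding $\XX$, which in this model are exactly the elements of $\ePG(\gridg)$ with $r \cap \XX = \emptyset$ --- including the wrapping parallelograms discussed in Section~\ref{sec:twisting}. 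The exponent of the variable $V_i$ (the $U$-power attached to the basepoint $O_i$) is then the local multiplicity $n_{O_i}(r)$, and the Maslov and Alexander gradings agree with their combinatorial counterparts by the standard index computation for an embedded parallelogram; assembling these identifications yields an isomorphism of $\F[V_0,\dots,V_{n-1}]$-modules intertwining $\dxminus$ with the holomorphic differential.

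The main obstacle is the careful bookkeeping needed to apply the nice-diagram reduction in the twisted toroidal setting: one must check that the count attached to a \emph{wrapping} embedded parallelogram in $\mathrm{Sym}^n(T^2)$ is still exactly one, confirm that the diagram meets the admissibility hypotheses for $CF^-$, and track the decomposition over $\spinc$ structures together with the identification of the $n$ basepoint variables with the $U$-action on $CF^-(\lpq,K)$.
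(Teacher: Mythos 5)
The paper does not prove this statement itself but cites \cite[Proposition 2.2]{Baker_2008}; your proposal correctly reconstructs that reference's argument, which identifies the twisted toroidal grid as a nice multi-pointed Heegaard diagram for $(\lpq,L)$ and applies the Sarkar--Wang reduction so that the holomorphic differential on $CF^-$ becomes the count of empty embedded parallelograms disjoint from $\XX$, with $V_i$-exponents recording multiplicities at $O_i$. The bookkeeping concerns you flag at the end (wrapping parallelograms still being embedded disks in $T^2$, admissibility, the identification of basepoint variables with the $U$-action) are handled in the cited source in exactly the way you sketch.
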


\begin{cor}
The endomorphism $\dxminus$ is a differential on $C^-(\gridg)$. 
\end{cor}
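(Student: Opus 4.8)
The statement is an immediate consequence of Proposition~\ref{prop:cminusiscfk}: since $(C^-(\gridg),\dxminus)$ is isomorphic to a chain complex, the endomorphism $\dxminus$ satisfies $\dxminus\circ\dxminus=0$ and so is a differential. For the sake of the rest of the paper, though, I will instead give the combinatorial argument of \cite{celoria2015note}, since the case analysis it produces is exactly what gets reused in the invariance proofs of Section~\ref{sec:invariance}.

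The plan is to expand
\[
\dxminus\circ\dxminus(\x)=\sum_{\z\in\genG}\ \sum_{\y\in\genG}\ \sum_{\substack{r_1\in\ePG(\x,\y),\,r_2\in\ePG(\y,\z)\\ r_1\cap\XX=\emptyset=r_2\cap\XX}}\Bigl(\prod_{i=0}^{n-1}V_i^{n_{O_i}(r_1)+n_{O_i}(r_2)}\Bigr)\z
\]
and show that, over $\F=\Ztwo$, the terms cancel in pairs. The point is that the monomial attached to $\z$ depends on the pair $(r_1,r_2)$ only through the juxtaposed domain $\phi=r_1*r_2$ from $\x$ to $\z$ --- namely through its multiplicities $n_{O_i}(\phi)=n_{O_i}(r_1)+n_{O_i}(r_2)$ and its disjointness from $\XX$ --- so it is enough to prove: for every domain $\phi$ arising as such a juxtaposition, the number of ordered pairs $(r_1,r_2)$ of $\XX$-avoiding empty parallelograms with $r_1*r_2=\phi$ is even, in fact always exactly $0$ or $2$.

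To establish this I would classify the possible $\phi$ by the relation between $\sigma_{\x}$ and $\sigma_{\z}$, paralleling \cite{Manolescu_2007}: either $\sigma_{\x}=\sigma_{\z}$, in which case $\tau_{r_1}=\tau_{r_2}$ and $\phi$ is a ``thin'' region obtained by overlapping two parallelograms carrying the same transposition, cut into two empty pieces in exactly two ways; or $\sigma_{\z}=\tau\sigma_{\x}$ for a transposition $\tau$ shared with one of the $\tau_{r_k}$, in which case $r_1$ and $r_2$ share a single moved index and $\phi$ is an L-shaped region with exactly two decompositions; or $\sigma_{\z}$ differs from $\sigma_{\x}$ by two disjoint transpositions, so $r_1$ and $r_2$ have disjoint support and may be composed in either order; or $\sigma_{\z}=\sigma_{\x}$ composed with a $3$-cycle, so $r_1$ and $r_2$ overlap in one moved index, again giving exactly two cuts. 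In each case one checks, using Remark~\ref{rem:parallelogramtransposition}, that the local picture at the two relevant corners of $\phi$ pins down the two decompositions, that both decompositions are simultaneously empty, and that both simultaneously avoid or meet $\XX$ (since $n_{O_i}$ and the $\XX$-condition only see the support of $\phi$), so the two contributions are equal and cancel over $\Ztwo$.

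The main obstacle --- and the only place the argument genuinely departs from the $S^3$ case --- is the wrapping phenomenon of Section~\ref{sec:twisting}: a vertical parallelogram may wind any number of times $<p$ around the torus, so $\phi$ need not be embedded in any fundamental domain, and one must make sure the above enumeration of decompositions is not sensitive to the winding number. I would dispatch this by noting that the data governing the decompositions of $\phi$ --- which $\alphas$- and $\betas$-arcs bound which edges, which components of $\x$ and $\z$ sit at the corners, and the $O$- and $X$-multiplicities --- are all invariant under changing the winding number, so the case analysis can be carried out once on a schematic picture (drawn embedded, wrapped, or with dashed edges) and then applies uniformly to every domain with that schematic. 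With this reduction the pairing of decompositions proceeds as in the untwisted setting, yielding $\dxminus\circ\dxminus=0$.
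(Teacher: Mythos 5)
Your opening observation---that the corollary is immediate from Proposition~\ref{prop:cminusiscfk}, since a map isomorphic to a differential squares to zero---is precisely how the paper obtains it, and that part of your proposal is correct and complete.

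The combinatorial argument you go on to sketch is really the content of Proposition~\ref{prop:d2=0} rather than of the corollary itself. Your overall strategy there (reduce to counting $\XX$-avoiding decompositions of a juxtaposed domain, classify by combinatorial type, show the winding is irrelevant) does match the paper, but the case analysis has two genuine errors. First, your case ``$\sigma_{\z}=\tau\sigma_{\x}$ for a transposition $\tau$'' is vacuous: the product $\tau_{r_1}\tau_{r_2}$ of two transpositions is the identity, a $3$-cycle, or a product of two disjoint transpositions---never a single transposition---so this case cannot occur and is redundant with your $3$-cycle case. Second, the case $\sigma_{\x}=\sigma_{\z}$ must be split, because its two subcases behave completely differently. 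When $\x=\z$, the support of $\phi$ is an annulus of height or width one, and such a domain admits a \emph{unique} decomposition into two empty parallelograms; it drops out of $(\dxminus)^2$ not because two decompositions cancel, but because every row and column contains an $X$-marking, so that unique decomposition fails the $\XX$-avoiding condition. Only when $\x\neq\z$ with $\sigma_{\x}=\sigma_{\z}$---so the two generators differ in two $p$-coordinates, a configuration with no $S^3$ analogue---does the domain genuinely have exactly two decompositions. Your blanket claim that $\sigma_{\x}=\sigma_{\z}$ yields ``exactly two cuts'' is therefore false for the annuli, and your appeal to ``both decompositions simultaneously avoid or meet $\XX$'' presupposes a pairing that does not exist in that case, so the actual mechanism that disposes of the annuli is never invoked.
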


The above corollary was proved combinatorially in \cite{celoria2015note}. We recreate that proof here in detail to lay out cases and notation which will be useful in later sections. 

\begin{prop}[{\cite[Proposition 2.13]{celoria2015note}}]\label{prop:d2=0}
The map $\dxminus$ is a differential on $C^-(\gridg)$, i.e.\ $(\dxminus)^2=0$. 
\end{prop}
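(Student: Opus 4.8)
The plan is to follow the standard grid-homology argument for $S^3$ (as in \cite{Manolescu_2007,gridhomology}) adapted to the twisted toroidal setting. First I would expand $(\dxminus)^2(\x)$ and observe that the coefficient of a generator $\z$ in $(\dxminus)^2(\x)$ is a sum over pairs $(r_1,r_2)$ with $r_1\in\ePG(\x,\y)$, $r_2\in\ePG(\y,\z)$ for some intermediate $\y$, each avoiding $\XX$, weighted by $\prod_i V_i^{n_{O_i}(r_1)+n_{O_i}(r_2)}$. It therefore suffices to show that the set of such composable pairs of empty parallelograms from $\x$ to $\z$ can be partitioned into pairs $\{(r_1,r_2),(r_1',r_2')\}$ with the same terminal generator $\z$, the same total domain $r_1*r_2 = r_1'*r_2'$ (hence the same $V$-power), so that they cancel over $\F = \Ztwo$. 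So the core of the proof is a combinatorial classification of the juxtaposed domain $\phi = r_1 * r_2$ and a verification that each such $\phi$ admits exactly two decompositions into composable empty parallelograms avoiding $\XX$ (or zero, in which case there is nothing to cancel).

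The key step is the case analysis on the local picture of $\phi = r_1 * r_2$, organized by how the four "moving" coordinates interact. Writing $\tau_{r_1}$ and $\tau_{r_2}$ for the associated transpositions (Remark~\ref{rem:parallelogramtransposition}), the cases are: (i) $\tau_{r_1}$ and $\tau_{r_2}$ have disjoint support, so the two parallelograms occupy four distinct rows/columns and can be done in either order — this gives the obvious involution $(r_1,r_2)\leftrightarrow(r_2',r_1')$ with the same support; (ii) $\tau_{r_1}$ and $\tau_{r_2}$ share exactly one index, so the domain $\phi$ is supported near three of the $\alpha$-curves (and correspondingly three $\beta$-curves), and the local picture is an "L-shaped" or overlapping region that decomposes as a disjoint union of two parallelograms in exactly one other way; (iii) $\tau_{r_1} = \tau_{r_2}$, so $\phi$ is supported on two rows and two columns, where $\phi$ is either a "thin" annular/rectangular region admitting two horizontal-then-vertical vs. vertical-then-horizontal cuts, or a region that overlaps itself (the width-$p$ wrapping phenomenon), in which case one checks the count of valid decompositions is again even. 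In each case one must also track the constraint $r_i\cap\XX=\emptyset$: the alternate decomposition has the same support as the original, hence passes over the same $\XX$ markings, so if one pair is admissible so is its partner.

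The main obstacle I expect is the twisting/wrapping complication discussed in Section~\ref{sec:twisting}: unlike in $S^3$, a parallelogram may wrap several times around the torus, so the "two rows, two columns" case is genuinely richer, and one must argue carefully that the combinatorics of the decompositions of $\phi$ is insensitive to the wrapping number. The strategy there is the one the authors flag in Section~\ref{sec:twisting} — set up the case analysis so that each drawn domain stands in for an entire family of wrapped domains with identical decomposition combinatorics, and verify the pairing (or the fact that the number of admissible decompositions is even) at the level of this combinatorial data rather than the embedded picture. A secondary subtlety is the bookkeeping when $\z = \x$: a "trivial" domain could in principle arise, but one checks that a nonzero composable pair of empty parallelograms always has nonzero total support, and that the degenerate configurations (e.g. $r_2 = $ the "reverse" of $r_1$) do not occur among empty parallelograms, so every contributing term genuinely cancels against a distinct partner. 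Assembling these cases, every term in $(\dxminus)^2(\x)$ cancels in pairs over $\Ztwo$, giving $(\dxminus)^2 = 0$.
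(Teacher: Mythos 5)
Your overall strategy matches the paper's: expand $(\dxminus)^2(\x)$, group terms by the juxtaposed domain $\phi = r_1 * r_2$, and show the number of admissible decompositions is even. Organizing the case analysis by the relationship between $\tau_{r_1}$ and $\tau_{r_2}$ is equivalent to the paper's organization by $m$ (the number of components where $\x$ and $\z$ differ), and your handling of the wrapping subtlety and of the preservation of $\XX$-avoidance under alternate decompositions for $m\ge 2$ is correct.

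However, your treatment of the $\x = \z$ case (your "secondary subtlety") has a genuine gap. You claim that "degenerate configurations (e.g.\ $r_2 = $ the reverse of $r_1$) do not occur among empty parallelograms" and that "every contributing term genuinely cancels against a distinct partner." Neither claim is correct. For $\x = \z$, there \emph{are} pairs $r_1\in\ePG(\x,\y)$, $r_2\in\ePG(\y,\x)$ whose juxtaposition is a thin annulus (a single row or a single column), and this annulus has a \emph{unique} decomposition, not two; so there is no distinct partner to cancel against, and if such a term appeared it would survive. The correct resolution, which the paper uses, is the $\XX$-constraint: by the conditions on a grid diagram, every row and every column contains exactly one $X\in\XX$, which lies in the interior of the annulus and hence in the interior of $r_1$ or $r_2$. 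Thus one of $r_1, r_2$ fails the condition $\Int(r)\cap\XX=\emptyset$, and this pair never contributes to $(\dxminus)^2$ at all. So the mechanism for the $m=0$ case is nonexistence of admissible pairs, not pairwise cancellation, and you need the $\XX$-avoidance in the definition of $\dxminus$ to close it.
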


\begin{remark}\label{rem:omultiplicity}
Note that if a domain $\phi$ from $\x$ to $\z$ has a decomposition as $\phi=r_1*r_2$, for $r_1\in\PG(\x,\y)$ and $r_2\in\PG(\y,\z)$, we have that $n_{O_i}(\phi)=n_{O_i}(r_1)+n_{O_i}(r_2)$ for any $0\le i <n$. 
\end{remark}

\begin{proof}[Proof of Proposition~\ref{prop:d2=0}]
This is the first of a number of proofs where we analyze the composition of two maps by analyzing domains which can be written as the juxtaposition of two specified polygons.

Let $\ePoly(\x,\z)$ denote the set of domains from $\x$ to $\z$ which can be written as the juxtaposition of two empty parallelograms. For $\phi\in\ePoly(\x,\z)$, let $N(\phi)$ denote the number of decompositions of $\phi$ as the juxtaposition of two empty parallelograms. 

We see that $$(\dxminus\circ\dxminus)(\x)=\sum_{z\in\genG}\sum_{\phi\in\ePoly(\x,\z)}N(\phi)(\prod_{i=0}^{n-1}V_i^{n_{O_i}(\phi)})\y. 
$$

Let $\x,\z\in\genG$ and let $\phi\in\ePoly(\x,\z)$, so $\phi=r_1*r_2$ for some $r_1\in\ePG(\x,\tgen)$, $r_2\in\ePG(\tgen,\z)$, $\tgen\in\genG$. Let $m$ be the number of components where $\x$ and $\z$ differ, and observe that $m\le 4$. For $2\le m\le 4$ we will produce the unique alternate decomposition of $\phi$ as the juxtaposition of two empty parallelograms, proving $N(\phi)=2$. 

Suppose $m=4$. There are two possible cases: either the supports of $r_1$ and $r_2$ are disjoint, or have overlapping interiors but share no corners or edges (except for transverse edge intersections). These two cases are shown in Figure~\ref{fig:dsquared4} (portrayed in schematic in the straight model). In either case, we produce an alternate decomposition of $\phi$ as follows. Let $r_1'$ have the same support as $r_2$, with initial generator $\x$. Let the terminal generator of $r_1'$, denoted $\w$, agree with $\x$ away from the support of $r_1'$ but occupy the complementary two corners of the support of $r_1'$ as $\x$; so $r_1'\in\ePG(\x,\w)$. Let $r_2'$ have the same support as $r_1$, but with initial generator $\w$ and terminal generator $\z$. Thus, $\phi=r_1'*r_2'$ is the unique alternate decomposition of $\phi$, implying $N(\phi)=2$. 

\begin{figure}[ht]
    \centering
    \includegraphics[height=\figheight]{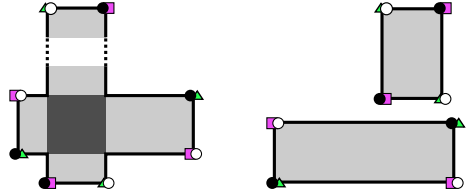}
    \caption{Domains counted in the proof of Proposition~\ref{prop:d2=0} when the initial and terminal generators disagree at four components. Throughout this paper, we use the convention that the initial generator of the domain $\phi$ is given by darkened circles, the terminal generator by empty circles, and the \emph{intermediate generators}, which are the initial or terminal generators of any domain included in a decomposition of $\phi$ (except for the initial and terminal generators of $\phi$ itself), are given by pink squares or green triangles.}
    \label{fig:dsquared4}
\end{figure}

Suppose $m=3$. We will construct the unique alternate decomposition of $\phi$, but this will require introducing some notation. Figure~\ref{fig:dsquared3detail} shows the below notation, which details one of the four combinatorially distinct cases of Figure~\ref{fig:dsquared3}. 

The intermediate generator $\tgen$ must have one component on $\partial(\phi)$ which does not coincide with $\x$ or $\z$. Suppose this component is on an arc in $\dalpha(\phi)$, and denote the component by $d_{\alpha}$. The arc of $\dalpha(\phi)$ which $d_{\alpha}$ occupies terminates in one component of $\x$ and one component of $\z$; denote these by $x_a$ and $z_b$. 

Denote the curve in $\betas$ on which $d_{\alpha}$ lies on by $\beta_j$. Traveling from $d_{\alpha}$ along $\beta_j$ into $\phi$, we reach a corner of $\phi$ which we will denote $d$. Denote the curve in $\alphas$ on which $d$ lies by $\alpha_i$. The point $d$ is characterized by the domain $\phi$ having multiplicity one in three of the four regions surrounding it, and multiplicity zero in the fourth. If we continue following $\beta_j$ along $\dbeta(\phi)$ we reach another corner of $\phi$, a component of $\x$ or $\z$. Call this component $w_1$. If instead at $d$ we travel along $\alpha_i$ into $\phi$, we reach a point we will denote $d_{\beta}$ on $\dbeta(\phi)$. This point is on the same arc of $\dbeta(\phi)$ as either $x_a$ or $z_b$; let $w_2$ be whichever of those two points does not lie on the same arc of $\dbeta(\phi)$ as $d_{\beta}$. 

We now let $\w$ agree with $\tgen$ away from $\phi$, and contain the components $w_1$, $w_2$, and $d_{\beta}$. Then one summand of the support of $\phi$ is the support of an embedded parallelogram $r_3\in\ePG(\x,\w)$, and the complement of that summand is the support of an embedded parallelogram $r_4\in\ePG(\w,\z)$, providing the unique alternate decomposition of $\phi$ as $\phi=r_3*r_4$. 

If, instead, the component of $\tgen$ which is distinct from $\x$ and $\z$ lies on $\dbeta(\phi)$, then interchanging the role of $\alpha$ and $\beta$ in the above proof will generate the unique alternate decomposition. Either way, $N(\phi)=2$. 

\begin{figure}[ht]
    \centering
    \includegraphics[height=\figheight]{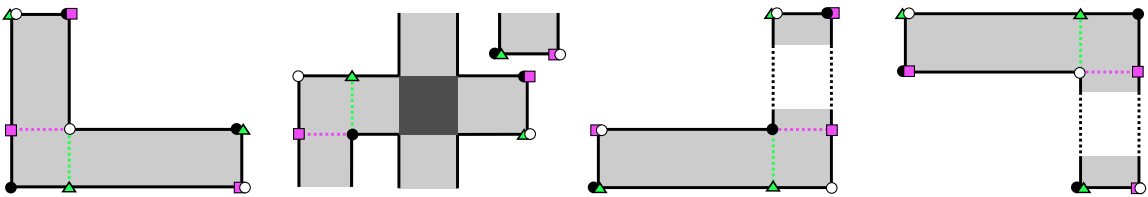}
    \caption{Domains counted in the proof of Proposition~\ref{prop:d2=0} when the initial and terminal generators disagree at three components. See Remark~\ref{rem:wrappingdiagrams} for a discussion of the schematics used here with respect to wrapping. }
    \label{fig:dsquared3}
\end{figure}
\begin{figure}[ht]
\begin{center}
    \begin{tikzpicture}
    \node at (0,0) {\includegraphics[height=\figheight]{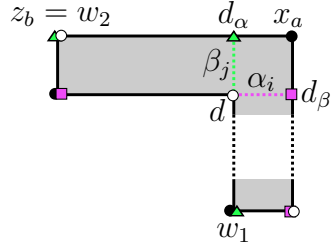}};
    \node at (.8,1.45) {$d_{\alpha}$};
    \node at (1.53,1.4) {$x_a$};
    \node at (-1.5,1.42) {$z_b=w_2$};
    \node at (.58,.78) {$\beta_j$};
    \node at (.58,.18) {$d$};
    \node at (1.17,.55) {$\alpha_i$};
    \node at (.8,-1.45) {$w_1$};
    \node at (1.9,.37) {$d_{\beta}$};
    \end{tikzpicture}
\end{center}
    \caption{An example of the notation in constructing an alternate decomposition for a domain where the initial and terminal generators disagree in three components}
    \label{fig:dsquared3detail}
\end{figure}

\begin{remark}\label{rem:wrappingdiagrams}
In light of Section~\ref{sec:twisting}, let us clarify what the schematics in the figures throughout this paper mean. Each domain, regardless of whether it is displayed as embedded, wrapping, or with dashed lines, stands in for all such domains with the same combinatorics of alternate decompositions. The wrapping does not impact the combinatorics, so this is a reasonable simplification. This wrapping can be disjoint from the rest of the domain, or overlap on the interior with transverse edge intersections. For example, in Figure~\ref{fig:dsquared3}, we display the first domain as embedded, the second domain as wrapping once, and the third and fourth with dashed lines. In practice, any of those domains could wrap any number of times (fewer than $p$) without impacting the combinatorics of the decompositions.  
\end{remark}

Suppose $m=2$. Recall that in grid diagrams for $S^3$, there are no such domains which admit a decomposition as the juxtaposition of two empty rectangles. Remark~\ref{rem:parallelogramtransposition} makes clear, however, that for twisted toroidal grid diagrams, this is possible, as generators are not identified with elements of $S_n$, but with elements of $S_n\times(\Zp)^n$. In either setting, we have that $\sigma_{\x}=\tau_{r_1}\tau_{r_2}\sigma_{\z}$. The only way for $\sigma_{\x}$ and $\sigma_{\z}$ to differ by one or fewer transpositions, then, is for $\tau_{r_1}=\tau_{r_2}$, so $\sigma_{\x}=\sigma_{\z}$. The parallelograms, however, can shift the $p$-coordinates by different amounts, so in the end, $\sigma_{\x}=\sigma_{\z}$, but they differ in two $p$-coordinates. See Figure \ref{fig:dsquared2} for the possible domains. 

Again we will construct the unique alternate decomposition of $\phi$. The intermediate generator $\tgen$ must have two components on $\partial(\phi)$ which do not coincide with $\x$ or $\z$; suppose they are on arcs in $\dalpha(\phi)$, and denote these components by $d_{\alpha}$ and $d_{\alpha}'$. 

Following the curves in $\betas$ on which these components lie into $\phi$ we reach two corners of $\phi$ which we denote $d$ and $d'$. These points are the two points around which the domain has multiplicity one in three regions and multiplicity zero in the fourth. From such points, traveling into $\phi$ on the curves in $\alphas$ which $d$ and $d'$ occupy, we reach two points on $\dbeta(\phi)$ which we denote $d_{\beta}$ and $d_{\beta}'$. Let $\w$ agree with $\tgen$ away from $\phi$, and include $d_{\beta}$ and $d_{\beta}'$. Then one summand of the support of $\phi$ is the support of an embedded parallelogram $r_3\in\ePG(\x,\w)$ and the complement of that summand is the support of an embedded parallelogram $r_4\in\ePG(\w,\z)$, providing the unique alternate decomposition of $\phi$ as $\phi=r_3*r_4$.

If, instead, the components of $\tgen$ which are distinct from $\x$ and $\z$ lie on $\dbeta(\phi)$, then interchanging the role of $\alpha$ and $\beta$ in the above proof will generate the unique alternate decomposition. Either way, $N(\phi)=2$.

\begin{figure}[ht]
    \centering
    \includegraphics[width=12cm]{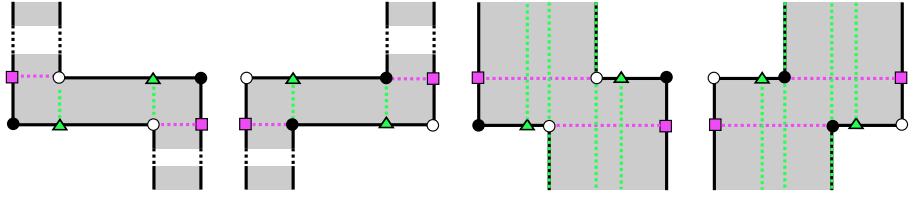}
    \caption{Domains counted in the proof of Proposition~\ref{prop:d2=0} when the initial and terminal generators disagree at two components.}
    \label{fig:dsquared2}
\end{figure}

By virtue of Remark \ref{rem:parallelogramtransposition}, there are no domains with $m=1$. 

Finally, suppose $\x=\z$. The support of $\phi$ must be an annulus. It must be width or height one, by virtue of $\phi$ being empty. The conditions on a grid diagram, however, ensure there is at least one $X\in\XX$ in each row and column, so there are no such annuli counted. 

Thus in each case, we have $N(\phi)\equiv 0\pmod{2}$, so $(\dxminus)^2=0$. 
\end{proof}

While $C^-(\gridg)$ is a chain complex over $\Ztwo[V_0,\ldots,V_{n-1}]$, the link invariant is the quasi-isomorphism type of $C^-(\gridg)$ as a chain complex over $\Ztwo[U_1,\ldots,U_\ell]$, where each variable corresponds to the action of a distinct component of the link. To see this is well-defined, we need the following lemma. 

\begin{lem}[cf. {\cite[Lemma 2.11]{Manolescu_2007}}]
\label{lem:componentvariablesarehomotopic}
Suppose $O_i$ and $O_j$ are markings corresponding to the same component of $L$. Then multiplication by $V_i$ is chain homotopic to multiplication by $V_j$. 
\end{lem}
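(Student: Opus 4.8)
The plan is to mimic the classical grid-homology argument (Manolescu--Ozsv\'ath--Szab\'o--Thurston, Lemma 2.11) adapted to the twisted toroidal setting. Suppose $O_i$ and $O_j$ lie on the same component of $L$. By definition of the link associated to $\gridg$, this means there is a chain of markings $O_i = O_{k_0}, X_{k_0}, O_{k_1}, X_{k_1}, \ldots$ connecting $O_i$ to $O_j$ along the link, alternating between being in the same row (for the $X$-to-$O$ arcs) and the same column (for the $O$-to-$X$ arcs). So it suffices to treat the case where $O_i$ and $X_k$ lie in the same row, and $O_j = O_m$ and $X_k$ lie in the same column, and show $V_i \sim V_j$ via this single ``elbow'' at $X_k$; the general statement then follows by transitivity of chain homotopy.

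First I would define the homotopy operator. For a generator $\x \in \genG$, set
\[
H_{X_k}(\x) = \sum_{\y \in \genG} \sum_{\substack{r \in \ePG(\x,\y) \\ \Int(r)\cap\XX \subseteq \{X_k\},\ n_{X_k}(r)=1}} \left(\prod_{l=0}^{n-1} V_l^{n_{O_l}(r)}\right)\y,
\]
that is, $H_{X_k}$ counts empty parallelograms that contain $X_k$ in their interior with multiplicity one and contain no other $X$ marking. Extend $\F[V_0,\ldots,V_{n-1}]$-linearly. The claim is that
\[
\dxminus \circ H_{X_k} + H_{X_k} \circ \dxminus = V_i + V_m,
\]
where $O_i$ is the $O$ in the same row as $X_k$ and $O_m$ is the $O$ in the same column as $X_k$ (after which one renames $V_m = V_j$ appropriately for the chosen chain link).

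The verification proceeds exactly as in the proof of Proposition~\ref{prop:d2=0}: analyze domains $\phi$ from $\x$ to $\z$ that decompose as a juxtaposition of two empty parallelograms, now with the constraint that exactly one of the two parallelograms avoids all of $\XX$ and the other contains $X_k$ once (and nothing else in $\XX$). For $m = |\x \setminus \z|$ equal to $4$, $3$, or $2$, the same case analysis shows each such $\phi$ has exactly two decompositions of the required mixed type, so they cancel in pairs mod $2$ --- this reuses verbatim the figures and the ``alternate decomposition'' constructions already established. The only genuinely new feature, and the main point, is the case $\x = \z$ (and the degenerate near-$m{=}0$ configurations): here $\phi$ must have annular support of width or height one containing $X_k$. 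Such a row-annulus contains exactly $O_i$ (the $O$ in $X_k$'s row) and such a column-annulus contains exactly $O_m$, and each admits a \emph{unique} decomposition of the required type rather than two; these are precisely the terms contributing $V_i$ and $V_m$ to the right-hand side. (One must also note that the width-one row-annulus through $X_k$ and the height-one column-annulus through $X_k$ are the only annuli that count, because any other row or column annulus contains some $X \neq X_k$, killing that term by the support constraint --- this is where the hypothesis that $O_i, O_j$ are on the \emph{same} component, via the connecting elbow at $X_k$, is used.)

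The main obstacle I anticipate is bookkeeping the twisting/wrapping: a ``row annulus'' or ``column annulus'' in a twisted toroidal diagram may wrap, and one must confirm, as in Section~\ref{sec:twisting} and Remark~\ref{rem:wrappingdiagrams}, that wrapping does not change the count of decompositions or which markings lie in the interior --- so that the $\x=\z$ annular contributions really are just $V_i$ and $V_m$ with coefficient one. Once that is checked, summing over the whole case analysis gives $\dxminus H_{X_k} + H_{X_k}\dxminus = V_i + V_m$ over $\F = \Ztwo$, i.e.\ $V_i$ is chain homotopic to $V_m$; chaining these elbow-homotopies along the component proves $V_i \sim V_j$.
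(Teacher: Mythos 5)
Your proposal is correct and takes essentially the same approach as the paper: the same homotopy operator counting empty parallelograms whose interior meets $\XX$ exactly in $X_a$, the same reuse of the pairing of decompositions from Proposition~\ref{prop:d2=0}, the identification of the two annuli through $X_a$ as the only unpaired terms (giving $V_i$ and $V_j$), and transitivity along the component to handle the general case.
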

\begin{proof}
Without loss of generality, suppose $O_i$ is in the same row as $X_a$, which is in the same column as $O_j$. Define $\Phi_{X_a}:C^-(\gridg)\to C^-(\gridg)$ on a generator $\x\in\genG$ by \[
\Phi_{X_a}(\x)=\sum_{\y\in\genG}\sum_{\substack{r\in\ePG(\x,\y)\\\Int(r)\cap\XX=X_a}}V_o^{n_{O_0}(r)}\cdots V_{n-1}^{n_{O_{n-1}}(r)}\y.
\] Extend this linearly to $C^-(\gridg)$. 

We will show that this is the homotopy between $V_i$ and $V_j$. Our goal is to show that $\dxminus\circ\Phi_{X_a}+\Phi_{X_a}\circ\dxminus = V_i-V_j$. The left hand side counts domains which are the juxtapositions of two embedded parallelograms, one of which contains $X_a$. Any such domain has an alternate decomposition given in the proof of Proposition~\ref{prop:d2=0}, regardless of the placement of $X_a$. In that proof we do not need to consider the domains which connect $\x\in\genG$ to $\z=\x\in\genG$, because they consist of a single row or a single column, either of which contains some marking $X\in\XX$. Here, however, domains are allowed to cover $X_a\in\XX$, so the left hand side above counts the annulus of width one covering $X_a$ and the annulus of height one covering $X_a$. Each of these connects $\x$ to $\x$; the row contributes a coefficient of $V_i$ and the column contributes a coefficient of $V_j$, proving the above equality. 

If $O_i$ and $O_j$ are not connected by a single marking $X_a$, then a sequence of the above homotopies, which can be found since $O_i$ and $O_j$ correspond to the same component, proves the actions of $V_i$ and $V_j$ are homotopic. 
\end{proof}

\section{Gradings}\label{sec:gradings}
Grid homology as defined comes with three gradings \cite{Baker_2008}. After recalling the definitions of these below, we will prove directly that each of these is well-defined, i.e. independent of the choice of a fundamental domain on which to compute them. Like the fact that $\dxminus$ is a differential, this is proved in \cite{Baker_2008} by appealing to the correspondence with the holomorphic theory. 

Throughout, let $\gridg$ be a grid diagram for an $\ell$-component link $L$.

\subsection{\texorpdfstring{$\spinc$ grading}{spinc grading}}
The $\spinc$ grading takes values in $\Zp$. Fix a fundamental domain for $T^2$. Let $\x_{\OO}$ be the generator with components occupying the lower-left corner of each region of $T^2-\alphas-\betas$ containing an $O$ marking. For any generator $\x\in\genG$, let $\tilde{\spincS}(\x)=\sum_{i=0}^{n-1}a^{\x}_i-a^{\x_{\OO}}_i\pmod{p}$, and let $\spincS(\x)=\tilde{\spincS}(\x)+q-1\pmod{p}$. Extend this to a grading on all of $C^-(\gridg)$ by setting $\spincS(V_i\x):=\spincS(\x)$. 

As an example, consider the generator given in Figure~\ref{fig:fundomain} by darkened circles, and denote it by $\z$. We noted earlier it has $p$-coordinates $(4,0,3)$. We can see on the diagram that $\x_{\OO}$ has $p$-coordinates $(3,4,2)$. We compute $\tilde{\spincS}(\z)=(4-3)+(0-4)+(3-2)\pmod{3}=3$, so $\spincS(\z)=4$. 

\begin{lem}
The function $\spincS$ is well-defined. 
\end{lem}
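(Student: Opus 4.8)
The plan is to show that $\spincS(\x)$ does not change when we pass between two different fundamental domains of the prescribed form, and since any two such are connected by elementary moves (shifting $k \mapsto k+1$, i.e.\ moving the bottom $\alpha$-edge up by one row, and shifting $l \mapsto l+1$, i.e.\ moving the left $\beta$-edge by one column), it suffices to check invariance under each of these two elementary changes separately. Because $\spincS(\x) = \tilde{\spincS}(\x) + q - 1 \pmod p$ and the shift $q-1$ is independent of the domain, it is equivalent to prove $\tilde{\spincS}(\x) = \sum_{i} a^{\x}_i - a^{\x_{\OO}}_i \pmod p$ is unchanged.

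First I would analyze the $\alpha$-shift (replacing $k$ by $k+1$). Moving the fundamental domain up by one row does not change the cyclic ordering of the $p$ intersection points of $\alpha_i \cap \beta_j$ left-to-right in any essential way except on the single $\alpha$-curve that crosses the boundary: the re-identification of the top and bottom edges (with the twist $(x,0)\sim(x+p/q, 1)$) permutes, by a single cyclic rotation, the labels of the intersection points on the curve that used to be the bottom $\alpha$ and is now the top $\alpha$. Concretely, each $p$-coordinate $a^{\x}_i$ either is unchanged or is shifted by a fixed constant $c \in \Zp$ depending only on $q$ (the amount of the twist), and the same shift applies to $a^{\x_{\OO}}_i$ for the corresponding index. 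Hence in the difference $\sum_i (a^{\x}_i - a^{\x_{\OO}}_i)$ the shift cancels term by term, and $\tilde{\spincS}(\x)$ is unchanged modulo $p$. I would make this precise by writing down exactly how the $p$-coordinate of a point on the affected $\alpha$-curve transforms.

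Next I would handle the $\beta$-shift (replacing $l$ by $l+1$), which is the more delicate case and the one I expect to be the main obstacle. Moving the left $\beta$-boundary over by one column changes which $\beta$-curve is called $\beta_0$, hence relabels the $\beta$'s cyclically; more importantly, it changes the left-to-right ordering of the $p$ intersection points on \emph{every} $\alpha$-curve simultaneously, since sweeping the $\beta$-fundamental-domain boundary past a point advances that point's $p$-coordinate. The key observation is again that this is a \emph{uniform} shift: there is a constant (depending on $p$, $q$, and which column boundary moved, but not on the generator) by which every $a^{\x}_i$ changes, and likewise every $a^{\x_{\OO}}_i$ changes by the same constant, so the difference is preserved. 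I would verify the uniformity by checking that $\x_{\OO}$ — defined as occupying the lower-left corner of each $O$-region — transforms under the $\beta$-shift by exactly the same rule as an arbitrary generator, because the defining condition ``lower-left corner of its region'' is characterized locally by the $\alpha$- and $\beta$-curves, not by the choice of fundamental domain. Once both elementary moves preserve $\tilde{\spincS}(\x) \pmod p$, the lemma follows. The only real work is bookkeeping the cyclic $\Zp$-shifts induced by the twisted re-identification, and confirming they are generator-independent so that they cancel in the difference $a^{\x}_i - a^{\x_{\OO}}_i$.
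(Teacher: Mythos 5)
Your overall plan (reduce to the two elementary moves $k \mapsto k+1$ and $l \mapsto l+1$, and show $\tilde{\spincS}$ is preserved under each) matches the paper's. Your analysis of the $\alpha$-shift is fine — indeed, it is a shift of a fixed amount on the row that wraps, and the shift is the same for the components of $\x$ and of $\x_\OO$ on that row, so it cancels in the difference.

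The $\beta$-shift, however, has a genuine error. You claim that ``there is a constant \ldots by which \emph{every} $a^{\x}_i$ changes.'' This is not what happens. Moving the slanted boundary right by $1/pn$ sweeps past exactly one $\beta$-strand: the one that was the old boundary, which is a strand of $\beta_0$. On a fixed $\alpha_i$, this changes the left-to-right ordering of the $p$ points $\alpha_i \cap \beta_0$ (the leftmost of these becomes rightmost, so every position drops by one), but it does \emph{not} change the ordering of the $p$ points of $\alpha_i \cap \beta_j$ for $j \neq 0$, since no strand of $\beta_j$ is crossed by the moving boundary. Hence the only $p$-coordinate of $\x$ that changes is $a^{\x}_{\sigma_\x^{-1}(0)}$, the one whose component sits on $\beta_0$, and it decreases by one; every other $a^{\x}_i$ is untouched. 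The same holds for $\x_\OO$, with its single affected coordinate $a^{\x_\OO}_{\sigma_{\x_\OO}^{-1}(0)}$. These two affected indices are \emph{different} $\alpha$-indices in general, since $\sigma_\x \neq \sigma_{\x_\OO}$, so there is no term-by-term cancellation. What makes the argument work is that \emph{each} of $\sum_i a^{\x}_i$ and $\sum_i a^{\x_\OO}_i$ decreases by exactly one, so their difference mod $p$ is preserved. Your stated ``verification'' (that $\x_\OO$ transforms by the same rule as an arbitrary generator) is true but does not address the false claim; if you actually tried to check uniformity over $i$ you would find it fails, and that is where the proof as outlined breaks down.
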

\begin{proof}
Consider the fundamental domain given by moving up by $1/n$ in the $y$-direction, which corresponds to vertical rotation of the rows. This will not change the $p$-coordinates of $\x_{\OO}$ or $\x$, so we get that $\spincS$ is well-defined under vertical rotation. 

Consider the fundamental domain given by moving right by $1/pn$ in the $x$-direction, which corresponds to horizontal rotation of the columns. This shift will decrease both $a^{\x}_{\sigma^{-1}(0)}$ and $a_{\sigma^{-1}(0)}^{\x_{\OO}}$ by one, and preserve all the other $p$-coordinates, so will preserve $\spincS(\x)$. 
\end{proof}

\subsection{Maslov grading}

We will follow \cite{Baker_2008}. To define the \emph{Maslov grading} (and the \emph{Alexander grading}, later), we need several auxiliary functions. It is unsurprising that we need to give coordinates to our generators to compute combinatorial gradings, and the first two functions do that. The third function compares sets of coordinate pairs. 

Fix a fundamental domain for $T^2$, with lower-left corner at $k/pn, l/n$. We use the function \begin{align*}W:\Big\{\text{finite sets}&\text{ of points in $\gridg$}\Big\}\\
&\to\Big\{\text{finite sets of pairs $(a,b)\in[0,pn)\times[0,n)$}\Big\}
\end{align*} which takes a set of points in $\gridg$ and computes its coordinates with respect to the basis for $\R^2$ given by  $\{\overrightarrow{v_1}=(\frac{1}{np},0),\overrightarrow{v_2}=(\frac{-q}{np},\frac{1}{n})\}$, then subtracts $k$ from each $x$ coordinate and $l$ from each $y$ coordinate. In effect, this computes coordinates as if the fundamental domain had lower left corner at the origin. 

We compute this above function on the example generator $\z$ from Figure~\ref{fig:fundomain} given by darkened circles. We have $W(\z)=\{(13,0),(2,1),(9,2)\}$. 

Next, we define a function \begin{align*}
C_{p,q}:\Big\{\text{finite sets}&\text{ of pairs $(a,b)\in[0,pn)\times[0,n)$}\Big\}\\
&\to\Big\{\text{finite sets of pairs $(a,b)\in[0,pn)\times[0,pn)$}\Big\}. 
\end{align*}

It is defined on an $n$-tuple of coordinates $\{(a_i,b_i)\}_{i=0}^{n-1}$ by  \[C_{p,q}(\{(a_i,b_i)\}_{i=0}^{n-1}):=
\{((a_i+nqk)\mod np,b_i+nk)_{i=0,k=0}^{i=n-1,k=p-1}\}.
\]

This corresponds to taking the coordinates of a generator $\x$ and lifting them to the coordinates of a generator $\tilde{\x}$ of the chain complex associated to $\tilde{L}$ in $S^3$, the lift of $L$ in the universal cover of $\lpq$. The effect of this function can be seen in Figure~\ref{fig:differentialgradings}.  

We will often conflate a generator $\x$ and its coordinates $W(\x)$, and we will do the same for $\tilde{\x}$ and $C_{p,q}(W(\x))$.

Note that the output of $C_{p,q}$ is $p$ horizontal strips of points, each with $y$-coordinates between $nk$ and $n(k+1)$ for $0\le k<p$. We will occasionally want to refer to each such strip, so will denote each by $C_{p,q}^k(\x):=\{((a_i+nqk)\pmod{np}, b_i+nk)\}_{i=0}^{i=n-1}$. 

For the generator $\z$ of Figure~\ref{fig:fundomain} given by darkened circles, we can compute algebraically or via inspection of Figure~\ref{fig:differentialgradings} that \begin{align*}C_{p,q}(W(\z))=\{&(13,0),(2,1),(9,2),(4,3),(8,4),(0,5),(10,6),(14,7),\\
&(6,8),(1,9),(5,10),(12,11),(7,12),(11,13),(3,14)\}.\end{align*}

The third and final of these auxiliary functions is $\I$, defined in \cite{Manolescu_2007}, which takes in two sets of coordinate pairs, $A$ and $B$. It is defined by \[
\I(A,B):=\#\{\big((a_1,a_2),(b_1,b_2)\big)\in A\times B:a_1<b_1\text{ and }a_2<b_2\}.
\]

With these auxiliary functions in hand, we can define a function $\tilde{\maslov}$ by \[
\tilde{\maslov}(\x)=\I(\tilde{\x},\tilde{\x})-\I(\tilde{\x},\tilde{\OO})-\I(\tilde{\OO},\tilde{\x})+\I(\tilde{\OO},\tilde{\OO}),\]
and the \emph{Maslov grading} on a generator by $\maslov(\x)=\frac{\tilde{\maslov}(\x)}{p}+d(p,q,q-1)+1$, where $d(p,q,q-1)$ is a recursively defined correction term whose definition can be found in \cite{Baker_2008}. 

As a grading, we can extend $\maslov$ to all of $C^-(\gridg)$ by declaring $\maslov(V_i\x)=\maslov(\x)-2$ for $0\le i<n$.

We can now compute the Maslov grading of the generator $\z$ of Figure~\ref{fig:fundomain}. We get that \begin{align*}
    \I(\tz,\tz)&=52\\
    \I(\tz,\tOO)&=55\\
    \I(\tOO,\tz)&=40\\
    \I(\tOO,\tOO)&=42.
\end{align*}

Asserting that $d(5,2,1)=-\frac{2}{5}$, we get that $\maslov(\z)=\frac{1}{5}(52-55-40+42)-\frac{2}{5}+1=\frac{2}{5}$. 

\begin{lem}\label{lem:maslov}
The function $\maslov$ is well-defined. 
\end{lem}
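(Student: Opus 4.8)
The plan is to show that the Maslov grading $\maslov(\x)$ does not depend on the choice of fundamental domain, by checking invariance under the two generators of the group of admissible fundamental-domain changes: vertical rotation (shifting the lower-left corner up by $1/n$, i.e.\ $l\mapsto l+1$) and horizontal rotation (shifting right by $1/pn$, i.e.\ $k\mapsto k+1$). Since the correction term $d(p,q,q-1)$ and the shift by $1$ are manifestly independent of the fundamental domain, it suffices to prove that $\tilde{\maslov}(\x)=\I(\tx,\tx)-\I(\tx,\tOO)-\I(\tOO,\tx)+\I(\tOO,\tOO)$ is unchanged. Crucially, $\tilde{\maslov}$ is built from the quantities $\tx=C_{p,q}(W(\x))$ and $\tOO=C_{p,q}(W(\x_{\OO}))$, which are genuinely subsets of the torus-grid in $S^3$; one should first observe that $C_{p,q}\circ W$ assigns to $\x$ the lift $\tilde\x$ of the generator of the grid complex of the lift $\tilde L\subset S^3$, \emph{up to an overall translation on the torus $\R^2/(np\cdot\Z^2)$ that depends only on the fundamental-domain data $(k,l)$, not on $\x$}. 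So both $\tx$ and $\tOO$ get translated by the same vector when we change fundamental domain.

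The key step is then a translation-invariance statement for the bilinear form appearing in the definition of $\tilde{\maslov}$. For $S^3$ grid diagrams, the relevant fact (used in \cite{Manolescu_2007}) is that $\I(A,A)-\I(A,B)-\I(B,A)+\I(B,B)$ is invariant under simultaneously translating $A$ and $B$ cyclically on the torus, provided $A$ and $B$ are ``balanced'' in the appropriate sense (each has the same number of points in each row and column as the other — here both $\tx$ and $\tOO$ are genuine grid generators of the same size $np$, so they have exactly one point in each of the $np$ rows and each of the $np$ columns). So the concrete steps are: (1) prove $\tx$ and $\tOO$ are both grid generators for the $np\times np$ toroidal grid underlying $\tilde L\subset S^3$, lying in the image of $C_{p,q}$; (2) show that changing the fundamental domain by the vertical generator translates every point's second coordinate by $n$ (mod $np$) in $C_{p,q}^k$-indexed blocks, equivalently a cyclic shift of rows, applied identically to $\tx$ and $\tOO$; (3) show the horizontal generator likewise induces an identical cyclic shift of columns (with the characteristic $q$-twist: $x\mapsto x+nq \bmod np$ on the appropriate block), applied to both; (4) invoke the translation-invariance of the form $\I(A,A)-\I(A,B)-\I(B,A)+\I(B,B)$ under a simultaneous cyclic shift of all rows, or of all columns, for balanced configurations. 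This last point is exactly the computation underlying the well-definedness of the Maslov grading for grids in $S^3$, and can be cited from \cite{Manolescu_2007} or \cite{gridhomology}; if a self-contained argument is wanted, it follows from the elementary identity that moving a single point across the torus boundary changes $\I(A,A)$, $\I(A,B)+\I(B,A)$ by amounts that cancel in the alternating sum once $A,B$ have matching row/column counts.

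I would also need to separately verify that $\x_{\OO}$ transforms correctly: under vertical rotation the $O$-markings occupy the same regions, so their lower-left corners shift exactly as the fundamental-domain origin shifts, keeping the $p$-coordinates of $\x_{\OO}$ (hence $W(\x_{\OO})$, hence $\tOO$) aligned with $\x$; under horizontal rotation the same holds after accounting for the one $O$-marking whose region ``wraps'' — this is precisely the bookkeeping already carried out in the proof that $\spincS$ is well-defined, and the same case analysis applies. So the structure mirrors the $\spincS$ lemma: identify the two generating moves, track what each does to the coordinates of $\x$ and of $\x_{\OO}$, and observe the net effect on $\tilde\maslov$ is trivial.

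The main obstacle I expect is step (4) together with the wrapping subtlety in step (3): the $q$-twist means a horizontal rotation of the fundamental domain does not simply cyclically permute the columns of the $np\times np$ grid in the naive way — it permutes the $p$ horizontal strips $C_{p,q}^k(\x)$ among themselves with a shear. One must check that this sheared permutation is still a symmetry of the torus $\R^2/(np\Z)^2$ that preserves the ``balanced'' property and under which the alternating $\I$-form is invariant. Making this precise — i.e.\ identifying the change-of-fundamental-domain action on the lift $\tilde L$ with an honest isometry of the big torus that acts identically on $\tx$ and $\tOO$ — is the crux; once that identification is in place, invariance of $\tilde\maslov$ is the same linear-algebra fact that establishes the $S^3$ case.
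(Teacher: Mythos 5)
Your overall plan — reduce well-definedness of $\maslov$ to invariance of the alternating form $\I(A,A)-\I(A,B)-\I(B,A)+\I(B,B)$ under the two generating changes of fundamental domain, and then import a translation-invariance statement for balanced configurations — is the right idea, and it is essentially what the paper does. But the specific claims in steps (2)--(4) about how the moves act on the lift are incorrect, and the gap they leave is exactly the nontrivial part of the argument. Concretely: the vertical shift of the fundamental domain (moving up by $1/n$) does \emph{not} act on the lifted $np\times np$ configuration as a single cyclic shift of rows, nor as a translation by a common vector. It acts as $p$ independent cyclic $n$-shifts, one inside each strip $C_{p,q}^k$: a lifted point at height $b+nk$ goes to $(b-1)+nk$ when $b\ge 1$ and to $(n-1)+nk$ when $b=0$. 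That is a product of $p$ disjoint $n$-cycles on rows, not the $np$-cycle of a global cyclic shift, so the $S^3$ invariance lemma you want to cite does not apply directly. Your proposed "overall translation depending only on $(k,l)$" simply does not exist in this case.

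Your identification of the obstacle is also inverted. You flag a "$q$-twist shear" in the horizontal shift, but the horizontal shift is the clean case: $(a,b)\mapsto (a-1\bmod np,\,b)$ on every lifted point simultaneously, i.e.\ a genuine global cyclic shift of columns by one, no shear. It is the vertical move that is blockwise. To repair your argument you would need an extra piece, and the paper supplies precisely it: do the cyclic-shift cancellation inside each strip (this is where the within-strip pair counts change by amounts that cancel in the alternating sum), and then observe that for two points lying in \emph{different} strips $k<k'$, the $y$-inequality $y_1<y_2$ holds before the shift if and only if it holds after (since the strips $[nk,n(k+1))$ are disjoint and order-preserved and the vertical shift keeps each point inside its strip), while the $x$-coordinates are untouched — so cross-strip pairs contribute identically to $\tilde\maslov$ before and after. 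Without this strip-by-strip decomposition plus the cross-strip observation, the argument does not close. Two further small points: you write $\tOO = C_{p,q}(W(\x_{\OO}))$, but the Maslov grading uses the lift of the markings $\OO$ (half-integer coordinates), not the generator $\x_{\OO}$; and your claim that the vertical generator "translates every point's second coordinate by $n$ (mod $np$)" and that the horizontal generator induces "$x\mapsto x+nq\bmod np$" are both wrong (the vertical move is by $1$ within blocks, and the horizontal move is by $1$ globally).
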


\begin{proof}
Fix a fundamental domain for $T^2$, and let $\x\in \genG$. Denote by $a$ the component of $\x$ with $W(a)=(r,0)$. We will denote by $\x'$ and $a'$ the same generator and component, but considered with respect to the fundamental domain given by moving up by $1/n$ in the $y$-direction.
Note that for all components of $\x=\x'$, the $x$-coordinate remains the same after this shift, and for all components except $a$, the $y$-coordinate decreases by 1. 
We see that $a'$ has coordinates $(r,n-1)$. 

For any $b\in\x$ with first coordinate less than $r$, we have that $(b,a')$ contributes to $\I(\x',\x')$ but $(b,a)$ does not contribute to $\I(\x,\x)$. Correspondingly, if $b$ has first coordinate greater than $r$, then $(a,b)$ contributes to $\I(\x,\x)$ but $(a',b)$ does not contribute to $\I(\x',\x')$. Thus we have that $\I(\x',\x')=\I(\x,\x)-(n-r-1)+r=\I(\x,\x)+2r+1-n$. In shifting up the fundamental domain by $1/n$, we have also changed the coordinates of one $O\in\OO$. If it originally had coordinates $(l+1/2,1/2)$, then a similar analysis yields $\I(\OO',\OO')=\I(\OO,\OO)+2l+1-n$. For the last two terms, we can break up the analysis into one more step to yield \begin{align*}
    \I(\x',\OO)&=\I(\x,\OO)-(n-r)\\
    \I(\x',\OO')&=\I(\x',\OO)+(l+1)\\
    \I(\x',\OO')&=\I(\x,\OO)+r+l+1-n,
\end{align*}
and 
\begin{align*}
    \I(\OO,\x')&=\I(\OO,\x)+r\\
    \I(\OO',\x')&=\I(\OO,\x')-(n-l-1)\\
    \I(\OO',\x')&=\I(\OO,\x)+r+l+1-n.
\end{align*}

In the end, we get equality in the sum:\begin{align*}
\I(\x',\x')-\I(\OO',\x')-\I(\x',\OO')+\I(\OO',\OO')&=(\I(\x,\x)+2r+1-n)-(\I(\OO,\x)+r+l+1-n)\\&-(\I(\x,\OO)+r+l+1-n)+(\I(\OO,\OO)+2l+1-n)\\
&=\I(\x,\x)-\I(\OO,\x)-\I(\x,\OO)+\I(\OO,\OO).
\end{align*}Further, this sum is well defined for any strip $C_{p,q}^k(\x)$ and $C_{p,q}^k(\OO)$. Finally, because the vertical shift only changes the $y$-coordinates of any point and not the $x$-coordinates, we get that two points in $\tilde{\x}\cup\tilde{\OO}$ from different strips contribute to $\tilde{\maslov}(\x)$ if and only if the corresponding points in $\tilde{\x'}\cup\tilde{\OO'}$ contribute to $\tilde{\maslov}(\x')$, as their $x$-coordinates are the same, and their $y$-coordinates satisfy the same inequality. Thus, $\tilde{\maslov}(\x')=\tilde{\maslov}(\x)$. 

Let's show the same is true for a horizontal shift of the fundamental domain to the right by $1/pn$. Instead of proving this strip by strip, let's look at the lift of the generator and markings to prove this case. There is one component of $\tilde{\x}$ with $x$-coordinate 0; suppose it has $y$-coordinate $s$. Similarly, there is one component of $\tilde{\OO}$ with $x$-coordinate 1/2; suppose it has $y$ coordinate $1/2+t$. We will assume that $s\ge t$. As in the first case, let $\x'$ be the same generator, but with coordinates given with respect to the fundamental domain shifted to the right by $1/pn$. Then $\I(\tilde{\x'},\tilde{\x'})=\I(\tilde{\x},\tilde{x})+s-(n-s-1)=\I(\tilde{\x},\tilde{x})-n+2s+1$. Similarly, for the other terms, \begin{align*}
    \I(\tilde{\OO'},\tilde{\OO'})&=\I(\tilde{\OO},\tilde{\OO})+t-(n-t-1)=\I(\tilde{\OO},\tilde{\OO})-n+2t+1\\
    \I(\tilde{\x'},\tilde{\OO'})&=\I(\tilde{\x},\tilde{\OO})-(n-s-1)+t\\
    \I(\tilde{\OO'},\tilde{\x'})&=\I(\tilde{\OO},\tilde{\x})-(n-t-1)+s.
\end{align*}
In the end, we see that as above, we have $\tilde{\maslov}(\x')=\tilde{\maslov}(\x)$, as desired. The case where $s\le t$ is analogous. 
\end{proof}

\begin{figure}[ht]
    \centering
    \includegraphics[height=6cm]{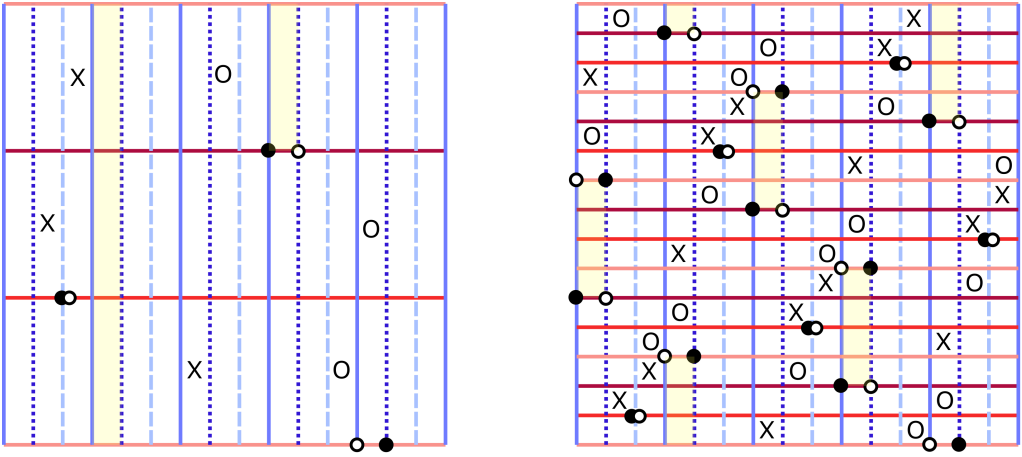}
    \caption{An example of an embedded parallelogram $r$ connecting a generator $\x\in\genG$ (represented by solid dots) to a generator $\y\in\genG$ (represented by empty dots) in the straight model on the left. On the right, the lift of the left to the universal cover. }
    \label{fig:differentialgradings}
\end{figure}

\subsection{Alexander grading}
The third and final grading comes, in its most general form, as a filtration on the chain complex. Throughout this paper, however, we have restricted ourselves to a differential which counts rectangles which do not contain any markings in $\XX$ in their interior, which is the differential on the associated graded complex coming from the filtration. As such, on the complex $(C^-(\gridg),\dxminus)$, we get a grading, called the Alexander grading. 

In an analogous way to what we will here call $\maslov_{\OO}$ defined above, we can define $\maslov_{\XX}$. For $\x\in\genG$, we then define $\tilde{\alex}(\x)=\maslov_{\OO}(\x)-\maslov_{\XX}(\x)$. Recall that $L$ is a link with $\ell$ components. The \emph{Alexander grading} of a generator $\x\in\genG$ is defined by \begin{equation}\label{eqn:alexdefn}\alex(\x)=\frac{\tilde{\alex}(\x)-(n-\ell)}{2}.\end{equation} We extend this to all of $C^-(\gridg)$ by declaring $\alex(V_i\x)=\alex(\x)-1$. It is clear that $\alex$ is well-defined, as $\maslov_{\XX}$ and $\maslov_{\OO}$ are well-defined by Lemma~\ref{lem:maslov}. 

Let us compute the Alexander grading of the generator $\z$ of Figure~\ref{fig:fundomain}. We compute \begin{align*}
    \I(\tz,\tz)&=52\\
    \I(\tz,\tXX)&=67\\
    \I(\tXX,\tz)&=52\\
    \I(\tXX,\tXX)&=62.
\end{align*}

We recall that $d(5,2,1)=-\frac{2}{5}$, so $\maslov_{\XX}(\z)=\frac{1}{5}(52-67-52+62)-\frac{2}{5}+1=-\frac{2}{5}$. Thus we have $\tilde{\alex}(\z)=\frac{2}{5}-(-\frac{2}{5})=\frac{4}{5}$, and $\alex(\x)=\frac{\frac{4}{5}-(3-1)}{2}=-\frac{3}{5}$.

\subsection{Embedded parallelograms}
The above gradings behave well with respect to embedded parallelograms. This was proved in \cite{celoria2015note}, but we recreate the proof here slightly more generally, as we will adapt it to study embedded pentagons later. 

\begin{prop}[cf.~{\cite[Proposition 2.13]{celoria2015note}}]\label{prop:parallelogramgradings}
Suppose $\x,\y\in\genG$ are connected by an embedded parallelogram $r\in \ePG(\x,\y)$. Then:\begin{enumerate}
    \item\label{prop:parallelogramgradingsspinc} $\spincS(\x)=\spincS(\y)$
    \item\label{prop:parallelogramgradingsmaslov} $\maslov(\x)=\maslov(\y)+1-2\sum_{i=0}^{n-1}n_{O_i}(r)$
    \item\label{prop:parallelogramgradingsalexander} $\alex(\x)=\alex(\y)+\sum_{i=0}^{n-1}n_{X_i}(r)-\sum_{i=0}^{n-1}n_{O_i}(r)$. 
\end{enumerate}
\end{prop}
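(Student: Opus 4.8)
The plan is to prove all three statements by the same mechanism: lift the embedded parallelogram $r$ from the torus to the universal cover and relate it to a collection of embedded rectangles for the lift $\tilde{L}$ in $S^3$, then invoke the known grading formulas for rectangles in $S^3$ grid homology (from \cite{Manolescu_2007}), summing over the $p$ strips $C_{p,q}^k$. Concretely, if $r \in \ePG(\x,\y)$ then applying $C_{p,q}$ to $W(r)$ produces $p$ parallelograms, one in each horizontal strip; each of these is an embedded rectangle in the $S^3$ grid diagram for $\tilde{L}$ connecting the lifted generators $\tilde{\x}$ and $\tilde{\y}$ (this uses Remark~\ref{rem:parallelogramtransposition}, which guarantees $r$ shifts exactly two $p$-coordinates by a common amount $k$, so its lift is genuinely a disjoint union of rectangles rather than something wrapping). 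The multiplicities satisfy $n_{O_i}(r) = \sum_k n_{\tilde{O}}(\tilde{r}_k)/(\text{appropriate count})$; more precisely $\sum_i n_{O_i}(r)$ relates to the total $O$-multiplicity of the union of the $p$ lifted rectangles divided by $p$.

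For part~\eqref{prop:parallelogramgradingsspinc}, I would argue directly on the torus: by Remark~\ref{rem:parallelogramtransposition}, $a^{\x}_i - a^{\y}_i = a^{\y}_j - a^{\x}_j = k$ and $a^{\x}_s = a^{\y}_s$ otherwise, so $\sum_i (a^{\x}_i - a^{\x_{\OO}}_i) = \sum_i (a^{\y}_i - a^{\x_{\OO}}_i)$ since the two changes cancel. Hence $\tilde{\spincS}(\x) = \tilde{\spincS}(\y) \pmod p$ and therefore $\spincS(\x) = \spincS(\y)$. This is the easy part and needs no lifting.

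For parts~\eqref{prop:parallelogramgradingsmaslov} and~\eqref{prop:parallelogramgradingsalexander}, I would use the standard $S^3$ rectangle formula: if $\tilde{r}$ is an empty rectangle from $\tilde{\x}$ to $\tilde{\y}$ in a toroidal grid diagram for a link in $S^3$, then $M_{\tilde{\OO}}(\tilde{\x}) = M_{\tilde{\OO}}(\tilde{\y}) + 1 - 2 n_{\tilde{\OO}}(\tilde{r})$ (and similarly for $\tilde{\XX}$). Summing the change in $\I(\tilde{\x},\tilde{\x}) - \I(\tilde{\x},\tilde{\OO}) - \I(\tilde{\OO},\tilde{\x}) + \I(\tilde{\OO},\tilde{\OO})$ over the lift, and using that the lifted domain is a union of $p$ embedded rectangles whose $O$-multiplicities sum (over all $p$ strips and all $n$ markings) to $p \sum_i n_{O_i}(r)$, gives $\tilde{\maslov}(\x) - \tilde{\maslov}(\y) = p\bigl(1 - 2\sum_i n_{O_i}(r)\bigr)$, and dividing by $p$ yields~\eqref{prop:parallelogramgradingsmaslov} since the correction term $d(p,q,q-1)+1$ is the same for $\x$ and $\y$; part~\eqref{prop:parallelogramgradingsalexander} then follows by applying the $\maslov_{\OO}$ computation and the analogous $\maslov_{\XX}$ computation to $\tilde{\alex} = \maslov_{\OO} - \maslov_{\XX}$, noting the $(n-\ell)$ shift and the factor $2$ in~\eqref{eqn:alexdefn} are independent of the generator. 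The main obstacle will be the careful bookkeeping in the lifting step: verifying that the $p$ lifted pieces of $r$ are genuinely disjoint embedded rectangles (not overlapping or wrapping within a strip), correctly tracking how many of the $pn$ lifted markings each lifted rectangle contains, and confirming the cross-strip contributions to $\I$ behave as claimed — i.e.\ that pairs of points in different strips contribute identically for $\tilde{\x}$ and $\tilde{\y}$ because $r$ only moves points horizontally within a bounded range and vertically not at all, an argument parallel to the one already used in the proof of Lemma~\ref{lem:maslov}.
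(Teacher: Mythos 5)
Your overall strategy — pass to the universal cover, identify the lift of $r$ with a collection of rectangles for $\tilde L$ in an $S^3$ grid of size $pn$, apply the $S^3$ rectangle formula, and divide by $p$ — is exactly the paper's strategy for parts~\eqref{prop:parallelogramgradingsmaslov} and~\eqref{prop:parallelogramgradingsalexander}, and part~\eqref{prop:parallelogramgradingsspinc} is handled the same way in both. However, there are two genuine gaps in how you propose to execute it.

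First, you assert that each of the $p$ lifted rectangles is ``an embedded rectangle \dots connecting the lifted generators $\tilde\x$ and $\tilde\y$.'' This cannot be right: a single rectangle connects two grid states that differ in exactly two components, but $\tilde\x$ and $\tilde\y$ differ in $2p$ components. The $p$ lifted rectangles $r_1,\dots,r_p$ have to be applied \emph{sequentially}, producing a chain of intermediate states $\tilde\x=\tilde\x^0\to\tilde\x^1\to\cdots\to\tilde\x^p=\tilde\y$ with $r_i\in\ePG(\tilde\x^{i-1},\tilde\x^i)$, and the grading changes are summed over this chain. This is precisely the device the paper uses (``we will do it one empty parallelogram at a time''); without it, there is nothing to which the $S^3$ rectangle formula applies. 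Related to this, ``one in each horizontal strip'' is not in general accurate either: the $r_i$ are the $\Zp$-translates of $r$, and they can straddle strip boundaries.

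Second, the bookkeeping you flag as ``the main obstacle'' would in fact fail as proposed. You hope to verify that the lifted pieces are ``genuinely disjoint embedded rectangles (not overlapping or wrapping within a strip)'' and that cross-strip pairs contribute identically to $\I$ ``because $r$ only moves points horizontally \dots an argument parallel to the one already used in the proof of Lemma~\ref{lem:maslov}.'' Neither claim holds. The analogy to Lemma~\ref{lem:maslov} breaks down because there the shift of fundamental domain moves \emph{every} point by the same amount, preserving all pairwise inequalities; here only the two corners of each $r_i$ move, so cross-strip $\I$-contributions involving those corners genuinely change. And the lifted rectangles \emph{do} wrap: once you fix a single fundamental domain for the $S^3$ grid (which you must, to evaluate $\I$), you cannot arrange for all $p$ of the $\Zp$-related rectangles to be simultaneously embedded. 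The paper explicitly notes that the na\"ive strip-by-strip computation ``sadly\dots does not'' work, and its proof devotes a three-way case analysis (no wrap, horizontal wrap, vertical wrap) to carefully counting the change in $\I$ under one $r_i$ in each wrapping configuration. Your proposal would need to either reproduce that case analysis or cite a version of the $S^3$ rectangle--grading formula that is already proven for arbitrary (possibly wrapping) rectangles in a toroidal diagram, applied once per $r_i$ along the chain of intermediate generators.
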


\begin{proof}
To prove (\ref{prop:parallelogramgradingsspinc}), recall that Remark~\ref{rem:parallelogramtransposition} tells us if $\x$ and $\y$ are connected by an empty parallelogram $r$, then $a^{\x}_k=a^{\y}_k$ for all but at most two $p$-coordinates $0\le k<n$. Further, if $\tau_r=(i\; j)$, then $a^{\x}_i-a^{\y}_i=a^{\y}_j-a^{\x}_j$. Plugging these values in to the definition of $\spincS$ tells us that $\spincS(\x)=\spincS(\y)$. 

To simplify notation in the proof of (\ref{prop:parallelogramgradingsmaslov}), let $s=\sum_{i=0}^{n-1}n_{O_i}(r)$. One could hope that the Maslov grading behaves on each of the $p$ strips as it does on a grid diagram for $S^3$, so we would get\begin{align*}
    \I(\tx,\tx)&=\I(\ty,\ty)+p\\
    \I(\tx,\OO)&=\I(\ty,\OO)+ps\\
    \I(\OO,\tx)&=\I(\OO,\tx)+ps.
\end{align*} 
Sadly it does not, as Figure~\ref{fig:differentialgradings} demonstrates. Instead of trying to relate the Maslov gradings of $\x$ and $\y$ one strip at a time, we will do it one empty parallelogram at a time, in the following sense.

Fix a fundamental domain for $T^2$, and consider $\tx$ and $\ty$, which are connected by $p$ parallelograms, denoted $\tilde{r}=\{r_1,\ldots,r_p\}$ as in Figure~\ref{fig:differentialgradings}. Let $\tx^i$ denote the result of applying $\{r_1,\ldots,r_i\}$ to $\tx$, so $\tx^0=\tx$, and $\tx^p=\y$. Note that the $\OO$ and $\XX$ markings are fixed. Finally, we will abuse notation to write $\tilde{\maslov}(\tx^i)=\I(\tx^i,\tx^i)-\I(\tx^i,\tOO)-\I(\tOO,\tx^i)+\I(\tOO,\tOO)$. 

We will tackle three cases, depending on how $r_i$ sits on the fundamental domain. Suppose $r_i$ does not wrap around the diagram, i.e. it is an embedded disk in the fundamental domain as a subset of $\R^2$.  Then $\I(\tx^{i-1},\tx^{i-1})=\I(\tx^i,\tx^i)+1$. Furthermore, we have $\I(\tx^{i-1},\tOO)=\I(\tx^i,\tOO)+s$, and similarly $\I(\tOO,\tx^{i-1})=\I(\tOO,\tx^{i})+s$. As a result, $\tilde{\maslov}(\x^{i-1})=\tilde{\maslov}(\x^i)+1-2s$. 

Suppose on the fundamental domain $r_i$ wraps horizontally around the diagram, i.e. is the disjoint union of two embedded disks which border the curves in $\betas$ which have coordinates $0$ and $pn$. Each of these embedded disks covers the same number of rows (consider on the fundamental domain as a subset of $\R^2$), which we will denote $l$.

Each row must have a component of $\tx^{i-1}$ on it, and since the rectangle is empty, there must be $l-1$ components in the complement of $r_i$ in the horizontal annulus containing those $l$ rows. Each such component counts twice towards $\I(\tx^i,\tx^i)$ but not at all towards $\I(\tx^{i-1},\tx^{i-1})$. Furthermore, the corners of $r_i$ contribute once to $\I(\tx^i,\tx^i)$ but not to $\I(\tx^{i-1},\tx^{i-1})$. Therefore, we get $\I(\tx^{i-1},\tx^{i-1})=\I(\tx^i,\tx^i)-1-2(l-1)$. 

Similarly, there is one marking per row, so there are $l$ markings contained in the horizontal annulus containing those $l$ rows. If a marking is contained in $r_i$, it contributes equally to $\I(\tx^i,\tx^i)$ and $\I(\tx^{i-1},\tx^{i-1})$. There are $l-s$ markings contained in the annulus but not in $r_i$; each contributes to $\I(\tx^i,\tOO)$ and $\I(\tOO,\tx^i)$ but not to $\I(\tx^{i-1},\tOO)$ or $\I(\tOO,\tx^{i-1})$, so we get \begin{align*}\tilde{\maslov}(\tx^{i-1})&=(\I(\tx^i,\tx^i)-1-2(l-1))-(\I(\tx^i,\tOO)-(l-s))-(\I(\tOO,\tx^i)-(l-s))+\I(\tOO,\tOO)\\
&=\tilde{\maslov}(\tx^i)+1-2s
.\end{align*}

Suppose on the fundamental domain $r_i$ wraps vertically around the diagram,  i.e. is the disjoint union of two embedded disks which border the curves in $\alphas$ which have coordinates $0$ and $n$. Then the analogous argument gives us that again $\tilde{\maslov}(\tx^{i-1})=\tilde{\maslov}(\tx^i)+1-2s$. 

If we choose a fundamental domain such that some rectangle has lower left corner at $(0,0)$, we ensure that no rectangle wraps both horizontally and vertically. Thus we have that $\tilde{\maslov}(\x)=\tilde{\maslov}(\tx^0)=\tilde{\maslov}(\tx^p)+p-2ps=\tilde{\maslov}(\y)+p-2ps$, so $\maslov(\x)=\maslov(\y)+1-2s$, as desired.

We can conclude (\ref{prop:parallelogramgradingsalexander}) similarly. We see \[\maslov_{\XX}(\x)=\maslov_{\XX}(\y)+1-2\sum_{i=0}^{n-1}n_{X_i}(r)\] by a similar argument, and combining this with Definition~\ref{eqn:alexdefn} gives us that \begin{equation*}\alex(\x)=\alex(\y)+\sum_{i=0}^{n-1}n_{X_i}(r)-\sum_{i=0}^{n-1}n_{O_i}(r).\qedhere\end{equation*}

\end{proof}

\section{Invariance}\label{sec:invariance}
By Theorem~\ref{thm:cromwell}, two grid diagrams represent the same isotopy class of link in $\lpq$ if and only if they are related by a finite sequence of moves called commutations, stabilizations, and destabilizations. After defining these moves, we will provide explicit homotopy equivalences between the chain complexes associated to two grids connected by such a move. Throughout, let $\gridg$ be a grid diagram with grid number $n$ for a $\ell$-component link $L$ in $\lpq$.

\subsection{Commutations}\label{sec:commutation}
We will discuss in detail the case of a commutation of columns, and at the end explain how this can be modified to work for a commutation of rows. Consider a pair of consecutive columns in $\gridg$ with the first column bounded by $\beta_{i-1}$ and $\beta_i$, and the second column by $\beta_i$ and $\beta_{i+1}$. We say the markings from these two columns \emph{do not interleave} if there is some point on $\beta_i$ such that when traveling along $\beta_i$ starting from that point we pass the regions of $T^2-\alphas-\betas$ containing both of the markings from one column, then the regions containing both of the markings from the other column. 

If the markings in these two columns do not interleave, we can perform a \emph{commutation move}, which consists of exchanging the markings in the first column with those in the second. Let $\gkprime$ be the result of applying a commutation move to $\gridg$, and let $L'$ be the link specified by $\gkprime$. By Theorem~\ref{thm:cromwell}, $L$ and $L'$ are smoothly isotopic. The fact that knot Floer homology is a link invariant then tells us that $(CF^-(\lpq,L),\dxminus)\simeq (CF^-(\lpq,L'),\dxminusprime)$. Furthermore, by Proposition~\ref{prop:cminusiscfk} we have that $(C^-(\gridg),\dxminus)\simeq(CF^-(\lpq,L),\dxminus)$ and $(C^-(\gkprime),\dxminus)\simeq(CF^-(\lpq,L'),\dxminusprime)$. We get, then, that $(C^-(\gridg),\dxminus)$ is quasi-isomorphic to $(C^-(\gkprime),\dxminusprime)$. We will prove this directly, without appealing to the holomorphic theory:

\begin{prop}\label{prop:commutation}
Suppose $\gridg$ and $\gkprime$ are grid diagrams for a $\ell$-component link $L$ which are connected by a commutation move. Then $(C^-(\gridg),\dxminus)$ and $(C^-(\gkprime),\dxminusprime)$ are homotopy equivalent as chain complexes over $\Ztwo[U_1,\ldots,U_\ell]$. 
\end{prop}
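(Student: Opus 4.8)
The plan is to imitate the commutation-invariance argument from the grid homology of links in $S^3$ (cf.~\cite{Manolescu_2007,gridhomology}), adapting it to handle the twisting and wrapping phenomena specific to twisted toroidal grid diagrams. The two grids $\gridg$ and $\gkprime$ differ only in the position of two markings in a pair of non-interleaving consecutive columns, bounded by $\beta_{i-1}$, $\beta_i$, $\beta_{i+1}$. First I would introduce an auxiliary curve $\gamma_i$, isotopic to $\beta_i$, which passes ``on the other side'' of the two markings so that it intersects $\beta_i$ transversely in a small number of points (in the $S^3$ case, two points; here one must check that the twisting does not force extra intersections on the interior of a fundamental domain — this is a combinatorial bookkeeping issue, not a substantive one). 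This produces an intermediate grid-like diagram, and the generators of the new complex are in bijection with pairs coming from intersections with $\gamma_i$ rather than $\beta_i$. I would then define a chain map $\Phi_{\gamma\beta}\colon C^-(\gridg)\to C^-(\gkprime)$ by counting empty pentagons (the analogue of the pentagon count in the $S^3$ theory) weighted by $\prod_i V_i^{n_{O_i}}$, and symmetrically a map $\Phi_{\beta\gamma}$ in the other direction.

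The key steps, in order, are: (1) set up the pentagons precisely — a pentagon from $\x$ to $\y$ is an embedded disk in $T^2$ with boundary on $\alphas$, the $\beta$-curves, and $\gamma_i$, with appropriate corner conditions, and one must identify the ``small'' distinguished intersection point of $\beta_i\cap\gamma_i$ that serves as the fifth corner; (2) show $\Phi_{\gamma\beta}$ is a chain map, i.e.\ $\dxminusprime\circ\Phi_{\gamma\beta}+\Phi_{\gamma\beta}\circ\dxminus=0$, by the now-familiar juxtaposition-of-domains analysis: a domain that decomposes as (rectangle)$*$(pentagon) or (pentagon)$*$(rectangle) generically admits exactly one alternate decomposition of the opposite type, so the terms cancel in pairs over $\F$, with the degenerate cases (thin annuli) killed because every row and column contains an $X$ marking — this is exactly the structure of the proof of Proposition~\ref{prop:d2=0}, and I would reuse the case analysis ($m=4,3,2$ mismatched components) verbatim, adding the pentagon corner; (3) show $\Phi_{\beta\gamma}\circ\Phi_{\gamma\beta}$ is chain homotopic to the identity via a homotopy $H$ counting empty hexagons (domains with a distinguished corner pattern on $\beta_i$, $\gamma_i$), again using the juxtaposition analysis, where now the ``extra'' domains that survive contribute exactly the identity term; (4) symmetrically $\Phi_{\gamma\beta}\circ\Phi_{\beta\gamma}\simeq\Id$; (5) check that all these maps respect the three gradings (using Proposition~\ref{prop:parallelogramgradings} and its pentagon analogue, which the excerpt explicitly flags as forthcoming) and descend to maps of complexes over $\Ztwo[U_1,\ldots,U_\ell]$, i.e.\ commute with the $U$-actions up to the homotopies of Lemma~\ref{lem:componentvariablesarehomotopic}; and (6) handle the commutation of rows by interchanging the roles of $\alphas$ and $\betas$, noting the twisting is invisible to this symmetry.

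I expect the main obstacle to be the domain-combinatorics for the homotopies in steps (3)--(4): one must enumerate all ways an empty hexagon can arise as a juxtaposition and verify the surviving count is precisely $1$ (the identity), and here the wrapping around the torus genuinely enlarges the list of diagram schematics one must consider, even though — as emphasized in Remark~\ref{rem:wrappingdiagrams} — wrapping does not affect the combinatorics of alternate decompositions. So the real work is to make that ``wrapping-independence'' principle rigorous in the pentagon/hexagon setting: one shows that for any domain appearing in the relevant compositions, the set of its decompositions into a rectangle and a pentagon (resp.\ two rectangles and a pentagon, or a hexagon) is in natural bijection with the corresponding set for the ``unwrapped'' model, because a decomposition is determined by its behavior near the corners and the local multiplicities, none of which see the wrapping. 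Once that is in place, the cancellation and the identity-term bookkeeping are identical to the $S^3$ case. A secondary, purely technical point is to verify that the non-interleaving hypothesis is exactly what is needed for $\gamma_i$ to exist with the minimal intersection pattern with $\beta_i$ — without it, $\Phi_{\gamma\beta}$ need not be a chain map.
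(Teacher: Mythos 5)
Your overall approach matches the paper's: perturb $\beta_i$ into an auxiliary curve $\gamma$ meeting it in two points, define $\Phi_{\beta\gamma}$ counting empty pentagons, show it is a chain map, and produce a homotopy inverse via a hexagon-counting homotopy. However, there is a concrete gap in step (2): you say you would ``reuse the case analysis ($m=4,3,2$ mismatched components) verbatim,'' with the degenerate cases dismissed as ``thin annuli killed because every row and column contains an $X$ marking.'' This misdescribes what actually happens. Because the initial and terminal generators of a domain in the chain-map identity live on \emph{different} grids ($\genG$ and $\genG'$), there is no $m=0$ case at all, and the thin-annulus argument from Proposition~\ref{prop:d2=0} is simply inapplicable. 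Instead, a genuinely new case $m=1$ appears: the two generators can agree in permutation and $p$-coordinates, with exactly one component sliding from $\beta$ to $\gamma$. These domains are either a row minus a triangle (which does have two decompositions, one of each type) or an annulus abutting a triangle. The latter has a \emph{unique} decomposition, and the cancellation comes from pairing it with a \emph{different} domain $\phi'$ — the one supported on the same triangle and the opposite annulus — sharing the same initial and terminal generators. This is a pairing of distinct domains, not an alternate decomposition of a single domain, and it cannot be deduced by reusing the $d^2=0$ case analysis. Without handling $m=1$, the chain-map lemma fails.

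A secondary, smaller point: for row commutations you claim ``the twisting is invisible to this symmetry.'' That is not quite right — the $\alpha$ curves are straight and the $\beta$ curves are twisted, so rows and columns are not symmetric under a $90^\circ$ rotation of the diagram; one must check directly that the analogous pentagon/hexagon arguments go through. The paper handles this by observing the proofs are analogous after rotation, which is the right move, but one should not appeal to a symmetry that the twisted torus does not actually possess.
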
 

Let us set some notation, following the language of \cite{Manolescu_2007}. Let $\betas'$ be the set $\betas$ after applying a commutation of columns to $\gridg$. Let $\beta:=\beta_i$ and $\gamma:=\beta_i'$. We wish to draw $\gridg$ and $\gkprime$ on the same diagram. To do so, perturb $\beta$ and $\gamma$ so they meet transversely at two points which are not on a horizontal circle. Figure~\ref{fig:commutation} depicts both grid diagrams drawn on the same twisted torus. Denote by $\genG$ the set of generators of $C^-(\gridg)$ and by $\genG'$ the set of generators of $C^-(\gkprime)$. 

\begin{figure}[h]
    \centering
    \begin{tikzpicture}
    \node at (0,0) {\includegraphics[height=6cm]{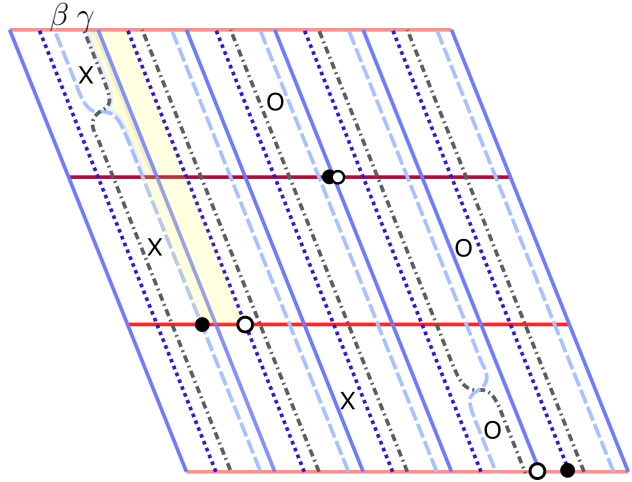}};
    \node at (-3.45,3.15) {$\beta$};
    \node at (-3.1,3.1) {$\gamma$};
    \end{tikzpicture}
    \caption{A commutation of the second and third columns on a grid diagram for a knot in $L(5,2)$ of grid index 3. The darkened circles represent a generator $\x\in\genG$ and the empty circles represent a generator $\y'\in\genG'$. The shaded pentagon is a pentagon connecting $\x$ to $\y'$. }
    \label{fig:commutation}
\end{figure}

We will define a homotopy equivalence $\Phi_{\beta\gamma}:C^-(\gridg)\to C^-(\gkprime)$ counting pentagons in the combined diagram, with homotopy inverse $\Phi_{\gamma\beta}:C^-(\gkprime)\to C^-(\gridg)$. We will define a homotopy $\Hbgb:C^-(\gridg)\to C^-(\gridg)$ which counts hexagons in the combined diagram. We define these polygons here. 

\begin{defn}[cf.~{\cite[Definition 5.1.1]{gridhomology}}]
Fix $\x\in\genG$ and $\y'\in\genG'$. A \emph{pentagon $s$ from $\x$ to $\y'$} is a disk embedded in $T^2$ along with the data of an \emph{initial generator} $\x$ and \emph{terminal generator} $\y'$ such that \begin{itemize}
    \item the boundary of $s$ consists of five arcs lying on $\alphas\cup\betas\cup\betas'$,
    \item four of the corners of $s$ are in $\x\cup\y'$; the last is one of the intersection points of $\beta$ and $\gamma$,
    \item $\partial(\partial_{\alpha}(s))=\y'-\x$, and
    \item around each corner point of $s$, the disk has multiplicity one in one region, and zero in the other three.
\end{itemize} 
\end{defn}

Let $\pbg(\x,\y')$ denote the space of pentagons from $\x$ to $\y'$, noting that $\pbg(\x,\y')$ is empty unless $\x$ and $\y'$ agree at $n-2$ points. Call a pentagon $s$ \emph{empty} if $\Int(s)\cap\x=\emptyset=\Int(s)\cap\y'$, and denote by $\epbg(\x,\y')$ the space of empty pentagons from $\x$ to $\y'$. 

\begin{remark}\label{rem:pentagontransposition}
Just as in the case for parallelograms, if a pentagon $s$ connects $\x$ to $\y'$, then $\sigma_{\y'}=\tau_{i,j}\sigma_{\x}$, for some transposition $\tau_{i,j}\in S_n$, which we denote $\tau_s$. Furthermore, we have that $a^{\x}_r=a^{\y'}_r$ for all $r\in\{0,\ldots,n-1\}\setminus\{i,j\}$, and $a^{\x}_i-a^{\y'}_i=a^{\y'}_j-a^{\x}_j=k$ for some $|k|<n$. 
\end{remark}

As with parallelograms, we can generalize the notion of a pentagon into that of a \emph{domain}. We overload the term \emph{domain} with the below notation, but the meaning will always be clear from context. 

\begin{defn}[cf.~{\cite[Lemma 5.1.4]{gridhomology}}]\label{def:domain2}
Fix $\x\in\genG$, $\y'\in\genG'$. A \emph{domain $\phi$ from $\x$ to $\y'$} is a formal linear combination of the closure of regions of $T^2\setminus(\alphas\cup\betas\cup\betas')$ such that $\partial(\partial_{\alpha}(\phi))=\y'-\x$, along with the data of the generators $\x$ and $\y'$. As before, we will call $\x$ the \emph{initial generator} for the domain, and $\y'$ the \emph{terminal generator}. The \emph{support} of a domain $\phi$ from $\x$ to $\y$ is the underlying linear combination. Let $\dbeta(\phi)=\partial(\phi)\cap(\betas\cup\betas')$. 
\end{defn} 

Remark~\ref{rem:omultiplicity} applies to domains of Definition~\ref{def:domain2} just as to domains of Definition~\ref{def:domain1}: if a domain $\phi$ from $\x$ to $\z$ has a decomposition as the juxtaposition of two domains $\phi=\psi_1*\psi_2$, then $n_{O_i}(\phi)=n_{O_i}(\psi_1)+n_{O_2}(\psi)$.

Pentagons are one specialization of domains; another is hexagons. 

\begin{defn}[cf.~{\cite[Definition 5.1.5]{gridhomology}}]
Let $\x,\y\in\genG$. A \emph{hexagon $h$ from $\x$ to $\y$} is a disk embedded in $T^2$ along with the data of generators $\x$ and $\y$ such that \begin{itemize}
    \item the boundary of $h$ consists of six arcs lying on $\alphas\cup\betas\cup\betas'$, 
    \item four of the corners of $h$ are in $\x\cup\y$; the other two corners are the two intersection points of $\beta$ and $\gamma$,
    \item $\partial(\partial_{\alpha}(h))=\y-\x$, and 
    \item around each corner point of $h$, the disk mas multiplicity one in one region, and zero in the other three. 
\end{itemize}
\end{defn}

Let $\hexbgb(\x,\y)$ denote the space of hexagons from $\x$ to $\y$, noting that $\hexbgb(\x,\y)$ is empty unless $\x$ and $\y$ agree at $n-2$ points. Call a hexagon $h$ empty if $\Int(h)\cap\x=\emptyset=\Int(h)\cap\y$, and denote by $\ehexbgb(\x,\y)$ the space of empty pentagons from $\x$ to $\y$. 

We can now define our chain map from $C^-(\gridg)$ to $C^-(\gkprime)$, its homotopy inverse, and the homotopy.

Define $\Phi_{\beta\gamma}:C^-(\gridg)\to C^-(\gkprime)$ on a generator $\x\in\genG$ by \[\Phi_{\beta\gamma}(\x)=\sum_{\y\in \genG'}\sum_{\substack{s\in \epbg(\x,\y)\\s\cap\XX=\emptyset}}V_0^{n_{O_0}(s)}\cdots V_{n-1}^{n_{O_{n-1}}(s)}\y. \]

Extend this linearly to all of $C^-(\gridg)$.

\begin{lem}\label{lem:phibetagamma}
The map $\Phi_{\beta\gamma}$ is a chain map. 
\end{lem}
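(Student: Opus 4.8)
The plan is to show that $\Phi_{\beta\gamma} \circ \dxminus = \dxminusprime \circ \Phi_{\beta\gamma}$ by the standard ``count the domains that are juxtapositions of two polygons'' argument, exactly as in the proof of Proposition~\ref{prop:d2=0}. Both compositions count domains $\phi$ from $\x \in \genG$ to $\y' \in \genG'$ which decompose either as (empty parallelogram in $\gridg$) followed by (empty pentagon), or as (empty pentagon) followed by (empty parallelogram in $\gkprime$), where all polygons avoid $\XX$. So I would fix $\x$ and $\y'$, let $\ePoly(\x,\y')$ denote the set of such juxtaposable domains avoiding $\XX$, and for $\phi \in \ePoly(\x,\y')$ let $N(\phi)$ be the total number of decompositions (of either type); the goal is to show $N(\phi)$ is even for every such $\phi$. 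Since $\phi$ records the same local multiplicity data at the $O_i$'s regardless of decomposition (Remark~\ref{rem:omultiplicity}), the $V$-monomials match up, and $\Phi_{\beta\gamma}\dxminus(\x) - \dxminusprime\Phi_{\beta\gamma}(\x) = \sum_{\y'} \sum_{\phi} N(\phi)\, (\prod_i V_i^{n_{O_i}(\phi)})\, \y' \equiv 0 \pmod 2$.

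The heart of the argument is the case analysis on $m$, the number of components at which $\x$ and $\y'$ differ. As before $m \le 4$, but now the intermediate generator $\gen{t}$ lies in $\genG$ if the pentagon comes second, and the corner at the $\beta$-$\gamma$ intersection point constrains the geometry. For $m = 4$, I expect the two polygons to have essentially disjoint (or transversally overlapping) supports, and swapping the order of the parallelogram and the pentagon gives the unique alternate decomposition, so $N(\phi) = 2$; the pentagon is the piece carrying the $\beta$-$\gamma$ corner in both decompositions. For $m = 3$ and $m = 2$, I would run the same local move as in Proposition~\ref{prop:d2=0}: identify the intermediate corner(s) $d_\alpha$ (or $d_\beta$), travel into $\phi$ along the transverse curve to a corner $d$ with multiplicity-one-in-three-regions, then out along the other curve to produce the new intermediate generator $\w$, yielding the unique alternate decomposition. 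One must check the $\beta$-$\gamma$ corner of $\phi$ is correctly inherited — it sits on the pentagon in each of the two decompositions — and that the move is compatible with the wrapping phenomena discussed in Section~\ref{sec:twisting} (Remark~\ref{rem:wrappingdiagrams}), which as there does not affect the combinatorics. As in Proposition~\ref{prop:d2=0}, $m = 1$ is impossible by the transposition constraints (Remarks~\ref{rem:parallelogramtransposition} and~\ref{rem:pentagontransposition}), and $m = 0$ would force $\phi$ to be a width-one or height-one annulus, which must contain an $\XX$ marking and so is not counted.

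There is however a genuinely new feature compared to $d^2 = 0$: the ``thin'' domains where the pentagon and parallelogram interact directly near the bigon region cut out by $\beta$ and $\gamma$. One must be careful about degenerate configurations in which $\phi$ has support only in the vicinity of the two $\beta$-$\gamma$ intersection points — for instance a domain that could be read as a pentagon-then-parallelogram in two ways, or one whose only decomposition swaps between a ``left'' and ``right'' reading across the bigon. I would handle these by the same pairing principle, checking that each such $\phi$ still has exactly two decompositions (possibly both of the same type, pentagon-then-parallelogram), so $N(\phi) = 2$ there as well. I expect this to be the main obstacle: not that the pairing fails, but that the enumeration of these near-the-bigon configurations is fiddly and must be shown to be exhaustive, especially since in the lens space setting a parallelogram can shift $p$-coordinates and wrap, so the ``no such domains'' shortcut available over $S^3$ in low-$m$ cases is unavailable. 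Once every $\phi \in \ePoly(\x,\y')$ is shown to satisfy $N(\phi) \equiv 0 \pmod 2$, the lemma follows.
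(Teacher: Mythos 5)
Your overall strategy (pair up decompositions, analyze by the number $m$ of components at which the initial and terminal generators disagree) is the right one and matches the paper. But you have the degenerate cases exactly backwards, and this is not a cosmetic slip — it hides the one genuinely new phenomenon in this lemma.

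You assert that $m=1$ is impossible by the transposition constraints and that $m=0$ is ruled out because the only candidate domains would be thin annuli covering an $\XX$. In fact it is $m=0$ that never occurs, and for a trivial reason: $\x\in\genG$ has a component on $\beta$ while $\y'\in\genG'$ has a component on $\gamma$, and $\beta\ne\gamma$ after the perturbation, so $\x$ and $\y'$ can never be literally equal. The case $m=1$, by contrast, \emph{does} occur: it happens precisely when $\sigma_{\x}=\sigma_{\y'}$, the $p$-coordinates of $\x$ and $\y'$ agree, and the only difference is that the single component on $\beta$ has moved to the corresponding point on $\gamma$. Remarks~\ref{rem:parallelogramtransposition} and~\ref{rem:pentagontransposition} permit this — they only constrain the permutations to differ by a transposition, and the identity transposition is allowed here because the pentagon's transposition can undo the parallelogram's. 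The paper analyzes $m=1$ in two sub-cases: either the support of $\phi$ is a row minus a triangle (in which case $\phi$ has exactly two decompositions, one of each type, found by cutting from the $\beta\cap\gamma$ intersection along $\beta$ and along $\gamma$), or the support is one of the two thin annuli of $T^2-\alphas-\betas-\betas'$ bordering both $\beta$ and $\gamma$, together with an abutting triangle. In the second sub-case the domain has a \emph{unique} decomposition and pairs not with another decomposition of itself but with a \emph{different} domain $\phi'$ — same triangle, the other annulus — which also has a unique decomposition. This ``pairing of distinct domains'' mechanism is the qualitatively new ingredient here, and it is exactly the thing your ``thin domains near the bigon'' paragraph was gesturing at without pinning down. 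As written, your proof would stop at ``$m=1$ is impossible,'' which is false, and the equality $\Phi_{\beta\gamma}\circ\dxminus=\dxminus\circ\Phi_{\beta\gamma}$ would not be established.

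Your treatment of $2\le m\le 4$ is essentially correct and mirrors the paper, though for the record: in the $m=3$ case the distinguished corner with multiplicity one in three regions can be either a component of $\x\cup\y'$ or one of the two $\beta\cap\gamma$ intersection points, and these require slightly different recipes for constructing the alternate intermediate generator (in particular, when traveling along $\dbeta(\phi)$ one may have to switch from $\beta$ to $\gamma$ at an intersection point). You would need to spell this out, but there is no conceptual obstruction.
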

\begin{proof}
The analysis of domains is similar to the corresponding proof for knots in $S^3$ (cf.~{\cite[Lemma 5.1.4]{gridhomology}}). 

We wish to prove \begin{equation}\label{eqn:phibetagammachainmap}
    (\Phi_{\beta\gamma}\circ\dxminus)(\x)=(\dxminus\circ\Phi_{\beta\gamma})(\x).
\end{equation} 

Let $\phi$ be a domain connecting $\x\in\genG$ to $\z\in\genG'$. Suppose $\phi$ has a decomposition of one of two types: either as the juxtaposition of a parallelogram and a pentagon, or as the juxtaposition of a pentagon and a parallelogram. For simplicity, we will only consider domains which have a decomposition with a pentagon to the righ of the intersection point; the cases where the pentagon lie to the left of the intersection point follow analogously. 

As in the proof of Proposition~\ref{prop:d2=0}, let $m$ be the number of components where $\x$ and $\z$ differ, and observe that $m\le 4$. 

We will show that for $2\le m\le 4$, $\phi$ has exactly one alternate decomposition, either of the same type, in which case it drops out of (\ref{eqn:phibetagammachainmap}) in characteristic two, or of the opposite type, proving the equality of the two terms of (\ref{eqn:phibetagammachainmap}). 

For $m=1$, there are two cases. In the first, $\phi$ has exactly two decompositions, of opposite types, proving the equality of (\ref{eqn:phibetagammachainmap}). In the other, we will show that $\phi$ has a unique decomposition, but pairs with another domain connecting $\x$ to $\z$ also with $m=1$ either of the same type, in which case it drops out, or of the opposite type, proving the equality of (\ref{eqn:phibetagammachainmap}). 

Note that there are no domains with $m= 0$, as $\x$ and $\z$ lie on different grid diagrams, so we never have $\x=\z$. 

Suppose $m=4$. Then in analogy to Proposition~\ref{prop:d2=0}, we will show that $\phi$ has exactly two decompositions. By assumption, we know that $\phi$ has a decomposition as one of the two types. In the same method as in the proof of Proposition~\ref{prop:d2=0}, we can construct the unique alternate decomposition. 

In this case, the supports of the parallelogram and pentagon are either disjoint or have overlapping interiors but share no corners or edges (except for transverse edge intersections). Suppose $\phi=r*s$ for $r\in\ePG(\x,\tgen)$ and $s\in\ePent(\tgen,\z)$ for some $\tgen\in\genG$. We can construct $\w\in\genG$ by letting it agree with $\tgen$ away from $r$ and $s$, by agreeing with $\x$ on $r$, and agreeing with $\z$ on $s$. Then $s'\in \ePent(\x,\w)$, $r'\in\ePG(\w,\z)$, and we get the unique alternate decomposition is $\phi=s'*r'$. 

If instead $\phi$ was the juxtaposition of a pentagon and a parallelogram, we would construct the unique alternate decomposition in the analogous way. These cases can be seen in Figure~\ref{fig:pentagon4}. 

\begin{figure}[h]
    \centering
    \includegraphics[height=\figheight]{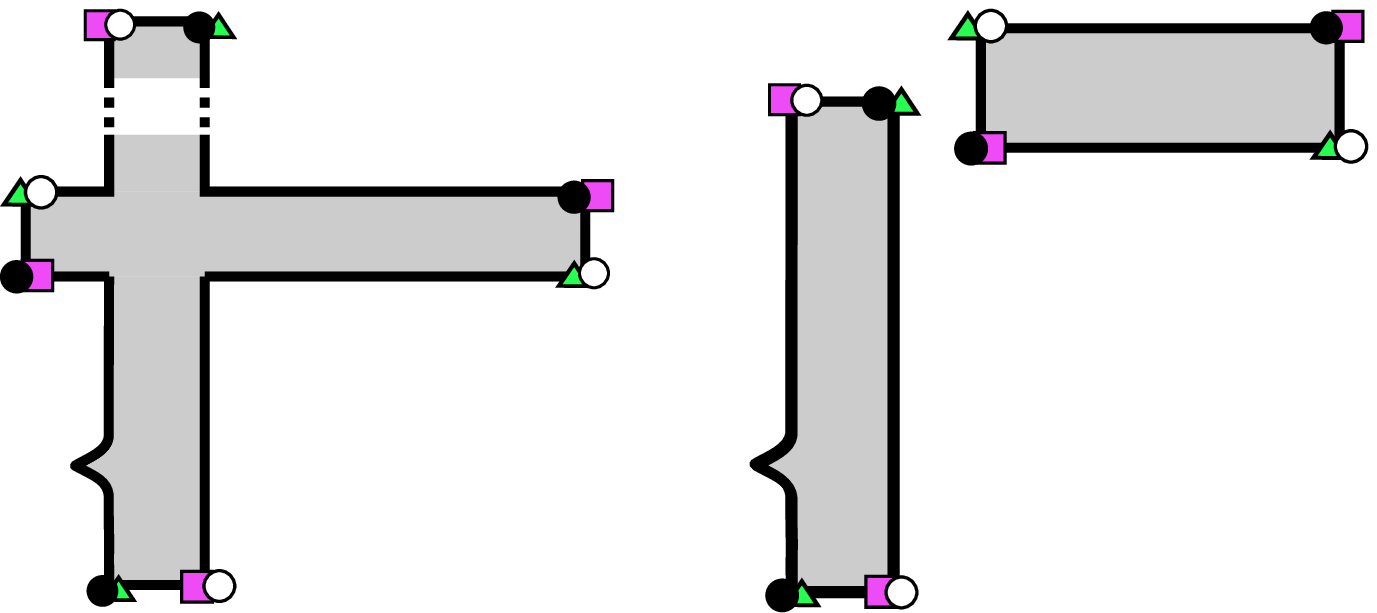}
    \caption{Domains counted in the proof of Lemma~\ref{lem:phibetagamma} when $m=4$. }
    \label{fig:pentagon4}
\end{figure}

Suppose $m=3$. Again, these cases are similar to those analyzed in the proof of Proposition~\ref{prop:d2=0}, and are also similar to domains that were analyzed in \cite{petkova2019skein}. Each such domain has a unique corner at which the domain has multiplicity one in three of the four regions around the corner, and multiplicity zero in the fourth. For the domains in Figure~\ref{fig:pentagon3differentcomponent} and Figure~\ref{fig:pentagon3samecomponent}, this corner is a component of $\x$ or $\z$. 

\begin{figure}[h]
    \centering
    \includegraphics[height=\figheight]{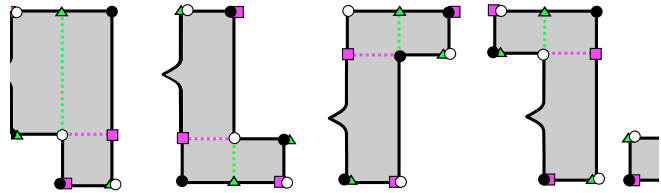}
    \caption{Certain domains counted in the proof of Lemma~\ref{lem:phibetagamma} when $m=3$. One decomposition contributes to each term:  $\Phi_{\beta\gamma}\circ\dxminus$ and $\dxminus\circ\Phi_{\beta\gamma}$.}
    \label{fig:pentagon3differentcomponent}
\end{figure}

\begin{figure}[h]
    \centering
    \includegraphics[height=\figheight]{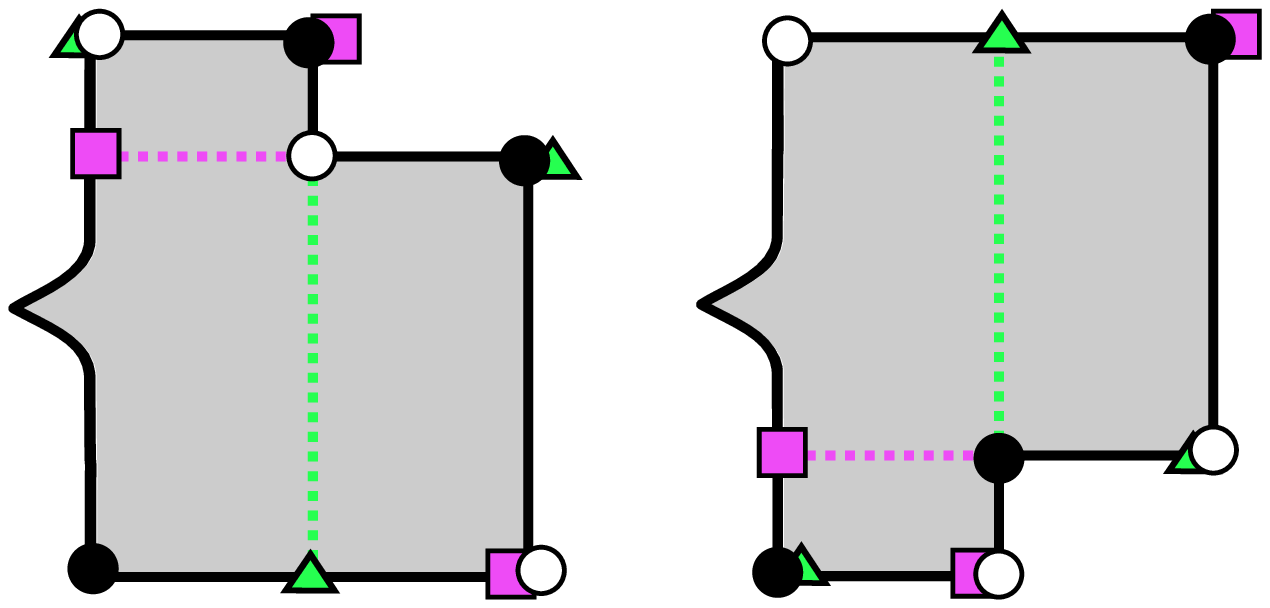}
    \caption{Certain domains counted in the proof of Lemma~\ref{lem:phibetagamma} when $m=3$. For the first domain, both decompositions contribute to $\dxminus\circ\Phi_{\beta\gamma}$. For the second domain, both decompositions contribute to $\Phi_{\beta\gamma}\circ\dxminus$.}
    \label{fig:pentagon3samecomponent}
\end{figure}

Cutting along the curves in $\alphas$ and $\betas$ into the domain yields components of the two intermediate generators, the other components of which are gotten similarly to the $m=3$ case of the proof of Proposition~\ref{prop:d2=0}. To define these other components, in the proof of of Proposition~\ref{prop:d2=0}, we at some point traveled along $\dbeta(\phi)$ via a curve in $\betas$. For these domains, if that curve is $\beta$ or $\gamma$, we note that if we reach an intersection point of $\beta$ and $\gamma$ we should keep following $\dbeta(\phi)$, which might require switching our course from $\beta$ to $\gamma$ or vice-versa.

For the domains in Figure~\ref{fig:pentagon3differentbetagamma}, this unique corner is at the intersection of $\beta$ and $\gamma$. Cutting from this point into $\phi$ along each of $\beta$ and $\gamma$ yields a component of each intermediate generator; these intermediate generators agree away from these components. 

\begin{figure}[h]
    \centering
    \includegraphics[height=\figheight]{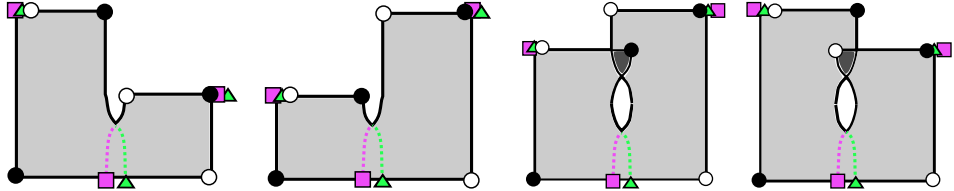}
    \caption{Certain domains counted in the proof of Lemma~\ref{lem:phibetagamma} when $m=3$. One decomposition contributes to each term:  $\Phi_{\beta\gamma}\circ\dxminus$ and $\dxminus\circ\Phi_{\beta\gamma}$.}
    \label{fig:pentagon3differentbetagamma}
\end{figure}

Suppose $m=2$. Each such domain has exactly two alternate decompositions, gotten in the same method as in the proof of Proposition~\ref{prop:d2=0}. As in that proof, $\phi$ has exactly two corners around which the domain $\phi$ has multiplicity one in three of the four regions and multiplicity zero in the fourth. Traveling along the curves in $\alphas$ and the curves in $\betas\cup\betas'$ occupied by these two corners yields the components of the intermediate generators. For the domains of Figure~\ref{fig:pentagon2alternate} these decompositions are of opposite type; for the domains of Figure~\ref{fig:pentagon2same}, these decompositions are of the same type.

\begin{figure}[h]
    \centering
    \includegraphics[height=\figheight]{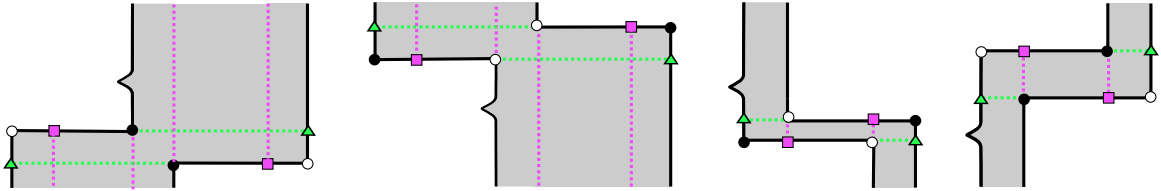}
    \caption{Certain domains counted in the proof of Lemma~\ref{lem:phibetagamma} when $m=2$. One decomposition contributes to each term:  $\Phi_{\beta\gamma}\circ\dxminus$ and $\dxminus\circ\Phi_{\beta\gamma}$.}
    \label{fig:pentagon2alternate}
\end{figure}

\begin{figure}[h]
    \centering
    \includegraphics[height=\figheight]{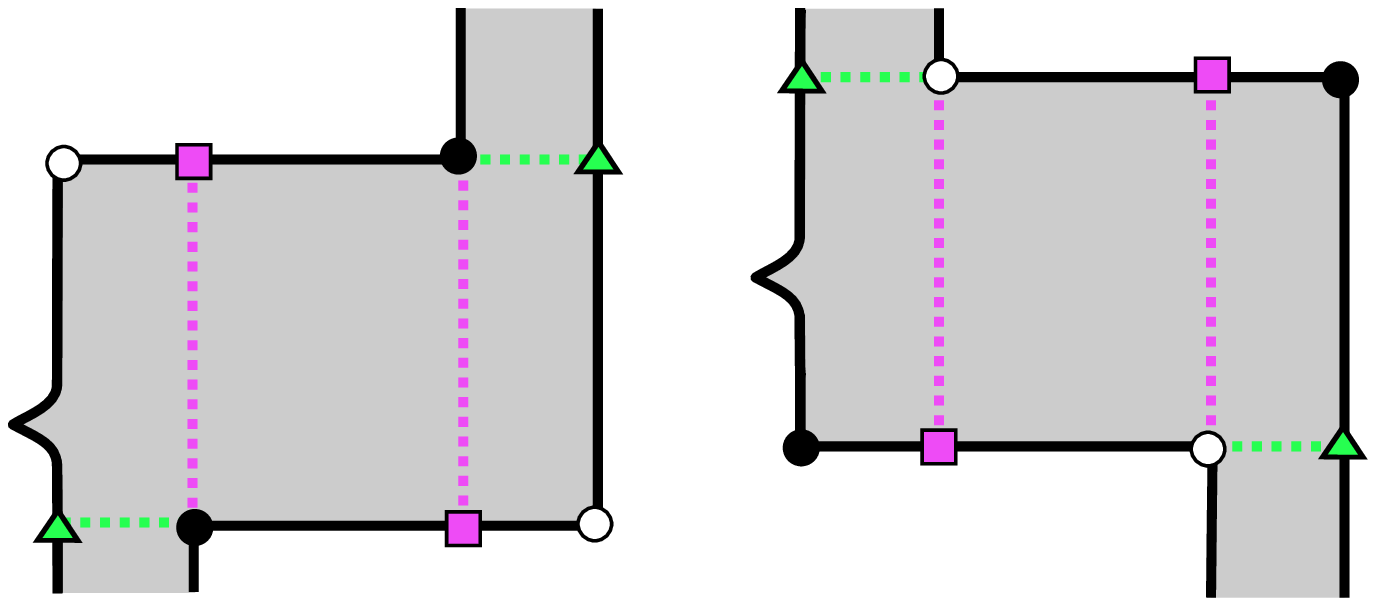}
    \caption{Certain domains counted in the proof of Lemma~\ref{lem:phibetagamma} when $m=2$. For the first domain, both decompositions contribute to $\Phi_{\beta\gamma}\circ\dxminus$. For the second domain, both decompositions contribute to $\dxminus\circ\Phi_{\beta\gamma}$.}
    \label{fig:pentagon2same}
\end{figure}

Finally, we must analyze the case where $m=1$. This is the analogue of the case where $\x=\z$ in Proposition \ref{prop:d2=0}. Remarks~\ref{rem:pentagontransposition} and \ref{rem:parallelogramtransposition} make clear this is only possible if the $p$-coordinates of $\x$ and $\z$ are the same and $\sigma_{\x}=\sigma_{\z}$, with one component of $\x$ moving from $\beta$ to $\gamma$. There are two such possibilities.

In either case, we will need to define on the grid a \emph{triangle}, an embedded disk in $T^2$ with boundary consisting of an arc of $\beta$, an arc of $\gamma$, and an arc of some curve in $\alphas$. 

In the first case, the support of $\phi$ consists of a row of the grid diagram, less a triangle. In this case, cutting into $\phi$ from the intersection of $\beta$ and $\gamma$ on the boundary of $\phi$ along each of $\beta$ and $\gamma$ yields components of each of the intermediate generators. These domains of this type can be seen in Figure~\ref{fig:pentagon1horizontal}. 

\begin{figure}[h]
    \centering
    \includegraphics[height=1.5cm]{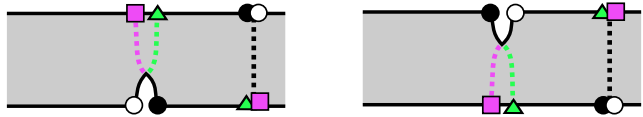}
    \caption{The domains counted in the proof of Lemma~\ref{lem:phibetagamma} when $m=1$ which are a row less a triangle. One decomposition contributes to each term:  $\Phi_{\beta\gamma}\circ\dxminus$ and $\dxminus\circ\Phi_{\beta\gamma}$.}
    \label{fig:pentagon1horizontal}
\end{figure}

Note that $T^2-\alphas-\betas-\betas'$ has two regions which are annuli and border both $\beta$ and $\gamma$. In the second case, the support of $\phi$ is one of these regions which is an annulus, as well as an abutting triangle. Each of these has a unique decomposition, but the domain whose support consists of the same triangle and the opposite annulus, denoted $\phi'$ is another domain with the same initial and terminal generators, and a unique decomposition. Thus, these domains pair together, as desired. There are six combinatorial possibilities when considering the relative positions of the components of a generator on $\beta_{i-1}$ and $\beta_i$. The decomposition of $\phi'$ depends on the relative position of the component of the generator on $\beta_{i+1}$. In Figure~\ref{fig:pentagon1vertical} we display these six combinatorial possibilities, as well as the decompositions of $\phi'$ gotten by an arbitrarily chosen location for the component on $\beta_{i+1}$. 
\end{proof}

\begin{figure}[h]
    \centering
    \includegraphics[height=5.5cm]{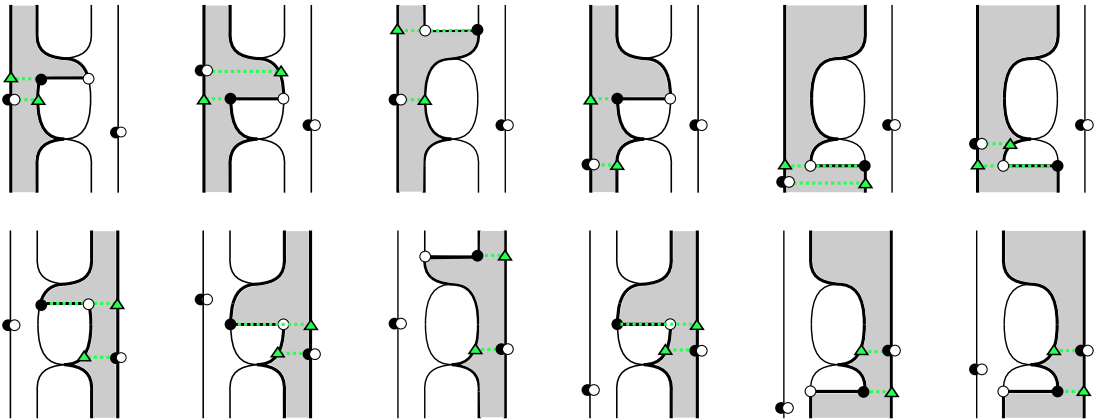}
    \caption{Certain domains counted in the proof of Lemma~\ref{lem:phibetagamma} when $m=1$. Each has a unique decomposition, but pairs with the domain below it to either cancel out (if of the same type) or prove our desired equality (if of the opposite type).}
    \label{fig:pentagon1vertical}
\end{figure}

\begin{lem}\label{lem:pentagongrading}
The map $\Phi_{\beta\gamma}$ preserves all three gradings.  
\end{lem}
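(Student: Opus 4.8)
The plan is to establish the grading identity strip-by-strip, in direct analogy with Proposition~\ref{prop:parallelogramgradings}, using the lift of an empty pentagon $s$ to the universal cover. First I would verify the $\spincS$ grading: by Remark~\ref{rem:pentagontransposition}, an empty pentagon $s$ from $\x$ to $\y'$ satisfies $a^{\x}_r = a^{\y'}_r$ for all but two indices $i,j$, and $a^{\x}_i - a^{\y'}_i = a^{\y'}_j - a^{\x}_j$; plugging these into the definition of $\tilde{\spincS}$ (and noting $\x_{\OO}$ is unchanged) gives $\spincS(\x) = \spincS(\y')$ exactly as in part~(\ref{prop:parallelogramgradingsspinc}) of Proposition~\ref{prop:parallelogramgradings}. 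The only subtlety here is that $\gridg$ and $\gkprime$ must be compared using a common fundamental domain that is compatible with both, i.e. one in which the perturbation of $\beta$ into $\gamma$ doesn't change the $p$-coordinate bookkeeping; since $\beta$ and $\gamma$ cobound thin bigons away from the horizontal circles, this is harmless.

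For the Maslov grading, I would mimic the proof of part~(\ref{prop:parallelogramgradingsmaslov}) almost verbatim. Lift $s$ to the universal cover to obtain $p$ pentagons $\tilde{s} = \{s_1,\ldots,s_p\}$ connecting $\tx$ to $\ty'$, set $\tx^i$ to be the result of applying $s_1,\ldots,s_i$, and track how each elementary piece changes $\I(\tx^i,\tx^i)$, $\I(\tx^i,\tOO)$, and $\I(\tOO,\tx^i)$. The key combinatorial fact is that a pentagon has the same effect on these intersection counts as a rectangle does, with one extra corner at the intersection point $\beta \cap \gamma$ — but that extra corner is not a component of any generator and is not a marking, so it contributes equally (or not at all, depending on where it sits) to the "before" and "after" counts. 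Carefully: the five-cornered pentagon, when cut along the appropriate $\beta$ or $\gamma$ arc at the $\beta\cap\gamma$ corner, behaves combinatorially like a rectangle for the purposes of computing $\I$. One must handle the wrapping cases (horizontal wrap, vertical wrap, no wrap) just as in Proposition~\ref{prop:parallelogramgradings}, choosing a fundamental domain with a rectangle corner at the origin so no piece wraps both ways. Summing over all $p$ strips yields $\tilde{\maslov}(\x) = \tilde{\maslov}(\y') + p - 2p\sum_i n_{O_i}(s)$, hence $\maslov(\x) = \maslov(\y') + 1 - 2\sum_i n_{O_i}(s) = \maslov(\y')$ since $\Phi_{\beta\gamma}$ only counts pentagons with $s\cap\XX=\emptyset$ and contributes $\prod V_i^{n_{O_i}(s)}$, and each $V_i$ drops the Maslov grading by $2$.

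The Alexander grading then follows formally: running the identical argument with $\XX$ in place of $\OO$ gives $\maslov_{\XX}(\x) = \maslov_{\XX}(\y') + 1 - 2\sum_i n_{X_i}(s)$, so $\tilde{\alex}(\x) = \tilde{\alex}(\y') + \sum_i n_{X_i}(s) - \sum_i n_{O_i}(s)$; since $s$ is counted with no $\XX$ markings in its interior, $\sum_i n_{X_i}(s) = 0$, and the factor $\prod_i V_i^{n_{O_i}(s)}$ shifts the Alexander grading down by $\sum_i n_{O_i}(s)$, exactly compensating. Hence $\alex$ is preserved. I expect the main obstacle to be the bookkeeping in the Maslov computation: specifically, convincing oneself that the fifth corner of the pentagon at the $\beta\cap\gamma$ intersection, together with the transition between $\beta$ and $\gamma$ arcs along the boundary, does not introduce any spurious contribution to the three $\I$-terms in any of the wrapping cases — this is the one place where pentagons genuinely differ from rectangles and requires a careful local analysis at that corner rather than a direct appeal to Proposition~\ref{prop:parallelogramgradings}.
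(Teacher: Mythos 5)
Your overall strategy is the same as the paper's terse proof sketch: verify $\spincS$ via Remark~\ref{rem:pentagontransposition}, lift to the universal cover for $\maslov$ as in Proposition~\ref{prop:parallelogramgradings}, then deduce $\alex$ from $\maslov_{\OO}$ and $\maslov_{\XX}$. The $\spincS$ piece is fine. But the Maslov computation contains a real off-by-one error, and it sits precisely at the step you yourself flagged as delicate.

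You assert that a pentagon ``has the same effect on these intersection counts as a rectangle does'' because the fifth corner at $\beta\cap\gamma$ is neither a generator component nor a marking, and derive $\maslov(\x) = \maslov(\y') + 1 - 2\sum_i n_{O_i}(s)$. That formula is what a \emph{rectangle} satisfies (Proposition~\ref{prop:parallelogramgradings}(\ref{prop:parallelogramgradingsmaslov})), but it is not what a pentagon satisfies. The correct pentagon relation is $\maslov(\x) = \maslov(\y') - 2\sum_i n_{O_i}(s)$, with no $+1$; this is exactly what makes $\Phi_{\beta\gamma}$ Maslov--degree $0$ after accounting for the $\prod V_i^{n_{O_i}(s)}$ coefficient, and it is the lens-space analogue of the classical $S^3$ statement. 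With your formula, $\Phi_{\beta\gamma}$ would have Maslov degree $-1$, which contradicts the lemma you are trying to prove. (Your final chain of equalities $\maslov(\x) = \maslov(\y') + 1 - 2\sum_i n_{O_i}(s) = \maslov(\y')$ is also internally inconsistent: it would force $2\sum_i n_{O_i}(s) = 1$.)

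The source of the missing $-1$ is not the fifth corner itself being ``spurious'' --- you are right that $a$ is not counted by $\I$ --- but that the pentagon's terminal generator $\y'$ lives on $\gamma$, not $\beta$. If you glue the triangle $t_{\y'}$ onto $s$ as in Section~\ref{sec:integralcommutation} to get a genuine rectangle $R(s) = s * t_{\y'} \in \ePG(\x,\y)$, the rectangle formula applies to $R(s)$ and gives a $+1$ relative to $\maslov(\y)$; but $\maslov(\y) \ne \maslov(\y')$. Sliding the $\gamma$-component to the $\beta$-component along the $\alpha$ arc changes the $\I$-counts by $1$ because of the commuted markings sitting in the bigon region between $\beta$ and $\gamma$ --- in a commutation those markings must lie there, else there would be nothing to commute. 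So the ``careful local analysis at that corner'' that you correctly identify as necessary must account for the marking in the $\beta$--$\gamma$ bulge; it does introduce exactly the contribution that cancels the $+1$. Your claim that it ``does not introduce any spurious contribution'' is precisely the gap.
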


\begin{proof}
This is directly analogous to Proposition \ref{prop:parallelogramgradings}, with the preservation of $\spincS$ proved by Remark \ref{rem:pentagontransposition}, and the preservation of $\maslov$ and $\alex$ proved by considering sequences of pentagons in the universal cover. 
\end{proof}

\subsection{Homotopy}
The homotopy between $\Phi_{\gamma\beta}\circ\Phi_{\beta\gamma}$ and the identity will be provided by a map counting hexagons. 

Define $\Hbgb:C^-(\gridg)\to C^-(\gridg)$ by \[\Hbgb(\x)=\sum_{\y\in\genG}\sum_{\substack{h\in\ehexbgb(\x,\y)\\\Int(h)\cap\XX=\emptyset}}V_0^{n_{O_0}(h)}\cdots V_{n-1}^{n_{O_{n-1}}(h)}\y.\]

\begin{lem}\label{lem:hbgb}
The map $\Hbgb$ is a homotopy between $(\Phi_{\gamma\beta}\circ\Phi_{\beta\gamma})$ and the identity $\mathbb{I}$. 
\end{lem}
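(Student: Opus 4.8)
The plan is to follow the template set by the proof of Lemma~\ref{lem:phibetagamma}: we must show that
\[
\Phi_{\gamma\beta}\circ\Phi_{\beta\gamma}+\mathbb{I}=\dxminus\circ\Hbgb+\Hbgb\circ\dxminus,
\]
and the standard way to do this is a domain-counting argument. The left-hand side counts domains from $\x$ to $\z\in\genG$ that decompose as a juxtaposition of two pentagons (one a $\beta\gamma$-pentagon, the other a $\gamma\beta$-pentagon), together with the trivial domain accounting for $\mathbb{I}$; the right-hand side counts domains that decompose as a juxtaposition of a hexagon and an empty parallelogram (in either order). So the core of the proof is: classify the domains $\phi$ from $\x$ to $\z$ that admit a decomposition of one of these types, count the number of decompositions of each type, and check that in each case the total count is even, except for one distinguished family of domains which is counted exactly once by $\Phi_{\gamma\beta}\circ\Phi_{\beta\gamma}$ and whose contribution is precisely $\mathbb{I}$ (up to the $U$-variable bookkeeping, which is controlled by Remark~\ref{rem:omultiplicity}).

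Concretely, I would organize the argument by $m$, the number of components at which $\x$ and $\z$ differ, exactly as in Proposition~\ref{prop:d2=0} and Lemma~\ref{lem:phibetagamma}. For $m=4$ and $m=3$, the domains look like those in the earlier proofs with an extra corner or two at the $\beta\cap\gamma$ intersection points; each such domain has a unique alternate decomposition, obtained by the same ``travel along the boundary, flip the intermediate generator'' recipe, and since a hexagon-plus-parallelogram domain and a pentagon-plus-pentagon domain cannot coincide unless they have the same total multiplicities, the paired decompositions either both lie on the same side of the equation (and cancel mod $2$) or split one to each side (and prove the equality). For $m=2$ and $m=1$, one again produces the pairings, now carefully tracking whether the two intermediate-generator corners lie on $\alphas$ or on $\betas\cup\betas'$ and which of $\beta,\gamma$ is involved; this mirrors Figures~\ref{fig:pentagon2alternate}--\ref{fig:pentagon1vertical}. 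The new phenomenon, not present in Lemma~\ref{lem:phibetagamma}, is the case $\x=\z$ (i.e.\ $m=0$): here the support of $\phi$ must be a union of a ``bigon'' region between $\beta$ and $\gamma$ with triangles, and the key point is that the \emph{thin} such domain---the narrow bigon between the two intersection points of $\beta$ and $\gamma$, containing no markings---admits exactly one decomposition as a $\gamma\beta$-pentagon followed by a $\beta\gamma$-pentagon and no decomposition as hexagon-plus-parallelogram, contributing $\x$ itself with coefficient $1$; every other $m=0$ domain is either wide enough to contain a marking (hence killed by the $\XX$-avoidance condition or pairs off) or pairs with a sibling domain built from the complementary region, exactly as in the $m=1$ analysis of Lemma~\ref{lem:phibetagamma}.

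I would also need to verify that $\Hbgb$ is grading-compatible in the appropriate sense (it shifts Maslov grading by $+1$ and preserves $\spincS$ and $\alex$), which follows from an argument directly analogous to Proposition~\ref{prop:parallelogramgradings} and Lemma~\ref{lem:pentagongrading}, counting sequences of hexagons in the universal cover; this is routine once the parallelogram and pentagon cases are in hand. The bookkeeping of the $V_i$-powers throughout is handled uniformly by Remark~\ref{rem:omultiplicity}, since juxtaposition is additive on marking multiplicities and the two decompositions of any given domain $\phi$ have the same total $O$-multiplicities, namely $n_{O_i}(\phi)$.

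The main obstacle I expect is the bookkeeping in the $m=1$ and $m=0$ cases: unlike in $S^3$, the twisting allows domains that wrap around the torus (Section~\ref{sec:twisting}), and one must be certain that, as in Remark~\ref{rem:wrappingdiagrams}, the combinatorics of decompositions is insensitive to the number of wraps, so that the finite schematic case analysis genuinely covers all domains. The delicate sub-point is isolating the single distinguished thin domain that produces the identity term and checking it has no hexagon-plus-parallelogram decomposition---this is where one uses that a hexagon must have two corners at $\beta\cap\gamma$ while the thin bigon domain is too narrow to support a parallelogram abutting a hexagon. Once that domain is correctly identified and everything else is shown to pair off, the proof closes.
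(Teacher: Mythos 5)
Your overall strategy (domain-counting, pairing decompositions, organized by $m$) matches the paper, but there are two concrete errors in the case analysis.

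First, you claim that for $m=1$ ``one again produces the pairings,'' mirroring Figures~\ref{fig:pentagon2alternate}--\ref{fig:pentagon1vertical}. But those figures arise in Lemma~\ref{lem:phibetagamma} precisely because there the initial and terminal generators live on \emph{different} diagrams ($\genG$ vs.\ $\genG'$), so they automatically disagree somewhere. For $\Hbgb$ the domain runs from $\x\in\genG$ to $\z\in\genG$, and by Remarks~\ref{rem:parallelogramtransposition} and~\ref{rem:pentagontransposition} (and the hexagon analogue), a juxtaposition of two of the relevant polygons moves two $p$-coordinates each by equal and opposite amounts while composing two transpositions of $S_n$. That forces $m\in\{0,2,3,4\}$: there simply are no $m=1$ domains to pair off. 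Introducing a phantom $m=1$ case is not fatal if you correctly conclude it is empty, but asserting that pairings occur there is wrong.

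Second, your $m=0$ analysis is not correct. You isolate ``the narrow bigon between the two intersection points of $\beta$ and $\gamma$, containing no markings,'' and claim it always decomposes uniquely as a $\gamma\beta$-pentagon following a $\beta\gamma$-pentagon, never as hexagon-plus-parallelogram, so that the identity term is produced by $\Phi_{\gamma\beta}\circ\Phi_{\beta\gamma}$ alone. In fact, for each $\x$ there is exactly one domain in $\ePoly(\x,\x)$---its support is one of the two thin annuli bordered by $\beta$ and $\gamma$---and whether its unique decomposition is $r*h$, $h*r$, or $s_1*s_2$ depends on the relative position of the components of $\x$ on $\beta$ with respect to the two points of $\beta\cap\gamma$. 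All six configurations appear (see Figure~\ref{fig:hex0}); two feed $\Hbgb\circ\dxminus$, two feed $\dxminus\circ\Hbgb$, and two feed $\Phi_{\gamma\beta}\circ\Phi_{\beta\gamma}$. If you insist the contribution comes only from the pentagon composition, your purported identity $\Phi_{\gamma\beta}\circ\Phi_{\beta\gamma}+\mathbb{I}=\dxminus\circ\Hbgb+\Hbgb\circ\dxminus$ would fail for generators positioned so that the thin annulus decomposes into a hexagon and a parallelogram. You also do not address the $m=4$ subtlety the paper must handle, namely ruling out a pentagon-followed-by-pentagon decomposition there (the intermediate generator would have to occupy $\gamma$, forcing the two pentagons to share a corner and part of an edge, contradicting $m=4$); without this, one cannot assert that $N(\phi)=2$ for $m=4$ rather than conflating several decomposition types.
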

\begin{proof}
We again must look at decompositions of domains. Repurposing the notation of Proposition \ref{prop:d2=0}, let $\ePoly(\x,\z)$ denote the set of empty domains from $\x\in\genG$ to $\z\in\genG$ which can be written as a juxtaposition of a parallelogram and a hexagon, a hexagon and a parallelogram, or two pentagons. For $\phi\in\ePoly(\x,\z)$, let $N(\phi)$ denote the number of decompositions of $\phi$ in one of the above three ways. In this notation, we have 
\begin{equation}\label{eq:hexhomotopy}
(\Phi_{\gamma\beta}\circ\Phi_{\beta\gamma}+\dxminus\circ \Hbgb+\Hbgb\circ\dxminus)(\x)=\sum_{\z\in\genG}\sum_{\substack{\phi\in\ePoly(\x,\z)\\\Int(\phi)\cap\XX=\emptyset}}N(\phi)V_0^{n_{O_0}(\phi)}\cdots V_{n-1}^{n_{O_{n-1}}(\phi)}\z.
\end{equation}

Our goal is to show that the left hand side of (\ref{eq:hexhomotopy}) equals the identity on $C^-(\gridg)$. We will do this by proving for most domains $\phi\in\ePoly(\x,\z)$, we have $N(\phi)=2$, so they are not counted in characteristic two. The only domains for which this is not true are those in $\ePoly(\x,\x)$, which have $N(\phi)=1$. Again, let $m$ be the number of components where $\x$ and $\z$ disagree, and observe $m\le 4$. 

Suppose $m=4$. Note that $\phi$ does not have a decomposition as the juxtaposition of two pentagons. Supposing it did, the intermediate generator would occupy $\gamma$. For the terminal generator to be in $\genG$, the second pentagon would need to share a corner with a first at this component on $\gamma$, so the pentagons would share at least one corner and some portion of an edge, precluding the case $m=4$. 

This means that, in analogy to the proofs of Lemma~\ref{lem:phibetagamma} and of Proposition~\ref{prop:d2=0}, $\phi$ has exactly two decompositions: one as the juxtaposition of a parallelogram and a hexagon, and one as the juxtaposition of a hexagon and a parallelogram. The parallelograms have the same support as each other, as do the hexagons. The common supports of the parallelograms and the hexagons are either disjoint, or have overlapping interiors but share no corners or edges (except for transverse edge intersections). See Figure~\ref{fig:dsquared4} or Figure~\ref{fig:pentagon4} for the analogous cases. 

Suppose $m=3$. These cases are very similar to those analyzed in Lemma~\ref{lem:phibetagamma}, and also to those in the classical case analyzed in \cite{gridhomology} Lemma 5.1.6. 

For most of these domains, there is a unique corner around which the domain $\phi$ has multiplicity one in three regions and zero in the fourth. Cutting into $\phi$ along each of the two curves this corner occupies yields components of each of the intermediate generators on $\partial(\phi)$. These domains can be seen in Figures~\ref{fig:hex3pentagonscomponent},  \ref{fig:hex3alternate}, and \ref{fig:hex3same}, organized by which terms in (\ref{eq:hexhomotopy}) they contribute to.

\begin{figure}[h]
    \centering
    \includegraphics[height=\figheight]{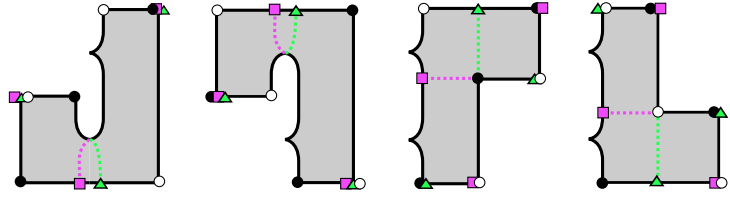}
    \caption{Certain domains counted in the proof of Lemma~\ref{lem:hbgb} when $m=3$. Each domain has one decomposition which contributes to $\Phi_{\gamma\beta}\circ\Phi_{\beta\gamma}$. The alternate decomposition for the first and third domain contributes to $\Hbgb\circ\dxminus$, and the alternate decomposition for the second and fourth domain contributes to $\dxminus\circ\Hbgb$.}
    \label{fig:hex3pentagonscomponent}
\end{figure}

\begin{figure}[h]
    \centering
    \includegraphics[height=\figheight]{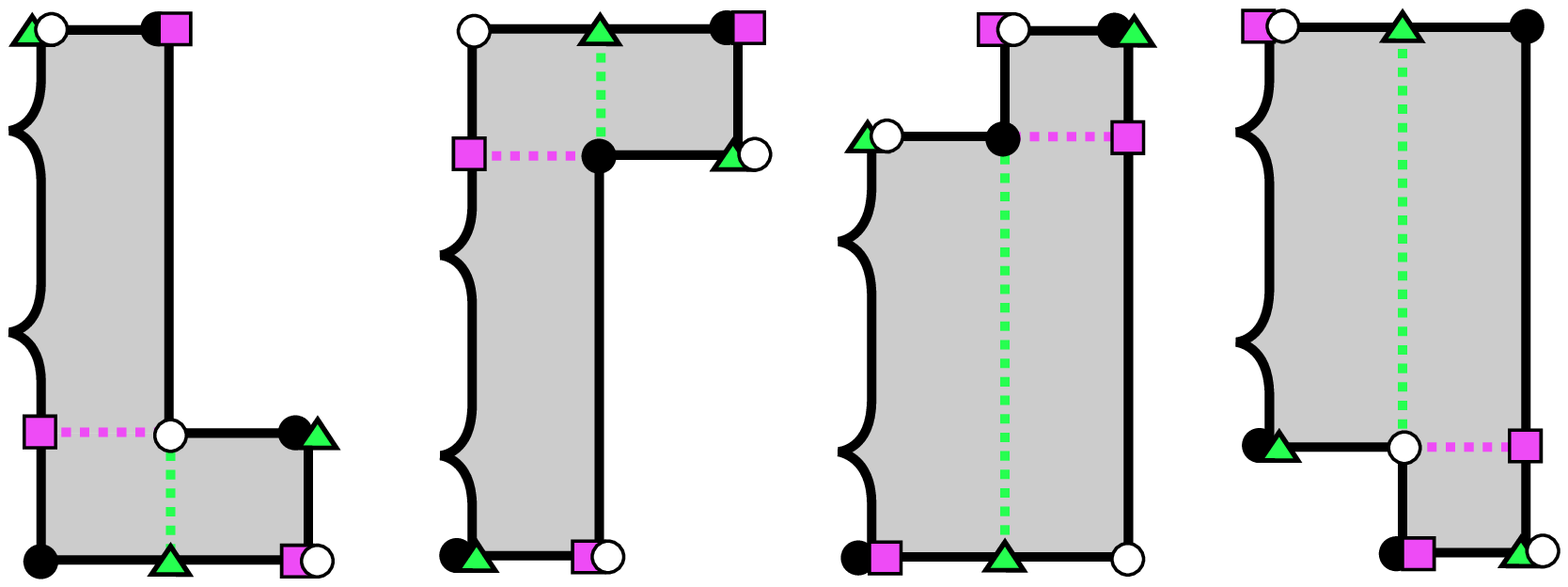}
    \caption{Certain domains counted in the proof of Lemma~\ref{lem:hbgb} when $m=3$. Each domain has one decomposition which contributes to $\Hbgb\circ\dxminus$ and one decomposition which contributes to $\dxminus\circ\Hbgb$.}
    \label{fig:hex3alternate}
\end{figure}

\begin{figure}[h]
    \centering
    \includegraphics[height=\figheight]{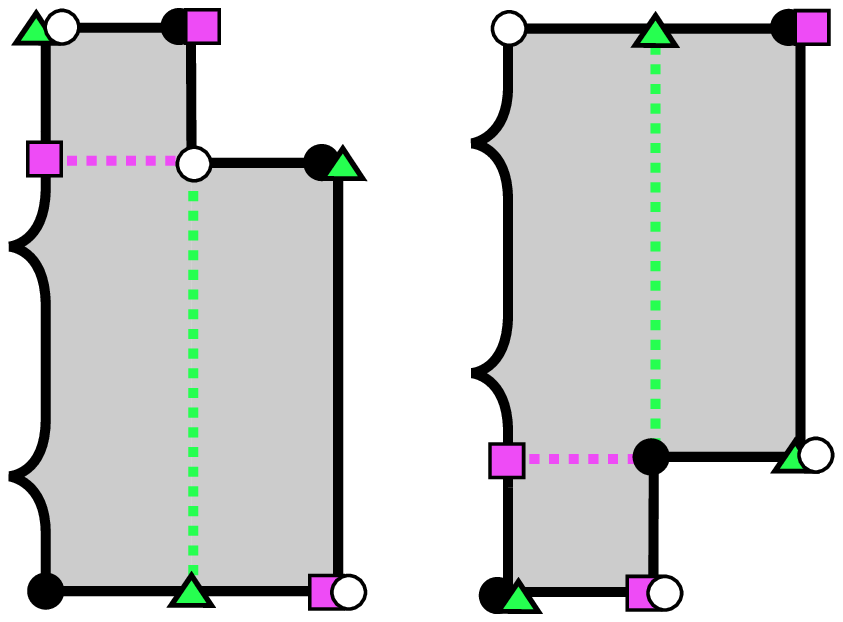}
    \caption{Certain domains counted in the proof of Lemma~\ref{lem:hbgb} when $m=3$. Both decompositions of the first domain contribute to $\dxminus\circ\Hbgb$; both decompositions of the second domain contribute to $\Hbgb\circ\dxminus$.}
    \label{fig:hex3same}
\end{figure}

For two domains, there are three such corners: the two intersection points of $\beta$ and $\gamma$, as well as a component of $\x$ or $\z$. From the intersection point of $\beta$ and $\gamma$ which is in every pentagon counted by $\Phi_{\beta\gamma}$, cutting along each of $\beta$ and $\gamma$ yields a component of each of the intermediate generators. These domains can be seen in Figure~\ref{fig:hex3pentagonsbetagamma}. 

\begin{figure}[h]
    \centering
    \includegraphics[height=\figheight]{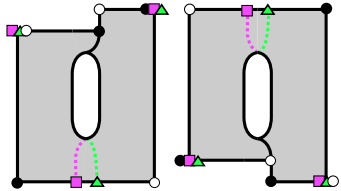}
    \caption{Certain domains counted in the proof of Lemma~\ref{lem:hbgb} when $m=3$. Each domain has one decomposition which contributes to $\Phi_{\gamma\beta}\circ\Phi_{\beta\gamma}$.  The alternate decomposition for the first domain contributes to $\Hbgb\circ\dxminus$, and the alternate decomposition for the second domain contributes to $\dxminus\circ\Hbgb$.}
    \label{fig:hex3pentagonsbetagamma}
\end{figure}

Suppose $m=2$. These cases are very similar to those analyzed in Lemma~\ref{lem:phibetagamma}. Each domain has two points around which the domain has multiplicity one in three of the four regions, and multiplicity zero in the fourth. From these two points cutting into $\phi$ along each of the two curves each of these two corners occupy yield components of each of the intermediate generators on $\partial(\phi)$. These domains can be seen with their alternate decompositions in Figures~\ref{fig:hex2pp},~\ref{fig:hex2alternating}, and ~\ref{fig:hex2same}, organized by which terms in (\ref{eqn:phibetagammachainmap}) they contribute to. 

\begin{figure}[h]
    \centering
    \includegraphics[height=\figheight]{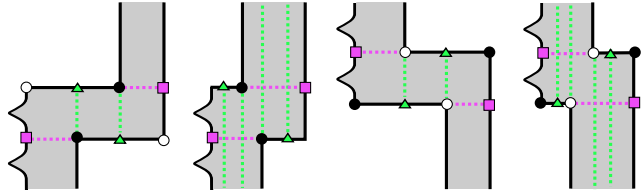}
    \caption{Certain domains counted in the proof of Lemma~\ref{lem:hbgb} when $m=2$. Each domain has one decomposition which contributes to $\Phi_{\gamma\beta}\circ\Phi_{\beta\gamma}$.  The alternate decomposition for the first two domains contributes to $\Hbgb\circ\dxminus$, and the alternate decomposition for the second domain contributes to $\dxminus\circ\Hbgb$.}
    \label{fig:hex2pp}
\end{figure}

\begin{figure}[h]
    \centering
    \includegraphics[height=\figheight]{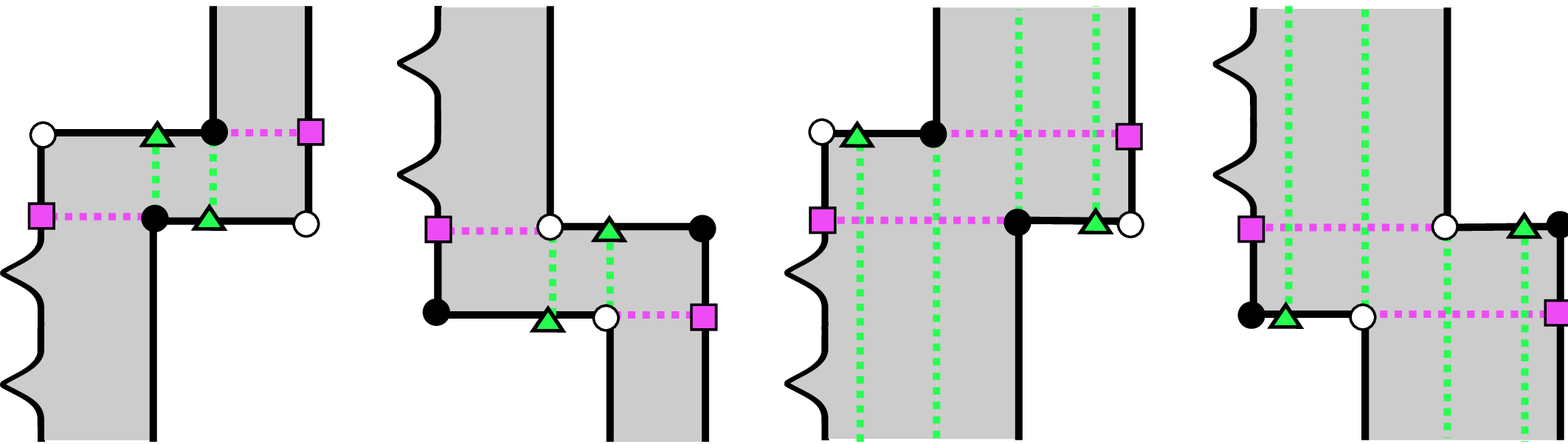}
    \caption{Certain domains counted in the proof of Lemma~\ref{lem:hbgb} when $m=2$. Each domain has one decomposition which contributes $\Hbgb\circ\dxminus$ and one decomposition which contributes to $\dxminus\circ\Hbgb$.}
    \label{fig:hex2alternating}
\end{figure}

\begin{figure}[h]
    \centering
    \includegraphics[height=\figheight]{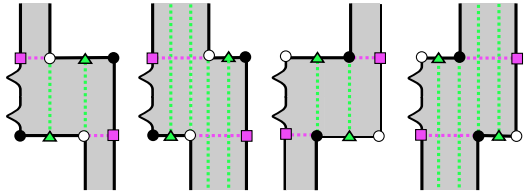}
    \caption{Certain domains counted in the proof of Lemma~\ref{lem:hbgb} when $m=2$. The two decompositions of the first two domains each contribute to $\dxminus\circ\Hbgb$, and the two decompositions of the third and fourth domains each contribute to $\Hbgb\circ\dxminus$.}
    \label{fig:hex2same}
\end{figure}

As is the case for Proposition~\ref{prop:d2=0}, there are no domains with $m=1$. This is true by Remarks \ref{rem:parallelogramtransposition} and \ref{rem:pentagontransposition}, and the analogue for hexagons. 

Suppose $\x=\z$. There is exactly one domain in $\ePoly(\x,\x)$, and the type of its decomposition is given by the relative location of $\x$, $\beta$, and $\gamma$. The support of each such domain is one of the annuli of $T^2-\alphas-\betas-\betas'$ which borders both $\beta$ and $\gamma$. It cannot also contain any other annulus, as then it would contain a marking $X\in\XX$ and a component of $\x$. Therefore this domain is counted with coefficient one in (\ref{eq:hexhomotopy}). 

\begin{figure}[h]
    \centering
    \includegraphics[height=3cm]{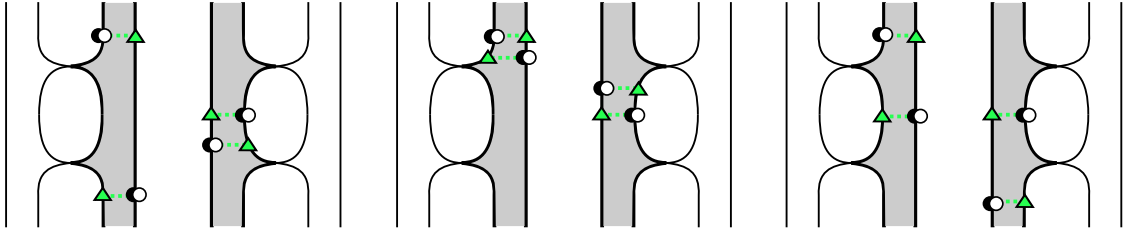}
    \caption{Domains counted in the proof of Lemma~\ref{lem:hbgb} when $m=0$. The first and second domains each contribute to $\Hbgb\circ\dxminus$. The third and fourth domains each contribute to $\dxminus\circ\Hbgb$. The fifth and sixth domains each contribute to $\Phi_{\gamma\beta}\circ\Phi_{\beta\gamma}$. }
    \label{fig:hex0}
\end{figure}

As a result, any domain connecting $\x$ to $\z$ for $2\le m\le 4$ has exactly two decompositions, so cancels in Equation~\ref{eq:hexhomotopy}. Furthermore, there are no domains connecting $\x$ to $\z$ for $m=1$, and there is a unique domain connecting $\x$ to $\x$, counted with coefficient 1, so we have $\Phi_{\gamma\beta}\circ\Phi_{\beta\gamma}+\dxminus\circ \Hbgb+\Hbgb\circ\dxminus$ is the identity map. 
\end{proof}

\begin{proof}[Proof of Proposition~\ref{prop:commutation}]
Suppose $\gridg$ and $\gkprime$ differ by a commutation of columns. Then Lemma~\ref{lem:phibetagamma} says $\Phi_{\beta\gamma}$ as defined above is a chain map. Lemma~\ref{lem:hbgb} says that $\Phi_{\beta\gamma}$ has as homotopy inverse $\Phi_{\gamma\beta}$, via the homotopy $\Hbgb$. That $\Phi_{\beta\gamma}$ is trigraded is the content of Lemma~\ref{lem:pentagongrading}. Therefore $C^-(\gridg)$ and $C^-(\gkprime)$ are homotopy equivalent as chain complexes over $\Ztwo[V_0,\ldots,V_{n-1}]$, and so by Lemma~\ref{lem:componentvariablesarehomotopic}, as chain complexes over $\Ztwo[U_1,\ldots,U_\ell]$. If instead, $\gridg$ and $\gkprime$ differ by a commutation of rows, analogous proofs to the above with the diagrams rotated $90^{\circ}$ provide the homotopy equivalence. 
\end{proof}

\subsection{Switches}\label{sec:switches}
 Another move that can be done to a grid diagram that preserves the knot type is a \emph{switch}. Let $\gridg$ be a grid diagram representing a link $L$. Consider a pair of consecutive columns in $\gridg$ with the first column bounded by $\beta_{i-1}$ and $\beta_i$, and the second column bounded by $\beta_i$ and $\beta_{i+1}$. If the $X\in\XX$ marking in one of these columns occurs in the same row as the $O\in\OO$ marking in the other, and these markings are in adjacent regions of $T^2-\alphas-\betas$, we call this pair of columns \emph{special}, in analogy to \cite{gridhomology}. A \emph{switch move of columns} consists of exchanging the markings in a special pair of columns. Similarly, if two markings from two consecutive columns are in the same row (which implies they are in adjacent regions of $T^2-\alphas-\betas$), we call this pair of rows \emph{special}, and a \emph{switch move of rows} consists of exchanging the markings in a special pair of rows. Let $\gkprime$ be the grid diagram gotten by performing a switch move to the grid diagram $\gridg$, and let $L'$ be the link represented by $\gkprime$. Then $L$ and $L'$ are smoothly isotopic. This is because any switch can be represented by a sequence of commutations, stabilizations, and destabilizations. An example is given in Figure~\ref{fig:switchisgridmove}. 
 
 \begin{figure}[h]
    \centering
    \includegraphics[height=3cm]{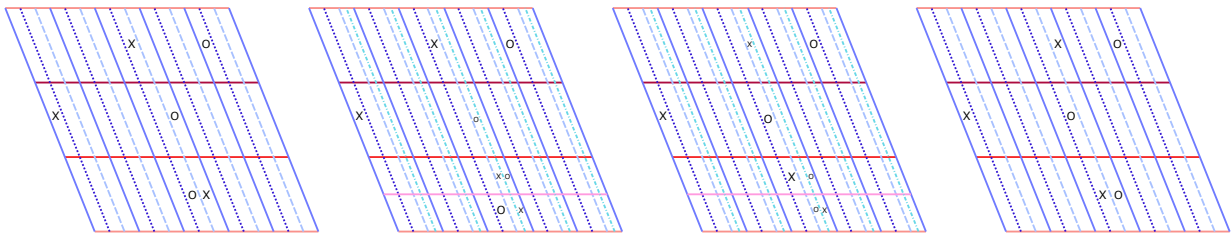}
    \caption{A switch of the second and third columns in $L(5,2)$ of grid index 3 represented as a stabilization, followed by a commutation of the third and fourth columns, and then a destabilization. }
    \label{fig:switchisgridmove}
\end{figure}

\begin{prop}\label{prop:switch}
Suppose $\gridg$ and $\gkprime$ are grid diagrams for an $\ell$-component link which are connected by a switch move. Then $(C^-(\gridg),\dxminus)$ and $(C^-(\gkprime),\dxminusprime)$ are homotopy equivalent as chain complexes over $\Ztwo[U_1,\ldots,U_\ell]$. 
\end{prop}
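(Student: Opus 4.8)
The plan is to reduce the switch move to the already-established commutation move via the observation, recorded in Figure~\ref{fig:switchisgridmove}, that a switch is isotopic to a stabilization followed by a commutation followed by a destabilization. However, since stabilization and destabilization invariance have not yet been proven in this excerpt, the cleaner approach — and the one I expect the author takes — is to prove switch invariance directly by a pentagon-counting argument essentially identical to the commutation case. So I would mirror Section~\ref{sec:commutation} verbatim in structure: perturb the two relevant $\beta$-curves $\beta = \beta_i$ and $\gamma = \beta_i'$ so they meet transversely in two points off the horizontal circles, draw $\gridg$ and $\gkprime$ on the same twisted torus, and define $\Phi_{\beta\gamma}\colon C^-(\gridg)\to C^-(\gkprime)$ by counting empty pentagons $s$ with $\Int(s)\cap\XX = \emptyset$, weighted by $\prod V_i^{n_{O_i}(s)}$, with putative homotopy inverse $\Phi_{\gamma\beta}$ and homotopy $\Hbgb$ counting hexagons.

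First I would verify $\Phi_{\beta\gamma}$ is a chain map by the domain-decomposition bookkeeping of Lemma~\ref{lem:phibetagamma}: for a domain $\phi$ from $\x\in\genG$ to $\z\in\genG'$ admitting a decomposition as a parallelogram-then-pentagon or pentagon-then-parallelogram, one shows that for $2\le m\le 4$ (where $m$ counts the components at which $\x,\z$ differ) there is a unique alternate decomposition, and for $m=1$ the domains either admit two decompositions of opposite type or pair off with a partner domain; there are no $m=0$ domains since $\x,\z$ lie on different diagrams. The only structural difference from the pure commutation is that in the special pair of columns the $X$ and $O$ markings that get swapped sit in adjacent regions sharing a row, so I would check that the "row less a triangle" domains and the "annulus plus abutting triangle" domains from the $m=1$ analysis (Figures~\ref{fig:pentagon1horizontal} and~\ref{fig:pentagon1vertical}) still behave correctly — in particular that the special adjacency does not create a new annular domain that fails to contain an $X$ marking, which is exactly where the special-pair hypothesis is used. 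Then I would invoke the grading argument of Lemma~\ref{lem:pentagongrading} (Remark-type transposition bookkeeping for $\spincS$, sequences of pentagons in the universal cover for $\maslov$ and $\alex$) to conclude $\Phi_{\beta\gamma}$ is trigraded, and the hexagon-counting argument of Lemma~\ref{lem:hbgb} to conclude $\Phi_{\gamma\beta}\circ\Phi_{\beta\gamma} + \dxminus\circ\Hbgb + \Hbgb\circ\dxminus = \mathbb{I}$, with the symmetric identity for $\Phi_{\beta\gamma}\circ\Phi_{\gamma\beta}$. Finally, Lemma~\ref{lem:componentvariablesarehomotopic} upgrades the homotopy equivalence over $\Ztwo[V_0,\ldots,V_{n-1}]$ to one over $\Ztwo[U_1,\ldots,U_\ell]$, and a $90^\circ$ rotation handles switches of rows.

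The main obstacle I anticipate is precisely the $m\le 1$ case: one must check that the special-pair hypothesis (the swapped $X$ and $O$ lie in adjacent regions of $T^2 - \alphas - \betas$) is exactly what is needed to make the $m=1$ domain count work out — i.e., that after adding $\gamma$, the annular regions of $T^2 - \alphas - \betas - \betas'$ bordering both $\beta$ and $\gamma$ still each contain an $X$ marking (so they do not contribute to the differential composed with a triangle-domain in a way that spoils the identity), and that the small bigon/triangle created between $\beta$ and $\gamma$ near the swapped markings does not interact badly with the $\XX$-avoidance condition. This is genuinely different from the commutation case, where the non-interleaving hypothesis plays the analogous role, and it is where the case analysis will need the most care; the higher-$m$ cases are formally identical to Lemmas~\ref{lem:phibetagamma} and~\ref{lem:hbgb} and I would reference those proofs rather than repeat them.
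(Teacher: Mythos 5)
Your approach is essentially the paper's: the paper's proof is a one-liner that perturbs $\beta$ and $\gamma$ as in Figure~\ref{fig:switchmove} (separating the two swapped markings within their shared row) and then applies the commutation argument of Section~\ref{sec:commutation} \emph{mutatis mutandis}. You correctly identified that the stabilization/destabilization reduction would be circular (Proposition~\ref{prop:stabilization}'s proof for $\XSE$ and $\XNW$ stabilizations itself invokes this switch proposition), and your attention to the $m\le 1$ domain count is precisely where the ``mutatis mutandis'' hides the one genuine check, which the paper leaves implicit.
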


\begin{proof}
Perturb $\beta_i$ and $\beta_i'$ so that we can draw both $\gridg$ and $\gkprime$ on the same grid diagram, as in Section~\ref{sec:commutation}, with the perturbation understood as in Figure~\ref{fig:switchmove}. The same proof that the chain complexes associated to two grid diagrams connected by a commutation move are homotopy equivalent applies here, mutatis mutandis.  
\end{proof}

\begin{figure}[h]
    \centering
    \begin{tikzpicture}
    \node at (0,0) {\includegraphics[height=6cm]{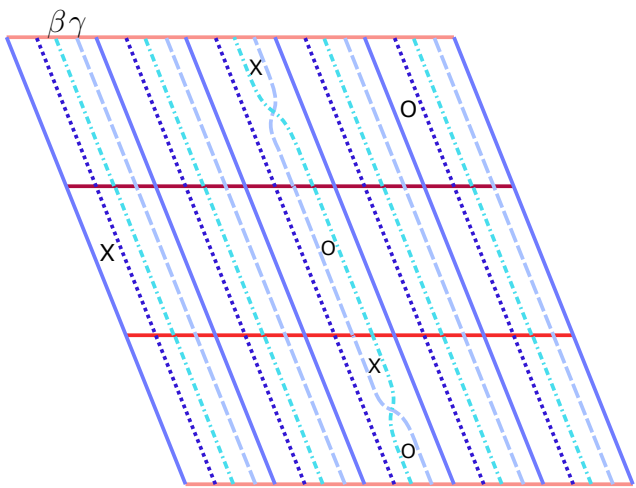}};
    \node at (-3.5,3.15) {$\beta$};
    \node at (-3.22,3.1) {$\gamma$};
    \end{tikzpicture}
    \caption{A switch of the second and third columns in $L(5,2)$ of grid index 3. Differing from a commutation, we have two markings in the same row which are being switched. We perturb $\beta$ and $\gamma$ to separate these within the row. }
    \label{fig:switchmove}
\end{figure}

\subsection{Stabilizations and destabilizations}\label{sec:stabilization}

We will follow \cite{gridhomology} in their discussion of stabilization, including terminology. There are three steps to a \emph{stabilization move}. First, choose a region of $T^2-\alphas-\betas$ containing a marking, and remove the marking as well as the other markings in the associated row and column. Second, split the row and column in two by adding a curve to each of $\alphas$ and $\betas$. Finally, add markings to the two new rows and columns to form a grid diagram. One can check that there are four such ways to add markings to yield a valid grid diagram for each of the original choices of marking, so eight total stabilization moves. These are given with names in Figure~\ref{fig:8stabilizations}. 
The inverse of such a move is called a \emph{destabilization}. 

\begin{figure}[ht!]
    \begin{center}
    \begin{tikzpicture}
    \node[anchor=south west,inner sep=0] at (0,0) {\includegraphics[width=16cm]{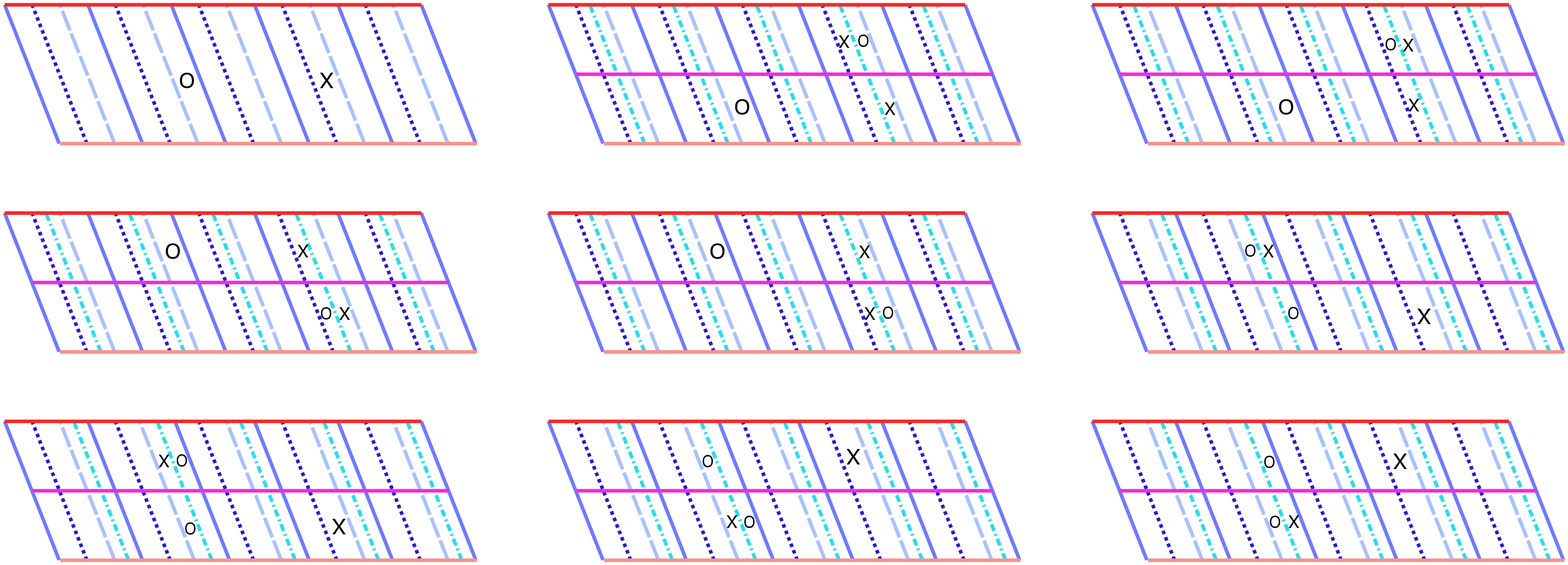}};
    \node at (8.2,4.1) {\small X:SW};
    \node at (13.7,4.1) {\small X:SE};
    \node at (8.2,1.9) {\small X:NW};
    \node at (8.2,-.2) {\small O:NE};
    \node at (2.5,1.9) {\small X:NE};
    \node at (13.7, 1.9) {\small O:SW};
    \node at (2.5, -.2) {\small O:SE};
    \node at (13.7, -.2) {\small O:NW};
    \end{tikzpicture}
    \caption{From the local picture of a portion of a grid diagram for $L(5,2)$ of grid index 3 shown in the top left corner, any of these eight stabilizations are possible, with names given. }
    \label{fig:8stabilizations}
    \end{center}
\end{figure}

We will prove the following:

\begin{prop}\label{prop:stabilization}
Suppose $\gridg$ and $\gkprime$ are grid diagrams for an $\ell$-component link which are connected by a stabilization. Then $(C^-(\gridg,\dxminus)$ and $C^-(\gkprime),\dxminusprime)$ are homotopy equivalent as chain complexes over $\Ztwo[U_1,\ldots,U_\ell]$. 
\end{prop}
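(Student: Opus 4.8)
The plan is to mirror the proof of stabilization invariance for links in $S^3$ (cf.\ \cite[Chapter~5]{gridhomology}), carrying it out combinatorially and paying attention to the twisting phenomena of Section~\ref{sec:twisting}, exactly as in the proofs of Proposition~\ref{prop:d2=0} and Lemma~\ref{lem:phibetagamma}. First I would reduce to a single type of stabilization: it suffices to treat the X:SW stabilization, since each of the other seven stabilizations of Figure~\ref{fig:8stabilizations} differs from one of type X:SW by a composition of commutation and switch moves, whose invariance is already established in Propositions~\ref{prop:commutation} and~\ref{prop:switch} (the type-O stabilizations being treated by the analogous argument).

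Next I would set up the combined picture. Draw $\gridg$ and $\gkprime$ on the same twisted torus; $\gkprime$ has grid number $n+1$. Let $c$ be the intersection point of the new $\alpha$-circle with the new $\beta$-circle lying inside the $2\times2$ stabilization region, let $O_1$ be the new $O$-marking and $X_1$ the new $X$-marking adjacent to $c$, and let $V_1$ be the variable attached to $O_1$. Since $O_1$ lies on the same component of $L$ as the marking it replaces, Lemma~\ref{lem:componentvariablesarehomotopic} reduces the statement over $\Ztwo[U_1,\ldots,U_\ell]$ to constructing a homotopy equivalence of complexes over $\Ztwo[V_0,\ldots,V_n]$ (while tracking $V_1$). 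Partition $\genG'=\tbI\sqcup\tbN$, where $\tbI$ is the set of generators having a component at $c$ and $\tbN$ the set of those that do not, and record the tautological bijection $\tbI\cong\genG$. With respect to the module splitting $C^-(\gkprime)=\tbI\oplus\tbN$, the $\XX$-avoiding condition forces $\dxminusprime$ to have no component $\tbN\to\tbI$, so $\dxminusprime=\dii+\din+\dnn$ and $C^-(\gkprime)=\Cone\bigl(\din\colon(\tbI,\dii)\to(\tbN,\dnn)\bigr)$ up to an overall grading shift.

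The heart of the argument is a local analysis of these three components in the stabilization region. The part of $\dnn$ counting the small empty rectangles near $c$ makes $\tbN$, equipped with that part of its differential, acyclic --- it pairs up the generators of $\tbN$ that agree away from the stabilization region --- so the homological cancellation lemma (Gaussian elimination, cf.\ \cite[Section~5.2]{gridhomology}) lets one cancel all of $\tbN$. The result is a complex homotopy equivalent to $C^-(\gkprime)$ with underlying module $\tbI\cong C^-(\gridg)$ and differential $\dii$ together with ``error terms'' coming from the zig-zags $\tbI\to\tbN\to\cdots\to\tbN\to\tbI$ through the cancelled pairs. A domain-decomposition case analysis --- split by the number $m\le4$ of components where the initial and terminal generators differ, exactly parallel to those in the proofs of Proposition~\ref{prop:d2=0} and Lemmas~\ref{lem:phibetagamma} and~\ref{lem:hbgb} --- then shows that under the bijection $\tbI\cong\genG$ the term $\dii$ is precisely $\dxminus$ (with $V_1$ identified with the variable of the marking it replaces) and that the error terms either vanish or are killed by an explicit homotopy, so the reduced complex is homotopy equivalent to $(C^-(\gridg),\dxminus)$. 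Finally I would verify, using the grading formulas of Section~\ref{sec:gradings} and in particular the behaviour of $\spincS$, $\maslov$, $\alex$ and the correction term $d(p,q,q-1)$, that all the maps involved respect the three gradings.

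The main obstacle, as in every proof in this paper, is the twisting of Section~\ref{sec:twisting}. Although the stabilization is a purely local modification, the domains that contribute to $\din$, to the acyclic part of $\dnn$, to the zig-zag error terms, and to the correction homotopy may wrap arbitrarily many times (fewer than $p$) around the twisted torus, and the new $\alpha$- and $\beta$-circles genuinely meet in $p$ points; so one must pin down which crossing plays the role of $c$ and carefully track the $p$-coordinate data of generators near it across the bijection $\tbI\cong\genG$. I expect the bulk of the work --- just as for $d^2=0$ and for commutation invariance --- to be the region-by-region check that the combinatorics of the decompositions governing the cancellation structure and the correction homotopy is unaffected by this wrapping.
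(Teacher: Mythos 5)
Your overall framing (reduce to a single stabilization type, split $\genG'=\bI\sqcup\mathbf{N}$ according to whether a generator contains $c$, observe that $\tbN$ is a subcomplex so $C^-(\gkprime)=\Cone(\din)$) matches the paper's Section~\ref{sec:stabilization}. However, the central step of your plan --- cancel all of $\tbN$ by Gaussian elimination --- has a genuine gap, in two respects.

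First, $(\tbN,\dnn)$ is not acyclic, and no ``small-rectangle'' subcomplex of $\dnn$ is acyclic either. The content of Lemmas~\ref{lem:phixni}--\ref{lem:phixnquasiisomorphism} is precisely that $(\tbN,\dnn)$ is chain homotopy equivalent to $(\tbI,\dii)$ via $\Phi_{X_n}^{\tbI}$ and $\Phi_{O_n}$, with homotopy $\Phi_{O_n,X_n}$. So the homology of $\tbN$ is that of $\tbI$, which is $H_*(C^-(\gridg))\otimes\Ztwo[V_n]$ up to a shift --- not zero in general. Relatedly, since $\tbN$ is a subcomplex, there are no differential arrows $\tbN\to\tbI$, so a hypothetical Gaussian elimination of $\tbN$ would produce no zig-zag error terms landing in $\tbI$ at all; the resulting complex would be exactly $(\tbI,\dii)$.

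Second, and more fundamentally, even if such a cancellation could be performed, the surviving complex $(\tbI,\dii)$ is \emph{not} $C^-(\gridg)$. As a module, $\tbI$ is free over $\Ztwo[V_0,\ldots,V_n]$ on the generators $\genG$, i.e.\ it is $C^-(\gridg)[V_n]$, which is strictly larger than $C^-(\gridg)$: its homology is $H_*(C^-(\gridg))\otimes\Ztwo[V_n]$, which is not isomorphic to $H_*(C^-(\gridg))$. Writing ``$\tbI\cong C^-(\gridg)$'' conflates the two rings. The essential ingredient you are missing is Lemma~\ref{lem:freeconelemma}: $\Cone(V_n-V_{n-1}\colon C[V_n]\to C[V_n])$ is quasi-isomorphic to $C$ as a trigraded $\Ztwo[V_0,\ldots,V_{n-1}]$-module. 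The paper's proof identifies $(\tbI,\dii)\cong C[V_n]\gop1,1,0\gcl$ (Lemma~\ref{lem:Iquasi-isomorphism}), shows $(\tbN,\dnn)\simeq C[V_n]$ via $e\circ\Phi_{X_n}^{\tbI}$, checks that under these identifications $\din$ becomes multiplication by $V_n-V_{n-1}$, and then invokes Lemma~\ref{lem:conealgebraiclemma} to conclude $C^-(\gkprime)=\Cone(\din)\simeq\Cone(V_n-V_{n-1})\simeq C^-(\gridg)$. Any correct proof must account for the extra polynomial variable in this way; ``cancel $\tbN$ and read off $\tbI$'' does not.
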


We will first prove the above in the specific case of an $\XSW$ stabilization, which can be seen with notation in Figure~\ref{fig:stabilization}. 

For notation, let $\gridg$ be a grid diagram for $L$ with grid number $n$, and suppose the marking we chose was $X_{n-1}\in\XX$. Denote by $\alpha_n$ and $\beta_n$ the new curves in the $\XSW$ stabilization, and by $X_n$ and $O_n$ the new markings added in the $\XSW$ stabilization. Denote by $c$ the intersection point of $\alpha_n$ and $\beta_n$ at the lower right corner of the region containing $X_{n-1}$. Further, let $\genG$ be the generators of $C^-(\gridg)$, which we will denote $C$, and $\genG'$ be the generators of $C^-(\gkprime)$, which we will denote $C'$. 

\begin{figure}[ht]
    \centering
    \begin{tikzpicture}
    \node at (0,0) {\includegraphics[width=16cm]{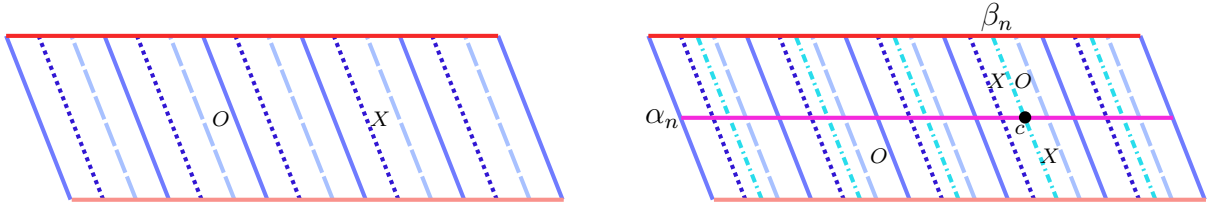}};
    \node at (-5.12,0) {\tiny $O$};
     \node at (-3,0) {\tiny $X$};
     \node at (3.63,-.5) {\tiny $O$};
     \node at (5.9,-.5) {\tiny $X$};
     \node at (5.2,.5) {\tiny $X$};
     \node at (5.55,.5) {\tiny $O$};
     \node at (5.5,-.15) {\tiny $c$};
     \node at (.75,0) {$\alpha_n$};
     \node at (5.2,1.35) {$\beta_n$};
    \end{tikzpicture}
    \caption{An example of $\XSW$ stabilization, including labels for the notation we will use throughout Section~\ref{sec:stabilization}.}
    \label{fig:stabilization}
\end{figure}

First we note that $C$ and $C'$ are chain complexes over different rings. To allow us to work uniformly over one ring, we formally introduce a variable $U_n$ to $C$, denoting the resulting chain complex $C[V_n]$. As a module this is $C\otimes_{\Ztwo [V_0,\ldots,V_{n-1}]}\Ztwo [V_0,\ldots,V_n]$. The action of $\Ztwo[V_0,\ldots,V_n]$ on this module is defined by by $a\cdot(\sum_i c_i\otimes d_i)=\sum_i c_i\otimes ad_i$ for $a\in \Ztwo[V_0,\ldots,V_n]$ and $\sum_i c_i\otimes d_i\in C\otimes \Ztwo[V_0,\ldots,V_n]$. 
The differential is defined by  $\dxminus(\sum c_i\otimes d_i)=\sum\dxminus(c_i)\otimes d_i$, and the grading can similarly be extended so that $V_n$ has Maslov grading $-2$ and Alexander grading $-1$, like the other variables. 

We would like to show that $C$ and $C'$ are quasi-isomorphic. We will show that there is a third complex $C''$ such that $C'$ is quasi-isomorphic to $C''$ and $C''$ is quasi-isomorphic to $C$. The second step we get for free. 

\begin{lem}[cf.~{\cite[Lemma 5.2.16]{gridhomology}}]\label{lem:freeconelemma}
Let $D$ be a trigraded chain complex over $\Ztwo [V_0,\ldots,V_{n-1}]$. Then $\Cone(V_n-V_{n-1}:D[V_n]\to D[V_n])$ is quasi-isomorphic as a trigraded $\Ztwo [V_0,\ldots,V_{n-1}]$ module to $D$. 
\end{lem}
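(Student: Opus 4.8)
The plan is to reduce this to a standard fact of homological algebra and then identify the pieces explicitly. The key observation is that for any injective chain map $f\colon A\to B$ of complexes over a ring, the natural projection $\Cone(f)\to\operatorname{coker}(f)$ is a quasi-isomorphism; I will apply this with $A=B=D[V_n]$ and $f$ equal to multiplication by $V_n-V_{n-1}$. Granting that, the lemma reduces to two verifications: that $V_n-V_{n-1}$ acts injectively on $D[V_n]$, and that its cokernel is $D$ as a trigraded chain complex.

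For injectivity, I would write a general element of $D[V_n]=D\otimes_{\Ztwo[V_0,\ldots,V_{n-1}]}\Ztwo[V_0,\ldots,V_n]$ uniquely as a finite sum $\sum_{k\ge 0}d_k V_n^{\,k}$ with $d_k\in D$; if $(V_n-V_{n-1})\sum_k d_k V_n^{\,k}=0$, comparing the highest power of $V_n$ shows the top coefficient is $0$, and descending induction kills all the $d_k$. (That $V_n-V_{n-1}$ is a chain map is immediate: the differential on $D[V_n]$ is $\Ztwo[V_0,\ldots,V_n]$-linear by construction, hence commutes with multiplication by any ring element.) For the homological-algebra input, I would note that the kernel of $\Cone(f)\twoheadrightarrow\operatorname{coker}(f)$, whose elements have the form $(a,f(a'))$, is isomorphic via $(a,f(a'))\mapsto(a,a')$ to $\Cone(\id_A)$, which is contractible; hence the projection is a quasi-isomorphism (alternatively one reads this off the long exact sequence of the cone, whose connecting map is $f_*$, and applies the five lemma). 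Finally, $\operatorname{coker}(V_n-V_{n-1})=D[V_n]/(V_n-V_{n-1})$ is identified with $D$ through the evaluation map $\sum_k d_k V_n^{\,k}\mapsto\sum_k V_{n-1}^{\,k}d_k$, which is surjective with kernel exactly $(V_n-V_{n-1})D[V_n]$ and, since the differential does not involve $V_n$, descends to an isomorphism of chain complexes.

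The one step requiring care, and which I expect to be the main bookkeeping obstacle, is tracking the three gradings. Since $V_n$ is assigned the same Maslov and Alexander degrees as $V_{n-1}$ (namely $-2$ and $-1$), multiplication by $V_n-V_{n-1}$ is homogeneous of degree $0$ for all three gradings, so $\Cone(V_n-V_{n-1})$ is trigraded once one inserts the usual internal degree shift on the domain copy of $D[V_n]$ that makes the cone differential homogeneous; the contractible subcomplex above is then a trigraded summand of the kernel, and the evaluation isomorphism $\operatorname{coker}\cong D$ is grading-preserving. Assembling these shifts so that the resulting quasi-isomorphism respects all three gradings exactly as in the statement is precisely the content of \cite[Lemma 5.2.16]{gridhomology}, of which this lemma is the verbatim analogue for lens-space grid complexes, and I would follow that argument.
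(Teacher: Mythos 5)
Your argument is correct and follows the standard route, which is exactly the approach taken in the cited reference \cite[Lemma~5.2.16]{gridhomology}; the present paper in fact states this lemma without reproducing a proof, deferring entirely to that citation, so there is nothing in the paper to diverge from. Your three steps --- (i) $V_n-V_{n-1}$ acts injectively on $D[V_n]=\bigoplus_{k\ge 0}D\cdot V_n^k$ by the top-degree-in-$V_n$ argument, (ii) for injective $f$ the projection $\Cone(f)\to\operatorname{coker}(f)$ is a quasi-isomorphism because its kernel is a copy of the contractible $\Cone(\id)$ (or by the five lemma comparing the two long exact sequences), and (iii) the evaluation map $\sum d_kV_n^k\mapsto\sum V_{n-1}^k d_k$ is a chain isomorphism $\operatorname{coker}(V_n-V_{n-1})\cong D$ --- are all sound and are precisely the content of the reference's proof.

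One small but genuine imprecision: you assert that multiplication by $V_n-V_{n-1}$ is ``homogeneous of degree $0$ for all three gradings.'' It is homogeneous (because $V_n$ and $V_{n-1}$ carry the same degrees, so their difference is homogeneous), but its degree is $(-2,-1,0)$ in (Maslov, Alexander, $\spinc$), not $(0,0,0)$. This is exactly why, in the commutative square where this lemma is applied, the source is written as $C[V_n]\gop 1,1,0\gcl$: the Maslov shift by $1$ turns the degree-$(-2)$ map into the degree-$(-1)$ cone differential, and the Alexander shift by $1$ makes the map Alexander-preserving. You do acknowledge that a shift is needed, so the contradiction is purely in the side remark, and the main argument is unaffected --- but as written the sentence would not survive a careful referee.
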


Let $C'':=\Cone(V_n-V_{n-1}:C[V_n]\to C[V_n])$. Then by the above we have that $C''$ is quasi-isomorphic to $C$. Most of the work now goes into showing that $C'$ is quasi-isomorphic to $C''$. 

We note that for a trigraded chain complex $D$, following \cite{gridhomology}, we denote grading shift by $D\gop a,b,c\gcl$, so $D\gop a,b,c\gcl_{r,s,t}=D_{r-a,s-b,t-c}$. 

 We draw our notation from Figure \ref{fig:stabilization}, following \cite{gridhomology}. We can decompose $\genG'$ into those generators which contain $c$ and those which do not. Let $\bI:=\{\x\in\genG':c\in\x\}$ and $\mathbf{N}:=\{\x\in\genG':c\not\in\x\}$; then $\genG'=\bI\cup\mathbf{N}$, and as a $\Ztwo[V_0,\ldots,V_n]$ module, $C'=\tbI\oplus\tbN$, where $\tbI$ and $\tbN$ are the spans of $\bI$ and $\mathbf{N}$. For $\x\in \mathbf{N}$, $\y\in\bI$, we have that $\ePG(\x,\y)$ is empty, as any parallelogram connecting $\x$ to $\y$ must contain $X_{n-1}$ or $X_n$, so $\tbN$ is a subcomplex of $C'$. Let $\dnn$ be the restriction of $\dxminus$ to $\tbN$, $\din$ the restriction of $\dxminus$ to the domain $\tbI$ and codomain $\tbN$, and $\dii$ the restriction of $\dxminus$ to the domain and codomain $\tbI$. Therefore, as $$\dxminus=\begin{pmatrix} \partial_{\tbI}^{\tbI} & 0 \\
 \partial_{\tbI}^{\tbN} & \partial_{\tbN}^{\tbN}
 \end{pmatrix},$$ we get that $C'=\Cone(\partial_{\tbI}^{\tbN})$. Our goal, then, is to provide a quasi-isomorphism from $C'=\Cone(\din)$ to $C''=\Cone(V_n-V_{n-1})$. We will do this in components, and then appeal to the following algebraic lemma: 
 
 \begin{lem}[{\cite[Lemma 5.2.12]{gridhomology}}]\label{lem:conealgebraiclemma}
 A commutative diagram of chain complexes of trigraded $R$-modules, \[
 \begin{tikzcd}
 C\arrow[r,"f"]\arrow[d,"\phi"] & C'\arrow[d,"\phi'"]\\
 E\arrow[r,"g"]&E'
 \end{tikzcd}
 \]where the horizontal maps are homogenous of degree $(m,t,r)$, and the vertical maps are trigraded, induces a chain map $\Phi:\Cone(f)\to\Cone(g)$. If $\phi$ and $\phi'$ are quasi-isomorphisms, then so is $\Phi$. 
 \end{lem}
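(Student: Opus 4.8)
The lemma is a piece of homological algebra, and the plan is the standard mapping-cone argument via the five lemma, carried out so as to respect the trigrading; everything is over $\Ztwo$ (or any ring), so signs are irrelevant. First I would construct $\Phi$ explicitly. Writing $\Cone(f) = C' \oplus C\gop m,t,r\gcl$ with the grading shift prescribed in \cite{gridhomology}, so that the cone differential carries $(b,a)$ to $(d_{C'}b + f(a),\, d_C a)$, and likewise $\Cone(g) = E' \oplus E\gop m,t,r\gcl$, I would set $\Phi := \phi' \oplus \phi$, where on the second summand $\phi$ is viewed as a map $C\gop m,t,r\gcl \to E\gop m,t,r\gcl$. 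Since $\phi,\phi'$ are trigraded and $f,g$ share the tridegree $(m,t,r)$, the shifts line up and $\Phi$ is trigraded; that $\Phi$ intertwines the two cone differentials reduces to the identities $d_E\phi = \phi d_C$, $d_{E'}\phi' = \phi' d_{C'}$, and $g\phi = \phi' f$, the first two of which hold because $\phi,\phi'$ are chain maps and the third of which is exactly the commutativity of the given square. This proves the first assertion of the lemma.

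Next I would exploit the short exact sequences of trigraded chain complexes
\[
0 \to C' \xrightarrow{\,\iota\,} \Cone(f) \xrightarrow{\,\pi\,} C\gop m,t,r\gcl \to 0, \qquad 0 \to E' \xrightarrow{\,\iota'\,} \Cone(g) \xrightarrow{\,\pi'\,} E\gop m,t,r\gcl \to 0,
\]
with $\iota,\iota'$ the inclusions of the first summand and $\pi,\pi'$ the projections onto the second. From $\Phi = \phi'\oplus\phi$ one reads off $\Phi\circ\iota = \iota'\circ\phi'$ and $\pi'\circ\Phi = (\phi\gop m,t,r\gcl)\circ\pi$, so $(\phi',\Phi,\phi)$ is a morphism of short exact sequences. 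Passing to the associated long exact sequences in homology in each of the three gradings, naturality of the connecting homomorphism yields a commuting ladder whose vertical maps are, cyclically, $H_\ast(\phi')$, $H_\ast(\Phi)$, and $H_\ast(\phi)$ (the grading shift on the last not affecting whether a map is an isomorphism). Since $\phi$ and $\phi'$ are quasi-isomorphisms, $H_\ast(\phi)$ and $H_\ast(\phi')$ are isomorphisms in every degree, so the five lemma forces $H_\ast(\Phi)$ to be an isomorphism in every degree, i.e.\ $\Phi$ is a quasi-isomorphism.

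I do not anticipate a genuine obstacle, as the statement is formal; the only thing to watch is keeping the homological/grading shift conventions for the mapping cone consistent with those of \cite{gridhomology}. As an alternative to the five-lemma step, one may observe that $\Cone(\Phi)$ is isomorphic to the total complex of the commuting square, i.e.\ to the mapping cone of the map $\Cone(\phi)\to\Cone(\phi')$ induced by $f$ and $g$; as $\phi,\phi'$ are quasi-isomorphisms, $\Cone(\phi)$ and $\Cone(\phi')$ are acyclic, hence so is their mapping cone by its own long exact sequence, whence $\Cone(\Phi)$ is acyclic and $\Phi$ is a quasi-isomorphism.
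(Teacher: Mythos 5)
The paper does not actually prove this lemma; it cites \cite[Lemma~5.2.12]{gridhomology} and uses it as a black box. Your proposal supplies the standard argument behind that reference, and it is correct: the map $\Phi = \phi' \oplus \phi$ on the underlying direct sums, the chain-map verification reducing to the two chain-map identities plus commutativity of the square, and the five-lemma argument on the ladder of long exact sequences coming from the short exact sequences of cones are exactly the proof given in the cited book. Your alternative observation that $\Cone(\Phi)$ is a cone of acyclic complexes is also valid and is a common shortcut.

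One small caution: you open by saying ``everything is over $\Ztwo$ (or any ring), so signs are irrelevant,'' but this lemma is invoked in the paper both over $\Ztwo[V_0,\ldots,V_n]$ and over $\Z[V_0,\ldots,V_n]$ (see the proofs of Propositions~\ref{prop:XSWstabilization} and~\ref{prop:xswsigned}). With the usual sign on the cone differential, $d_{\Cone(f)}(b,a)=(d_{C'}b + f(a), -d_C a)$, the map $\Phi=\phi'\oplus\phi$ still intertwines the cone differentials because the extra sign appears on both sides; so the argument does carry over to $\Z$, but it is worth stating rather than waving away, since the integral case is precisely where the paper needs it.
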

 
 We are looking to fill in the vertical maps on the following square with quasi-isomorphisms that make the square commute, at which point we can appeal to the above lemma: 
 \[
 \begin{tikzcd}
 (\tbI,\dii)\arrow[r,"\din"]\arrow[d,""] & (\tbN,\dnn)\arrow[d,""]\\
 (C[V_n]\gop1,1,0\gcl,\dxminus)\arrow[r,"V_n-V_{n-1}"]&(C[V_n],\dxminus).
 \end{tikzcd}\]
 
 \begin{lem}\label{lem:Iquasi-isomorphism}
 The identification of generators $\x\in \bI$ with $\x\setminus c\in \genG$ induces an isomorphism of trigraded chain complexes $e:(\tbI,\partial_{\tbI}^{\tbI})\to C[V_n]\gop1,1,0\gcl$ over $\Ztwo [V_0,\ldots,V_n]$. 
 \end{lem}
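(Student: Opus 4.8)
The plan is to write $e$ explicitly on generators, check that it is an isomorphism of $\Ztwo[V_0,\dots,V_n]$-modules with the claimed trigrading shift, and then verify that it intertwines the differentials by a parallelogram count that is local to the stabilization region.

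First I would describe the bijection underlying $e$. A generator $\x\in\bI$ has exactly one component equal to the distinguished point $c=\alpha_n\cap\beta_n$, and its remaining $n$ components occupy the curves $\alpha_0,\dots,\alpha_{n-1},\beta_0,\dots,\beta_{n-1}$ — that is, all curves of $\gkprime$ other than $\alpha_n,\beta_n$ — which are canonically identified with the curves of $\gridg$. Hence $\x\mapsto\x\setminus c$ is a well-defined bijection $\bI\to\genG$, with inverse adjoining $c$; extending $\Ztwo[V_0,\dots,V_n]$-linearly (so the formal variable $V_n$ on $C[V_n]$, corresponding to $O_n$, is respected) gives a module isomorphism $e\colon\tbI\to C[V_n]$. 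Write $\bar\x=\x\setminus c$ for $\x\in\bI$.

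Second I would verify the trigrading shift $\gop 1,1,0\gcl$. The $\spincS$ grading is unchanged: one checks that the generator $\x_{\OO}$ for $\gkprime$ may be taken to contain $c$, and that deleting $c$ from both $\x$ and $\x_{\OO}$ leaves the defining sum of $p$-coordinates unchanged modulo $p$. For the Maslov and Alexander gradings, the correction term $d(p,q,q-1)$ and the constant $n-\ell$ appearing in the defining equation~\eqref{eqn:alexdefn} behave exactly as in $S^3$, so it remains to compute $\tilde{\maslov}_{\gkprime}(\x)-\tilde{\maslov}_{\gridg}(\bar\x)$ and the analogous Alexander difference using the functions $W$, $C_{p,q}$, and $\I$ of Section~\ref{sec:gradings}. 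In the universal cover the points $\x$, $O_n$, and $X_n$ each acquire $p$ preimages, one in each strip $C_{p,q}^k$, sitting in the local $\XSW$ configuration of Figure~\ref{fig:stabilization}; the computation is the lens-space analogue of the $S^3$ one, with each strip contributing identically, and yields a shift of $+1$ in each of $\maslov$ and $\alex$.

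Third — and this is the crux — I would prove $e\circ\dii=\dxminus\circ e$ by showing that for $\x,\y\in\bI$ the empty parallelograms from $\x$ to $\y$ in $\gkprime$ avoiding $\XX$ correspond bijectively to the empty parallelograms from $\bar\x$ to $\bar\y$ in $\gridg$ avoiding $\XX$, with $n_{O_i}(r)=n_{O_i}(\bar r)$ for $0\le i<n$ and $n_{O_n}(r)=0$. The key observation is that $c\in\x\cap\y$, so $c$ is not one of the four corners of such an $r$ (those are the points of $(\x\setminus\y)\cup(\y\setminus\x)$) and, $r$ being empty, is not in $\Int(r)$. Combined with the local picture near the stabilization region — where the squares meeting $c$ carry the markings $X_{n-1},X_n,O_n$ in an $\XSW$ pattern, only one being empty — this forces $r$ to be disjoint from $\alpha_n$, from $\beta_n$, and from the interior of the stabilization region, hence in particular to miss $O_n$. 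Such an $r$ descends to a parallelogram of $\gridg$ with the same $O_i$-multiplicities, and conversely every empty parallelogram of $\gridg$ avoiding $\XX$ lifts uniquely; as throughout the paper, the number of times a domain wraps does not affect this combinatorial correspondence (cf.\ Remark~\ref{rem:wrappingdiagrams}), so the twisting produces no new parallelograms. Since $n_{O_n}(r)=0$ for every counted $r$ and $e$ is linear over $\Ztwo[V_0,\dots,V_n]$, the two differentials agree and $e$ is an isomorphism of trigraded chain complexes.

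I expect the main obstacle to be precisely this last correspondence: ruling out parallelograms that interact with the two new curves $\alpha_n$ and $\beta_n$. Because $\alpha_n$ and $\beta_n$ each meet in $p$ points and parallelograms may wrap, one cannot simply invoke a planar picture as for $S^3$; the resolution is that the constraint $c\in\x\cap\y$ localizes the behaviour near the stabilization region, and that local configuration — identical to the $S^3$ one treated in \cite{gridhomology} and \cite{Manolescu_2007} — is insensitive to how the domain wraps.
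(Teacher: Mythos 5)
Your overall approach --- identify $\bI$ with $\genG$ by deleting $c$, exhibit a bijection of the counted parallelograms with matching $O_i$-multiplicities, and check the trigrading shift --- matches the paper (which treats the grading shift in the separate Lemma~\ref{lem:Igradings}). However, the step you single out as the crux contains a genuine error. You claim that for $r\in\ePG(\x,\y)$ with $\x,\y\in\bI$, emptiness and avoidance of $\XX$ force $r$ to be disjoint from $\alpha_n$ and from $\beta_n$. This is false. What is true is that $c$ is not a corner of $r$, so the four boundary arcs of $r$ lie on curves $\alpha_a,\alpha_b,\beta_u,\beta_v$ with $a,b,u,v<n$; but nothing prevents $\alpha_n$ or $\beta_n$ from crossing through $\Int(r)$ far from $c$. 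A parallelogram spanning several rows, one of which is the thin new row created by stabilization, meets $\alpha_n$ in its interior. Such parallelograms must be counted --- under collapse they correspond exactly to the parallelograms of $\gridg$ that cross the original row of $X_{n-1}$ --- and your stated criterion would wrongly exclude them, so the claimed bijection, and with it the chain-map identity, would break.

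The correct local argument is as follows. Since no corner of $r$ lies on $\alpha_n\cup\beta_n$, the boundary of $r$ sits on curves of $\gridg$ only and therefore never enters the interior of the $\gridg$-cell that was subdivided by the stabilization. That cell is thus either wholly contained in $\Int(r)$ or wholly disjoint from it; the former would place $c$ in $\Int(r)$, contradicting emptiness. Hence the whole cell, and in particular $O_n$, $X_n$, and $X_{n-1}$, lies outside $\Int(r)$, giving $n_{O_n}(r)=0$ and disjointness from $X_n$ --- with no constraint on how $r$ meets $\alpha_n$ or $\beta_n$ away from $c$. With that replacement your proposal recovers the paper's (much terser) proof.
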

 
 Before we continue, let's address the gradings.  
 \begin{lem}\label{lem:Igradings}
 For $\x\in \genG$, let $\x'=\x\cup c\in \bI$. Then $\spincS(\x)=\spincS(\x')$, $\maslov(\x)=\maslov(\x')+1$ and $\alex(\x)=\alex(\x')+1$. 
 \end{lem}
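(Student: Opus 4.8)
The plan is to compute each of the three gradings directly from its definition, using the freedom — granted by Lemma~\ref{lem:maslov} and the well-definedness of $\spincS$ — to work in whichever fundamental domain is most convenient. First I would fix a fundamental domain for $\gkprime$ adapted to the $\XSW$ stabilization: one in which the new curves $\alpha_n$ and $\beta_n$ run along two sides of a thin neighborhood of the region of $T^2-\alphas-\betas$ that had contained $X_{n-1}$. In such a domain every old component of $\x$ keeps exactly its $p$-coordinate from $\gridg$; the markings of $\gkprime$ differ from those of $\gridg$ only in that $X_{n-1}$ together with the two $O$'s in its row and column is replaced by $X_{n-1}$, $X_n$, $O_n$ and two slightly relocated $O$'s, all within a bounded region; and the extra component of $\x'$, which lies on both $\alpha_n$ and $\beta_n$, is $c$, at the lower-right corner of the old $X_{n-1}$ region.

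For $\spincS$, I would expand $\tilde{\spincS}(\x')=\sum_{i=0}^{n}\bigl(a^{\x'}_i-a^{\x_{\OO}'}_i\bigr)\pmod p$, where $\x_{\OO}'$ is the $O$-generator for $\gkprime$. The sum over the $n$ old curves reproduces $\tilde{\spincS}_{\gridg}(\x)$, and the one new term — from $c$ on $\alpha_n$ against the $\alpha_n$-component of $\x_{\OO}'$ (the lower-left corner of the region of $O_n$) — vanishes modulo $p$ by the placements of $c$ and $O_n$; equivalently, the shift of the $O$-generator caused by the stabilization exactly cancels the contribution of the extra component. Since $\spincS=\tilde{\spincS}+q-1\pmod p$ in both grids, this gives $\spincS(\x)=\spincS(\x')$.

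For $\maslov$ and $\alex$ I would pass to the universal cover, exactly as in their definitions. The lift $\tilde{\x'}=C_{p,q}(W(\x'))$ is obtained from $\tilde{\x}$ by inserting the $p$ lifts of $c$, one per strip, and widening the strips accordingly, while $\tilde{\OO'}$ (resp.\ $\tilde{\XX'}$) differs from $\tilde{\OO}$ (resp.\ $\tilde{\XX}$) only through the bounded local change described above, repeated in each strip. Expanding $\tilde{\maslov}(\x')=\I(\tilde{\x'},\tilde{\x'})-\I(\tilde{\x'},\tilde{\OO'})-\I(\tilde{\OO'},\tilde{\x'})+\I(\tilde{\OO'},\tilde{\OO'})$, the terms not involving the new data cancel strip by strip against $\tilde{\maslov}(\x)$, and the total contribution of the $p$ copies of $c$ together with the new and relocated $\OO$-markings comes to $-p$; since $d(p,q,q-1)$ is unchanged, $\maslov(\x')=\maslov(\x)-1$. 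Running the identical computation with $\XX$ in place of $\OO$ — where the relevant $\XX$-markings near $c$ occupy the complementary local configuration — instead shows the corresponding quantity is unchanged, i.e.\ $\maslov_{\XX}(\x')=\maslov_{\XX}(\x)$, whence $\tilde{\alex}(\x')=\maslov_{\OO}(\x')-\maslov_{\XX}(\x')=\tilde{\alex}(\x)-1$; substituting into $\alex(\x')=\frac{\tilde{\alex}(\x')-((n+1)-\ell)}{2}$ yields $\alex(\x)=\alex(\x')+1$.

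The step I expect to be the main obstacle is this last universal-cover bookkeeping: one must track precisely how the $p$ lifts of $c$, the widened strips, and the new and relocated $\OO$- and $\XX$-markings affect each of the four $\I$-terms, and verify that the net change is $-p$ for the $\OO$-version but $0$ for the $\XX$-version — the asymmetry that records that the stabilization is of type $\XSW$ rather than, say, $\OSW$. This is the lens-space analogue of the classical stabilization grading computation (cf.~the treatment of stabilization in~\cite[Chapter~5]{gridhomology}); the only genuinely new feature is the $p$-fold multiplicity, which is handled by the strip-by-strip structure already exploited in the proof of Proposition~\ref{prop:parallelogramgradings}.
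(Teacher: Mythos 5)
Your approach matches the paper's: fix a fundamental domain placing $c$ in a convenient corner, compute $\spincS$ directly from the $p$-coordinates (the new term is $0$), and pass to the universal cover for $\maslov$ and $\alex$. The one piece you leave unverified — and correctly flag as the main obstacle — is exactly the content of the paper's proof: a cancelling-pairs table showing that each new pair contributing to one of the four $\I$-terms pairs off with another of the opposite sign, except for the $p$ pairs $(\tc^r,\tO_n^r)$, $0\le r<p$, which go unmatched and give $\tilde\maslov(\x')=\tilde\maslov(\x)-p$; the analogous bookkeeping for $\XX$ then shows the net change there is $0$. Without that table your assertion of "$-p$ for $\OO$, $0$ for $\XX$" is a guess (consistency with Lemma~\ref{lem:maslov} alone only pins the answer down to a multiple of $p$), and the $\OO$/$\XX$ asymmetry recording the $\XSW$ type is not actually established. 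So the route is right, but to turn the sketch into a proof you need to supply the explicit signed pairing, as the paper does.
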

 \begin{proof}
 Fix a fundamental domain so that $c$ is in the top right of the diagram. 
 
 Let's first consider $\spincS$. We see that $a_i^{\x}=a_i^{\x'}$ and $a_i^{\x_{\OO}}=a_i^{\x_{\OO}'}$ for $0\le i<n$ and $a_n^{\x}=a_n^{\x'}=a_n^{\x_{\OO}}=a_n^{\x_{\OO}'}=p-1$, so we get $\spincS(\x)=\spincS(\x')$. 
 
 Now let's consider $\maslov$. Let $O_i\in\OO$ and $x_i\in\x$. Denote by $\tO_i^k$ and $\tx_i^k$ the component in the strip $C_{p,q}^k$. 
 
 Recall that $\tilde{\maslov}$ is the signed sum of four terms. We can consider what happens to each of these terms for $\x\cup c$ and $\OO\cup O_n$ as compared to $\x$ and $\OO$. For example, each pair counted by $\I(\x\cup c,\x\cup c)$ which is not counted by $\I(\x,\x)$ is of the form $(\tx_i^r,\tc^s)$, $(\tc^r,\tx_i^s)$ for $r\le s$ and some $0\le i <n$, or of the form $(\tc^r,\tc^s)$, for $r<s$. When we consider these new pairs counted by each of the four terms, we see that they cancel in signed pairs, as follows: 
 
 \begin{align*}
 &\textbf{\underline{For $0\le r\le s<p$} }&&\textbf{\underline{For $0\le r< s<p$}}\\
     &(\tx_i^r,\tc^s)\longleftrightarrow(\tx_i^r,\tO_n^s)&&(\tc^r,\tc^s)\longleftrightarrow(\tc^r,\tO_n^s)\\
     &(\tc^r,\tx_j^s)\longleftrightarrow(\tO_n^r,\tx_j^s)&&(\tO_n^r,\tO_n^s)\longleftarrow(\tO_n^r,\tc^s)\\
     &(\tO_k^r,\tO_n^s)\longleftrightarrow(\tO_k^r,\tc^s)&&\\
     &(\tO_n^r,\tO_l^s)\longleftrightarrow(\tc^r,\tO_l^s)&&
 \end{align*}
 
 For each of the above pairs, the terms on the left are counted with positive sign in the definition of $\maslov$, and the terms on the right are counted with negative sign. 
 
 Unaccounted for in the above table are the pairs of the form $(\tc^r,\tO_n^r)$ for $0\le r<p$, which match with nothing, and of which there are $p$. Thus overall we have $\tilde{\maslov}(\x\cup c)=\tilde{\maslov}(\x)-p$, so $\maslov(\x)=\maslov(\x')+1$. A similar analysis of cancelling pairs yields that $\alex(\x)=\alex(\x')+1$. 
 \end{proof}
 
 \begin{proof}[Proof of Lemma \ref{lem:Iquasi-isomorphism}]

 This proof is essentially the same as in the case of knots in $S^3$, see {\cite[Lemma 5.2.18]{gridhomology}}. There is a bijection of generators between $C[V_n]$ and $\tbI$, and a bijection between empty parallelograms disjoint from $\XX$ in $\gridg$ and empty parallelograms disjoint from $\XX'$ and $c$ in $\gkprime$. Denote this bijection $e$. Note that for any $r\in\partial_{\tbI}^{\tbI}$, we have $n_{O_n}(r)=0$, and $n_{O_i}(r)=n_{O_i}(e(r))$ for all $0\le i<n$. That this isomorphism of chain complexes respects the gradings is the result of Lemma~\ref{lem:Igradings}. 
 \end{proof}
 
 Next we wish to show that we have a trigraded quasi-isomorphism from $(\tbN,\partial_{\tbN}^{\tbN})$ to $C[V_n]$ as chain complexes over $\Ztwo[V_0,\ldots,V_n]$, which we will do by giving an explicit trigraded chain homotopy equivalence between $(\tbN,\partial_{\tbN}^{\tbN})$ and $(\tbI\gop -1,-1,0\gcl,\partial_{\tbI}^{\tbI})$, and composing with the map $e$ above. 
 
 Define the map $\Phi_{X_n}^{\tbI}:\tbN\to\tbI$ by $$\Phi_{X_n}^{\tbI}(\x):=\sum_{\y\in\tbI}\sum_{\substack{r\in\ePG(\x,\y)\\\Int(r)\cap\XX=X_n}}V_0^{n_{O_0}(r)}\cdots V_{n}^{n_{O_{n}}(r)}\y. $$
 
 \begin{lem}\label{lem:phixni}
 The map $\Phi_{X_n}^{\tbI}$ is a chain map. 
 \end{lem}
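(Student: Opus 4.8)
The plan is to establish the chain-map identity by the domain-counting scheme used in the proofs of Proposition~\ref{prop:d2=0} and Lemma~\ref{lem:phibetagamma} (compare the analogous arguments for knots in $S^3$ in \cite{gridhomology}). Since $\Phi_{X_n}^{\tbI}$, $\dii$, and $\dnn$ are all $\Ztwo[V_0,\ldots,V_n]$-linear, it suffices to check
\[
\big(\dii\circ\Phi_{X_n}^{\tbI}+\Phi_{X_n}^{\tbI}\circ\dnn\big)(\x)=0
\]
for every generator $\x\in\mathbf{N}$. Expanding the two composites, the coefficient of a given $\z\in\bI$ in this expression is a sum, over domains $\phi$ from $\x$ to $\z$ that admit a decomposition $\phi=r_1*r_2$ into two empty parallelograms exactly one of which contains $X_n$ --- and no other marking of $\XX$ --- in its interior while the other is disjoint from $\XX$, of $N(\phi)\prod_{i=0}^{n}V_i^{n_{O_i}(\phi)}$, where $N(\phi)$ is the number of such decompositions (the monomial depending only on $\phi$, by Remark~\ref{rem:omultiplicity}). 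Calling such $\phi$ \emph{relevant}, it is enough to prove that $N(\phi)$ is even for every relevant $\phi$.

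A first point is that the bookkeeping with the splitting $\genG'=\bI\sqcup\mathbf{N}$ is automatic. Recall, as used just before the statement, that $\ePG(\x',\y')$ is empty whenever $\x'\in\mathbf{N}$ and $\y'\in\bI$, since every parallelogram joining such generators has $X_{n-1}$ or $X_n$ in its interior. Consequently, in a relevant decomposition $\phi=r_1*r_2$ the $\XX$-disjoint piece cannot cross from $\mathbf{N}$ into $\bI$, so it stays entirely on one side of the splitting and the $X_n$-carrying piece is the one crossing from $\mathbf{N}$ to $\bI$; hence the intermediate generator is forced to the side for which the decomposition is genuinely counted (by $\dii\circ\Phi_{X_n}^{\tbI}$ when $X_n\in\Int(r_1)$, by $\Phi_{X_n}^{\tbI}\circ\dnn$ when $X_n\in\Int(r_2)$). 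With this understood, I would run the case analysis on $m$, the number of components at which $\x$ and $\z$ disagree, exactly as in the proof of Proposition~\ref{prop:d2=0}: one always has $\x\neq\z$, so $m\ge 1$; the transposition bookkeeping of Remark~\ref{rem:parallelogramtransposition} rules out $m\le 1$; and for $2\le m\le 4$ the local surgery on $\phi$ from the proofs of Proposition~\ref{prop:d2=0} and Lemma~\ref{lem:phibetagamma} produces the unique alternate decomposition. Since that surgery only redistributes the support of $\phi$ between the two pieces, the piece carrying $X_n$ still carries exactly $X_n$, so the alternate decomposition is again relevant, giving $N(\phi)=2$.

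The step I expect to be the main obstacle is the geometrically special family of relevant domains concentrated in or near the region created by the $\XSW$ stabilization --- those whose support is (a wrapping of) an annular component of the complement in $T^2$ of the $\alpha$- and $\beta$-curves of $\gkprime$ that abuts $X_n$, possibly with a triangle having the corner $c$ as a vertex adjoined or deleted --- the analogues of the domains in Figures~\ref{fig:pentagon1horizontal} and~\ref{fig:pentagon1vertical}; note that no genuine annular domain occurs here, since $\x\neq\z$ always. For these I would enumerate the local positions of the components of $\x$ near $\alpha_n$, $\beta_n$, $X_n$, $O_n$, and $c$, and verify directly that each such $\phi$ again has exactly two decompositions of the relevant kind: cutting into $\phi$ from the corner at which $\phi$ has multiplicity one in three of the four surrounding regions and zero in the fourth, together with cutting from $c$ along each of $\alpha_n$ and $\beta_n$ at the stabilization corner, recovers the two intermediate generators. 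Confirming that no relevant $\phi$ is left with only a single decomposition is the crux of the argument, and will require a figure listing these local configurations; the remaining cases are a routine transcription of the earlier proofs. Granting this, $N(\phi)$ is even for every relevant $\phi$, so $\dii\circ\Phi_{X_n}^{\tbI}+\Phi_{X_n}^{\tbI}\circ\dnn=0$ and $\Phi_{X_n}^{\tbI}$ is a chain map.
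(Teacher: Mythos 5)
Your proposal is correct and takes the same domain-counting approach as the paper, and in fact your abstract observation is slightly cleaner than what the paper records. Since every relevant domain $\phi$ from $\x\in\mathbf{N}$ to $\z\in\bI$ satisfies $n_{X_n}(\phi)=1$ and $n_{X_i}(\phi)=0$ for $i\neq n$, the distribution of $\XX$ over the two pieces of any decomposition of $\phi$ into empty parallelograms is forced by additivity of multiplicities, so every decomposition of a relevant $\phi$ is relevant; combined with your correct observation that the intermediate generator is then forced to land in the right side of the splitting $\genG'=\bI\sqcup\mathbf{N}$, this already shows that $N_{\text{relevant}}(\phi)=N(\phi)=2$ for all $2\le m\le 4$. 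That closes the argument in full generality, which is exactly what the paper's case enumeration in Figures~\ref{fig:phixnialternatedecompositions} and \ref{fig:phixnisamedecompositions} verifies concretely. Where you go a bit astray is the final paragraph: the worry about ``analogues of the domains in Figures~\ref{fig:pentagon1horizontal} and~\ref{fig:pentagon1vertical}'' is misplaced, because those belong to the commutation proof and involve triangles and the two perturbed curves $\beta,\gamma$ on the combined diagram; nothing of that sort exists in the stabilization setting, which is a genuine grid diagram with only parallelograms. More to the point, the reason those $m=1$ pentagon domains require a separate pairing argument in Lemma~\ref{lem:phibetagamma} is that in the commutation case $\x$ and $\z$ live on different grids so $m=0$ is impossible but $m=1$ is not; here the domain and codomain of $\Phi_{X_n}^{\tbI}$ are the disjoint subsets $\mathbf{N}$ and $\bI$ of the same $\genG'$, $\x\neq\z$ forces $m\ge 1$, and Remark~\ref{rem:parallelogramtransposition} then forces $m\ge 2$, so no such exceptional family can occur. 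Your hedged ``Granting this'' is therefore unnecessary --- you had already granted yourself what you needed.
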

 \begin{proof}
We wish to show $\Phi_{X_n}^{\tbI}\circ \dnn=\dii\circ \Phi_{X_n}^{\tbI}$. This map is very similar to the map used in the proof of Lemma~\ref{lem:componentvariablesarehomotopic}. As there, we are analyzing domains which are the juxtaposition of two embedded parallelograms, and possible placements of $X_n$ allowing the domains to contribute to either $\Phi_{X_n}^{\tbI}\circ\dnn$ or $\dii\circ\Phi_{X_n}^{\tbI}$. The restrictions given by the placement of $X_{n-1}$ and by the fact that the codomain of $\Phi_{X_n}^{\tbI}$ is $\tbI$ and not $C'$ make it feasible to present all such domain and placement combinations.  Figure~\ref{fig:phixnialternatedecompositions} contains those domains and placements for which one decomposition contributes to each of $\Phi_{X_n}^{\tbI}\circ\dnn$ and $\dii\circ\Phi_{X_n}^{\tbI}$. Figure~\ref{fig:phixnisamedecompositions} contains those domains and placements for which both decompositions are of the same type, and thus cancel in characteristic two.
\end{proof}

\begin{figure}[ht]
    \begin{center}
    \begin{tikzpicture}
    \node[anchor=south west,inner sep=0] at (0,0) {\includegraphics[height=5.5cm]{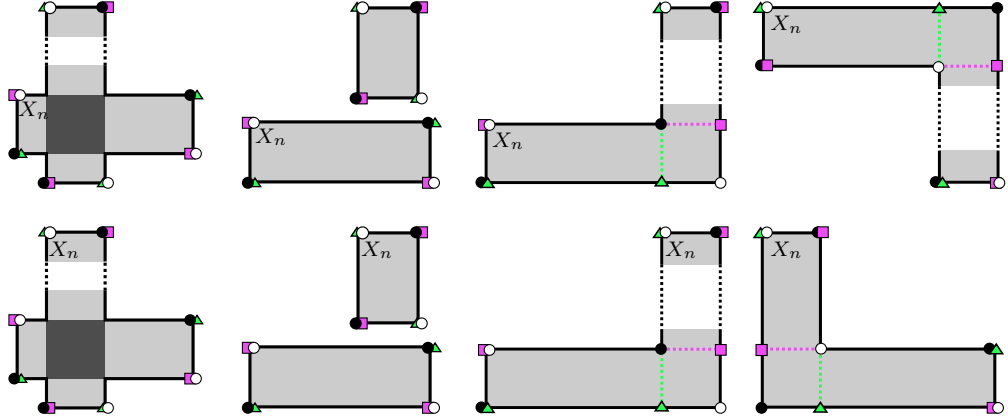}};
    \node[label=above right:{\tiny $X_n$}] at (-.15,3.65){};
    \node[label=above right:{\tiny $X_n$}] at (.25,1.8){};
    \node[label=above right:{\tiny $X_n$}] at (3,3.3){};
    \node[label=above right:{\tiny $X_n$}] at (6.15,3.25){};
    \node[label=above right:{\tiny $X_n$}] at (9.85,4.8){};
    \node[label=above right:{\tiny $X_n$}] at (4.41,1.8){};
    \node[label=above right:{\tiny $X_n$}] at (8.45,1.8){};
    \node[label=above right:{\tiny $X_n$}] at (9.85,1.8){};
    \end{tikzpicture}
    \end{center}
    \caption{Certain domains of Proposition~\ref{prop:d2=0} given with possible placements of a marking so as to be counted in the proof of Lemma~\ref{lem:phixni}. Each domain and marking combination has one decomposition which contributes to each term: $\Phi_{X_n}^{\tbI}\circ\partial_{\tbN}^{\tbN}$ and $\partial_{\tbI}^{\tbI}\circ\Phi_{X_n}^{\tbI}$.}
    \label{fig:phixnialternatedecompositions}
\end{figure}

\begin{figure}[ht]
    \begin{center}
    \begin{tikzpicture}
    \node[anchor=south west,inner sep=0] at (0,0) {\includegraphics[height=\figheight]{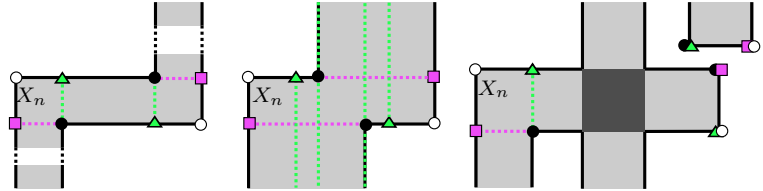}};
    \node[label=above right:{\tiny $X_n$}] at (-.2,.85){};
    \node[label=above right:{\tiny $X_n$}] at (2.95,.85){};
    \node[label=above right:{\tiny $X_n$}] at (5.95,.95){};
    \end{tikzpicture}
    \end{center}
    \caption{Certain domains of Proposition~\ref{prop:d2=0} given with possible placements of a marking so as to be counted in the proof of Lemma~\ref{lem:phixni}. Both decompositions of each domain and marking combination are of the same type, contributing to $\Phi_{X_n}^{\tbI}\circ\dnn$.}
    \label{fig:phixnisamedecompositions}
\end{figure}

 \begin{lem}\label{lem:phixndegree}
 The map is of tridegree $(-1,-1,0)$. 
 \end{lem}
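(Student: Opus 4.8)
The plan is to read the tridegree off directly from Proposition~\ref{prop:parallelogramgradings}, in the same way that the degree of the homotopy $\Phi_{X_a}$ in Lemma~\ref{lem:componentvariablesarehomotopic} is forced. Since $\Phi_{X_n}^{\tbI}$ is $\Ztwo[V_0,\dots,V_n]$-linear and respects the standard grading conventions on the variables, it suffices to check that every term appearing in $\Phi_{X_n}^{\tbI}(\x)$ for a generator $\x\in\tbN$ has the same tridegree shift relative to $\x$.

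First I would fix $\x\in\tbN$ and a term $V_0^{n_{O_0}(r)}\cdots V_n^{n_{O_n}(r)}\y$ of $\Phi_{X_n}^{\tbI}(\x)$, coming from an empty parallelogram $r\in\ePG(\x,\y)$ in $\gkprime$ with $\Int(r)\cap\XX=X_n$. The key preliminary observation is that this hypothesis forces $n_{X_n}(r)=1$ and $n_{X_i}(r)=0$ for $i\neq n$, so that $\sum_{i=0}^{n}n_{X_i}(r)=1$; abbreviate $s:=\sum_{i=0}^{n}n_{O_i}(r)$. Next I would apply Proposition~\ref{prop:parallelogramgradings} to $r$ (the statement holds verbatim in any grid diagram, in particular in the stabilized diagram $\gkprime$), obtaining $\spincS(\x)=\spincS(\y)$, $\maslov(\x)=\maslov(\y)+1-2s$, and $\alex(\x)=\alex(\y)+1-s$, where the last equality uses $\sum n_{X_i}(r)=1$.

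Finally, I would combine this with the fact that each $V_i$ for $0\le i\le n$ carries $\spincS$-degree $0$, Maslov degree $-2$, and Alexander degree $-1$ (the variable $V_n$ was assigned this tridegree by construction when $C[V_n]$ was defined). Thus multiplying by $V_0^{n_{O_0}(r)}\cdots V_n^{n_{O_n}(r)}$ shifts the three gradings of $\y$ by $0$, $-2s$, and $-s$ respectively, and therefore the image term has $\spincS$-grading $\spincS(\x)$, Maslov grading $\maslov(\y)-2s=\maslov(\x)-1$, and Alexander grading $\alex(\y)-s=\alex(\x)-1$. These shifts are independent of $r$, so $\Phi_{X_n}^{\tbI}$ is homogeneous of tridegree $(-1,-1,0)$.

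I do not expect any genuine obstacle: the only points requiring care are invoking Proposition~\ref{prop:parallelogramgradings} for $\gkprime$ rather than $\gridg$ (where it applies without change), and correctly accounting for the extra variable $V_n$. This degree count is exactly the one that makes the identity $\dxminus\circ\Phi_{X_a}+\Phi_{X_a}\circ\dxminus=V_i-V_j$ in Lemma~\ref{lem:componentvariablesarehomotopic} grading-consistent, so the computation parallels one already carried out implicitly in the text.
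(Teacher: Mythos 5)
Your argument is correct and is exactly the spelled-out version of the paper's proof, which simply asserts that the claim follows directly from Proposition~\ref{prop:parallelogramgradings}. The only slight misattribution is your parenthetical remark that the tridegree of $V_n$ comes from the construction of $C[V_n]$: in the source complex $\tbN\subset C^-(\gkprime)$, the variable $V_n$ is the one attached to the marking $O_n$ and carries tridegree $(-2,-1,0)$ by the same convention as the other $V_i$'s, independently of the formal adjunction used to define $C[V_n]$; the numerical value is of course identical, so this does not affect the computation.
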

 \begin{proof}
 This follows directly from Proposition \ref{prop:parallelogramgradings}. 
 \end{proof}
 
 The map $\Phi_{X_n}^{\tbI}$ is a chain homotopy equivalence. We will prove this by providing a homotopy inverse, and an explicit homotopy. The homotopy inverse is $\Phi_{O_n}:\tbI\to\tbN$, which is defined on generators by \[
 \Phi_{O_n}(\x)=\sum_{\y\in\tbN}\sum_{\substack{r\in\ePG(\x,\y)\\
 \Int(r)\cap\XX=\emptyset\\
 O_n\in r}}V_0^{n_{O_0}(r)}\cdots V_{n-1}^{n_{O_{n-1}}(r)}\y.
 \]
 
 \begin{lem}
 The map $\Phi_{O_n}$ is a chain map of tridegree $(1,1,0)$. 
 \end{lem}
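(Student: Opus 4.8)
The plan is to establish two things about $\Phi_{O_n}\colon\tbI\to\tbN$: that it commutes with the differentials $\dii$ and $\dnn$, and that it raises the $\maslov$ and $\alex$ gradings by $1$ while preserving $\spincS$.

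The degree claim is the quicker half, and I would deduce it from Proposition~\ref{prop:parallelogramgradings} applied to the stabilized diagram $\gkprime$ (whose grid number is $n+1$, so the sums there run over $i=0,\dots,n$), exactly as Lemma~\ref{lem:phixndegree} is deduced for $\Phi_{X_n}^{\tbI}$ but with the roles of $O_n$ and $X_n$ interchanged. The one point to notice is that an empty parallelogram is an embedded disk, so $n_{O_n}(r)\le 1$; hence the condition $O_n\in r$ forces $n_{O_n}(r)=1$. Since the coefficient $V_0^{n_{O_0}(r)}\cdots V_{n-1}^{n_{O_{n-1}}(r)}$ omits the variable $V_n$, combining parts (1)--(3) of Proposition~\ref{prop:parallelogramgradings} with the facts that each $V_i$ has $\maslov$-degree $-2$ and $\alex$-degree $-1$ shows that every term $V_0^{n_{O_0}(r)}\cdots V_{n-1}^{n_{O_{n-1}}(r)}\y$ occurring in $\Phi_{O_n}(\x)$ has $\spincS$-grading $\spincS(\x)$, $\maslov$-grading $\maslov(\x)+1$, and $\alex$-grading $\alex(\x)+1$.

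For the chain-map property I would show $\Phi_{O_n}\circ\dii+\dnn\circ\Phi_{O_n}=0$ over $\Ztwo$ by the same domain-counting scheme used in Proposition~\ref{prop:d2=0} and Lemma~\ref{lem:phixni}. The domains $\phi$ that contribute run from a generator $\x\in\bI$ to a generator $\z\in\NN$, are disjoint from $\XX$ in their interior, satisfy $n_{O_n}(\phi)=1$, and decompose as a juxtaposition of two empty parallelograms; writing $N(\phi)$ for the number of such decompositions, the contribution of $\phi$ to $\Phi_{O_n}\circ\dii+\dnn\circ\Phi_{O_n}$ is $N(\phi)\cdot V_0^{n_{O_0}(\phi)}\cdots V_{n-1}^{n_{O_{n-1}}(\phi)}\z$, so it suffices to see that $N(\phi)$ is always even. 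Let $m$ be the number of coordinates at which $\x$ and $\z$ differ. Since $\x$ contains $c$ and $\z$ does not, $\x\ne\z$, so $m\ge1$, while Remark~\ref{rem:parallelogramtransposition} (exactly as in Proposition~\ref{prop:d2=0}) forces the two parallelograms to carry the same transposition and thereby rules out $m=1$; hence $m\in\{2,3,4\}$. For each such $m$ the construction of the alternate decomposition in the proof of Proposition~\ref{prop:d2=0}, as adapted in Lemma~\ref{lem:phixni}, produces a unique second decomposition, so $N(\phi)=2$. The new wrinkle to verify is the placement of the marked data: the support of the parallelogram covering $O_n$, and the support of the parallelogram that moves the component $c$, each reappears as the support of one of the two parallelograms in the alternate decomposition, so the type of the alternate decomposition is determined; when it contributes to the same composite it cancels the original mod $2$, and when it contributes to the other composite the pair realizes the two sides of the identity. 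Finally there are no width-one or height-one annuli to worry about, since such a domain connects a generator to itself and so cannot run from $\bI$ to $\NN$; hence every $\phi$ contributes $0$ and the identity holds.

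The step I expect to be the main obstacle is precisely this last bookkeeping: one must check, over all the combinatorial configurations of two juxtaposed empty parallelograms with $m\in\{2,3,4\}$, that $O_n$ and $c$ always land in the parallelogram dictated by the type of the decomposition, so that the cancellation-versus-matching dichotomy behaves as asserted. This is the only place where the specific local geometry of the $\XSW$ stabilization near $X_{n-1}$, $X_n$, $O_n$, and $c$ (as in Figure~\ref{fig:stabilization}) enters, rather than the formal combinatorics of domains, and I expect it to be dispatched, as elsewhere in the paper, by exhibiting the configurations in figures.
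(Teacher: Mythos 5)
Your proof is correct and mirrors the paper's, which similarly derives the degree from Proposition~\ref{prop:parallelogramgradings} (plus the omission of $V_n$ in the coefficient) and handles the chain-map identity by the same case analysis of juxtaposed empty parallelograms as in Lemma~\ref{lem:phixni}, with possible placements of $O_n$ exhibited in figures. One small inaccuracy worth noting: you assert that the supports of the parallelogram covering $O_n$ and the one moving $c$ each ``reappear as the support of one of the two parallelograms in the alternate decomposition,'' but this only holds when $m=4$; for $m=2,3$ the alternate decomposition generically has different supports, so the claim isn't what actually pins down the type --- what does is the geometric fact that any parallelogram carrying $c$ as a moved corner, in an $\XX$-disjoint domain, must enter the region containing $O_n$ (the other three regions around $c$ contain $X_{n-1}$, $X_n$, or force the parallelogram to cover one of them), which is precisely the check you correctly flag as the remaining bookkeeping to be settled by figures.
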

 \begin{proof}
 This is a similar case analysis to Lemma \ref{lem:phixni}, analyzing possible placements of $O_n$ in the domains of Proposition \ref{prop:d2=0}. The grading follows directly from Proposition~\ref{prop:parallelogramgradings}. 
  
 \end{proof}
 
 \begin{lem}\label{lem:phixnphionidentity}
    On $\tbI$, the composition $\Phi_{X_n}^{\tbI}\circ\Phi_{O_n}$ is the identity.  
 \end{lem}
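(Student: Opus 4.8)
The plan is to unwind the composition into a count of domains that split into two empty parallelograms, and then reduce everything to the local picture of the $\XSW$ stabilization, mirroring the corresponding argument for knots in $S^3$ (cf.\ \cite{gridhomology}).

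Expanding the definitions,
\[
(\Phi_{X_n}^{\tbI}\circ\Phi_{O_n})(\x)=\sum_{\y\in\bI}\ \sum_{\phi}N(\phi)\,V_0^{n_{O_0}(\phi)}\cdots V_{n}^{n_{O_n}(\phi)}\,\y,
\]
where the inner sum runs over domains $\phi$ from $\x$ to $\y$ admitting a decomposition $\phi=r_1*r_2$ with $r_1\in\ePG(\x,\tgen)$, $\tgen\in\mathbf{N}$, $\Int(r_1)\cap\XX=\emptyset$, $O_n\in r_1$, and $r_2\in\ePG(\tgen,\y)$, $\Int(r_2)\cap\XX=\{X_n\}$, and $N(\phi)$ is the number of such decompositions. (Recall that $\Phi_{O_n}$ does \emph{not} record $V_n$, so $n_{O_n}(r_1)$ never contributes to the coefficient.) Hence it suffices to prove: for every $\x\in\bI$ there is exactly one domain $\phi_0$ of this kind; it satisfies $\y=\x$, the exponent of every $V_i$ in its coefficient is $0$, and $N(\phi_0)=1$; and every other domain of this kind, which necessarily has $\y\ne\x$, admits a second decomposition of the same kind and so is killed in characteristic two.

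The starting observation is that $c\in\x$ and $c\in\y$ but $c\notin\tgen$, so $c$ must be a corner of both $r_1$ and $r_2$; in particular the components of $\x$ and of $\y$ on $\alpha_n$ and on $\beta_n$ are both equal to $c$. One then analyzes, case by case, the positions of the components of $\x$ on the curves bounding the stabilization region, with $X_{n-1}$, $X_n$ and $O_n$ placed as in Figure~\ref{fig:stabilization}: here $O_n$ and $X_n$ lie on a common annular region $A$ created by the stabilization, $A$ meets $\OO\cup\XX$ only in $\{O_n,X_n\}$, and $c$ lies on $\partial A$. The region $A$ carries exactly one nontrivial domain decomposing in the prescribed way, namely $\phi_0$ itself: it is obtained by cutting $A$ at $c$ into a piece $r_1$ containing $O_n$ and a piece $r_2$ containing $X_n$. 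One checks directly that $\phi_0$ connects $\x$ to $\x$, that this is the only way to decompose that domain of the given type (so $N(\phi_0)=1$), and that the coefficient is $1$, since $r_1$ meets no marking other than $O_n$ (ignored by $\Phi_{O_n}$) and $r_2$ meets no $O$-marking at all. Any $r_1$ reaching outside $A$ either crosses $X_n$ or $X_{n-1}$, contradicting $\Int(r_1)\cap\XX=\emptyset$, or produces a domain with a second decomposition of the same type, paired off exactly as in the proof of Proposition~\ref{prop:d2=0}; the same applies to $r_2$.

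The twisting plays no role here: the stabilization takes place inside a single region, so $\alpha_n$ and $\beta_n$ agree with their neighbouring curves away from that region and do not wrap, and every parallelogram entering the count is supported near the stabilization. The main obstacle is therefore the routine but somewhat fiddly local verification: confirming, in each combinatorial configuration of $\x$ near the stabilization region, that $\phi_0$ is the only non-cancelling domain and that it returns $\x$ with coefficient $1$. This parallels the verification carried out for knots in $S^3$ in \cite{gridhomology}.
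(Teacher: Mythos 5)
Your expansion of the composition, your identification of the unique contributing annulus $\phi_0$, and your computation that its coefficient is $1$ (since $\Phi_{O_n}$ ignores $V_n$) all match what is needed. You also make the crucial observation that $c\in\x\cap\y$ while $c\notin\tgen$, so $c$ is a corner of both $r_1$ and $r_2$. However, you do not then use this observation to rule out other domains; instead you assert that any other domain ``admits a second decomposition of the same kind and so is killed in characteristic two,'' and this is where the proof breaks down.

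That assertion is not justified, and in fact the alternate decompositions produced in Proposition~\ref{prop:d2=0} are generally \emph{not} of the prescribed kind: e.g.\ in the $m=4$ case the alternate decomposition swaps the supports of the two parallelograms, so the first parallelogram would now contain $X_n$ (violating the $\Phi_{O_n}$ constraint) and the second would contain $O_n$ instead of $X_n$. If such a domain existed with one valid decomposition, it would contribute a surviving term and the lemma would fail. The argument the paper actually uses---and which your earlier observation already sets up---is that no such domains exist at all when $m\ge 1$. Because $c$ is simultaneously a corner of $r_1$ (in $\x\setminus\tgen$) and of $r_2$ (in $\z\setminus\tgen$), the component index of $c$ must lie in both sets $\{i:\x_i\ne\tgen_i\}$ and $\{i:\tgen_i\ne\z_i\}$, yet $\x$ and $\z$ agree at $c$. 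A short case check against the combinatorics of $m=2,3,4$ in Proposition~\ref{prop:d2=0} shows this is impossible (in $m=4$ the parallelograms share no corners; in $m=3$ the one position in the intersection has $\x,\tgen,\z$ all distinct; in $m=2$ both positions where the parallelograms act have $\x\ne\z$), and $m=1$ is already excluded by Remark~\ref{rem:parallelogramtransposition}. This is why the only surviving domains have $\x=\z$, and then the annulus analysis you give takes over. So replace the ``pairing'' step with the non-existence argument, which your own observation about $c$ already makes available.
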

 
 \begin{proof}
 Once more, this is a case analysis of the domains of Proposition \ref{prop:d2=0}, analyzing possible placements of $X_n$ and $O_n$. Let $\phi$ be a domain connecting $\x$ to $\z$ for $\x,\z\in\genG$. It becomes clear that not a single domain in Figures \ref{fig:dsquared4}, \ref{fig:dsquared3}, or \ref{fig:dsquared2} permit placement of $X_n$ and $O_n$ in such a way that either decomposition is in $\Phi_{X_n}^{\tbI}\circ\Phi_{O_n}$. The only possibility, then, is that $\x=\z$ and $\phi$ is an annulus of width one. The annuli of height one are prohibited by virtue of either not containing $X_n$ or not containing $O_n$. Thus out of each generator $\x$ is a single annulus covering only the markings $X_n$ and $O_n$, decomposed in a unique way, connecting $\x$ to $\x$. Note that the map $\Phi_{O_n}$ is defined in such a way as to not count a coefficient coming from $O_n$, and neither $\Phi_{X_n}^{\tbI}$ nor $\Phi_{O_n,X_n}$ count a coefficient coming from $X_n$, so this map is exactly the identity. 
 \end{proof}

 \begin{lem}\label{lem:phixnquasiisomorphism}
 On $\tbN$, the composition $\Phi_{O_n}\circ\Phi_{X_n}^{\tbI}$ is homotopic to the identity. 
 \end{lem}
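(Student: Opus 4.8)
The plan is to write down an explicit homotopy operator and to verify the resulting chain homotopy identity by the same kind of domain bookkeeping used in the proofs of Proposition~\ref{prop:d2=0} and Lemma~\ref{lem:phixni}. Define $H\colon\tbN\to\tbN$ on a generator $\x\in\tbN$ by
\[
H(\x)=\sum_{\y\in\tbN}\ \sum_{\substack{r\in\ePG(\x,\y)\\ X_n\in\Int(r),\ O_n\in\Int(r)\\ \Int(r)\cap\XX=X_n}}V_0^{n_{O_0}(r)}\cdots V_{n-1}^{n_{O_{n-1}}(r)}\,\y,
\]
and extend linearly; thus $H$ counts empty parallelograms containing both new markings and no other marking of $\XX$, with a coefficient that ignores $V_n$ (just as for $\Phi_{O_n}$). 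For such an $r$ one has $n_{X_n}(r)=n_{O_n}(r)=1$, so Proposition~\ref{prop:parallelogramgradings} shows that $H$ is homogeneous: it preserves $\spincS$ and $\alex$ and raises $\maslov$ by one. This is exactly the degree that makes $\dnn\circ H+H\circ\dnn$ grading-preserving, since $\dnn$ lowers $\maslov$ by one and preserves the other two gradings. The goal is then to prove the identity
\[
\Phi_{O_n}\circ\Phi_{X_n}^{\tbI}\;+\;\dnn\circ H\;+\;H\circ\dnn\;=\;\mathbb{I}_{\tbN}.
\]

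To establish it, expand the left side (as in Proposition~\ref{prop:d2=0}) as a sum over domains $\phi$ from $\x\in\tbN$ to $\z\in\tbN$ which admit a decomposition of one of three types: an empty parallelogram through $X_n$ followed by an empty parallelogram through $O_n$ (contributing to $\Phi_{O_n}\circ\Phi_{X_n}^{\tbI}$), an empty parallelogram through $\{X_n,O_n\}$ followed by an empty parallelogram disjoint from $\XX$ (contributing to $\dnn\circ H$), or an empty parallelogram disjoint from $\XX$ followed by an empty parallelogram through $\{X_n,O_n\}$ (contributing to $H\circ\dnn$); by Remark~\ref{rem:omultiplicity} the $V$-coefficient of $\phi$ is the same for every such decomposition. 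Writing $N(\phi)$ for the number of decompositions of these three types and $m$ for the number of components at which $\x$ and $\z$ disagree (so $m\le4$), one runs the usual case analysis. For $2\le m\le4$ the constructions from the proofs of Proposition~\ref{prop:d2=0} and Lemma~\ref{lem:phixni} --- sliding the free component of the intermediate generator across the unique corner where $\phi$ has multiplicity one in three of the four surrounding regions and zero in the fourth, and interchanging the roles of $\alphas$ and $\betas$ as needed --- produce the unique alternate decomposition of $\phi$; it is either of the same type (so $\phi$ is counted twice and drops out in characteristic two) or of a complementary type (so the matched terms of the two sides of the identity agree). The admissible $\phi$, together with the placements of $X_n$, $O_n$ and the point $c$ that make them eligible, are exactly the domains of Figures~\ref{fig:dsquared4}--\ref{fig:dsquared2} and \ref{fig:phixnialternatedecompositions}--\ref{fig:phixnisamedecompositions}, restricted so that $\phi$ covers $X_n$, covers no other marking of $\XX$, covers $O_n$, and has both endpoints free of $c$. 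Remark~\ref{rem:parallelogramtransposition} rules out $m=1$. For $\x=\z$ the support of $\phi$ is a width-one or height-one annulus; since $\phi$ must cover $X_n$ and $O_n$ but no other marking of $\XX$, and no two markings share a row or column, the only possibility is the single thin annulus in the row of the stabilization region containing exactly $X_n$ and $O_n$ (see Figure~\ref{fig:stabilization}), which connects a generator to itself, has $N(\phi)=1$, and carries trivial $V$-coefficient because it covers no $O_i$ with $i<n$. Summing, this exceptional family contributes exactly $\mathbb{I}_{\tbN}$ while every other domain cancels modulo two, which is the claimed identity; since $\Phi_{X_n}^{\tbI}$ and $\Phi_{O_n}$ are chain maps, $H$ is the sought homotopy.

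The main obstacle is the completeness and correctness of this case analysis: one must verify that no eligible domain with $2\le m\le4$ has been missed, that each is sorted according to which of the three terms its decompositions feed, and --- crucially for obtaining the identity rather than a multiple of it --- that the exceptional annulus is counted with coefficient exactly one, with no stray factor of $V_n$ (which $H$ and $\Phi_{O_n}$ are designed to suppress) or of any other $V_i$. The feature special to lens spaces, already encountered in Proposition~\ref{prop:parallelogramgradings}, is that the parallelograms here may wrap around the twisted torus; as in Remark~\ref{rem:wrappingdiagrams}, wrapping changes neither the eligibility of a domain nor the recipe producing its alternate decomposition, so each schematic stands for all domains with the same decomposition combinatorics and $N(\phi)$ is unaffected. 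Finally, the whole construction is visibly $V_i$-equivariant, so it is a homotopy of chain maps of $\Ztwo[V_0,\ldots,V_n]$-modules, which is what is needed.
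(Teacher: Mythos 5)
Your proof follows the paper's approach exactly: introduce an explicit homotopy operator $H$ counting rectangles through both new markings, expand $\Phi_{O_n}\circ\Phi_{X_n}^{\tbI}+\dnn\circ H+H\circ\dnn$ as a count of juxtapositions, and run the $m$-by-$m$ case analysis from the proofs of Proposition~\ref{prop:d2=0} and Lemma~\ref{lem:phixni}, isolating the single width-one annulus as the identity contribution. Your tridegree computation and equivariance remark are also the right sanity checks.

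There is one noteworthy discrepancy. Your $H$ imposes $\Int(r)\cap\XX=X_n$ (together with $O_n\in\Int(r)$), whereas the paper's displayed formula for $\Phi_{O_n,X_n}$ imposes $\Int(r)\cap\XX=X_{n-1}$. These cannot both be correct as written, and your version is the one that makes the argument close: every juxtaposition contributing to $\Phi_{O_n}\circ\Phi_{X_n}^{\tbI}$ covers $X_n$ and $O_n$ and no other $\XX$-marking, so by additivity of multiplicities along a decomposition (Remark~\ref{rem:omultiplicity}), any domain having such a decomposition has $n_{X_{n-1}}(\phi)=0$ and $n_{X_n}(\phi)=1$; a rectangle with $\Int(r)\cap\XX=\{X_{n-1}\}$ can never occur in an alternate decomposition of such a $\phi$. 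Likewise, the paper's own treatment of the $\x=\z$ case says the unique width-one annulus ``contains $O_n$ and $X_n$'' and may be assigned to $\dnn\circ\Phi_{O_n,X_n}$ or $\Phi_{O_n,X_n}\circ\dnn$, which again forces $\Phi_{O_n,X_n}$ to count rectangles through $X_n$; and the captions to Figures~\ref{fig:phionxnannuli} and \ref{fig:phionxnmatchingdecompositions} are only coherent under that reading. In short, the paper's $X_{n-1}$ appears to be a typo and your $H$ is the intended homotopy.

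Two small points to tidy. First, the identity-contributing annulus is a \emph{width-one} annulus, i.e., a column (this is forced both by Lemma~\ref{lem:phixnphionidentity} and by the geometry of the $\XSW$ stabilization, in which $X_n$ and $O_n$ share a column); you call it ``the thin annulus in the row,'' which is a slip of terminology. Second, for $2\le m\le 4$ you assert that the alternate decomposition of each admissible $\phi$ always lands in one of the three terms (same type, or complementary), and you defer to the paper's Figures~\ref{fig:phixnialternatedecompositions}--\ref{fig:phixnisamedecompositions} and their analogues for completeness of that case check; this is the same level of detail the paper itself provides, but it is worth noting that the nontrivial content here is precisely ruling out decompositions in which $X_n$ and $O_n$ land in different factors with the intermediate generator \emph{not} containing $c$ --- a case the three terms do not see --- and that this exclusion is where the specific placement of $X_n$, $O_n$, $X_{n-1}$, and $c$ in the $\XSW$ picture enters.
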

 
 \begin{proof}
 We will first define the homotopy equivalence between $\Phi_{O_n}\circ\Phi_{X_n}^{\tbI}$ and the identity. We will will denote it by $\Phi_{O_n,X_n}$. It is defined on generators $\x\in\tbN$ by \[
 \Phi_{O_n,X_{n}}(\x)=\sum_{\y\in\tbN}\sum_{
 \substack{r\in\ePG(\x,\y)\\
 \Int(r)\cap\XX=X_{n-1}\\
 O_n\in r}}
 V_0^{n_{O_0}(r)}\cdots V_{n-1}^{n_{O_{n-1}}(r)}\y.
 \]

 We will prove that \begin{equation}\label{eq:onxnhomotopy} \Phi_{O_n}\circ \Phi_{X_n}^{\tbI}+\partial_{\tbN}^{\tbN}\circ \Phi_{O_n,X_n}+\Phi_{O_n,X_n}\circ\partial_{\tbN}^{\tbN}=\Id_{\tbN}.\end{equation}
 
 Again, we are analyzing the domains in the proof of Proposition~\ref{prop:d2=0}, subject to the constraints given by the definitions of the maps above. Let $\phi$ be a domain connecting $\x$ to $\z$ for $\x,\z\in\genG$. 
 
 Figure~\ref{fig:phionxnalternate}, Figure~\ref{fig:phionxnsamedecompositions}, and Figure~\ref{fig:phionxnmatchingdecompositions} display each possible domain and placement of $O_n$ and $X_n$ which allow for exactly two decompositions counted by the left hand side of (\ref{eq:onxnhomotopy}). Their decompositions are described in the proof of Proposition~\ref{prop:d2=0}; as a result, they drop out of the left hand side of (\ref{eq:onxnhomotopy}) in characteristic two. 
 
 \begin{figure}[ht]
   
   \centering
   \begin{tikzpicture}
    \node at (0,0) {\includegraphics[height=8cm]{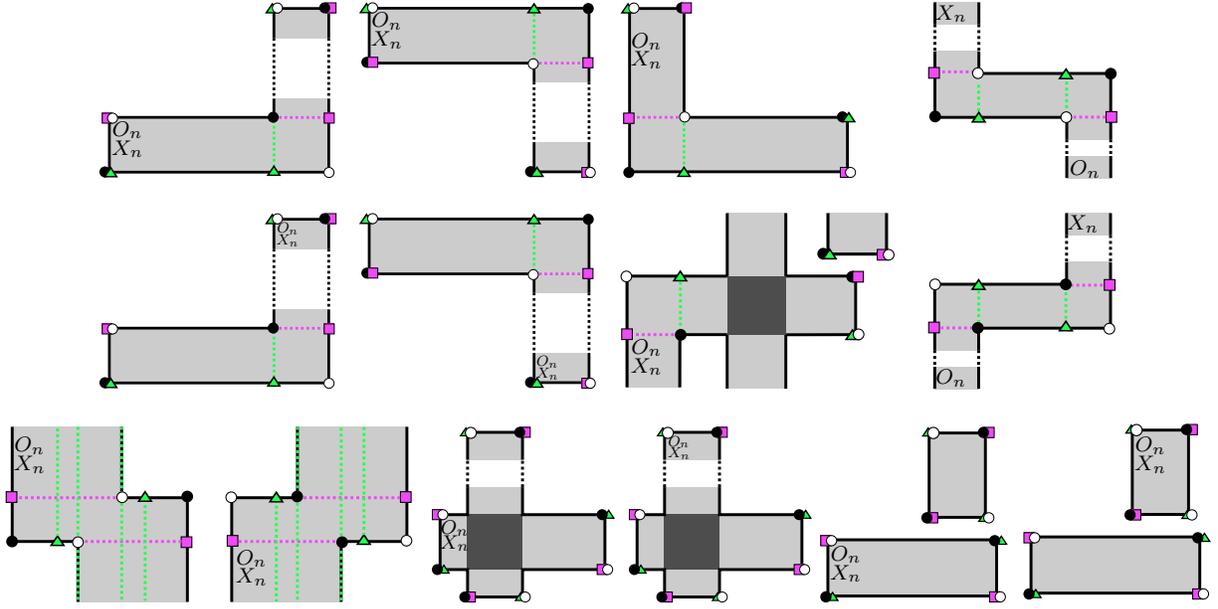}};
    \node at (-6.4,2.3) {\tiny $O_n$};
    \node at (-6.4,2.05) {\tiny $X_n$};
    \node at (-2.95,3.75) {\tiny $O_n$};
    \node at (-2.95,3.5) {\tiny $X_n$};
    \node at (0.5,3.48) {\tiny $O_n$};
    \node at (0.5,3.23) {\tiny $X_n$};
    \node at (6.33,1.78) {\tiny $O_n$};
    \node at (4.55,3.85) {\tiny $X_n$};
    \node[scale=0.7] at (-4.27,.99) {\tiny $O_n$};
    \node[scale=0.7] at (-4.27,.83) {\tiny $X_n$};
    \node[scale=0.7] at (-.82,-.78) {\tiny $O_n$};
    \node[scale=0.7] at (-.82,-.94) {\tiny $X_n$};
    \node at (0.5,-.6) {\tiny $O_n$};
    \node at (0.5,-.85) {\tiny $X_n$};
    \node at (4.55,-1) {\tiny $O_n$};
    \node at (6.3,1.05) {\tiny $X_n$};
    \node at (-7.7,-1.9) {\tiny $O_n$};
    \node at (-7.7,-2.15) {\tiny $X_n$};
    \node at (-4.75,-3.4) {\tiny $O_n$};
    \node at (-4.75,-3.65) {\tiny $X_n$};
    \node[scale=0.93] at (-2.05,-3) {\tiny $O_n$};
    \node[scale=0.93] at (-2.05,-3.2) {\tiny $X_n$};
    \node[scale=0.7] at (.93,-1.85) {\tiny $O_n$};
    \node[scale=0.7] at (.93,-2.) {\tiny $X_n$};
    \node at (3.15,-3.35) {\tiny $O_n$};
    \node at (3.15,-3.6) {\tiny $X_n$};
    \node at (7.2,-1.9) {\tiny $O_n$};
    \node at (7.2,-2.15) {\tiny $X_n$};
   \end{tikzpicture}
    \caption{Certain domains of Proposition~\ref{prop:d2=0} given with possible placements of $X_n$ and $O_n$ markings so as to be counted in the proof of Lemma~\ref{lem:phixnquasiisomorphism}. One decomposition contributes to each of $\dnn\circ\Phi_{O_n,X_n}$ and $\Phi_{O_n,X_n}\circ\dnn$. }
    \label{fig:phionxnalternate}
\end{figure}

\begin{figure}[ht]
 \centering
 \begin{tikzpicture}
  \node at (0,0) {\includegraphics[height=5.5cm]{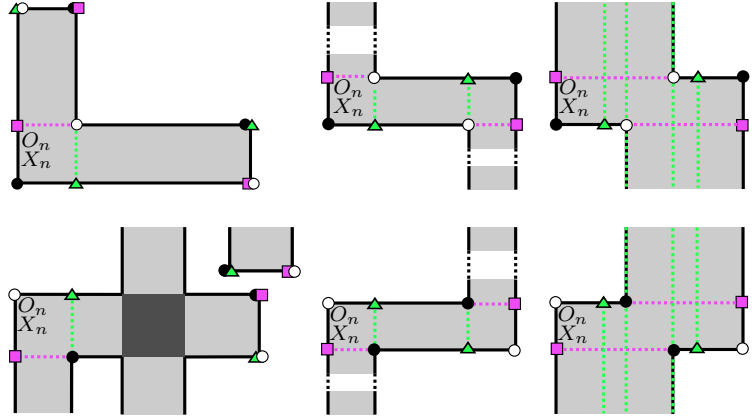}};
    \node at (-4.55,.9) {\tiny $O_n$};
    \node at (-4.55,.9-.25) {\tiny $X_n$};
    \node at (-.4,1.6) {\tiny $O_n$};
    \node at (-.4,1.6-.25) {\tiny $X_n$};
    \node at (2.6,1.6) {\tiny $O_n$};
    \node at (2.6,1.6-.25) {\tiny $X_n$};
    \node at (-4.6,-1.3) {\tiny $O_n$};
    \node at (-4.6,-1.3-.25) {\tiny $X_n$};
    \node at (-.45,-1.42) {\tiny $O_n$};
    \node at (-.45,-1.42-.25) {\tiny $X_n$};
    \node at (2.58,-1.42) {\tiny $O_n$};
    \node at (2.58,-1.42-.25) {\tiny $X_n$};
 \end{tikzpicture}
    \caption{Certain domains of Proposition~\ref{prop:d2=0} given with possible placements of a marking so as to be counted in the proof of Lemma~\ref{lem:phixnquasiisomorphism}. Both decompositions of each domain and marking combination are of the same type. For the top three domains, both decompositions contribute to $\dnn\circ\Phi_{O_n,X_n}$. For the bottom three domains, both decompositions contribute to  $\Phi_{O_n,X_n}^{\tbI}\circ\dnn$.}
    \label{fig:phionxnsamedecompositions}
\end{figure}

\begin{figure}[ht]
 \centering
 \begin{tikzpicture}
    \node at (0,0) {\includegraphics[height=5.5cm]{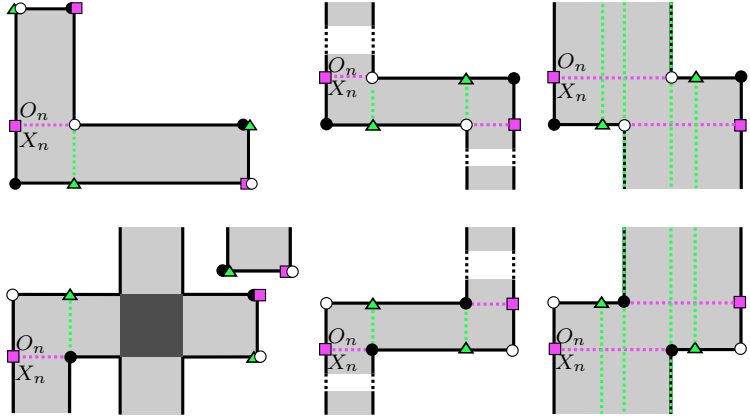}};
    \node at (-4.55,1.3) {\tiny $O_n$};
    \node at (-4.55,.9) {\tiny $X_n$};
    \node at (-.45,1.9){\tiny $O_n$};
    \node at (-.45,1.6){\tiny $X_n$};
    \node at (2.6,1.95) {\tiny $O_n$};
    \node at (2.6,1.55) {\tiny $X_n$};
    \node at (-4.6,-1.8) {\tiny $O_n$};
    \node at (-4.6,-2.2) {\tiny $X_n$};
    \node at (-.45,-1.7) {\tiny $O_n$};
    \node at (-.45,-2.05) {\tiny $X_n$};
    \node at (2.58,-1.7) {\tiny $O_n$};
    \node at (2.58,-2.05) {\tiny $X_n$};
 \end{tikzpicture}
    \caption{Certain domains of Proposition~\ref{prop:d2=0} given with possible placements of a marking so as to be counted in the proof of Lemma~\ref{lem:phixnquasiisomorphism}. Each has one decomposition contributing to $\Phi_{O_n}\circ\Phi_{X_n}^{\tbI}$. For the top three domains, the other decomposition contributes to $\dnn\circ\Phi_{O_n,X_n}$. For the bottom three domains, the other decomposition contributes to $\Phi_{O_n,X_n}\circ\dnn$.}
    \label{fig:phionxnmatchingdecompositions}
\end{figure}
 
 In the case where $\x=\z$, there is a unique width one annulus connecting $\x$ to $\x=\z$ which contains $O_n$ and $X_n$; which of the above three terms account for this annulus depends on the relative position of $O_n$, $X_n$, and components of $\x$. The three possible cases are given in Figure~\ref{fig:phionxnannuli}. As a result, we get that the left hand side of (\ref{eq:onxnhomotopy}) exactly equals the identity; any domain connecting $\x$ to $\z\ne\x$ has exactly two cancelling decompositions, and the unique annulus connecting $\x$ to $\z=\x$ has a unique decomposition counted with coefficient one.

\begin{figure}[ht]
    \centering
    \begin{tikzpicture}
    \node at (0,0) {\includegraphics[height=3.5cm]{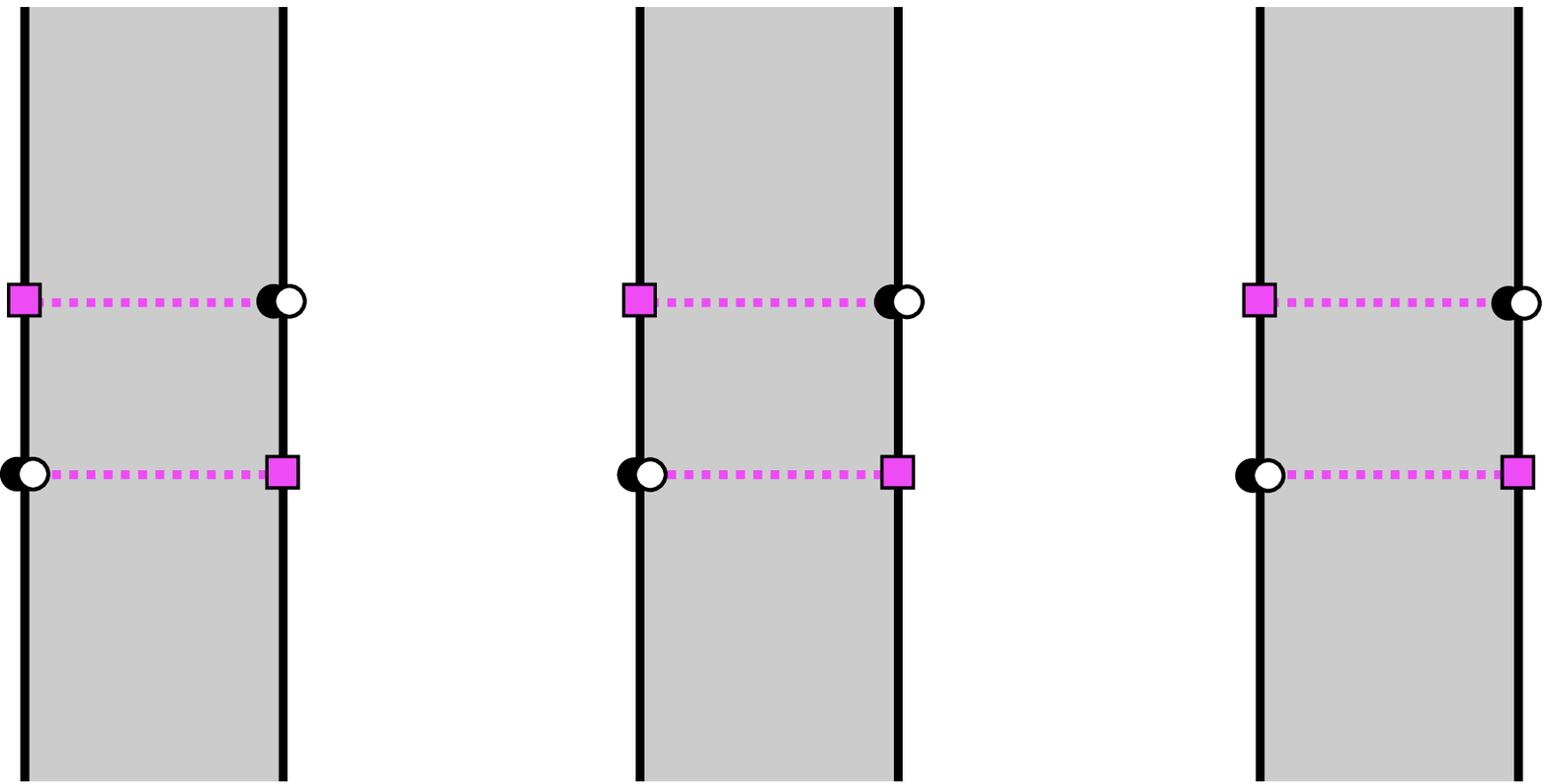}};
    \node at (-.35,.2) {\tiny $O_n$};
    \node at (-.35,-.1) {\tiny $X_n$};
    \node at (-3.1,.6) {\tiny $O_n$};
    \node at (-3.1,.2) {\tiny $X_n$};
    \node at (2.45,1.3) {\tiny $O_n$};
    \node at (2.45,1) {\tiny $X_n$};
    \end{tikzpicture}
    \caption{The relative locations of markings and generators on annuli counted in the proof of Lemma~\ref{lem:phixnquasiisomorphism}. Each has a unique decomposition contributing to one of the terms of (\ref{eq:onxnhomotopy}). The first contributes to $\Phi_{O_n}\circ\Phi_{X_n}^{\tbI}$, the second to $\dnn\circ\Phi_{O_n,X_n}$, and the third to $\Phi_{O_n,X_n}\circ\dnn$.  }
    \label{fig:phionxnannuli}
\end{figure}

 \end{proof}

 \begin{prop}\label{prop:XSWstabilization}
 Suppose that $\gkprime$ is gotten by performing an $\XSW$ stabilization at $X_{n-1}$. Then there is a quasi-isomorphism as trigraded chain complexes over $\Ztwo[V_0,\ldots,V_{n-1}]$ of $C^-(\gridg)$ and $C^-(\gkprime)$. 
 \end{prop}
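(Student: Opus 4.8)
The plan is to realize $C^-(\gkprime)$ as a mapping cone and compare it to the intermediary $C'' := \Cone(V_n - V_{n-1} : C[V_n] \to C[V_n])$; by Lemma~\ref{lem:freeconelemma} (applied with $D = C^-(\gridg)$) this $C''$ is already trigraded quasi-isomorphic to $C^-(\gridg)$ over $\Ztwo[V_0,\ldots,V_{n-1}]$, so everything reduces to producing a trigraded quasi-isomorphism $C^-(\gkprime) \to C''$. As observed above, writing $\genG' = \bI \sqcup \mathbf{N}$ makes $\tbN$ a subcomplex of $C^-(\gkprime)$ (every empty parallelogram from $\mathbf{N}$ to $\bI$ covers $X_{n-1}$ or $X_n$), so the differential is block lower-triangular and $C^-(\gkprime) = \Cone(\din : (\tbI,\dii) \to (\tbN,\dnn))$.

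To compare the two cones I would fill in the vertical maps of the square
\[
\begin{tikzcd}
(\tbI,\dii)\arrow[r,"\din"]\arrow[d,"e"] & (\tbN,\dnn)\arrow[d,"e\circ\Phi_{X_n}^{\tbI}"]\\
C[V_n]\gop1,1,0\gcl\arrow[r,"V_n-V_{n-1}"]&C[V_n]
\end{tikzcd}
\]
and apply Lemma~\ref{lem:conealgebraiclemma}. The left vertical map is the trigraded isomorphism $e$ of Lemma~\ref{lem:Iquasi-isomorphism}, whose grading behavior is Lemma~\ref{lem:Igradings}. For the right vertical map I take $e \circ \Phi_{X_n}^{\tbI}$, with $\Phi_{X_n}^{\tbI}$ the chain map of Lemma~\ref{lem:phixni}: its tridegree $(-1,-1,0)$ (Lemma~\ref{lem:phixndegree}) is precisely what absorbs the grading shift $\gop 1,1,0\gcl$, making $e \circ \Phi_{X_n}^{\tbI}$ a trigraded map $(\tbN,\dnn) \to C[V_n]$. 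Both vertical maps are quasi-isomorphisms: $e$ is an isomorphism, and $\Phi_{X_n}^{\tbI}$ is a chain homotopy equivalence with homotopy inverse $\Phi_{O_n}$ by Lemmas~\ref{lem:phixnphionidentity} and \ref{lem:phixnquasiisomorphism}.

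The step I expect to be the main obstacle is checking that the square commutes, i.e.\ $(e \circ \Phi_{X_n}^{\tbI}) \circ \din = (V_n - V_{n-1}) \circ e$. This is a domain count in the style of Lemma~\ref{lem:componentvariablesarehomotopic}: evaluated on $\x \cup c \in \bI$, the left-hand side counts two-step domains back into $\bI$ decomposed as an empty parallelogram into $\mathbf{N}$ disjoint from $\XX$, followed by an empty parallelogram through $X_n$. Analyzing these as in Proposition~\ref{prop:d2=0} --- with the possibility of wrapping, as in Section~\ref{sec:twisting}, being the only feature absent in the $S^3$ case --- the domains with several moving components occur in cancelling pairs, and the net contribution modulo $2$ is the single-row and single-column annulus through $X_n$, which by Remark~\ref{rem:omultiplicity} contribute $V_n\cdot(\x\cup c)$ and $V_{n-1}\cdot(\x\cup c)$ (cf.~Figure~\ref{fig:stabilization}); this is exactly $(V_n - V_{n-1})(\x\cup c)$. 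The care here is in enumerating which parallelograms are admissible given the position of $X_{n-1}$ and the requirement that both endpoints lie in $\bI$; alternatively, the corresponding verification in \cite{gridhomology} transports with only notational changes.

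Granting the square, Lemma~\ref{lem:conealgebraiclemma} produces a trigraded quasi-isomorphism $C^-(\gkprime) = \Cone(\din) \to C'' = \Cone(V_n - V_{n-1})$, and composing with the quasi-isomorphism $C'' \to C^-(\gridg)$ of Lemma~\ref{lem:freeconelemma} gives the required trigraded quasi-isomorphism over $\Ztwo[V_0,\ldots,V_{n-1}]$.
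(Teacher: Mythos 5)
Your proposal takes essentially the same route as the paper: decompose $C^-(\gkprime)$ as $\Cone(\din)$, compare to $C'' = \Cone(V_n - V_{n-1})$ via the same commutative square with vertical maps $e$ and $e\circ\Phi_{X_n}^{\tbI}$, and invoke Lemma~\ref{lem:conealgebraiclemma} together with Lemma~\ref{lem:freeconelemma}. The one thing you add that the paper omits is a sketch of why $\Phi_{X_n}^{\tbI}\circ\din = V_n - V_{n-1}$ (the paper asserts this without argument); your cancellation-plus-annuli outline is a reasonable account of it, the only point needing care being that the alternate decomposition of a multi-component domain must again pass through $\mathbf{N}$ rather than $\bI$, which follows from $c$ being forced to appear as a corner of both constituent parallelograms.
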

 
 \begin{proof}
 We have \[
 \begin{tikzcd}
 \tbI\arrow[r,"\partial_{\tbI}^{\tbN}"]\arrow[d,"e"] & \tbN\arrow[d,"e\circ\Phi_{X_n}^{\tbI}"]\\
 C[V_n]\gop1,1,0\gcl\arrow[r,"V_n-V_{n-1}"]&C[V_n].
 \end{tikzcd}
 \]

The square commutes, as $\Phi_{X_n}^{\tbI}\circ\partial_{\tbI}^{\tbN}=V_n-V_{n-1}$, and $e$ is an isomorphism induced by a bijection of generators. The vertical maps are quasi-isomorphisms, by Lemmas~\ref{lem:Iquasi-isomorphism} and \ref{lem:phixnquasiisomorphism}. The horizontal maps are chain maps of degree $(-1,0,0)$, by Proposition \ref{prop:parallelogramgradings} and Lemma \ref{lem:phixndegree}. Then by Lemma \ref{lem:conealgebraiclemma}, we have an induced quasi-isomorphism as trigraded $\Ztwo[V_0,\ldots,V_n]$ modules from $C'=\Cone(\partial_{\tbI}^{\tbN}:\tbI\to\tbN)$ to $C''=\Cone(V_n-V_{n-1}:C[V_n]\gop1,1,0\gcl\to C[V_n])$, and by Lemma \ref{lem:freeconelemma}, we know $C''$ and $C$ are quasi-isomorphic as trigraded $\Ztwo[V_0,\ldots,V_{n-1}]$ modules. 
 \end{proof}

 Before we prove Proposition~\ref{prop:stabilization}, we need a combinatorial lemma. 
 
 \begin{lem}[{\cite[Lemma 3.2.2]{gridhomology}}]\label{lem:typeOistypeX}
 A stabilization of type $\ONE$ (respectively $\OSE$, $\ONW$, or $\OSW$) can be realized by a stabilization of type $\XSW$ (respectively $\XNW$, $\XSE$, or $\XNE$) followed by a sequence of commutation moves. 
 \end{lem}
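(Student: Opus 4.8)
The lemma is the twisted toroidal counterpart of \cite[Lemma 3.2.2]{gridhomology}, and the plan is to adapt that argument, the sole new issue being that the twisting of $T^2$ must be shown not to interfere. The key structural observation is that both kinds of move in play are supported in a disk: a stabilization alters a grid diagram only inside the single region of $T^2-\alphas-\betas$ that gets subdivided, together with the curves bounding it (and the small parallel ``detours'' of the two new curves), while a commutation of columns is supported in the union of the two columns being exchanged, and likewise for a commutation of rows. Consequently one may choose a fundamental domain on which the entire affected portion of the diagram embeds as a subset of $\R^2$, and there the local pictures of all eight stabilization types (see Figure~\ref{fig:8stabilizations}) and of a commutation are literally the ones appearing in the $S^3$ theory. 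In particular the new thin row and new thin column produced by a stabilization each carry both of their markings inside the $2\times2$ subdivision block.

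Granting this, I would treat the case $\ONE\leftrightarrow\XSW$ in detail and deduce the other three from the reflections of $T^2$ interchanging $N\leftrightarrow S$ and $E\leftrightarrow W$ (applied, where appropriate, together with the interchange of the roles of rows and columns), which act on the purely local combinatorial picture in the evident way. Concretely: to produce an $\ONE$ stabilization at a marking $O_a$, perform instead an $\XSW$ stabilization at the $X$ marking lying in the same row (resp.\ column) as $O_a$; matching the two local pictures shows that the diagram so obtained differs from the desired $\ONE$-stabilized one only in that the newly created thin column (resp.\ row) sits on the opposite side of one neighboring column (resp.\ row). Sliding that thin column (resp.\ row) across by one commutation --- in a few sub-cases, by two --- converts it into the $\ONE$-stabilized diagram, exactly in the spirit of the realization of a switch by grid moves in Figure~\ref{fig:switchisgridmove}. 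Recording these sequences of local pictures across the four cases is the bulk of the (routine) work.

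The single point that genuinely requires care --- and the only place the argument goes beyond \cite[Lemma 3.2.2]{gridhomology} --- is checking that each commutation invoked above really is a legal grid move in the twisted setting, that is, that the two columns (or rows) it exchanges have non-interleaving markings in the sense of Section~\ref{sec:commutation}. This follows from the observation recorded at the end of the first paragraph: the thin column created by the stabilization has its $X$ and its $O$ in \emph{adjacent} regions of the subdivision block, so they cannot interleave with the two markings of any neighboring column, wherever those lie; and because the whole construction is confined to a disk, the twisting of $T^2$ is never felt. I expect this non-interleaving verification, run uniformly over the four cases, to be the main --- if modest --- obstacle.
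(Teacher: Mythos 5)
The paper does not prove this lemma itself --- it simply cites the $S^3$ version, \cite[Lemma 3.2.2]{gridhomology} --- so there is no ``paper's proof'' to compare against; I evaluate your attempt on its own. Your overall strategy is the right one: stabilize of type $X$ at the $X$-marking sharing a row (or column) with $O_a$, and transport the resulting thin annulus by commutations into the position the $O$-stabilization would have produced. But two of your supporting claims are wrong, and the second is load-bearing. The claim that one or two commutations suffice (equivalently, that the two stabilized diagrams ``differ only in that the thin column sits on the opposite side of one neighboring column'') is false: the $X$-marking can lie an arbitrary number of columns (resp.\ rows) away from $O_a$, so the thin annulus must be commuted past every intervening column (resp.\ row), and the sequence has length on the order of $n$.

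More seriously, ``the whole construction is confined to a disk, so the twisting of $T^2$ is never felt'' cannot be used to dispatch the non-interleaving verification: a single stabilization or commutation is supported in a disk, but the commutation sequence sweeps the thin annulus around the whole torus, and the non-interleaving condition of Section~\ref{sec:commutation} is phrased as a full traversal of a $\beta$- or $\alpha$-circle, which meets each row or column $p$ times. What actually makes each commutation legal is the local observation you state first, made precise: exactly one of the two sub-rows (and one of the two sub-columns) created by the stabilization has both of its markings inside the $2\times 2$ subdivision block, in regions sharing a $\beta_n$- (resp.\ $\alpha_n$-) edge; the two arcs of the neighboring $\alpha$- (resp.\ $\beta$-) circle bordering those regions are consecutive, while an old row (or column) not meeting the block has its markings bordering other arcs of that circle, so the two pairs of arcs can always be separated, independently of $p$. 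Two caveats you omit that matter: you must commute that particular sub-row/sub-column, not its twin (the twin has one marking far from the block and may interleave with a neighbor), and you must push it away from its twin, since exchanging the two halves of the split against one another is a switch rather than a commutation.
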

 
 \begin{proof}[Proof of Proposition~\ref{prop:stabilization}]
 By the above lemma, it is enough to prove that if $\gridg$ and $\gkprime$ are connected by a stabilization of type $X$ then $(C^-(\gridg),\dxminus)$ and $(C^-(\gkprime),\dxminusprime)$ are homotopy equivalent. This is true for an $\XSW$ stabilization by Proposition~\ref{prop:XSWstabilization}. For a $\XNE$ stabilization the proof is essentially the same, as follows. We can still decompose $C'=\tbI\oplus\tbN$ with the same differential, and we have $\Phi_{X_n}^{\tbI}:\tbN\to\tbI$ and $\Phi_{O_n}:\tbI\to\tbN$ are chain maps which are inverse to each other on $\tbI$ and homotopy inverse to each other on $\tbN$ via homotopy $\Phi_{O_n,X_n}$. As a result, we can again apply Lemma~\ref{lem:conealgebraiclemma} to the diagram:  \[
 \begin{tikzcd}
 \tbI\arrow[r,"\partial_{\tbI}^{\tbN}"]\arrow[d,"e"] & \tbN\arrow[d,"e\circ\Phi_{X_n}^{\tbI}"]\\
 C[V_n]\gop1,1,0\gcl\arrow[r,"V_n-V_{n-1}"]&C[V_n].
 \end{tikzcd}
 \]
 
 A $\XSE$ stabilization is the composition of an $\XSW$ stabilization and a switch move. By the above and Proposition~\ref{prop:switch}, then, we have the desired result for $\XSE$ stabilizations. Similarly, a $\XNW$ stabilization is the composition of an $\XNE$ stabilization and a switch move, proving the result. 
 
 The fact that the respective chain complexes are quasi-isomorphic over $\Ztwo[V_0,\ldots,V_{n-1}]$, in combination with Lemma~\ref{lem:componentvariablesarehomotopic}, proves that they are quasi-isomorphic over $\Ztwo[U_1,\ldots,U_\ell]$. 
 \end{proof}

\subsection{Proof of invariance}
We now have the ingredients we need to prove Theorem~\ref{thm:maintheoremz2}. 

\begin{proof}[Proof of Theorem~\ref{thm:maintheoremz2}]
By Theorem~\ref{thm:cromwell}, if $\gridg$ and $\gkprime$ represent the same link, they are connected by a sequence of commutations and (de)stabilizations. Therefore the result follows directly from Propositions~\ref{prop:commutation} and ~\ref{prop:stabilization}. 
\end{proof}

\section{Sign assignments}\label{sec:signassignments}
In this section we will recall the definition of a sign assignment on a grid diagram, and present a few results of \cite{celoria2015note}.

\begin{defn}[{\cite[Definition 3.1]{celoria2015note}}]

Given a grid diagram $\gridg$, let $\x,\tgen,\w,\z\in\genG$. Let $r_1\in \PG(\x,\tgen)$, $r_2\in\PG(\tgen,\z)$, $r_3\in\PG(\x,\w)$, and $r_4\in\PG(\w,\z)$. Then a \emph{sign assignment} on $\gridg$ is a function $$
\mathcal{S}:\PG(\gridg)\to\{\pm1\}
$$ 

satisfying the following three properties:\begin{itemize}
    \item[(S1)] If $r_1*r_2=r_3*r_4$ and $\tgen\ne\w$, then $\saS(r_1)\saS(r_2)=-\saS(r_3)\saS(r_4)$. 
    \item[(S2)] If $r_1*r_2$ is a row, then $\saS(r_1)\saS(r_2)=1$. 
    \item[(S3)] If $r_1*r_2$ is a column, then $\saS(r_1)\saS(r_2)=-1$. 
\end{itemize}
\end{defn}

In \cite{celoria2015note}, it is proved that on grid diagrams for links in lens spaces, sign assignments exist, and are unique (up to homology). 

\begin{thm}[{\cite[Theorem 1.1]{celoria2015note}}]\label{thm:signassignmentsdontmatter}
Sign assignments exist on all grids representing a link $L\subset L(p,q)$. Moreover, the sign-refined grid homology does not depend on the choice of a sign assignment. 
\end{thm}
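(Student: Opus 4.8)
The plan is to split the statement into its two halves --- existence of a sign assignment, and independence of the sign-refined homology on the choice --- and to reduce the second to a ``gauge uniqueness'' fact about sign assignments. For \emph{existence}, I would transport the sign assignment $\tilde\saS$ of \cite{Manolescu_2007} from the universal cover. Recall from the construction of $C_{p,q}$ that a grid diagram $\gridg$ of grid number $n$ for $L\subset\lpq$ lifts to a grid diagram $\tilde\gridg$ of grid number $pn$ for the preimage link $\tilde L\subset S^3$, under which $\x\in\genG$ lifts to $\tx$ and --- exactly as in the proof of Proposition~\ref{prop:parallelogramgradings} --- a parallelogram $r\in\PG(\x,\y)$ lifts to a sequence of $p$ rectangles $\tilde r^1,\dots,\tilde r^p$ whose concatenation runs from $\tx$ to $\ty$. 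Using multiplicativity of $\tilde\saS$ under juxtaposition I would set $\saS(r):=\prod_{k=1}^{p}\tilde\saS(\tilde r^k)$. Property (S1) then follows from the corresponding relation for $\tilde\saS$ applied to the lift of a square: the case analysis of Proposition~\ref{prop:d2=0}, organized by the number $m\in\{2,3,4\}$ of components at which the two generators differ, produces the two decompositions of the domain, and these lift to two families of $S^3$-squares to which $\tilde\saS$'s relation applies. Properties (S2) and (S3), which have no analogue in $S^3$ because a full row or column there is never a juxtaposition of two rectangles, would be checked by tracking how a width-one horizontal (resp.\ vertical) annulus of $\gridg$ and its two parallelogram decompositions sit inside $\tilde\gridg$. (Alternatively one can write an explicit closed formula for $\saS$ in terms of $\sigma_\x$ and the $p$-coordinates, generalizing the determinant formula of \cite{Manolescu_2007}, and verify (S1)--(S3) directly; I expect the lifting argument to be cleaner.)

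For \emph{gauge uniqueness}, given two sign assignments $\saS,\saS'$ set $\delta(r):=\saS(r)\saS'(r)\in\{\pm1\}$. Applying (S1)--(S3) to both shows $\delta(r_1)\delta(r_2)=\delta(r_3)\delta(r_4)$ whenever $r_1*r_2=r_3*r_4$ with distinct middle generators, and $\delta(r_1)\delta(r_2)=1$ whenever $r_1*r_2$ is a row or column. I claim $\delta$ is a coboundary: there exists $\epsilon\colon\genG\to\{\pm1\}$ with $\delta(r)=\epsilon(\x)\epsilon(\y)$ for all $r\in\PG(\x,\y)$. To build $\epsilon$, fix $\x_0\in\genG$, declare $\epsilon(\x_0)=1$, and for each $\x$ choose a chain of parallelograms $\x_0=\z_0,\z_1,\dots,\z_k=\x$ and set $\epsilon(\x):=\prod_i\delta(\rho_i)$ with $\rho_i\in\PG(\z_{i-1},\z_i)$; such a chain exists since $\genG$ is connected under parallelograms (the permutation part can be changed by transpositions and the $p$-coordinates shifted, each step realized by a parallelogram). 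Well-definedness of $\epsilon$ amounts to $\prod_i\delta(\rho_i)=1$ around every closed chain of parallelograms; by multiplicativity of $\delta$ under juxtaposition this is equivalent to saying the graph with vertex set $\genG$ and one edge per parallelogram becomes simply connected after attaching a $2$-cell for each instance of the relations in (S1), (S2), (S3) --- i.e.\ those relations generate all relations among parallelograms. This is the twisted-toroidal analogue of the simple-connectivity input underlying the $S^3$ case, and it is the step I expect to be the main obstacle: one must show every loop of parallelograms can be filled by squares, rows, and columns, which requires a careful combinatorial reduction that controls wrapping and the $(\Z/p)^n$ factor in $\genG$.

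Finally, for \emph{independence of the homology}, write $\partial_\saS$ for the sign-refined differential
\[
\partial_\saS(\x)=\sum_{\y\in\genG}\ \sum_{\substack{r\in\ePG(\x,\y)\\ r\cap\XX=\emptyset}}\saS(r)\Bigl(\prod_{i=0}^{n-1} V_i^{n_{O_i}(r)}\Bigr)\y .
\]
Given $\saS,\saS'$ with $\saS'(r)=\epsilon(\x)\epsilon(\y)\saS(r)$ as above, define $F\colon(C^-(\gridg),\partial_\saS)\to(C^-(\gridg),\partial_{\saS'})$ by $F(\x)=\epsilon(\x)\x$ on generators, extended $\Ztwo[V_0,\dots,V_{n-1}]$-linearly. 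A one-line computation using $\epsilon(\x)^2=1$ shows $F\circ\partial_\saS=\partial_{\saS'}\circ F$: both sides send $\x$ to $\sum_{\y}\sum_r\epsilon(\y)\saS(r)\bigl(\prod_i V_i^{n_{O_i}(r)}\bigr)\y$. Since each $\epsilon(\x)$ is a unit, $F$ is an isomorphism of chain complexes, it preserves the three gradings, and by Lemma~\ref{lem:componentvariablesarehomotopic} it descends to an isomorphism over $\Ztwo[U_1,\dots,U_\ell]$. Hence the sign-refined grid homology and its quasi-isomorphism type do not depend on $\saS$, which together with existence is the assertion --- modulo the combinatorial simple-connectivity claim, which is the heart of the argument.
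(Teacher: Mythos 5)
The paper does not prove this theorem: it is stated with a citation to \cite{celoria2015note}, and immediately afterwards Tripp writes ``we refer the reader to \cite{celoria2015note} for an example of how to explicitly define a sign assignment.'' Only axioms (S1)--(S3) are used downstream in Section~\ref{sec:integralinvariance}. So there is no proof in this paper to compare yours against; the comparison is really with Celoria's note, which the paper describes as giving a \emph{constructive} definition of $\saS$ (closer to the ``explicit closed formula'' you mention in passing than to your lifting argument).

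Taking your proposal on its own terms, both halves have a genuine gap. For existence, $\saS(r):=\prod_{k=1}^{p}\tilde\saS(\tilde r^k)$ is not well-defined as stated. The lift of a parallelogram to $\tilde\gridg$ is a \emph{disjoint} union of $p$ rectangles, and writing them as ``the sequence $\tilde r^1,\dots,\tilde r^p$'' requires choosing an order of juxtaposition. Transposing two adjacent disjoint rectangles in a juxtaposition flips the sign $\tilde\saS(\tilde r^k)\tilde\saS(\tilde r^{k+1})$ --- that is exactly (S1) for $\tilde\saS$ applied to the two decompositions of their union --- so the product is order-dependent; the order implicit in Proposition~\ref{prop:parallelogramgradings} comes from a choice of fundamental domain, a cyclic change of which alters the product by $(-1)^{p-1}$, nontrivial for $p$ even. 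Even with a fixed convention, verifying (S1) would require tracking the parity of the permutation relating the $2p$ lifted rectangles of $r_1*r_2$ to those of $r_3*r_4$, which is not a formality and is not addressed. For uniqueness and homology independence, the gauge isomorphism $F(\x)=\epsilon(\x)\x$ is fine (though over $\Z$ you want Lemma~\ref{lem:componentvariablesarehomotopicsigned} rather than Lemma~\ref{lem:componentvariablesarehomotopic}), but it rests entirely on the assertion that $\delta=\saS\cdot\saS'$ is a coboundary, which you reduce to simple-connectivity of the parallelogram graph on $\genG$ after attaching cells along (S1)--(S3) and explicitly leave open. That combinatorial claim is where essentially all the content lies --- the $(\Z/p)^n$ factor in $\genG$ and the wrapping of Section~\ref{sec:twisting} make it materially harder than its $S^3$ analogue --- so as written the proposal is a sketch with its main step missing on each half.
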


The above theorem is constructive, but we will only use the above three properties of any sign assignment, so we refer the reader to \cite{celoria2015note} for an example of how to explicitly define a sign assignment. 

\section{Integral knot Floer homology}\label{sec:integralinvariance}
In \cite{celoria2015note}, the definition of grid homology for twisted toroidal grid diagrams was extended to integer coefficients, using a sign assignment. After recalling the definition, we prove that this homology is a link invariant. 

For $L\subset \lpq$, define $C^-(\gridg;\Z)$ as follows. Let $\gridg$ be a grid diagram for a link $L$ with $\ell$ components, and fix a sign assignment $\saS$ on $\gridg$. Let $C^-(\gridg;\Z)$ be the free $\Z[V_0,\ldots,V_{n-1}]$ module generated by $\genG$. For $\x\in\genG$, define \[\dxminus(\x):=\sum_{y\in\genG}\sum_{\substack{r\in\ePG(\x,\y)\\r\cap\XX=\emptyset}}\saS(r)(\prod_{i=0}^{n-1}V_i^{n_{O_i}(r)})\y.\] 
That this is a differential and thus defines a chain complex comes from the first axiom of a sign assignment, and the alternate decompositions in the proof of Proposition \ref{prop:d2=0}. 

We note that we really should denote the differential by $\partial_{\XX,\saS}^-$, but Theorem \ref{thm:signassignmentsdontmatter}, proven in \cite{celoria2015note}, proves that the differentials assigned to any two sign assignments on the same grid define quasi-isomorphic chain complexes. 

Furthermore, we will prove the link invariant is the quasi-isomorphism type of $C^-(\gridg;\Z)$ as a chain complex over $\Z[U_1,\ldots,U_\ell]$, where each variable corresponds to the action of a distinct component of the link. We need the following lemma. 

\begin{lem}\label{lem:componentvariablesarehomotopicsigned}
Suppose $O_i$ and $O_j$ are markings corresponding to the same component of $K$. Then multiplication by $V_i$ is chain homotopic to multiplication by $V_j$. 
\end{lem}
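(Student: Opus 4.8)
The plan is to mimic the proof of Lemma~\ref{lem:componentvariablesarehomotopic} verbatim, but carrying along the sign assignment $\saS$ and tracking how the three sign-assignment axioms (S1), (S2), (S3) enter. As in that proof, without loss of generality suppose $O_i$ lies in the same row as the marking $X_a\in\XX$, which in turn lies in the same column as $O_j$. Define the signed homotopy operator $\Phi_{X_a,\saS}\colon C^-(\gridg;\Z)\to C^-(\gridg;\Z)$ on a generator $\x\in\genG$ by
\[
\Phi_{X_a,\saS}(\x)=\sum_{\y\in\genG}\sum_{\substack{r\in\ePG(\x,\y)\\\Int(r)\cap\XX=X_a}}\saS(r)\,V_0^{n_{O_0}(r)}\cdots V_{n-1}^{n_{O_{n-1}}(r)}\,\y,
\]
extended $\Z[V_0,\dots,V_{n-1}]$-linearly. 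The goal is to show $\dxminus\circ\Phi_{X_a,\saS}+\Phi_{X_a,\saS}\circ\dxminus = V_i - V_j$.

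The key step is the bookkeeping of decompositions. The left-hand side counts domains $\phi$ from $\x$ to $\z$ that decompose as a juxtaposition of two empty parallelograms, exactly one of which has $X_a$ in its interior, with the composite coefficient being $\saS(r_1)\saS(r_2)$ times the appropriate power of the $V$'s. By the case analysis in the proof of Proposition~\ref{prop:d2=0}, every such domain with $2\le m\le 4$ (where $m$ is the number of components where $\x$ and $\z$ differ) has exactly one alternate decomposition $\phi=r_3*r_4$ with $\tgen\ne\w$; since $r_1*r_2=r_3*r_4$ and the intermediate generators differ, axiom (S1) gives $\saS(r_1)\saS(r_2)=-\saS(r_3)\saS(r_4)$, so the two contributions cancel \emph{over $\Z$}, not merely mod $2$. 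There are no domains with $m=1$ by Remark~\ref{rem:parallelogramtransposition} (and its pentagon/hexagon analogues are not needed here). This leaves the case $\x=\z$: as in Lemma~\ref{lem:componentvariablesarehomotopic}, unlike in Proposition~\ref{prop:d2=0} we now \emph{do} count annuli, because the operator $\Phi_{X_a,\saS}$ permits $X_a$ in the interior. The width-one annulus through the row containing $O_i$ and $X_a$ decomposes as $r_1*r_2$ a row, contributing $\saS(r_1)\saS(r_2)V_i=+V_i$ by (S2); the height-one annulus through the column containing $O_j$ and $X_a$ decomposes as a column, contributing $\saS(r_1)\saS(r_2)V_j=-V_j$ by (S3). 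Summing gives exactly $V_i-V_j$, establishing the claimed chain homotopy.

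Finally, if $O_i$ and $O_j$ are not joined by a single marking, one chains together a sequence of such homotopies along a path of markings running through the component of $L$ containing both $O_i$ and $O_j$, exactly as in the proof of Lemma~\ref{lem:componentvariablesarehomotopic}; the composite of chain homotopies is again a chain homotopy, so $V_i\simeq V_j$. The main obstacle — and the only real content beyond reproducing the $\Ztwo$ argument — is verifying the \emph{signs}: one must check that the alternate decomposition produced in Proposition~\ref{prop:d2=0} genuinely satisfies the hypotheses of (S1) (in particular that $\tgen\ne\w$ in every relevant configuration, including the wrapping cases of Section~\ref{sec:twisting} and Remark~\ref{rem:wrappingdiagrams}), and that the two annuli in the $\x=\z$ case are correctly classified as a row and a column so that (S2) and (S3) apply with the right signs. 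Since the wrapping does not affect the combinatorics of decompositions, these checks reduce to the finitely many schematic configurations already drawn, so no genuinely new case analysis is required.
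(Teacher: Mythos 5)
Your proof is correct and takes essentially the same approach as the paper: you define the signed version of $\Phi_{X_a}$, appeal to the case analysis of Proposition~\ref{prop:d2=0} to see that all domains with $2\le m\le 4$ come in pairs of opposite sign by (S1), and then handle the two annuli through $X_a$ via (S2) and (S3).

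One small discrepancy worth flagging: the paper asserts that the homotopy should be $-\Phi_{X_a}$, with the row annulus counted with coefficient $-1$ and the column with $+1$, yielding $-\dxminus\circ\Phi_{X_a}-\Phi_{X_a}\circ\dxminus=V_i-V_j$. Your accounting, which applies (S2) to get $+1$ for the row (giving $+V_i$) and (S3) to get $-1$ for the column (giving $-V_j$), concludes directly that $\dxminus\circ\Phi_{X_a}+\Phi_{X_a}\circ\dxminus=V_i-V_j$ without the extra negation. Your version is the one consistent with axioms (S2)/(S3) as stated, so the paper's $-\Phi_{X_a}$ and its $\mp 1$ coefficients appear to contain a sign slip. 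In any case, both yield the desired conclusion that $V_i\simeq V_j$. Beyond this, you also supply more detail than the paper (which essentially defers to Lemma~\ref{lem:componentvariablesarehomotopic} and just treats the annuli), in particular spelling out that the $2\le m\le 4$ cancellations are genuine over $\Z$ via (S1) because the alternate decomposition always has a distinct intermediate generator.
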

\begin{proof}
The proof is essentially the same as that of Lemma~\ref{lem:componentvariablesarehomotopic}, adapted to integral coefficients. Define $\Phi_{X_a}:C^-(\gridg)\to C^-(\gridg)$ on a generator $\x\in\genG$ by \[
\Phi_{X_a}(\x)=\sum_{\y\in\genG}\sum_{\substack{r\in\ePG(\x,\y)\\\Int(r)\cap\XX=X_a}}\saS(r)V_o^{n_{O_0}(r)}\cdots UV{n-1}^{n_{O_{n-1}}(r)}\y.
\] Extend this linearly to $C^-(\gridg;\Z)$. The only difference is that now the annulus of width one which connects a generator $\x\in\genG$ to itself is counted with coefficient $-1$, and the annulus of height one is counted with coefficient $1$. Thus, using $-\Phi_{X_a}$ as our homotopy, we get that $-\dxminus\circ\Phi_{X_a}-\Phi_{X_a}\circ\dxminus = V_i-V_j$. 
\end{proof}

To show that this is a link invariant, not just dependent on the grid representing the link, requires showing that the commutation and stabilization maps defined before are compatible with the sign assignments.

\subsection{Commutations}\label{sec:integralcommutation}
Before we define our desired chain map, we need a way to assign signs to pentagons. We are equipped with a way to assign signs to parallelograms, so we define a map which takes a pentagon on the combined grid diagram and returns a parallelogram. Suppose, as in Section~\ref{sec:commutation}, that $\gridg$ and $\gkprime$ are two grid diagrams connected by a commutation of columns. Note that the only thing which has changed is the position of the markings, and a sign assignment doesn't depend on the position of the markings, so there is a bijection between sign assignments on $\gridg$ and on $\gkprime$. Fix a common sign assignment and denote it $\saS$. 

Let $a$ denote the intersection point of $\beta$ and $\gamma$ which is included in every pentagon counted by $\Phi_{\beta\gamma}$. Let $\x\in\genG$ and $\y'\in\genG'$. The component $y'\in \y'$ which lays on $\gamma$ can be slid along the $\alpha\in\alphas$ curve it occupies, without crossing any other $\betas$ curves, to a unique intersection point of that $\alpha$ and $\beta$, which we will call $y$. Define the generator $\y\in\genG$ by replacing $y'$ with $y$ in $\y'$. Bounded by this $\alpha$ arc, the arc of $\beta$ connecting $y$ to $a$, and the arc of $\gamma$ connecting $a$ to $y'$ is an embedded triangle, which we will denote $t_{\y'}$.  We could do the same process in reverse to $\x$ to get $\x'\in\genG'$ and $t_{\x}$. If we have $s\in\ePent(\x,\y')$, then either $s*t_{\y'}\in\ePG(\x,\y)$ or $t_{\x}*s\in\ePG(\x',\y')$. There is a natural one-to-one correspondence between $\ePG(\gridg)$ and $\ePG(\gkprime)$, which if both of the above parallelograms exists, maps them to each other. Thus we can define $R(s)$ to be this parallelogram, which we will take to live in $\ePG(\gridg)$. 

We call a pentagon a left (respectively right) pentagon if it has multiplicity one to the left of $a$ (respectively right), or, equivalently, if the above defined parallelogram is a left (respectively right) parallelogram. 

For $s\in\ePent(\x,\y')$, define a sign assignment on pentagons denoted $\saS_{\text{pent}}$ by \[
\saS_{\text{pent}}(s):=\begin{cases}
(-1)^{\maslov(\x)+1}\saS(R(s)) &\text{if $s$ is a left pentagon,}\\
(-1)^{\maslov(\x)}\saS(R(s)) &\text{if $s$ is a right pentagon.}
\end{cases}
\]

We will abuse notation and denote the above just $\saS$. 

Define $\Phi_{\beta\gamma,\saS}:C^-(\gridg)\to C^-(\gkprime)$ on generators $\x\in\genG$ by \[\Phi_{\beta\gamma,\saS}(\x)=\sum_{\y\in \genG'}\sum_{\substack{p\in \epbg(\x,\y)\\p\cap\XX=\emptyset}}\saS(p)V_0^{n_{O_0}(p)}\cdots V_{n-1}^{n_{O_{n-1}}(p)}\y. \]

We will abuse notation and leave the sign assignment implicit, denoting the above map by $\Phi_{\beta\gamma}$. 

\begin{lem}\label{lem:phibetagammasignedchainmap}
The map $\Phi_{\beta\gamma}$ is a chain map.
\end{lem}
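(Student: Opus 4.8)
The plan is to mirror the proof of Lemma~\ref{lem:phibetagamma} (the $\Ztwo$ case), but now track the signs coming from the sign assignment $\saS$ on $\gridg$, the induced sign assignment on $\gkprime$, and the auxiliary sign assignment $\saS_{\text{pent}}$ on pentagons. We must show $(\Phi_{\beta\gamma}\circ\dxminus)(\x)=(\dxminusprime\circ\Phi_{\beta\gamma})(\x)$ for every generator $\x\in\genG$, now over $\Z$. The combinatorial skeleton is unchanged: every domain $\phi$ from $\x$ to $\z$ that decomposes as (parallelogram)$*$(pentagon) or (pentagon)$*$(parallelogram), with $1\le m\le 4$, has exactly one alternate decomposition (or, for the paired domains when $m=1$, pairs with a partner domain having a unique decomposition). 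So the content of the new proof is: for each case, verify that the two contributions carry \emph{opposite} signs when they land on the same side of the chain-map identity, and \emph{equal} signs when one lands on each side — exactly the bookkeeping that axioms (S1), (S2), (S3) and the definition of $\saS_{\text{pent}}$ are designed to produce.

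\textbf{Key steps, in order.} First I would fix notation: write $\phi = r*s$ with $r$ a parallelogram, $s$ a pentagon, initial generator $\x$, intermediate generator $\tgen$, terminal generator $\z$; the alternate decomposition is $\phi = s'*r'$ (pentagon then parallelogram) or $\phi = r_1'*r_2'$ depending on the case, with intermediate generator $\w$. Second, for the cases $2\le m\le 4$ where $\phi$ pairs a same-type decomposition with itself (the figures of pentagon-pentagon... wait, these are parallelogram--pentagon juxtapositions): when both decompositions are of the form (parallelogram)$*$(pentagon) — i.e.\ both contribute to $\Phi_{\beta\gamma}\circ\dxminus$, or symmetrically both to $\dxminus\circ\Phi_{\beta\gamma}$ — I would apply axiom (S1) to the \emph{parallelogram halves} and show the overall product of signs is negated, so the two terms cancel over $\Z$. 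The subtlety is that the pentagon halves also carry signs; here I would use the definition $\saS_{\text{pent}}(s) = (-1)^{\maslov(\text{initial})+\epsilon}\saS(R(s))$ together with the fact (Proposition~\ref{prop:parallelogramgradings} / Lemma~\ref{lem:pentagongrading}) that $\maslov$ is controlled along the decomposition, so the Maslov-exponent factors telescope, and (S1) applied to the associated parallelograms $R(s)$, $R(s')$ (which satisfy $R(s)*r' = r*R(s)$-type relations via the triangle-absorption map $R$) gives the sign flip. Third, for the cases where one decomposition is (parallelogram)$*$(pentagon) and the alternate is (pentagon)$*$(parallelogram), the two contributions land on opposite sides of the identity, so I need the product of signs to be \emph{equal} on the two sides — again (S1) supplies a minus sign, but so does the switch in the $\pm1$ exponent of $\saS_{\text{pent}}$ between "$s$ is on the left/right factor", and these two minus signs cancel, yielding equality. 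Fourth, the $m=1$ "row less a triangle" domains (Figure~\ref{fig:pentagon1horizontal}): here $\phi * (\text{triangle})$ or related completes to a \emph{row}, so axiom (S2) forces the sign product to be $+1$ in the appropriate sense — I would spell out that the pentagon-decomposition sign times the parallelogram-decomposition sign equals the value dictated by (S2), matching across the identity. Fifth, the $m=1$ "annulus plus triangle" domains (Figure~\ref{fig:pentagon1vertical}): these come in pairs $\phi,\phi'$ sharing the triangle but with complementary annuli. The union of the two annuli is a column, so (S3) governs the relative sign of the two; combined with the $(-1)^{\maslov}$ factors and the fact that $\phi,\phi'$ have the same initial generator, I would show the two paired contributions cancel (if same type) or combine to prove equality (if opposite type), exactly as in the unsigned proof but now with the signs working out by (S3).

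\textbf{Main obstacle.} The hard part will be the $m=1$ annulus-plus-triangle pairing (Figure~\ref{fig:pentagon1vertical}) and, to a lesser extent, getting the Maslov-parity bookkeeping in $\saS_{\text{pent}}$ to line up in the $m=3,4$ same-type cases. The issue is that $\saS_{\text{pent}}$ is defined using $\maslov(\x)$ of the \emph{initial} generator of the pentagon, so when a pentagon sits as the \emph{second} factor of a juxtaposition its sign is weighted by $\maslov(\tgen)$ or $\maslov(\w)$, not $\maslov(\x)$; one must check, using that parallelograms shift $\maslov$ by an even amount away from the $+1$ (Proposition~\ref{prop:parallelogramgradings}(\ref{prop:parallelogramgradingsmaslov})) and pentagons preserve $\maslov$ (Lemma~\ref{lem:pentagongrading}), that the parity $(-1)^{\maslov(\cdot)}$ is constant along any decomposition of a fixed $\phi$, so these weighting choices are consistent and the telescoping works. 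Once that parity-invariance is established, each case reduces to a short computation combining one application of the relevant sign axiom (S1), (S2), or (S3) with a left/right-pentagon bookkeeping term, and the argument concludes just as in the proof of Lemma~\ref{lem:phibetagamma}.
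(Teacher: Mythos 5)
Your overall strategy matches the paper's: follow the case structure of Lemma~\ref{lem:phibetagamma}, straighten pentagons to parallelograms via $R$, apply (S1)/(S2)/(S3) to the straightened juxtapositions, and track the $(-1)^{\maslov}$ weight in $\saS_{\text{pent}}$ using the parity bookkeeping (that a parallelogram flips Maslov parity, a pentagon preserves it). Your ``main obstacle'' paragraph correctly identifies the crux: since $\saS_{\text{pent}}(s)$ is weighted by the Maslov grading of the pentagon's \emph{initial} generator, and $\dxminus$ shifts Maslov parity, the weight depends on whether the pentagon appears first or second in the juxtaposition, and this is exactly what the paper exploits.

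There is, however, one genuine gap. Your ``Third'' step treats every opposite-type pair (one decomposition is parallelogram-then-pentagon, the other pentagon-then-parallelogram) by invoking (S1) to get a minus sign and then a second minus sign from the position/Maslov shift. That is correct for the domains of Figures~\ref{fig:pentagon4}, \ref{fig:pentagon3differentcomponent}, and~\ref{fig:pentagon2alternate}, where after straightening the two parallelogram decompositions $r*R(s)$ and $R(s')*r'$ pass through \emph{distinct} intermediate generators $\tgen \neq \w$, so (S1) applies and both $s,s'$ are pentagons of the same side. But for the domains of Figures~\ref{fig:pentagon3differentbetagamma} and~\ref{fig:pentagon1horizontal}, after straightening the two intermediate generators \emph{coincide} ($\w = \tgen$), so (S1) is vacuous and instead $\saS(r)\saS(R(s)) = \saS(R(s'))\saS(r')$ holds trivially; and in these cases $s$ and $s'$ are pentagons on \emph{opposite} sides, so the side-exponent in $\saS_{\text{pent}}$ contributes an extra factor of $-1$ that combines with the Maslov-parity factor to again give overall equality. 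Your proposal handles neither subcase: you do not mention Figure~\ref{fig:pentagon3differentbetagamma} at all, and for Figure~\ref{fig:pentagon1horizontal} you reach for (S2) (``$\phi*$triangle is a row''), which is not the paper's route and which in any case gives you no purchase on the $m=3$ version of the phenomenon. So while your framework is right and most cases are correct, you would discover a sign mismatch in those two figures if you executed the plan as written, because you would apply (S1) where it does not apply and omit the left/right pentagon flip that compensates for its absence.
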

\begin{proof}
This is similar to the proof for the case of knots in $S^3$ ({\cite[Lemma 15.3.3]{gridhomology}]}. 

As in Lemma~\ref{lem:phibetagamma}, we want to prove $\Phi_{\beta\gamma}\circ\dxminus=\dxminus\circ\Phi_{\beta\gamma}$, so we again consider domains which are the juxtaposition of a pentagon and a parallelogram or are the juxtaposition of a parallelogram and a pentagon. Each such domain is given with an alternate decomposition in the proof of Lemma~\ref{lem:phibetagamma}. For the domains of Figure~\ref{fig:pentagon4}, Figure~\ref{fig:pentagon3differentcomponent}, and Figure~\ref{fig:pentagon2alternate}, we have that the domain $\phi$ connecting $\x$ to $\z$ has alternate decompositions as $\phi=r*s$ and $\phi=s'*r'$ for some $r\in\ePG(\x,\tgen)$, $s\in\ePent(\tgen,\z)$, $s'\in\ePent(\x,\w')$, and $r'\in\ePG(\w',\z)$ for some generators $\tgen\in\genG$ and $\w\in\genG'$. We have that $r*R(s)=R(s')*r'$, so $\saS(r)\saS(R(s))=-\saS(R(s'))\saS(r')$. Both $s$ and $s'$ are right pentagons in the displayed domains, but the analysis is the same when they are both left pentagons. The Maslov degrees of the initial generators of $s$ and $s'$ differ by one, since $\dxminus$ is degree $-1$ with respect to Maslov grading, so we get that $\saS(r)\saS(s)=\saS(s')\saS(r')$, proving our desired equality in this case. 

For the domains of Figure~\ref{fig:pentagon3differentbetagamma} and Figure~\ref{fig:pentagon1horizontal}, we have the same decompositions as above. If, however, we have $\w\in\genG$ as defined by sliding $\w'$ along a curve $\alpha\in\alphas$ as described above, then $\w=\tgen$, so we no longer appeal to the first axiom of a sign assignment. Instead, we have that $\saS(r)\saS(R(s))=\saS(R(s'))\saS(r')$. The Maslov degrees of the initial generators of $s$ and $s'$ differ by one, and $s$ and $s'$ are pentagons of different sides. The effect of these differences cancels out, and we get $\saS(r)\saS(s)=\saS(s')\saS(r')$, again proving our desired equality in this case.

For the domains in Figure~\ref{fig:pentagon3samecomponent} and Figure~\ref{fig:pentagon2same}, we have that the domain $\phi$ has alternate decompositions as $\phi=r*s$ and $\phi=r'*s'$ for some $r\in\ePG(\x,\tgen)$, $s\in\ePent(\tgen,\z)$, $r'\in\ePG(\x,\w)$ and $s'\in\ePent(\w,\z)$ for some generators $\x,\tgen\in \genG$ (or has two decompositions as the juxtaposition of a pentagon and parallelogram, but the analysis is the same). In this case, we have $\saS(r)\saS(R(s))=-\saS(r')\saS(R(s'))$. The Maslov degrees of the initial generators of $s$ and $s'$ are the same, and they are same-sided pentagons, so $\saS(r)\saS(s)=-\saS(r')\saS(s')$. Therefore, these terms drop out from one side of our desired equality. 

For the domains in Figure~\ref{fig:pentagon1vertical}, regardless of the decomposition, after mapping each pentagon to its appropriate parallelogram on the diagram, we get that each decomposition forms a column, so is counted with coefficient $-1$. The pentagons, however, are on different sides. If the decompositions are of the same type, then, they are counted with opposite sign and cancel in the sum. If they are of opposite type, then the Maslov degrees of the initial generators differ by one, so we get that the decompositions are counted with the same sign, proving our desired equality. 
\end{proof}

We will show that $\Phi_{\beta\gamma}$ is a chain homotopy equivalence, with $-\Phi_{\gamma\beta}$ as a homotopy inverse, with a homotopy provided by $-\Hbgb$, after defining these maps with signs below. 

First, we need a way to assign signs to hexagons. Each hexagon in $\Hbgb$ contains an arc of $\gamma$ which borders a bigon between $\gamma$ and $\beta$; call that bigon $g$. We get that for $h\in\hexbgb(\x,\y)$, $g*h\in\ePG(\x,\y)$. Let $R(h)=g*h$, and define $\saS(h)=\saS(R(h))$, by an abuse of notation. Now we can define $\HH_{\beta\gamma\beta,\saS}:C^-(\gridg)\to C^-(\gridg)$ by \[\HH_{\beta\gamma\beta,\saS}(\x)=\sum_{\y\in\genG}\sum_{\substack{h\in\ehexbgb(\x,\y)\\\Int(h)\cap\XX=\emptyset}}\saS(h)V_0^{n_{O_0}(h)}\cdots V_{n-1}^{n_{O_{n-1}}(h)}\y.\]

We will abuse notation and denote the above by $\Hbgb$. 

\begin{lem}\label{lem:phibetagammasigned}
The map $\Phi_{\beta\gamma}$ is a chain homotopy equivalence. 
\end{lem}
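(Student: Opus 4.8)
The plan is to mirror the structure of the $\Ztwo$ argument from Section~\ref{sec:commutation}, where $\Phi_{\beta\gamma}$ was shown to be a homotopy equivalence via Lemma~\ref{lem:hbgb}, but now carrying the sign bookkeeping through. That is, I would prove that
\[
-\Phi_{\gamma\beta}\circ\Phi_{\beta\gamma}+\dxminus\circ(-\Hbgb)+(-\Hbgb)\circ\dxminus=\Id_{C^-(\gridg)},
\]
and symmetrically with the roles of $\beta$ and $\gamma$ reversed (using $\Phi_{\gamma\beta}$ defined with its own sign conventions), which together establish that $\Phi_{\beta\gamma}$ is a chain homotopy equivalence with homotopy inverse $-\Phi_{\gamma\beta}$. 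The skeleton of the domain analysis is identical to the proof of Lemma~\ref{lem:hbgb}: for $2\le m\le 4$ every contributing domain $\phi\in\ePoly(\x,\z)$ has exactly two decompositions (among juxtapositions parallelogram$*$hexagon, hexagon$*$parallelogram, or pentagon$*$pentagon) as catalogued in Figures~\ref{fig:dsquared4}, \ref{fig:pentagon4}, \ref{fig:hex3pentagonscomponent}--\ref{fig:hex3pentagonsbetagamma}, \ref{fig:hex2pp}--\ref{fig:hex2same}, \ref{fig:hex0}; there are no domains with $m=1$ by the transposition remarks; and for $\x=\z$ there is a unique annulus contributing with coefficient one.

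The work, then, is to check in each of these cases that the two decompositions are counted with \emph{opposite} signs (so they cancel over $\Z$) when they contribute to the same one of the three terms, and with signs that make the identity come out when they contribute to different terms — and that the unique annulus in the $m=0$ case is counted with coefficient $+1$. Concretely, I would introduce the parallelogram-valued maps $R$ on pentagons and hexagons (pentagons via the triangle $t$, hexagons via the bigon $g$, exactly as already set up before Lemma~\ref{lem:phibetagammasigned}), note that a hexagon$*$hexagon-type domain does not occur for $m=4$ (as in Lemma~\ref{lem:hbgb}), and then for each decomposition write $\saS$ of the pentagon or hexagon in terms of $\saS$ of the associated parallelogram and a Maslov-parity factor $(-1)^{\maslov(\cdot)}$ or $(-1)^{\maslov(\cdot)+1}$. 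The key inputs are: (S1) applied to the four associated parallelograms when the intermediate generators genuinely differ; (S2) and (S3) when an associated juxtaposition of parallelograms is a row or column (this is exactly what produces the $\pm1$ in the annulus case and in the Figure~\ref{fig:pentagon1vertical}-type and Figure~\ref{fig:hex0}-type cases); the fact that $\dxminus$ and $\Phi_{\beta\gamma}$ shift Maslov grading by $-1$ (so the initial generators of the two "inner" polygons in a decomposition have Maslov gradings differing by one); and the fact that $\Phi_{\gamma\beta}$ carries its own sign of the form $(-1)^{\maslov}\saS$, whose parity factor combines with that of $\Phi_{\beta\gamma}$. In each figure, I would record whether the two relevant polygons are same-sided or opposite-sided pentagons (or a hexagon versus a pentagon), since that, together with the Maslov parity shift, determines the net sign; this is precisely the same casework already performed in the proof of Lemma~\ref{lem:phibetagammasignedchainmap}, just with hexagons thrown in.

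The main obstacle I expect is the sheer bookkeeping of signs across the many figure cases — in particular making sure that the parity conventions on $\Phi_{\beta\gamma}$ (with its left/right pentagon split), on $\Phi_{\gamma\beta}$, and on $\Hbgb$ are chosen so that the three terms assemble to $+\Id$ rather than $-\Id$, and that the overall minus signs on $\Phi_{\gamma\beta}$ and $\Hbgb$ are consistent with the signs forced in the $m=0$ annulus case by (S2) and (S3). The substantive mathematical content (which domains decompose in how many ways) is already done; what remains is verifying that every diagram in Figures~\ref{fig:hex3pentagonscomponent}--\ref{fig:hex0} is sign-compatible, so I would organize the proof by first handling the "generic" $2\le m\le 4$ cases where (S1) applies to four distinct-generator parallelograms, then the $m=3$ and $m=2$ cases where the slid generator $\w$ coincides with $\tgen$ (so one appeals to row/column axioms or to the Maslov-parity mismatch instead of (S1), exactly as in the second paragraph of the proof of Lemma~\ref{lem:phibetagammasignedchainmap}), and finally the unique $m=0$ annulus, concluding that the left-hand side equals $\Id_{C^-(\gridg)}$ and hence, by the symmetric computation, that $\Phi_{\beta\gamma}$ is a chain homotopy equivalence. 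Since $\Phi_{\beta\gamma}$ was already shown to be a chain map (Lemma~\ref{lem:phibetagammasignedchainmap}) and to preserve the gradings (as in Lemma~\ref{lem:pentagongrading}), this completes the proof.
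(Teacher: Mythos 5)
Your proposal takes essentially the same approach as the paper: you set up the same signed homotopy identity, reduce it to the signed count $N(\phi)$ over the same catalogue of domains from Lemma~\ref{lem:hbgb}, invoke (S1)--(S3) together with the Maslov-parity factors built into the pentagon and hexagon sign conventions, and isolate the $m=0$ annuli as the source of the identity term — which is exactly what the paper does, likewise deferring the generic $1\le m\le 4$ cancellations to the casework of Lemma~\ref{lem:phibetagammasignedchainmap}.
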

\begin{proof}
Again, this is similar to the proof for the case of knots in $S^3$ ({\cite[Lemma 15.3.4]{gridhomology}]}.

We are trying to prove $\Id+\Phi_{\gamma\beta}\circ\Phi_{\beta\gamma}+\dxminus\circ \Hbgb+\Hbgb\circ\dxminus=0$ as maps on $C^-(\gridg)$. We can follow Lemma~\ref{lem:hbgb} and characterize a portion of the above equation as \begin{equation}\label{eq:hexhomotopysigned}
(\Phi_{\gamma\beta}\circ\Phi_{\beta\gamma}+\dxminus\circ \Hbgb+\Hbgb\circ\dxminus)(\x)=\sum_{\z\in\genG}\sum_{\substack{\phi\in\epi(\x,\z)\\\Int(\phi)\cap\XX=\emptyset}}N(\phi)V_0^{n_{O_0}(\phi)}\cdots V_{n-1}^{n_{O_{n-1}}(\phi)}\z, 
\end{equation}
where this time, $N(\phi)$ is the signed number of decompositions of $\phi$. We wish to show $N(\phi)=0$ for $m\in\{1,2,3,4\}$, and $N(\phi)=-1$ for $m=0$, as we will then have that the above is equal to $-\Id$, as desired. 

The cases for $1\le m\le 4$ are very similar to the proof of Lemma~\ref{lem:phibetagammasignedchainmap}, so we omit the analysis here. In summary, after straightening each pentagon and hexagon involved in a decomposition of a domain $\phi$, we end up in one of two cases. The first case is $\phi$ has a pair of juxtapositions of parallelogram satisfying the conditions of the first property of a sign assignment in which case the decompositions are counted with opposite sign and we have $N(\phi)=0$. In the second case, the supports of the parallelograms are not distinct, in which case the pentagons must come from different signs, so the cancelling sign comes from the definition of the sign assignment on pentagons, and again we have $N(\phi)=0$. 

For the case $m=0$, we have the domains of Figure~\ref{fig:hex0}, each of which after straightening forms a column, so is counted with coefficient $-1$. Since hexagons are counted with the same sign as their straightened parallelograms, we have that any decomposition of a column contributing to $\dxminus\circ \Hbgb$ or $\Hbgb\circ\dxminus$ is counted with coefficient $-1$. For decompositions of columns contributing to $\Phi_{\gamma\beta}\circ\Phi_{\beta\gamma}$, both pentagons are on the same side. If they are both right (respectively left) pentagons, the sign differs by a factor of $(-1)^{\maslov(\x)}(-1)^{\maslov(\Phi_{\beta\gamma}(\x))}$ (respectively $(-1)^{\maslov(\x)+1}(-1)^{\maslov(\Phi_{\beta\gamma}(\x))+1}$. Since $\Phi_{\beta\gamma}$ preserves Maslov grading, we have that the sign differs by a factor of 1, so the column is still counted with coefficient $-1$. 

Thus, as desired we have (\ref{eq:hexhomotopysigned}) counts exactly one domain coming out of each generator $\x\in\genG$, which connects it to $\x$ with coefficient $-1$, so this map is the negative identity. 
\end{proof}

\begin{prop}\label{prop:signedcommutationinvariance}
Suppose $\gridg$ and $\gkprime$ are grid diagrams for a link with $\ell$ components which are connected by a commutation move. Then $C^-(\gridg)$ and $C^-(\gridg)$ homotopy equivalent as chain complexes over $\Z[U_1,\ldots,U_\ell]$. 
\end{prop}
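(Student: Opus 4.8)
The plan is to assemble Proposition~\ref{prop:signedcommutationinvariance} from the sign-refined pentagon and hexagon maps already analyzed, in exact parallel with the proof of Proposition~\ref{prop:commutation} in the $\Ztwo$ setting. Suppose first that $\gridg$ and $\gkprime$ differ by a commutation of columns, and fix the common sign assignment $\saS$ as in Section~\ref{sec:integralcommutation}. Lemma~\ref{lem:phibetagammasignedchainmap} says that $\Phi_{\beta\gamma}$ (with signs $\saS$) is a chain map $C^-(\gridg)\to C^-(\gkprime)$ over $\Z[V_0,\ldots,V_{n-1}]$, and Lemma~\ref{lem:phibetagammasigned} upgrades this to a chain homotopy equivalence, with homotopy inverse $-\Phi_{\gamma\beta}$ and homotopy $-\Hbgb$. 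The only point that requires more than a citation is that Lemma~\ref{lem:phibetagammasigned} records this for the composition on $C^-(\gridg)$; for the other composition one runs the completely symmetric argument on $C^-(\gkprime)$, interchanging the roles of $\beta$ and $\gamma$ and replacing the $\beta$-$\gamma$-$\beta$ hexagon map by the $\gamma$-$\beta$-$\gamma$ hexagon map (whose sign is still obtained by straightening the hexagon into a parallelogram across the bigon between the two perturbed curves), so the sign bookkeeping is unchanged. Since introducing signs does not affect which pentagons are counted, the grading computation of Lemma~\ref{lem:pentagongrading} also goes through verbatim, so the equivalence is trigraded.

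It then remains to pass from $\Z[V_0,\ldots,V_{n-1}]$ to $\Z[U_1,\ldots,U_\ell]$. A commutation move changes neither the number of markings nor the isotopy type of $L$, and each variable $V_i$ continues to record the multiplicity at a marking $O_i$ lying on the same component of $L$ in both $\gridg$ and $\gkprime$; hence the two $\Z[U_1,\ldots,U_\ell]$-module structures correspond under the obvious identification of markings. By Lemma~\ref{lem:componentvariablesarehomotopicsigned}, multiplication by $V_i$ and by $V_j$ are chain homotopic whenever $O_i$ and $O_j$ lie on the same component, so a $\Z[V_0,\ldots,V_{n-1}]$-linear homotopy equivalence is automatically one over $\Z[U_1,\ldots,U_\ell]$; this is exactly the step used in the proof of Proposition~\ref{prop:commutation}, and it is simpler here than for stabilizations, where one must first adjoin a formal variable. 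If instead $\gridg$ and $\gkprime$ differ by a commutation of rows, rotating both diagrams by $90^\circ$ reduces to the column case.

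I do not expect a genuine obstacle here: the substantive sign-tracking has already been carried out in Lemmas~\ref{lem:phibetagammasignedchainmap} and \ref{lem:phibetagammasigned}, and this proposition is essentially their assembly together with Lemma~\ref{lem:componentvariablesarehomotopicsigned}. The two things I would be careful about when writing are (i) making the two-sidedness of the homotopy inverse $-\Phi_{\gamma\beta}$ explicit rather than inferring it from the single computation in Lemma~\ref{lem:phibetagammasigned}, and (ii) keeping the sign conventions consistent, so that $-\Phi_{\gamma\beta}$ and $-\Hbgb$ are indeed the data that make $\Id+\Phi_{\gamma\beta}\circ\Phi_{\beta\gamma}+\dxminus\circ\Hbgb+\Hbgb\circ\dxminus$ vanish as established in Section~\ref{sec:integralcommutation}.
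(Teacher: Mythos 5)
Your proof is correct and follows essentially the same route as the paper: it assembles Lemmas~\ref{lem:phibetagammasignedchainmap} and \ref{lem:phibetagammasigned} (pentagon/hexagon maps with signs), Lemma~\ref{lem:pentagongrading} (gradings), and Lemma~\ref{lem:componentvariablesarehomotopicsigned} (passage to $\Z[U_1,\ldots,U_\ell]$), with the row case handled by rotation. You are in fact a bit more explicit than the paper on two minor points — spelling out the symmetric $\gamma$-$\beta$-$\gamma$ argument for the other composite and the $\Z[V]$-to-$\Z[U]$ step — whereas the paper leaves those implicit but does flag that the row case replaces left/right pentagons by above/below pentagons in the sign convention; both are cosmetic differences.
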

\begin{proof}
Suppose the diagrams differ by a commutation of columns. Then the chain map $\Phi_{\beta\gamma}$ defined above is a chain homotopy equivalence, via the homotopy $-\Hbgb$, via Lemma \ref{lem:phibetagammasigned}, and has homotopy inverse $-\Phi_{\gamma\beta}$. That this map is trigraded is the content of Lemma \ref{lem:pentagongrading}; it does not change in the signed setting. If instead $\gridg$ and $\gkprime$ differ by a commmutation of rows, the proof is in essence the same, with the above diagrams rotated. We note that the definition of a sign assignment on pentagons would rely in that case on above vs. below pentagons, instead of left vs. right pentagons. 
\end{proof}

\subsection{Switches}\label{sec:integralswitches}
Again, understanding a switch as in Figure~\ref{fig:switchmove}, the same proof that the chain complexes associated to two grid diagrams over $\Z$ connected by a commutation are homotopy equivalent carries over to the case of a switch. 

\begin{prop}\label{prop:signedswitchinvariance}
Suppose $\gridg$ and $\gkprime$ are grid diagrams for a link with $l$ components which are connected by a switch move. Then $(C^-(\gridg;\Z),\dxminus)$ and $(C^-(\gkprime;\Z),\dxminus)$ are homotopy equivalent as chain complexes over $\Z[U_1,\ldots, U_\ell]$. 
\end{prop}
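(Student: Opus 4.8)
The plan is to repeat the proof of Proposition~\ref{prop:signedcommutationinvariance} essentially verbatim, in the same way that the unsigned switch invariance (Proposition~\ref{prop:switch}) was deduced from the unsigned commutation invariance (Proposition~\ref{prop:commutation}). First I would perturb $\beta:=\beta_i$ and $\gamma:=\beta_i'$ so that they meet transversely at two points not on a common horizontal circle, with the perturbation arranged as in Figure~\ref{fig:switchmove}, so that $\gridg$ and $\gkprime$ are drawn on the same twisted torus. Since a sign assignment depends only on the $\alpha$- and $\beta$-curves and not on the placement of the markings, and a switch move only rearranges markings, there is a canonical bijection between sign assignments on $\gridg$ and on $\gkprime$; I would fix a common one and call it $\saS$, then define the induced sign assignments on pentagons and hexagons exactly as in Section~\ref{sec:integralcommutation}, via the parallelogram-completion maps $R$ together with the Maslov-parity factors $(-1)^{\maslov(\x)}$ and $(-1)^{\maslov(\x)+1}$. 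This produces maps $\Phi_{\beta\gamma}\colon C^-(\gridg;\Z)\to C^-(\gkprime;\Z)$, its candidate homotopy inverse $-\Phi_{\gamma\beta}$, and the homotopy $-\Hbgb$, abbreviated as before.

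Next I would observe that the case analyses of Lemmas~\ref{lem:phibetagammasignedchainmap} and \ref{lem:phibetagammasigned} carry over unchanged. The signs attached to every polygon are pulled back from $\saS$ via $R$ together with a Maslov-parity factor, and the cancellations in those proofs used only the axioms (S1)--(S3), the fact that $\dxminus$ drops Maslov grading by one, and the fact that $\Phi_{\beta\gamma}$ preserves Maslov grading (Lemma~\ref{lem:pentagongrading}) --- none of which is affected by whether the two exchanged columns fail to interleave (a commutation) or contain an $X$ and an $O$ in adjacent regions of a common row (a switch). As already noted in the proof of Proposition~\ref{prop:switch}, the combinatorics of the decompositions of all relevant domains (juxtapositions of a parallelogram and a pentagon, of two pentagons, of a parallelogram and a hexagon, etc.) is the same; in particular for $m=0$ the unique domain out of each generator is still a thin annulus bordering both $\beta$ and $\gamma$, which cannot contain a full row or column and hence avoids $\XX$, and which straightens to a column and so is counted with sign $-1$. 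Therefore $\Phi_{\beta\gamma}$ is a trigraded chain map with $\Id+\Phi_{\gamma\beta}\circ\Phi_{\beta\gamma}+\dxminus\circ\Hbgb+\Hbgb\circ\dxminus=0$, giving a trigraded homotopy equivalence over $\Z[V_0,\ldots,V_{n-1}]$; Lemma~\ref{lem:componentvariablesarehomotopicsigned} then upgrades this to a homotopy equivalence over $\Z[U_1,\ldots,U_\ell]$. For a switch of rows rather than columns I would rotate all diagrams by $90^\circ$, the only change being that the sign assignment on pentagons and hexagons is organized by ``above'' versus ``below'' instead of ``left'' versus ``right,'' exactly as in the last sentence of the proof of Proposition~\ref{prop:signedcommutationinvariance}.

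The only point requiring genuine care --- and the expected main obstacle --- is confirming that no domain acquires or loses a decomposition because of the switch-specific marking configuration, so that the diagrams in Figures~\ref{fig:pentagon4}--\ref{fig:pentagon1vertical} and \ref{fig:hex0} (and their signed refinements) really do exhaust all cases. This should follow because $\Phi_{\beta\gamma}$, $\Phi_{\gamma\beta}$, $\Hbgb$, and $\dxminus$ constrain the interior of a polygon only by $\Int(\cdot)\cap\XX=\emptyset$ (or, for the auxiliary maps, by which single $X$ is hit), never by the precise locations of the other markings, and because the combined $\alpha$--$\beta$--$\gamma$ arrangement for a switch is, after forgetting markings, the same as that for a commutation; so the enumeration of domains and of their decompositions is identical, and only the marking-dependent $O$-weights differ, which is immaterial to the sign bookkeeping.
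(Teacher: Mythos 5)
Your proposal is correct and takes essentially the same approach as the paper, whose own proof is a single sentence noting that the signed commutation argument carries over once the switch is displayed on a combined diagram as in Figure~\ref{fig:switchmove}. Your more careful accounting of why the sign assignment, the Maslov-parity factors, and the enumeration of domains and decompositions are insensitive to the switch-specific marking placement makes explicit what the paper treats as immediate.
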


\subsection{Stabilizations and destabilizations}
As in Section~\ref{sec:stabilization}, suppose $\gridg$ is a grid diagram for a link $L$ in a lens space $\lpq$, and suppose $\gkprime$ is the grid diagram given by performing an $\XSW$ stabilization at $X_{n-1}$. Let $\saS_{\gkprime}$ be any sign assignment on $\gkprime$; this restricts to a sign assignment on $\gridg$, as follows. For $\x,\y\in\genG$, and $r\in\PG(\x,\y)$, we have that there is a corresponding rectangle $r'\in \PG(\x',\y')$, where $\x'=\x\cup c,\y'=\y\cup c\in\genG'$. Define $\saS_{\gridg}(r)=\saS_{\gkprime}(r')$. This clearly satisfies the three properties for a sign assignment. Note that this is the first time that it has been used that a sign assignment assigns signs to all parallelograms, not just empty ones, as even if $r$ is empty, $r'$ need not be. Throughout, we will call both these sign assignments $\saS$. We also will follow the notation of Section~\ref{sec:stabilization}, letting $C$ denote $C^-(\gridg)$ and $C'$ denote $C^-(\gkprime)$.

Recall the definition of $\bI:=\{\x\in\genG':c\in\x\}$, where $c$ is the unique intersection point of $\alpha_n$ and $\beta_n$ which abuts regions containing $X_n$, $O_n$, and $X_{n-1}$. 

First, note that when considered with sign, $C'$ is not equal to $\Cone(\din)$. 
 Instead, we have \[C'=\Cone(\begin{tikzcd}
 (\tbI,-\dii)\arrow[r,"\din"] & (\tbN,\dnn)
 \end{tikzcd}
 )
\] Note that indeed the above are still chain complexes, and by counting alternate pairs, we get that $\din$ is indeed a chain map. We still wish to appeal to Lemma~\ref{lem:conealgebraiclemma}, so need to define quasi-isomorphisms between the summands of $C'$ and $C'':=\Cone(V_n-V_{n-1}:C[V_n]\to C[V_n])$. These will just be appropriately signed versions of the maps of Section~\ref{sec:stabilization}.

\begin{lem}\label{lem:signedIquasi-isomorphism}
The map defined on generators $\x\in\bI$ by $e_{\Z}(\x)=(-1)^{\maslov(\x)}(\x\cup c)$ and extended linearly to a map from $(\tbI,-\dii)$ to $(C[V_n]\gop 1,1,0\gcl,\dxminus)$ induces an isomorphism of trigraded chain complexes over $\Z[V_0,\ldots,V_n]$. 
 \end{lem}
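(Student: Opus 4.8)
The plan is to mimic the proof of Lemma~\ref{lem:Iquasi-isomorphism}, but carefully track the signs introduced by the factor $(-1)^{\maslov(\x)}$ in $e_{\Z}$. First I would note that $e_{\Z}$ is a bijection on generators (up to a unit), since $\x\mapsto\x\cup c$ is a bijection between $\bI$ and $\genG$, and multiplication by $\pm 1$ is invertible; so $e_{\Z}$ is an isomorphism of free $\Z[V_0,\ldots,V_n]$-modules. The content is therefore entirely in checking that $e_{\Z}$ is a chain map, i.e.\ that $e_{\Z}\circ(-\dii)=\dxminus\circ e_{\Z}$, and that it respects the three gradings.

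For the chain map property: as in the proof of Lemma~\ref{lem:Iquasi-isomorphism}, there is a bijection $r\mapsto e(r)$ between empty parallelograms in $\gridg$ disjoint from $\XX$ and empty parallelograms in $\gkprime$ disjoint from $\XX'$ and from $c$, with $n_{O_n}(e(r))=0$ and $n_{O_i}(e(r))=n_{O_i}(r)$ for $0\le i<n$. Under the sign assignment $\saS$ defined by restriction, we have $\saS(r)=\saS(e(r))$ by construction. Fix $\x\in\bI$ with $\x\setminus c = \x_0 \in\genG$. A parallelogram $r$ contributing to $\dii(\x)$ lands on some $\y\in\bI$ with $\y_0=\y\setminus c$, and $e(r)$ contributes to $\dxminus(\x_0)$ landing on $\y_0$. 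So the coefficient of $\y$ in $(-\dii)(\x)$ is $-\saS(r)(\prod V_i^{n_{O_i}(r)})$, while the coefficient of $\y\cup c = \y$ in $\dxminus(e_{\Z}(\x)) = (-1)^{\maslov(\x_0)}\dxminus(\x\cup c)$... here I need to be careful about which generator carries the sign. Writing $e_{\Z}(\x)=(-1)^{\maslov(\x_0)}(\x_0)$ in $C[V_n]$, the coefficient of $\y_0$ in $\dxminus(e_{\Z}(\x))$ is $(-1)^{\maslov(\x_0)}\saS(e(r))(\prod V_i^{n_{O_i}(e(r))})$, and $e_{\Z}$ applied to the target $\y\in\bI$ gives $(-1)^{\maslov(\y_0)}$ times $\y_0$. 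The key identity I would invoke is that by Proposition~\ref{prop:parallelogramgradings}(\ref{prop:parallelogramgradingsmaslov}) together with Lemma~\ref{lem:Igradings} (or rather the same computation relating $\maslov$ on $\gridg$ to $\maslov$ on $\gkprime$), the Maslov gradings of $\x_0$ and $\y_0$ differ by the amount that makes $\dxminus$ degree $-1$, so $(-1)^{\maslov(\x_0)}$ and $(-1)^{\maslov(\y_0)}$ differ by a factor of $-1$; this $-1$ is exactly the sign that makes $e_{\Z}\circ(-\dii)=\dxminus\circ e_{\Z}$. I expect this Maslov-parity bookkeeping to be the main obstacle — getting the source-versus-target sign conventions consistent — but it is a finite computation with no surprises.

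For the gradings: $\spincS$ is preserved since $\spincS(\x_0)=\spincS(\x_0\cup c)=\spincS(\x)$ by the first part of Lemma~\ref{lem:Igradings} and multiplication by $\pm1$ does not affect any grading. The map $e_{\Z}$ has Maslov degree $+1$ and Alexander degree $+1$ by the remaining two parts of Lemma~\ref{lem:Igradings}, which matches the grading shift $\gop 1,1,0\gcl$ on $C[V_n]$; hence $e_{\Z}$ is trigraded as a map into $C[V_n]\gop 1,1,0\gcl$. Assembling these observations gives that $e_{\Z}$ is an isomorphism of trigraded chain complexes over $\Z[V_0,\ldots,V_n]$, as claimed.
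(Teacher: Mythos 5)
Your proposal is correct and follows essentially the same argument as the paper's (very terse) proof: the restricted sign assignment gives $\saS(r)=\saS(e(r))$ under the bijection of parallelograms from Lemma~\ref{lem:Iquasi-isomorphism}, and the fact that $\dii$ drops Maslov grading by an odd amount (Proposition~\ref{prop:parallelogramgradings}) means $(-1)^{\maslov}$ flips sign under the differential, supplying exactly the $-1$ needed to intertwine $-\dii$ with $\dxminus$, while Lemma~\ref{lem:Igradings} handles the grading shift. You also correctly caught that the formula in the statement should read $\x\setminus c$ rather than $\x\cup c$ for $\x\in\bI$, which is a typo in the paper.
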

 
 \begin{proof}
 With the definition of the induced sign assignment in hand, and noting that $\dii$ is degree $-1$ with respect to the Maslov grading, this follows from the proofs of Lemma~\ref{lem:Iquasi-isomorphism} and Lemma~\ref{lem:Igradings}.
 \end{proof}
 
  Define the map $\Phi_{X_n,S}^{\tbI}:\tbN\to\tbI$ on generators $\x\in\mathbf{N}$ by $$\Phi_{X_n,S}^{\tbI}(\x):=\sum_{\y\in\tbI}\sum_{\big\{\substack{r\in\eRect(\x,\y)\\\Int(r)\cap \XX=X_n}\big\}}\saS(r)V_0^{n_{O_0}(r)}\cdots V_{n}^{n_{O_{n}}(r)}\y. $$ 
 
 Extend this map linearly to $\tbN$.

 \begin{lem}
  The map $\Phi_{X_n,\saS}^{\tbI}$ is a chain map of tridegree $(-1,-1,0)$. 
 \end{lem}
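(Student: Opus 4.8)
The plan is to prove the two assertions of the lemma separately, reusing almost everything from the unsigned case. The statement about the tridegree is immediate: each term $\saS(r)\,V_0^{n_{O_0}(r)}\cdots V_n^{n_{O_n}(r)}\y$ of $\Phi_{X_n,\saS}^{\tbI}(\x)$ differs from the corresponding term of the unsigned map $\Phi_{X_n}^{\tbI}(\x)$ only by the factor $\saS(r)=\pm 1$, which does not affect any grading, so Proposition~\ref{prop:parallelogramgradings} gives the tridegree $(-1,-1,0)$ exactly as in Lemma~\ref{lem:phixndegree}.

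The substance is the chain-map identity. Since $\tbI$ enters $C'$ carrying the twisted differential $-\dii$, being a chain map $(\tbN,\dnn)\to(\tbI,-\dii)$ amounts to proving $\Phi_{X_n,\saS}^{\tbI}\circ\dnn = -\dii\circ\Phi_{X_n,\saS}^{\tbI}$, i.e.\ $\Phi_{X_n,\saS}^{\tbI}\circ\dnn + \dii\circ\Phi_{X_n,\saS}^{\tbI}=0$. I would expand the left-hand side as a sum over domains $\phi$ from some $\x\in\mathbf{N}$ to some $\z\in\bI$ that decompose as a juxtaposition $r_1*r_2$ of two empty parallelograms, exactly one of which has interior meeting $\XX$ precisely in $X_n$ and the other disjoint from $\XX$, each decomposition weighted by $\saS(r_1)\saS(r_2)$ times the common monomial $\prod_i V_i^{n_{O_i}(\phi)}$ (Remark~\ref{rem:omultiplicity}). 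Because $\mathbf{N}$ and $\bI$ are disjoint we have $\x\ne\z$, so Remark~\ref{rem:parallelogramtransposition} forces $2\le m\le 4$, where $m$ is the number of components at which $\x$ and $\z$ disagree; hence the relevant $\phi$ are precisely the domains of Figures~\ref{fig:phixnialternatedecompositions} and~\ref{fig:phixnisamedecompositions}, and there are no annular ($m=0$) domains to account for.

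The key input, taken from the proof of Proposition~\ref{prop:d2=0} (and already used in Lemma~\ref{lem:phixni}), is that every such $\phi$ has exactly two decompositions of this shape and that their intermediate generators are distinct: automatically so for the domains of Figure~\ref{fig:phixnialternatedecompositions}, since a decomposition whose first parallelogram lies in $\dnn$ has intermediate generator in $\mathbf{N}$ while one whose first parallelogram carries $X_n$ has intermediate generator in $\bI$, and by the explicit construction in Proposition~\ref{prop:d2=0} for the domains of Figure~\ref{fig:phixnisamedecompositions}. Writing $\phi=r_1*r_2=r_3*r_4$ with distinct intermediate generators, axiom (S1) yields $\saS(r_1)\saS(r_2)=-\saS(r_3)\saS(r_4)$. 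When the two decompositions are of opposite type (Figure~\ref{fig:phixnialternatedecompositions}) this makes the contribution of $\phi$ to $\Phi_{X_n,\saS}^{\tbI}\circ\dnn$ cancel its contribution to $\dii\circ\Phi_{X_n,\saS}^{\tbI}$; when they are of the same type (Figure~\ref{fig:phixnisamedecompositions}) the two contributions land in the same term and cancel there. So every $\phi$ contributes zero and the identity follows; note that, because $\x\ne\z$, axioms (S2) and (S3) are never invoked. I expect the only delicate point to be the sign bookkeeping — pinning down that the target differential is $-\dii$ and that it is axiom (S1), not (S2)/(S3), that is in play — rather than any new geometry, since the domain combinatorics is entirely inherited from Proposition~\ref{prop:d2=0} and Lemma~\ref{lem:phixni}.
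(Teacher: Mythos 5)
Your proof is correct and takes essentially the same route as the paper: reduce the chain-map condition to $\Phi_{X_n,\saS}^{\tbI}\circ\dnn=-\dii\circ\Phi_{X_n,\saS}^{\tbI}$ (since $\tbI$ carries the twisted differential $-\dii$), run the same domain case analysis as Lemma~\ref{lem:phixni}, and invoke axiom (S1) so that same-type decompositions cancel on one side and opposite-type decompositions produce the $-$ sign needed across sides, with the tridegree following from Proposition~\ref{prop:parallelogramgradings}. Your added observations — that $\mathbf{N}$ and $\bI$ are disjoint so $\x\neq\z$ rules out annular domains and makes (S2)/(S3) irrelevant, and that opposite-type decompositions automatically have distinct intermediate generators since one lies in $\mathbf{N}$ and the other in $\bI$ — are accurate and make the appeal to (S1) fully justified.
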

 \begin{proof}
This requires showing $\Phi_{X_n,\saS}^{\tbI}\circ\dnn=-\dii\circ\Phi_{X_n,\saS}^{\tbI}$. Again, this is a case analysis of domains and possible placements of $X_n$. These are analyzed in the proof of Lemma~\ref{lem:phixni}. For the domains of Figure~\ref{fig:phixnisamedecompositions}, both decompositions contribute to the same side of the above equality, and are counted with opposite sign according to the first axiom of a sign assignment, so they cancel out. For the domains of Figure~\ref{fig:phixnialternatedecompositions}, the decompositions contribute to opposite sides of the above equality. The difference in sign from the sign assignment cancels with the negative sign to prove our desired equality. 
 \end{proof}
 
 The map $e_{\Z}\circ\Phi_{X_n,\saS}^{\tbI}$ is a chain homotopy equivalence between $\tbN$ and $C[V_n]$. To see this, we again will define an explicit homotopy inverse and homotopy. 
 
 Define the map $\Phi_{O_n,\saS}:\tbI\to\tbN$ on generators $\x\in\bI$ by \[
\Phi_{O_n,\saS}(\x)=\sum_{\y\in\tbN}\sum_{\substack{r\in\ePG(\x,\y)\\
 \Int(r)\cap\XX=\emptyset\\
 O_n\in r}}\saS(r)V_0^{n_{O_0}(r)}\cdots V_{n-1}^{n_{O_{n-1}}(r)}\y.
\]

Extend this map linearly to $\tbI$. 

 \begin{lem}
 The map $-\Phi_{O_n,\saS}$ is a chain map of tridegree $(1,1,0)$, and on $\tbI$, the composition $-\phixnisigned\circ\phionsigned$ is the identity. 
 \end{lem}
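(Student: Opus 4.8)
The statement packages together three claims: (a) $-\phionsigned$ is a chain map, i.e.\ $\phionsigned\circ\dii=-\dnn\circ\phionsigned$ (recall that inside the signed cone $C'$ the differential on $\tbI$ is $-\dii$, not $\dii$); (b) it has tridegree $(1,1,0)$; and (c) on $\tbI$ one has $-\phixnisigned\circ\phionsigned=\Id$. The plan is to re-run the two relevant unsigned arguments --- the proof that $\Phi_{O_n}$ is a chain map and the proof of Lemma~\ref{lem:phixnphionidentity} --- while carrying the sign data through, exactly as was done for $\phixnisigned$ and as in Lemma~\ref{lem:phibetagammasignedchainmap}.

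For (b): every parallelogram $r$ appearing in $\phionsigned$ is empty, has $\Int(r)\cap\XX=\emptyset$, and contains $O_n$, so $n_{O_n}(r)=1$ while the attached monomial $\prod_{i=0}^{n-1}V_i^{n_{O_i}(r)}$ omits $V_n$. Feeding this into Proposition~\ref{prop:parallelogramgradings} (applied on $\gkprime$) shows that the contribution $\prod_{i=0}^{n-1}V_i^{n_{O_i}(r)}\y$ sits in Maslov grading $\maslov(\x)+1$ and Alexander grading $\alex(\x)+1$, while $\spincS$ is preserved; the global sign does not affect gradings, so $-\phionsigned$ has tridegree $(1,1,0)$.

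For (a): one examines the domains $\phi$ from a generator of $\tbI$ to a generator of $\tbN$ which decompose as a juxtaposition of a parallelogram through $O_n$ (the $\phionsigned$-piece) with a $\dii$- or $\dnn$-piece; these are precisely the domains and marking placements from the proof of Lemma~\ref{lem:phixni}, with $X_n$ replaced by $O_n$ and with the requirement that $O_n$ lie in the $\phionsigned$-parallelogram. Since generators of $\tbI$ (containing $c$) and of $\tbN$ (not containing $c$) never coincide, only the cases $2\le m\le 4$ of Proposition~\ref{prop:d2=0} occur, and each such $\phi$ has exactly one alternate decomposition. When the two decompositions are of the same type they cancel by axiom (S1); when they are of opposite type, axiom (S1) combined with the sign $-1$ in $-\dnn\circ\phionsigned$ forces the two contributions to agree, giving $\phionsigned\circ\dii=-\dnn\circ\phionsigned$.

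For (c): by the analysis of Lemma~\ref{lem:phixnphionidentity}, none of the domains of Figures~\ref{fig:dsquared4}, \ref{fig:dsquared3}, \ref{fig:dsquared2} admits placements of $X_n$ and $O_n$ that would let a decomposition lie in $\phixnisigned\circ\phionsigned$, so the only contribution is the width-one annulus covering exactly $X_n$ and $O_n$, connecting $\x$ to $\x$, with a unique decomposition $r_1*r_2$ (with $O_n\in r_1$, $\Int(r_1)\cap\XX=\emptyset$, $\Int(r_2)\cap\XX=X_n$); its monomial is $1$ because $\phionsigned$ does not record $O_n$ and $r_2$ covers no $O$. The only new input beyond the unsigned case is that this annulus is a column, so axiom (S3) gives $\saS(r_1)\saS(r_2)=-1$, whence $-\phixnisigned\circ\phionsigned(\x)=-\saS(r_1)\saS(r_2)\,\x=\x$. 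The main obstacle is purely the sign bookkeeping: one must consistently use $-\dii$ as the differential on $\tbI$, track how the factors $-1$ produced by (S1)--(S3) interact with the explicit global minus signs in $-\phionsigned$ and $-\phixnisigned\circ\phionsigned$, and verify in the degenerate case that the relevant annulus is a column (so (S3), not (S2), applies).
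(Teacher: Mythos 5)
Your proposal is correct and follows the same strategy as the paper: the chain map property is argued by the same signed case analysis used for $\phixnisigned$ (domains from Proposition~\ref{prop:d2=0} with $O_n$ placed instead of $X_n$, signs governed by axiom (S1)), the tridegree follows from Proposition~\ref{prop:parallelogramgradings} with the observation that $V_n$ is not recorded by $\phionsigned$, and the identity on $\tbI$ comes from the unique width-one annulus through $O_n$ and $X_n$ together with axiom (S3). The paper's own proof is essentially a compressed version of what you wrote; you supply the additional justifications (why $m=0,1$ cannot occur, why the monomial is $1$, why (S3) rather than (S2) applies) that the paper leaves implicit, so the two arguments agree in substance.
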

 \begin{proof}
 That $-\phionsigned\circ\dii=\dnn\circ\phionsigned$ is seen by a similar signed case analysis to the above. The grading result follows from Proposition~\ref{prop:parallelogramgradings}.
 
 Analyzing possible placements of $X_n$ and $O_n$ in the domains of Proposition~\ref{prop:d2=0}, we see the only domains counted are the vertical annuli of width one. If a vertical annulus $\phi$ has a decomposition as $\phi=r_1*r_2$, the sign assignment tells us $\saS(r_1)\saS(r_2)=-1$. In the above, then, the annulus is counted with sign +1. The map $\phionsigned$ is defined so as to not count the coefficient coming from $O_n$, so we get the composite is equal to the identity.
 \end{proof}

 Define $\phionxnsigned$ on a generator $\x$ of $\mathbf{N}$ by \[
\phionxnsigned(\x)=\sum_{\y\in\tbN}\sum_{
 \substack{r\in\ePG(\x,\y)\\
 \Int(r)\cap\XX=X_{n-1}\\
 O_n\in r}}
 \saS(r)V_0^{n_{O_0}(r)}\cdots V_{n-1}^{n_{O_{n-1}}(r)}\y. 
\]
Extend this map linearly to $\mathbf{N}$. 

\begin{lem}
 On $\tbN$, the composition $-\phionsigned\circ\phixnisigned$ is homotopic to the identity. 
\end{lem}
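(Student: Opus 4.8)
The plan is to rerun the domain analysis from the proof of Lemma~\ref{lem:phixnquasiisomorphism}, now keeping track of signs, with $\phionxnsigned$ serving as the chain homotopy. Concretely, I would prove the identity
\[
-\phionsigned\circ\phixnisigned\ -\ \dnn\circ\phionxnsigned\ -\ \phionxnsigned\circ\dnn\ =\ \Id_{\tbN},
\]
which exhibits $\phionxnsigned$ as a chain homotopy between $-\phionsigned\circ\phixnisigned$ and $\Id_{\tbN}$, proving the claim. As in the unsigned case, both sides are evaluated by enumerating the domains of Proposition~\ref{prop:d2=0} from $\x\in\genG$ to $\z\in\genG$ that split as a juxtaposition of two empty parallelograms, subject to the marking constraints imposed by the three maps; these are exactly the domains and placements of $X_{n-1}$, $X_n$, $O_n$ recorded in Figures~\ref{fig:phionxnalternate}, \ref{fig:phionxnsamedecompositions}, \ref{fig:phionxnmatchingdecompositions}, and \ref{fig:phionxnannuli}. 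A domain $\phi$ contributes to $\z$ the \emph{signed} number of its decompositions, each weighted by $\prod_i V_i^{n_{O_i}(\phi)}$, where a decomposition $\phi=r_1*r_2$ enters whichever of the three summands it lies in with weight $\saS(r_1)\saS(r_2)$ times the common leading sign $-1$.

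First I would dispose of the domains with $1\le m\le 4$, where $m$ is the number of components at which $\x$ and $\z$ differ. By the explicit construction of alternate decompositions in the proof of Proposition~\ref{prop:d2=0}, each such $\phi$ has exactly two decompositions $\phi=r_1*r_2=r_3*r_4$ whose intermediate generators are distinct; Remark~\ref{rem:parallelogramtransposition} still rules out $m=1$. Axiom (S1) then gives $\saS(r_1)\saS(r_2)=-\saS(r_3)\saS(r_4)$, and since the two decompositions of $\phi$ enter the left-hand side with the \emph{same} leading sign — whether they land on a single summand, as in Figure~\ref{fig:phionxnsamedecompositions}, or on two different summands, as in Figures~\ref{fig:phionxnalternate} and \ref{fig:phionxnmatchingdecompositions} — their contributions cancel, so $\phi$ contributes nothing.

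Next I would treat $m=0$. Exactly as in Lemma~\ref{lem:phixnquasiisomorphism}, for $\x=\z$ the only domain counted is the unique width-one annulus from $\x$ to $\x$ covering $X_n$ and $O_n$; the height-one annulus is excluded because it cannot contain both $X_n$ and $O_n$. This annulus has a single decomposition, and that decomposition is a column, so Axiom (S3) gives $\saS(r_1)\saS(r_2)=-1$; the annulus lies in exactly one of the three summands according to the relative positions of $O_n$, $X_n$, and the components of $\x$ displayed in Figure~\ref{fig:phionxnannuli}. Combined with the leading sign $-1$, it is therefore counted with coefficient $+1$ on each generator, so the left-hand side equals $\Id_{\tbN}$, as desired. (This computation also fixes the leading signs of the other two summands and the sign of the homotopy.)

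The main obstacle I anticipate is the sign bookkeeping rather than any new geometric input: one must verify, figure by figure, that the two decompositions of each $\phi$ with $1\le m\le 4$ genuinely enter the displayed identity with the same leading sign, so that the minus sign from Axiom (S1) produces a cancellation rather than a doubling, and one must check that the single global sign forced by the $m=0$ annulus is consistent across all the figures. This is the same kind of verification already carried out for Lemmas~\ref{lem:phibetagammasignedchainmap} and \ref{lem:phibetagammasigned} and for the preceding lemmas of this subsection, so it requires care but no new ideas.
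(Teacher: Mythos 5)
Your approach is essentially identical to the paper's: both prove the identity $-\Id_{\tbN}=\phionsigned\circ\phixnisigned+\dnn\circ\phionxnsigned+\phionxnsigned\circ\dnn$ (your equation is this one, rearranged) by reusing the domain enumeration of the unsigned Lemma~\ref{lem:phixnquasiisomorphism}, then observing that for $2\le m\le 4$ the two decompositions have distinct intermediate generators so axiom (S1) makes them cancel, and that the unique $m=0$ annulus is a column so axiom (S3) gives it coefficient $-1$, hence $+1$ after the overall sign. The paper states this tersely; you spell it out but add no new ideas, so the proposal is correct and matches the paper.

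One very small imprecision worth flagging: your parenthetical ``the height-one annulus is excluded because it cannot contain both $X_n$ and $O_n$'' gives the right conclusion but not quite the full reason — the constraint for $\phionxnsigned$ is $X_{n-1}\in\Int(r)$ and $O_n\in r$, not $X_n$ and $O_n$, so ruling out the row through $X_{n-1}$ and $O_n$ requires a separate observation. The paper itself simply asserts uniqueness (via Figure~\ref{fig:phionxnannuli}), so you are at the same level of detail, but be aware the justification as you phrased it doesn't cover every potentially dangerous annulus.
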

\begin{proof}
We will prove the homotopy is $-\phionxnsigned$. Therefore, we wish to show $-\Id=\phionsigned\circ\phixnisigned+\dnn\circ\phionxnsigned+\phionxnsigned\circ\dnn$. Any domain that is counted on the right hand side has two alternate decompositions counted with opposite sign, cancelling out in sum, as in Lemma~\ref{lem:phixnquasiisomorphism}, except for the annuli, which have a unique decomposition, counted with sign -1. 
\end{proof}

\begin{prop}\label{prop:xswsigned}
Suppose that $\gkprime$ is gotten by performing an $\XSW$ stabilization at $X_{n-1}$. Then there is a quasi-isomorphism as trigraded chain complexes over $\Z[V_0,\ldots,V_{n-1}]$ of $C^-(\gridg)$ and $C^-(\gkprime)$. 
\end{prop}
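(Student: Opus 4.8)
The plan is to rerun the proof of Proposition~\ref{prop:XSWstabilization} while carrying the sign assignment $\saS$ along at every stage. Recall from the discussion above that, over $\Z$, we have $C^-(\gkprime)=C'=\Cone\!\big(\din\colon(\tbI,-\dii)\to(\tbN,\dnn)\big)$, and put $C'':=\Cone\!\big(V_n-V_{n-1}\colon C[V_n]\gop 1,1,0\gcl\to C[V_n]\big)$, where $C=C^-(\gridg)$ and $C[V_n]$ is the formal extension of scalars. The integral analogue of Lemma~\ref{lem:freeconelemma} — its proof is the standard deformation retraction of a mapping cone of multiplication by $V_n-V_{n-1}$, which is insensitive to the ground ring — shows that $C''$ is quasi-isomorphic to $C$ as a trigraded $\Z[V_0,\ldots,V_{n-1}]$-module. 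So it suffices to produce a quasi-isomorphism $C'\to C''$.

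To build it I would assemble the signed refinement of the square from Proposition~\ref{prop:XSWstabilization},
\[
\begin{tikzcd}
(\tbI,-\dii)\arrow[r,"\din"]\arrow[d,"\ez"] & (\tbN,\dnn)\arrow[d,"\ez\circ\phixnisigned"]\\
C[V_n]\gop 1,1,0\gcl\arrow[r,"V_n-V_{n-1}"]&C[V_n],
\end{tikzcd}
\]
with left vertical map the trigraded isomorphism $\ez$ of Lemma~\ref{lem:signedIquasi-isomorphism}, and right vertical map $\ez$ precomposed with $\phixnisigned$. The map $\phixnisigned$ is a chain homotopy equivalence with homotopy inverse $-\phionsigned$, via the homotopy $-\phionxnsigned$ (the content of the lemmas proved just above), hence a quasi-isomorphism, and $\ez$ is a $\Z[V_0,\ldots,V_n]$-module isomorphism, so the composite is a quasi-isomorphism. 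The horizontal maps are homogeneous of degree $(-1,0,0)$, exactly as in Proposition~\ref{prop:XSWstabilization}, using Proposition~\ref{prop:parallelogramgradings} for $\din$ and the tridegree of $\phixnisigned$ recorded above. Granting that the square commutes, Lemma~\ref{lem:conealgebraiclemma} yields a quasi-isomorphism $C'=\Cone(\din)\to C''$ of trigraded $\Z[V_0,\ldots,V_n]$-modules, and composing with the integral cone lemma gives the assertion.

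The one step that genuinely requires care is commutativity of the square, i.e. $\phixnisigned\circ\din=V_n-V_{n-1}$ on $\tbI$ (the twist $(-1)^{\maslov(\cdot)}$ in $\ez$ passes through multiplication by the $V_i$ since $\ez$ is a module map, so it does not enter this identity). As in the unsigned case the composite counts domains from $\x\in\tbI$ to itself which split as an empty rectangle disjoint from $\XX$ followed by an empty rectangle whose only $\XX$-marking is $X_n$, and these are precisely the two width-one vertical annuli through $X_n$, producing the monomials $V_n$ and $V_{n-1}$. The signs of the two resulting annular decompositions are pinned down by axioms (S2)--(S3) of the sign assignment together with the fact that $\din$ and $\phixnisigned$ each lower the Maslov grading by one; checking that they combine to give precisely $V_n-V_{n-1}$ (rather than, say, $V_{n-1}-V_n$) is the main obstacle, and is where the precise normalizations of $\ez$ by $(-1)^{\maslov(\cdot)}$ and of the restricted differential as $-\dii$ are needed. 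This is the signed counterpart of the corresponding computation for knots in $S^3$, and once it is in place every other ingredient is quoted verbatim from the lemmas already established.
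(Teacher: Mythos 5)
Your argument is the same as the paper's: you assemble the identical commutative square with vertical maps $\ez$ and $\ez\circ\phixnisigned$, invoke the signed lemmas for the quasi-isomorphisms, and conclude via Lemma~\ref{lem:conealgebraiclemma} together with the integral analogue of Lemma~\ref{lem:freeconelemma}. Your added remarks on why the square commutes go slightly beyond what the paper records (it simply asserts commutativity), though note a small slip in the parenthetical: the two thin annuli through $X_n$ are one vertical and one horizontal, not two vertical, and the sign of $\phixnisigned\circ\din=V_n-V_{n-1}$ is governed by axioms (S2)--(S3) directly rather than by the $(-1)^{\maslov}$ normalization of $\ez$, which cancels out of that particular identity because $\ez$ appears on both sides.
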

\begin{proof}
We have \[
 \begin{tikzcd}
 (\tbI,-\dii)\arrow[r,"\din"]\arrow[d,"e"] & (\tbN,\dnn)\arrow[d,"e\circ\phixnisigned"]\\
 (C[V_n]\gop1,1,0\gcl,\dxminus)\arrow[r,"V_n-V_{n-1}"]&(C[V_n],\dxminus).
 \end{tikzcd}\]
 
 This square commutes. 
 The vertical maps are quasi-isomorphisms, so by Lemma~\ref{lem:conealgebraiclemma} we get that the induced map between $C^-(\gkprime)=\Cone(\din)$ and $C''=\Cone(V_n-V_{n-1})$ is a quasi-isomorphism. Lemma~\ref{lem:freeconelemma} says that $C''$ is quasi-isomorphic to $C^-(\gridg)$. 
\end{proof}

\begin{prop}\label{prop:signedstabilizationinvariance}
Suppose $\gridg$ and $\gkprime$ are two grid diagrams for a link with $\ell$ components which are connected by a stabilization move. Then $(C^-(\gridg),\dxminus;\Z)$ and $(C^-(\gkprime),\dxminusprime;\Z)$ are quasi-isomorphic as chain complexes over $\Z[U_1,\ldots,U_\ell]$. 
\end{prop}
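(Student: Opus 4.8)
The plan is to mirror the proof of Proposition~\ref{prop:stabilization} line for line, now feeding in the signed ingredients established in this section. First I would invoke Lemma~\ref{lem:typeOistypeX}: it is a purely combinatorial statement about grid moves, so it applies verbatim and reduces the claim to the case where $\gridg$ and $\gkprime$ are connected by a stabilization of type $X$, \emph{provided} we also know signed invariance under commutations, which is Proposition~\ref{prop:signedcommutationinvariance}. Since sign assignments on any fixed grid differ only up to quasi-isomorphism by Theorem~\ref{thm:signassignmentsdontmatter}, along the chain of moves produced by Lemma~\ref{lem:typeOistypeX} we are free to choose whichever sign assignment is convenient at each stage, and the composite of the resulting quasi-isomorphisms is still a quasi-isomorphism.

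Next I would dispatch the four $X$-type stabilizations. The $\XSW$ case is exactly Proposition~\ref{prop:xswsigned}, which gives a quasi-isomorphism of $C^-(\gridg;\Z)$ and $C^-(\gkprime;\Z)$ as trigraded chain complexes over $\Z[V_0,\ldots,V_{n-1}]$. For the $\XNE$ case the argument of Proposition~\ref{prop:xswsigned} carries over essentially unchanged: one still decomposes $C^-(\gkprime;\Z)=\tbI\oplus\tbN$, the restricted differential still has the shape of $\Cone$ of $(\tbI,-\dii)\to(\tbN,\dnn)$, the signed maps $\phixnisigned$ and $-\phionsigned$ together with the homotopy $-\phionxnsigned$ are defined the same way and satisfy the same identities (the sign case analyses reference the same domain figures from the proof of Proposition~\ref{prop:d2=0}), and one applies Lemma~\ref{lem:conealgebraiclemma} to the analogous commuting square together with Lemma~\ref{lem:freeconelemma}. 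For $\XSE$, write it as an $\XSW$ stabilization followed by a switch move and combine Proposition~\ref{prop:xswsigned} with Proposition~\ref{prop:signedswitchinvariance}; similarly $\XNW$ is an $\XNE$ stabilization followed by a switch.

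Finally, having obtained a quasi-isomorphism over $\Z[V_0,\ldots,V_{n-1}]$ (or directly over $\Z[U_1,\ldots,U_\ell]$ in the cases that passed through a switch), I would invoke Lemma~\ref{lem:componentvariablesarehomotopicsigned}: multiplication by any two variables $V_i$ and $V_j$ attached to markings on the same link component is chain homotopic, so a quasi-isomorphism of complexes over $\Z[V_0,\ldots,V_{n-1}]$ descends to one of the associated complexes over $\Z[U_1,\ldots,U_\ell]$, exactly as at the end of the proof of Proposition~\ref{prop:stabilization}. The one point requiring genuine care — and the step I expect to be the main obstacle — is the sign bookkeeping in the $\XNE$ adaptation of Proposition~\ref{prop:xswsigned}: one must confirm that the signs attached to the stabilization region's width-one and height-one annuli and to the relevant parallelograms combine so that the cone description $C^-(\gkprime;\Z)=\Cone\bigl((\tbI,-\dii)\to(\tbN,\dnn)\bigr)$ and the identity $\phixnisigned\circ\din = V_n-V_{n-1}$ still hold. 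Once that compatibility is verified, the remainder of the proof is assembly of results already established.
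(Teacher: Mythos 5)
Your proposal is correct and follows essentially the same route as the paper: reduce O-type stabilizations to X-type via Lemma~\ref{lem:typeOistypeX} plus Proposition~\ref{prop:signedcommutationinvariance}, handle $\XSW$ by Proposition~\ref{prop:xswsigned}, adapt it for $\XNE$, reduce $\XSE$ and $\XNW$ to the preceding cases by a switch via Proposition~\ref{prop:signedswitchinvariance}, and descend from $\Z[V_0,\ldots,V_{n-1}]$ to $\Z[U_1,\ldots,U_\ell]$ using Lemma~\ref{lem:componentvariablesarehomotopicsigned}. Your added remarks about using Theorem~\ref{thm:signassignmentsdontmatter} to switch sign assignments along the chain of moves and your flag on the $\XNE$ sign bookkeeping are sensible caution but do not change the argument.
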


\begin{proof}
We have proven the statement for stabilizations of type $\XSW$ in Proposition~\ref{prop:xswsigned}. The argument that it is true for a stabilization of type $\XNE$ follows just as in the proof of Proposition~\ref{prop:stabilization}. Similarly, a stabilization of type $\XSE$ (respectivel $\XNW$) is the composition of a stabilization of type $\XSW$ (respectively $\XNE$) and a switch, so by the above and Proposition~\ref{prop:signedswitchinvariance}, we have the desired result for stabilizations of type $X$. By Lemma~\ref{lem:typeOistypeX} and Proposition~\ref{prop:signedcommutationinvariance}, we have the desired result for stabilizations of type $O$. 
\end{proof}

\subsection{Integral invariance}
With the above in hand, the main theorem follows. 

\begin{proof}[Proof of Theorem~\ref{thm:mainresultz}]
Suppose $\gridg$ and $\gkprime$ are two grid diagrams associated to the same link. By Theorem~\ref{thm:cromwell},  $\gridg$ and $\gkprime$  are connected by a sequence of commutations and (de)stabilizations. From Proposition~\ref{prop:signedcommutationinvariance} and Proposition~\ref{prop:signedstabilizationinvariance}, it follows that $(C^-(\gridg),\dxminus;\Z)$ and $(C^-(\gkprime),\dxminus;\Z)$ are quasi-isomorphic. 
\end{proof}

Corollary~\ref{cor:invariant} follows immediately. The quasi-isomorphism type $\CFKm(L,\lpq;\Z)$ is a link invariant by Theorem~\ref{thm:mainresultz}, and so the isomorphism type of $\HFKm(L,\lpq;\Z):=H_*(\CFKm(L,\lpq;\Z))$ is also.

\bibliographystyle{plain}
\bibliography{master}

\begin{thebibliography}{10}

\bibitem{baker2008grid}
Kenneth~L. Baker and J.~Elisenda Grigsby.
\newblock Grid diagrams and {L}egendrian lens space links, 2008.

\bibitem{Baker_2008}
Kenneth~L. Baker, J.~Elisenda Grigsby, and Matthew Hedden.
\newblock Grid diagrams for lens spaces and combinatorial knot {F}loer
  homology.
\newblock {\em International Mathematics Research Notices}, 2008, Jan 2008.

\bibitem{beliakova2010simplification}
Anna Beliakova.
\newblock A simplification of combinatorial link {F}loer homology, 2010.

\bibitem{celoria2015note}
Daniele Celoria.
\newblock A note on grid homology in lens spaces: $\mathbb{Z}$ coefficients and
  computations, 2015.

\bibitem{droz2008effective}
Jean-Marie Droz.
\newblock Effective computation of knot {F}loer homology, 2008.

\bibitem{Manolescu_2007}
Ciprian Manolescu, Peter Ozsváth, Zoltán Szabó, and Dylan~P Thurston.
\newblock On combinatorial link {F}loer homology.
\newblock {\em Geometry \& Topology}, 11(4):2339–2412, Dec 2007.

\bibitem{Ni_2009}
Yi~Ni.
\newblock Link {F}loer homology detects the thurston norm.
\newblock {\em Geometry \& Topology}, 13(5):2991–3019, Sep 2009.

\bibitem{Ozsvath2002HolomorphicDA}
P.~Ozsváth and Z.~Szab{\'o}.
\newblock Holomorphic disks and knot invariants.
\newblock {\em Advances in Mathematics}, 186:58--116, 2002.

\bibitem{gridhomology}
Peter Ozsváth, Andras Stipsicz, and Zoltán Szabó.
\newblock {\em Grid homology for knots and links}.
\newblock American Mathematical Society, 2015.

\bibitem{Ozsvath00holomorphicdisks}
Peter Ozsváth and Zoltán Szabó.
\newblock Holomorphic disks and topological invariants for closed
  three-manifolds.
\newblock {\em Ann. Of Math}, 2:1027--1158, 2000.

\bibitem{Ozsvath_2008}
Peter Ozsváth and Zoltán Szabó.
\newblock Holomorphic disks, link invariants and the multi-variable {A}lexander
  polynomial.
\newblock {\em Algebraic \& Geometric Topology}, 8(2):615–692, May 2008.

\bibitem{Ozsv_th_2010}
Peter S Ozsváth and Zoltán Szabó.
\newblock Knot {F}loer homology and rational surgeries.
\newblock {\em Algebraic \& Geometric Topology}, 11(1):1–68, Jan 2010.

\bibitem{petkova2019skein}
Ina Petkova and C.-M.~Michael Wong.
\newblock Skein relations for tangle {F}loer homology, 2019.

\bibitem{Rasmussen2003FloerHA}
J.~Rasmussen.
\newblock Floer homology and knot complements.
\newblock {\em arXiv: Geometric Topology}, 2003.

\bibitem{Sarkar2006AnAF}
S.~Sarkar and Jiajun Wang.
\newblock An algorithm for computing some {H}eegaard {F}loer homologies.
\newblock {\em Annals of Mathematics}, 171:1213--1236, 2006.

\end{thebibliography}

\end{document}